\definecolor{answercolor}{RGB}{0, 112, 48}
\def\Ht{{\operatorname{ht}}}
\DeclareMathOperator\ldom {\triangleleft}
\DeclareMathOperator\ledom{\trianglelefteq}
\def\bi{\text{\boldmath$i$}}
\def\bj{\text{\boldmath$j$}}
\def\b1{\text{\boldmath$1$}}
\def\pmod#1{\text{ }(\text{\rm mod } #1)\,}
\def\Belt{\mathbf{B}}
\def\caly{\mathcal{Y}}
\def\calx{\mathcal{X}}
\newcommand{\tij}{\mathcal{T}_{i,j}}
\newcommand\Par[1][]{{\mathcal P}}
\newcommand{\cont}{\operatorname{cont}}
\newcommand{\std}{\operatorname{Std}}
\newcommand{\cmA}{\mathsf{A}}
\newcommand{\ct}{c_{\mathtt{T}}}
\newcommand{\vtt}{v_{\mathtt{T}}}
\newcommand{\vtd}[1]{v_{{\mathtt{T}_{(#1)}}}}
\newcommand{\tj}[2]{\tau_{(#1,#2)}}
\newcommand{\tu}[2]{\tau^{(#1,#2)}}
\newcommand{\vt}[1]{v_{\tau({#1})}}
\newcommand{\vs}{v_{\mathtt{S}}}
\newcommand{\vtp}{v_{\mathtt{T'}}}
\newcommand{\vtb}{v_{\bar{\mathtt{T}}}}
\newcommand{\vst}{v_{{\tts_{(2)}}}}
\newcommand{\vsk}{v_{{\tts_{(3)}}}}
\newcommand{\vsf}{v_{{\tts_{(4)}}}}
\newcommand{\Arm}{\operatorname{Arm}}
\newcommand{\Leg}{\operatorname{Leg}}
\newcommand{\Psichaind}[2]{\Psi\hspace{-4pt}\underset{\scriptscriptstyle{#2}}{\overset{\scriptscriptstyle{#1}}{\downarrow}}}
\newcommand{\Psichainu}[2]{\Psi\hspace{-4pt}\underset{\scriptscriptstyle{#1}}{\overset{\scriptscriptstyle{#2}}{\uparrow}}}
\newcommand{\tp}{\mathtt{p}}
\newcommand{\last}{\mathfrak{l}}
\newcommand{\first}{\mathfrak{f}}
\renewcommand{\P}{\sfp}
\newcommand{\id}{\operatorname{id}}
\renewcommand\hom{\operatorname{Hom}_{R_n^{\La}}}
\newcommand{\Boxedi}[2][]{\boxed{\quad\cali_{g_{#2}#1,h_{#2}}\quad}}
\newcommand{\iBoxed}[2][]{\boxed{\quad\cali^{G_{#2}#1,H_{#2}}\quad}}
\newcommand{\lzero}{\underline{0}}
\newcommand\md{\operatorname{mod}}
\numberwithin{equation}{section}
\newcommand\SetBox[2][35mm]{\Big\{\vcenter{\hsize#1\centering#2}\Big\}}
\newcommand\YoungDiagram[2][\relax]{
  \begin{tikzpicture}[scale=0.5,draw/.append style={thick,black}]
    \ifx\relax#1\relax%
    \else 
    \foreach\box in {#1} {
      \filldraw[blue!30]\box rectangle ++(1,1);
    }
    \fi
    \newcount\row
    \row=0
    \foreach \col in {#2} {
       \draw(1,\the\row)grid ++(\col,1);
       \global\advance\row by -1
    }
  \end{tikzpicture}
}
\newcommand\Tableau[2][\relax]{
  \begin{tikzpicture}[scale=0.5,draw/.append style={thick,black}]
    \ifx\relax#1\relax%
    \else 
      \foreach\box in {#1} { \filldraw[blue!30]\box+(-.5,-.5)rectangle++(.5,.5); }
    \fi
    \newcount\row\newcount\col
    \row=0
    \foreach \Row in {#2} {
       \col=1
       \foreach\k in \Row {
          \draw(\the\col,\the\row)+(-.5,-.5)rectangle++(.5,.5);
          \draw(\the\col,\the\row)node{\k};
          \global\advance\col by 1
       }
       \global\advance\row by -1
    }
  \end{tikzpicture}
}
\title[Homomorphisms into Specht modules labelled by hooks]{Homomorphisms into Specht modules labelled by hooks in quantum characteristic two}
\author[Berta Hudak]{Berta Hudak}
\address{Okinawa Institute of Science and Technology, Okinawa 904-0495, Japan}
\email{berta.hudak@oist.jp}
\date{}
\begin{document}


\renewcommand\auth{Berta Hudak}

\let\thefootnote\relax\footnote{2020 Mathematics subject classification: 20C08, 05E10}

\begin{abstract}
Let $R_n$ denote the KLR algebra of type $A^{(1)}_{e-1}$.
Using the presentation of Specht modules given by Kleschev--Mathas--Ram, Loubert completely determined $\Hom_{R_n}(S^\mu,S^\la)$ where $\mu$ is an arbitrary partition, $\la$ is a hook and $e\neq2$.
In this paper, we investigate the same problem when $e=2$.
First we give a complete description of the action of the generators on the basis elements of $S^\la$.
We use this result to identify a large family of partitions $\mu$ such that there exists at least one non-zero homomorphism from $S^\mu$ to $S^\la$, explicitly describe these maps and give their grading.
Finally, we generalise James's result for the trivial module.
\end{abstract}

\maketitle

\tableofcontents

\section{Introduction}\label{intro}

The {\em Iwahori--Hecke algebra} $\calh_n$ of type $A$ arises in various mathematical settings, and its representation theory is very similar to the modular representation theory of the symmetric group $\fkS_ n$.
Let $q$ denote the deformation parameter of $\calh_n$ and set $e$ to be the smallest integer such that $1+q+q^2+\dots+q^{e-1}=0$ (if no such integer exists, we set $e=0$).
For every partition $\la$ of $n$ we can associate a corresponding $\calh_n$-module, called a {\em Specht module} $S^\la$.

Brundan and Kleshchev showed in \cite{bkisom}
that over an arbitrary field $\bbf$, the algebra $\calh_n$ is isomorphic to a certain algebra $R^\La_n$, known as the \emph{cyclotomic Khovanov--Lauda--Rouquier} (KLR) algebra of type $A$.
Moreover, $R^{\La}_n$ is $\bbz$-graded and this grading can be transferred to $\calh_n$ via the mentioned isomorphism. 
In \cite{bkw11}, Specht modules were described as graded modules over $R^{\La}_n$ and this result was extended in \cite{kmr} where the authors gave a construction of Specht modules in terms of generators and relations.

In this paper, we use this presentation to consider $\hom(S^\mu,S^\la)$ where $\mu$ is an arbitrary partition, $\la$ is a hook and $e=2$.
The significantly simpler situation when the quantum characteristic is at least $3$ was solved earlier by Loubert \cite{loub17}.
The defining relations of $R^\La_n$ are different when $e=2$, so it requires a completely separate treatment and our work turns out to be a lot more complicated than the $e\ge3$ case.
In particular, when $e\ge3$ $\hom(S^\mu,S^\la)$ is at most one-dimensional with any nonzero homomorphism sending the cyclic generator of $S^\mu$ to a single standard basis vector in $S^\la$.
As we will later see, here this is different: dimensions of the homomorphism space quickly grow very large and each map involves an integer linear combination of standard basis vectors.
Nevertheless, in the case when $\mu$ is a \emph{two-column hook partition}, a partition of the form $(c,2^s,1^d)$, we completely determine when $\hom(S^\mu,S^\la)$ is nontrivial if $c$ is even or $s=0$ and construct at least one homomorphism explicitly.

\vspace{10pt}
\noindent
{\bf Main Theorem}
\emph{
Let $\la=(a,1^b)$ and $\mu=(c,2^s,1^d)$ where if $s>0$ then $c$ is even. Then $\hom(S^\mu,S^\la)$ is at least one dimensional if for $c$ even, we have that $a\ge s+2$; and
\begin{itemize}
    \item $n$ is odd, $n+a\equiv_2 s+1$ and $s+1\le a\le n-(s+1)$, or
    \item $n$ is even and $s+2\le a\le c+s$ or $a=c+s+1+2y$ for $0\le y\le d/2-2$,
\end{itemize}
and zero otherwise.
}
\vspace{10pt}

(The above theorem appears as \cref{thm:main} later.)

In the special cases when $\la$ is a single row or column, our result allows us to determine whether $S^\mu$ has the trivial module as a quotient.
Furthermore, by the following isomorphism, it also enables us to immediately see whether there exists a non-trivial homomorphism in the other direction (see \cite{loub17} for more details).
Let $\la'$ denote the {\em conjugate} of $\la$, which is the partition obtained from $\la$ by swapping its rows and columns.
Then we have that
\[
\hom(S^\mu,S^\la) \cong \hom(S^{\la'}, S^{\mu'}).
\]
By the above isomorphism, it is easy to apply \cref{thm:main} to determine whether $S^\mu$ has a one-dimensional submodule.\\

\textbf{Structure of the paper} In \cref{backgr}, we set up notation and collect all necessary background material.
In particular, we introduce cyclotomic KLR algebras of type $A$ and the universal Specht modules, following \cite{kmr}.
In \cref{hookaction}, we completely determine the action of the generator elements on the basis elements $\vtt$ of $S^\la$.
Our findings (see \cref{action}) generalise the results in \cite{ls14} and \cite[Chapter~3.]{thesis} for all standard tableaux of hook shapes and all parities of $n$.
In \cref{hookhoms}, we first discover a novel way to label hook tableaux by partitions and a rule to assign a coefficient to each one of them.
This leads us to completely determine the homomorphisms from $S^\mu$ to $S^\la$ where $\mu$ is a two-column hook partition, that is a partition of the form $(c,2^s,1^d)$ if $c$ is even or $s=0$ (see \cref{thm:main}).
Finally, we generalise James's well-known result for the trivial module.
Using a different technique inspired by \cite{dg15}, in the special case when $\mu$ is a two-part partition and $n$ is odd, we are able to prove when the homomorphism space $\hom(S^\mu,S^\la)$ is non-trivial without explicitly describing the maps.\\

\textbf{ArXiv version}
We decided to omit some of the more lengthy but less informative calculations from this pdf as they are similar to others included.
However for those interested, we suggest downloading the \LaTeX~source and find the toggle near the beginning of the file which allows one to compile the paper with these details included.
\\

\textbf{Acknowledgements}
The author thanks Dr Liron Speyer for his helpful comments and inspiring correspondence.
She would also like to thank Dr Haralampos Geranios for pointing out the argument given in \cref{lem:tworows} after most of the work in this paper has been completed.

\section{Background}\label{backgr}


For $m,m'\in\bbz$, we write $m\equiv_2 m'$ if $m-m'$ is even and $m\not\equiv_2 m'$ otherwise.

\subsection{Lie theoretical notation}

Take $e=2$ and $I:=\bbz/e\bbz=\{0,1\}$.
Let $\Gamma$ be the quiver of type $A_{e-1}^{(1)}=A_{1}^{(1)}$ with vertex set $I$ as depicted below.

\[
\begin{tikzcd}
\qquad
0 \arrow[r, shift left]
& 1 \arrow[l, shift left]
\end{tikzcd}
\]

The corresponding {\em Cartan matrix} 
$\cmA =(a_{i,j})_{i, j \in I}$ is given by 
\[
\begin{pmatrix}
    2&-2\\
    -2&2
\end{pmatrix}.
\]

Let $(\cmA,P,\Pi,\Pi^\vee)$ be a realization of the Cartan matrix $(a_{ij})_{i,j\in I}$, so we have the {\em simple roots} 
$\Pi=\{\al_0,\al_1\},$ and  
the {\em fundamental dominant weights} 
$\{\La_0,\La_1\}$ in the \emph{weight lattice} $P$.
Let $Q^+:= \bigoplus_{i\in I} \bbz_{\ge0} \alpha_i$ be the \emph{positive cone of the root lattice}.

For $\al = \sum_{i\in I} a_i \alpha_i \in Q^+$, let $\Ht(\al)$ denote the {\em height of} $\al$, that is $\Ht(\al) = \sum_{i\in I} a_i$.

Let $\fkS_n$ denote the {\em symmetric group} on $n$ letters and let $s_r=(r,r+1)$, for $1\le r<n$, be the simple transpositions of~$\fkS_n$. Then $\fkS_n$
acts from the left on the set $ I^n$ by place permutations. If $\bi=(i_1,\dots,i_n) \in I^n$
then its {\em weight} is $|\bi|:=\alpha_{i_1}+\cdots+\alpha_{i_n}\in Q^+$.  Then the
$\fkS_n$-orbits on $I^n$ are the sets 
\begin{equation*} I^\alpha := \{\bi \in I^n \mid \al=|\bi|\} 
\end{equation*} 
parameterised by all $\alpha \in Q^+$ of height $n$.

\subsection{Partitions and tableaux}\label{Par}
Let $\Par_n$ be the set of all partitions of $n$ and fix $\Par:=\bigsqcup_{n\ge 0}\Par_n$. 
The {\em Young diagram} of the partition $\mu\in \Par$ is defined as
$$
[\mu]:=\{(r,c)\in\bbz_{>0}\times\bbz_{>0}\mid 1\le c\le \mu_r\}.
$$
We call the elements of this set the {\em nodes of $\mu$}.
To each node $A=(r,c)\in[\mu]$ 
we associate its {\em residue} given by:
$$
\res A=(c-r)\; \pmod{2}.
$$
An {\em $i$-node} is a node of residue $i$.
Define the {\em residue content of $\mu$} to be
$$
\cont(\mu):=\sum_{A\in\mu}\al_{\res A} \in Q^+.
$$

A node $A\in[\mu]$ is a {\em removable node} (of~$\mu$), if $[\mu]\setminus \{A\}$ is (the diagram of) a partition. A node $B\not\in[\mu]$ is an {\em addable node} (of $\mu$), if $[\mu]\cup \{B\}$ is a partition.
We use the notation
$$
\mu_A:=[\mu]\setminus \{A\}.
$$

If $\mu,\nu\in \Par_n$, we say that $\mu$ {\em dominates} $\nu$, and write $\mu\unrhd\nu$, if 
$$
\sum_{j=1}^i \mu_j \ge \sum_{j=1}^i \nu_j 
$$
for all $i \ge 1$.
In other words, $\mu$ dominates $\nu$ if $[\mu]$ can be obtained from $[\nu]$ by moving nodes up in the diagram.

If $\mu=(\mu_1,\dots,\mu_k)\in\Par_n$, then the {\em conjugate} of $\mu$ is the partition
$\mu'\in \Par_n$ where $\mu'$ is obtained by swapping the rows and columns of $\mu$. 

Let $\mu\in\Par_n$. 
A {\em $\mu$-tableau} $\ttt$ is obtained from $[\mu]$ by 
inserting the integers $1,\dots,n$ into the nodes without any repeats. 
If the node $A=(r,c)\in[\mu]$ is occupied by the integer $m$ in $\ttt$, we write $m=\ttt(r,c)$ and set
$\res_\ttt(m)=\res A$. The {\em residue sequence} of~$\ttt$ is
$
\bi^\ttt=(i_1,\dots,i_n)\in I^n,
$
where $i_m=\res_\ttt(m)$ is the residue of the node occupied by 
$m$ in $\ttt$ ($1\le m\le n$).

A $\mu$-tableau $\ttt$ is {\em row-standard} (resp.\ {\em column-standard}) if its entries increase from left to right (resp.\ from top to bottom) along the rows (resp.\ columns) of each component of $\ttt$. 
A $\mu$-tableau $\ttt$ is {\em standard} if it is both row- and column-standard. 
 Let $\std(\mu)$ be the set of standard $\mu$-tableaux.

For $\mu\in\Par$, $i\in I$, and a removable $i$-node $A \in [\mu]$, we define

\[
d_A(\mu)=\#\SetBox{addable $i$-nodes of\\[-4pt] $\mu$ strictly below $A$}
                -\#\SetBox[38mm]{removable $i$-nodes of\\[-4pt]$\mu$ strictly below $A$}.
\]
\vspace{1mm}

Given $\mu \in \Par_n$ and $\ttt \in \std(\mu)$, the {\em degree} of $\ttt$ is defined inductively. If $n=0$, then $\ttt=\emptyset$, and set $\deg(\ttt):=0$.
Otherwise, let $A$ be the node occupied by $n$ in $\ttt$. Let $\ttt_{<n}\in\std(\mu_A)$
be the tableau obtained from $\ttt$ by removing this node and set
\[
\deg(\ttt):=d_A(\mu)+\deg(\ttt_{<n}).
\]

The symmetric group $\fkS_n$ acts on the set of $\mu$-tableaux from the left by acting on the entries of the tableaux.
Let $\ttt^\mu$ be the unique $\mu$-tableau in which the numbers $1,2,\dots,n$ appear in order from left to right along the successive rows,
working from top to bottom.
We call $\ttt^\mu$ the \emph{(row) initial tableau}.

Set 
$$
\bi^\mu:=\bi^{\ttt^\mu}.
$$
For each $\mu$-tableau $\ttt$ define permutations $w^\ttt \in \fkS_n$ by the equation
$$
w^\ttt  \ttt^\mu=\ttt.
$$

\subsection{Bruhat order}
Let $\ell$ be the length function on $\fkS_n$ with respect to the Coxeter generators
$s_1,s_2,\dots,s_{n-1}$. Let $\le$ be the {\em Bruhat order} on $\fkS_n$ (so that $1\le w$ for all
$w\in\fkS_n)$.
Define a related
partial order on $\std(\mu)$ as follows: if $\tts,\ttt\in\std(\mu)$ then
$$
\ttt \ledom \tts \quad \text{if and only if} \quad w^{\tts} \le w^\ttt.
$$
If $\tts\ledom\ttt$ and $\tts\ne\ttt$, we write $\tts\ldom\ttt$.
Observe that if  $\ttt\in \std(\mu)$ then $\ttt\ledom\ttt^\mu$.


\subsection{KLR algebras}\label{relations}
We mostly follow the notations and conventions of \cite{kmr}.
Let $\bbf$ be a field and fix $\al\in Q^+$ such that $\Ht(\al)=n$.
We define the algebra $R_\al$ to be the unital associative $\bbf$-algebra generated by elements
\[
\{e(\bi)\:|\: \bi\in I^\al\}\cup\{y_1,\dots,y_{n}\}\cup\{\psi_1, \dots,\psi_{n-1}\},
\]
subject only to the following relations:

\begin{align*}
e(\bi) e(\bj) &= \de_{\bi,\bj} e(\bi);\\
\sum_{\bi \in I^\al} e(\bi) &= 1;\\
y_r e(\bi) &= e(\bi) y_r;\\
\psi_r e(\bi) &= e(s_r{ }\bi) \psi_r;\\
y_r y_s &= y_s y_r;\\
\psi_r y_s  &= y_s \psi_r\hspace{5cm}\text{if $s \neq r,r+1$};\\
\psi_r \psi_s &= \psi_s \psi_r\hspace{4.9cm}\text{if }r\neq s\pm1;\\
\psi_r y_{r+1} e(\bi) &=  (y_r\psi_r+\de_{i_r,i_{r+1}})e(\bi)\\
y_{r+1} \psi_re(\bi) &= (\psi_ry_r+\de_{i_r,i_{r+1}})e(\bi)\\
\psi_r^2e(\bi) &= 
\begin{cases}
0&\quad\text{if }i_r = i_{r+1},\\
(y_{r+1} - y_{r})(y_{r}-y_{r+1}) e(\bi)&\quad\text{if }i_r \neq i_{r+1};
\end{cases}
\\
(\psi_{r}\psi_{r+1} \psi_{r}-\psi_{r+1} \psi_{r} \psi_{r+1} ) e(\bi)
&=
\begin{cases}
    (y_r-2y_{r+1}+y_{r+2}\big)e(\bi)&\qquad\;\text{if }i_{r+2}=i_r \neq i_{r+1},\\
    e(\bi)&\qquad\;\text{otherwise}.
\end{cases}
\end{align*}
\vspace{.5cm}
for all admissible $\bi,\bj,r,s$.

We define the {\em affine Khovanov--Lauda--Rouquier (KLR) algebra} $R_n$ of type $A^{(1)}_1$ to be the direct sum
\[
R_n:=\bigoplus_{\substack{\al\in Q^+\\ \Ht(\al)=n}}R_\al
\]

Fix $\La=\La_0\in P^+$.
The corresponding {\em cyclotomic KLR algebra (of type $A$)} $R_n^{\La}$ is defined to be the quotient of $R_n$ subject to the additional {\em cyclotomic relations} (of level one)
\begin{equation}
y_1=0\qquad\text{and}\qquad e(\bi)=0\text{ if } i_1\neq0.
\end{equation}

The algebras $R_n$ and $R_n^{\La}$ are non-trivially $\bbz$-graded, by setting 
$e(\bi)$ to be of degree 0,
$y_r$ of degree $2$, and
$\psi_r e(\bi)$ of degree $-a_{i_r,i_{r+1}}$
for all $r$ and $\bi \in I^\al$.

For the rest of this paper, we fix a {\em preferred reduced expression} $w=s_{r_1}\dots s_{r_m}$ for each $w \in \fkS_n$ ($1 \le r_1, \dots, r_m < n$) and we define the elements $\psi_w := \psi_{r_1}\dots \psi_{r_m} \in R_n$.

Furthermore, for $\mu\in\Par_n$ and some tableau $\ttt$ of shape $\mu$, we set $ \psi^\ttt:=\psi_{w^\ttt}$.

\subsection{Universal Specht modules}\label{SecSpecht}
Fix a partition $\mu \in \Par_n$.
A node $A=(r,c) \in \mu$ is called a \emph{Garnir node} of $\mu$ if we also have $(r+1,c) \in \mu$.
We define the \emph{Garnir belt} of $A$ to be the set $\Belt^A$ of nodes of $\mu$ containing $A$ and all nodes directly to the right of $A$, along with the node directly below $A$ and all nodes directly to the left of this node:
\[
\Belt^A = \{(r,s) \in \mu \mid c \le s \le \mu_r\} \cup \{(r+1, s) \in \mu \mid 1 \le s \le c \}.
\]

For example, if $A=(2,3)$ and $\mu=(6,5,3,2)$ then $\Belt^A$ consists of the highlighted nodes:

$$\begin{array}{l}
\begin{tikzpicture}[scale=0.5,draw/.append style={thick,black}]
  \fill[orange!50](2.5,-1.5)rectangle(5.5,-0.5);
  \fill[orange!50](0.5,-2.5)rectangle(3.5,-1.5);
  \newcount\col
  \foreach\Row/\row in {{,,,,,}/0,{,,A,,}/-1,{,,}/-2,{,}/-3} {
     \col=1
    \foreach\k in \Row {
        \draw(\the\col,\row)+(-.5,-.5)rectangle++(.5,.5);
        \draw(\the\col,\row)node{\k};
    \global\advance\col by 1
     }
  }
   \draw[magenta,line width=2pt](2.5,-1.5)--(2.5,-0.5)--(5.5,-0.5)--(5.5,-1.5)--(3.5,-1.5)
                     --(3.5,-2.5)--(0.5,-2.5)--(0.5,-1.5)--(2.5,-1.5);
\end{tikzpicture}
\end{array}$$

We define an \emph{$A$-brick} to be a set of $e$ successive nodes in the same row
\[
\{(t,s), (t,s+1),\dots,(t,s+e-1)\} \subseteq \Belt^A
\]
such that $\res(t,s) = \res(A)$.
Let $k \ge 0$ be the number of bricks in $\Belt^A$.
We label the bricks
$$B^A_1, B^A_2, \dots, B^A_k$$
going from left to right along row $r$, and then from left to right along row $r+1$.
Define $C^A$ to be the set of nodes in row $r$ of $\Belt^A$ not contained in any $A$-brick, and $D^A$ to be the set of nodes in row $r+1$ of $\Belt^A$ not contained in any $A$-brick.
We see that as $e=2$, each brick consists of two nodes and $C^A$ and $D^A$ each contain at most one node.

Continuing with the previous example, taking $\mu=(6,5,3,2)$ and $A=(2,3)$:

$$\begin{array}{l}
\begin{tikzpicture}[scale=0.5,draw/.append style={thick,black}]
 \fill[lime!70](2.5,-1.5)rectangle(4.5,-0.5);
  \fill[lime!70](1.5,-2.5)rectangle(3.5,-1.5);
  \fill[orange!50](4.5,-1.5)rectangle(5.5,-0.5);
  \fill[orange!50](0.5,-2.5)rectangle(1.5,-1.5);
  \newcount\col
  \foreach\Row/\row in {{1,2,3,4,5,6}/0,{7,8,9,10,11}/-1,{12,13,14}/-2,{15,16}/-3} {
     \col=1
     \foreach\k in \Row {
        \draw(\the\col,\row)+(-.5,-.5)rectangle++(.5,.5);
        \draw(\the\col,\row)node{\k};
        \global\advance\col by 1
      }
   }
   \draw[magenta,line width=2pt](2.5,-1.5)--(2.5,-0.5)--(5.5,-0.5)--(5.5,-1.5)--(3.5,-1.5)
                     --(3.5,-2.5)--(0.5,-2.5)--(0.5,-1.5)--(2.5,-1.5);
   \draw[magenta,line width=2pt](4.5,-0.5)--(4.5,-1.5);
   \draw[magenta,line width=2pt](1.5,-1.5)--(1.5,-2.5);
   \draw[magenta](5.5,-0.5)--(5.5,-1.5);
   \draw[magenta,line width=2pt](1.5,-1.5)--(3.5,-1.5);
   \draw[black,->](4.2,1)node[gray,above=0mm]{$B_1^A$}--(3.6,-0.8);
   \draw[black,->](4.05,-3.5)node[gray,right=-1mm]{$B_2^A$}--(2.6,-2.2);
   \draw[black,->](6.05,-2.5)node[gray,right=-1mm]{$C^A$}--(5,-1.2);
      \draw[black,->](-.8,-2.5)node[gray,left=0mm]{$D^A$}--(0.75,-2.2);
\end{tikzpicture}
\end{array}$$

Let $u, u+1, \dots, v$ be the values in $\Belt^A$ in the initial tableau $\ttt^\mu$.
We obtain a new tableau $\ttt^A$ by placing the numbers $u, u+1 \dots, v$ from left to right in the following order: $D^A$, $B^A_1$, $B^A_2$, \dots, $B^A_k$, $C^A$.
The rest of the numbers are placed in the same positions as in $T^\mu$.
We also define $\bi^A := \bi^{\ttt^A}$.

If $\mu$ and $A$ are the same as before, then we have that

$$\ttt^A=\begin{array}{l}
\begin{tikzpicture}[scale=0.5,draw/.append style={thick,black}]
\fill[lime!70](2.5,-1.5)rectangle(4.5,-0.5);
  \fill[lime!70](1.5,-2.5)rectangle(3.5,-1.5);
  \fill[orange!50](4.5,-1.5)rectangle(5.5,-0.5);
  \fill[orange!50](0.5,-2.5)rectangle(1.5,-1.5);
  \newcount\col
  \foreach\Row/\row in {{1,2,3,4,5,6}/0,{7,8,10,11,14}/-1,{9,12,13}/-2,{15,16}/-3} {
     \col=1
     \foreach\k in \Row {
        \draw(\the\col,\row)+(-.5,-.5)rectangle++(.5,.5);
        \draw(\the\col,\row)node{\k};
        \global\advance\col by 1
      }
   }
   \draw[magenta,line width=2pt](2.5,-1.5)--(2.5,-0.5)--(5.5,-0.5)--(5.5,-1.5)--(3.5,-1.5)
                     --(3.5,-2.5)--(0.5,-2.5)--(0.5,-1.5)--(2.5,-1.5);
   \draw[magenta,line width=2pt](4.5,-0.5)--(4.5,-1.5);
   \draw[magenta,line width=2pt](1.5,-1.5)--(1.5,-2.5);
   \draw[magenta,line width=2pt](5.5,-0.5)--(5.5,-1.5);
   \draw[magenta,line width=2pt](1.5,-1.5)--(3.5,-1.5);
   \draw[black,->](3.85,1)node[gray,above=0mm]{$B_1^A$}--(3.6,-0.8);
   \draw[black,->](4.05,-3.5)node[gray,right=-1mm]{$B_2^A$}--(2.6,-2.2);
   \draw[black,->](6.05,-2.5)node[gray,right=-1mm]{$C^A$}--(5,-1.2);
   \draw[black,->](-0.8,-2.5)node[gray,left=0mm]{$D^A$}--(0.75,-2.2);
\end{tikzpicture}
\end{array}$$

Assume that $k > 0$, and let $l = \ttt^A(A)$ and for $1\le r < k$ define
\[
w^A_r := \prod_{s=l+re-e}^{l+re-1} (s, s+e) \in \fkS_n.
\]
Informally, $w^A_r$ swaps the bricks $B^A_r$ and $B^A_{r+1}$. The elements $w^A_1, w^A_2, \dots, w^A_{k-1}$ are Coxeter generators of the group
\[
\fkS^A := \langle w^A_1, w^A_2, \dots, w^A_{k-1} \rangle \cong \fkS_k.
\]
By convention, if $k=0$ we set $\fkS^A$ to be the trivial group.
Let $f$ be the number of $A$-bricks in row $r$ of $\Belt^A$. 

We denote by $\scrd^A$ the set of minimal length left coset representatives of $\fkS_f \times \fkS_{k-f}$ in $\fkS^A \cong \fkS_k$.
The corresponding elements of $R^{\La}_n$ are given by
\[
\si^A_r := \psi_{w^A_r} e(\bi^A) \quad\text{and}\quad \tau^A_r := (\si^A_r + 1)e(\bi^A).
\]
Let $u \in \scrd^A$ with reduced expression $u = w^A_{r_1} \dots w^A_{r_a}$. 
Since every element of $\scrd^A$ is fully commutative, the element $\tau^A_u := \tau^A_{r_1} \dots \tau^A_{r_a}$ does not depend upon this reduced expression.
Finally, we define the \emph{Garnir element} to be
\[
g^A := \sum_{u \in \scrd^A} \tau^A_u \psi^{\ttt^A} \in R^{\La}_n.
\]

\begin{defn}
Let $\al\in Q_+$, $n=\Ht(\al)$ and $\mu\in\Par_n$.
The \emph{universal graded (row) Specht module} $S^\mu$ is defined to be the graded $R^{\La}_n$-module generated by the vector $z^\mu$ of degree $\deg(\ttt^\mu)$ subject only to the following relations:
\begin{enumerate}
\item[{\rm (i)}] $e(\bj)z^\mu= \de_{\bj,\bi^\mu}z^\mu$ for all $\bj\in I^n$;
\item[{\rm (ii)}] $y_rz^\mu=0$ for all $1\le r\le n$;
\item[{\rm (iii)}] $\psi_rz^\mu=0$ for all $1\le r< n$ such that $r$ and $r+1$ appear in the same row of $\ttt^\mu$;
\item[{\rm (iv)}] \emph{(homogeneous Garnir relations) }$g^A z^\mu=0$ for all Garnir nodes $A\in[\mu]$. 
\end{enumerate}
\end{defn}

For each $\ttt \in \std(\mu)$, we define $\vtt := \psi_{w^\ttt} z^\mu \in S^\mu$ with residue sequence $\bi^\ttt := w^\ttt \bi^\mu$.

\begin{thmc}{kmr}{Corollary 6.24}
Let $\mu \in \Par_n$. Then the universal Specht module $S^\mu$ for $R^{\La}_n$ has homogeneous $\bbf$-basis
\[
\{\vtt \mid \ttt\in\std(\mu)\}.
\]
\end{thmc}

\section{Specht modules indexed by hook partitions}\label{hookaction}

In this section, we give a complete description of the action of the generators on the basis elements of the Specht module $S^\la$.

For the rest of this paper, we fix $\la := (a,1^b)$ such that $a>0,b\ge0$ and $n = a +b$.

\begin{defn}\label{4-1}
  Given $\ttt \in \std(\la)$, we call the nodes in positions $(1,y)$ of $\la$ for $y= 2, 3, \dots, a$ {\em arm nodes} and we define $\Arm(\ttt)=\{w^\ttt (2), \dots, w^\ttt(a)\}$.
  Similarly, we call the nodes in positions $(x,1)$ of $\la$ for $x=2, 3, \dots, b$ {\em leg nodes} and $\Leg(\ttt)=\{w^\ttt(a+1), \dots, w^\ttt(b)\}$.
  \end{defn}

 \begin{defn}\label{pairs}
 For any odd $3 \le u \le n$, we call $u-1$ and $u$ a {\em pair} of $\ttt$, denoted $\tp_u$, whenever $u-1$ appears directly in front of $u$ in $\ttt$, i.e.\ if $u-1\in\Leg(\ttt)$ (resp., $u-1\in\Arm(\ttt)$) we also have that $u\in\Leg(\ttt)$ (resp., $u\in\Arm(\ttt)$).
 We denote by $\bi_{\tp_u}:=(i_{u-1}, i_u)$ the residue of the pair.
 \end{defn}

\begin{rem}
The previous definition and many of the following lemmas are very similar to those in \cite[\S5]{ls14} and \cite[Chapter~3]{thesis}.
In particular, if both $n$ and $a$ are odd, our results agree.
However, in our case the residues of the pairs might be different.
\end{rem}

\begin{lemc}{bkw11}{Lemma~4.4}
    Then for any $\ttt\in\std(\la)$, we have that $e(\bi)\vtt=\de_{\bi,\bi^\ttt}\vtt$.
 \end{lemc}

\begin{lem}\label{standard-y}
Let $\ttt \in \std(\la)$ and write $\bi^\ttt = (i_1, \dots, i_n)$. For $1 \le r \le n$, we have that
          \[ y_r \vtt = \begin{cases}
            -v_{ s_r \ttt }& \text{ if $i_r = i_{r+1}$, $r \in \Leg(\ttt)$, and $r+1 \in \Arm(\ttt)$},\\
              v_{ s_{r-1} \ttt}& \text{ if $i_{r-1} = i_r$, $r-1 \in \Leg(\ttt)$, and $r \in \Arm(\ttt)$},\\
             0,& \text{ otherwise.}
              \end{cases} \]
\end{lem}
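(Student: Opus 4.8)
The statement computes the action of $y_r$ on a standard basis vector $\vtt = \psi_{w^\ttt} z^\la$. Since $y_1 = 0$ in $S^\la$ and $z^\la$ is annihilated by all $y_s$, the strategy is to commute $y_r$ past the sequence of $\psi$-generators in $\psi_{w^\ttt}$ until it reaches $z^\mu$ and dies, collecting correction terms along the way from the KLR relations. The relevant relations are $\psi_s y_{s+1} e(\bi) = (y_s \psi_s + \delta_{i_s, i_{s+1}}) e(\bi)$ and $y_{s+1}\psi_s e(\bi) = (\psi_s y_s + \delta_{i_s,i_{s+1}}) e(\bi)$, together with $\psi_s y_t = y_t \psi_s$ for $t \neq s, s+1$. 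Each time $y$ slides past a $\psi_s$ crossing the strand at the relevant position, the commutator produces a Kronecker-delta term $\delta_{i_s, i_{s+1}}$ that survives only when two adjacent residues coincide; these delta terms are exactly what produce the nonzero output $\pm v_{s_r\ttt}$ or $\pm v_{s_{r-1}\ttt}$.

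\textbf{Key steps.} First I would fix a reduced word for $w^\ttt$ adapted to the hook shape, so that I can track precisely which strand $r$ corresponds to and which crossings $\psi_s$ the generator $y_r$ must pass through. For a hook $\la = (a,1^b)$ the tableau $\ttt$ is determined by how the entries $2,\dots,n$ are distributed between $\Arm(\ttt)$ and $\Leg(\ttt)$, and $w^\ttt$ can be written as a product of transpositions that interleave arm and leg entries relative to $\ttt^\la$. I would compute $y_r \psi_{w^\ttt} z^\la$ by repeatedly applying the commutation relations above, using that $y_r z^\la = 0$ to kill the ``straightened'' term. The surviving delta contributions $\delta_{i_s,i_{s+1}}$ force two conditions simultaneously: a residue equality $i_r = i_{r+1}$ (or $i_{r-1} = i_r$), and a combinatorial adjacency in $\ttt$, namely that $r$ sits at the foot of the leg and $r+1$ at the start of the arm (or the symmetric statement). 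Translating ``the delta term is nonzero'' into ``$r \in \Leg(\ttt)$ and $r+1 \in \Arm(\ttt)$'' is the combinatorial heart of the argument, and I would verify that the resulting tableau $s_r \ttt$ (resp.\ $s_{r-1}\ttt$) is again standard so that $v_{s_r \ttt}$ is a genuine basis vector.

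\textbf{The sign and the obstacle.} The $\pm 1$ coefficient is the subtle point. Each delta term comes with a sign determined by which of the two relations (the $\psi y$ or the $y \psi$ version) applies, i.e.\ whether $y_r$ approaches a crossing from the left or the right, and this in turn depends on whether $r$ is the lower or upper index of the transposition being commuted past. I expect the main obstacle to be the careful bookkeeping that shows exactly one delta term survives and pins down its sign: one must confirm that all other potential correction terms either vanish (because the relevant residues differ, using $e = 2$ so that residues take only the values $0, 1$) or reassemble into $y_s z^\la = 0$. A clean way to control the sign is to observe that in the first case $r$ begins below $r+1$ and the crossing moves it up, contributing $-v_{s_r\ttt}$, whereas in the second case the symmetric computation with the roles of the two relations reversed yields $+v_{s_{r-1}\ttt}$. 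Finally I would note that in all remaining configurations — where no adjacent residues coincide in the required pattern, or where $r$ and $r+1$ lie in the same row or column — every correction term carries a vanishing delta or is absorbed into a $y$ acting on $z^\la$, giving $y_r \vtt = 0$.
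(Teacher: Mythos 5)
Your overall plan --- slide $y_r$ through the whole reduced expression of $w^\ttt$ down to $z^\la$, collecting Kronecker-delta corrections --- is not the paper's argument, and as it stands it has a genuine gap at its core. The entire content of the lemma is the claim you defer to ``careful bookkeeping'': that exactly one delta term survives, with the stated sign. Two concrete problems arise. First, a delta term appears every time the strand tracked by $y_r$ crosses another strand of equal residue, and with $e=2$ there are only two residues, so such coincidences are \emph{frequent}; your suggested mechanism for vanishing (``the relevant residues differ, using $e=2$ so that residues take only the values $0,1$'') is backwards --- having only two residues makes coincidences more common, not rarer. Second, a delta produced in the middle of the word does not ``reassemble into $y_s z^\la=0$'': it leaves a deleted-crossing product $\psi_{r_1}\cdots\psi_{r_{j-1}}\psi_{r_{j+1}}\cdots\psi_{r_m}z^\la$, which is neither obviously zero nor obviously a standard basis vector. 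Rewriting such terms requires precisely the kind of straightening/action results (the propositions later in Section 3) that are proved \emph{using} this lemma, so the direct computation is either circular or forces a much larger simultaneous induction that your proposal does not set up.

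The paper avoids all of this with a structural shortcut you are missing. It factorizes $w^\ttt = u a$, where $a \in \scrd^\la(\bi^\ttt)$ is the unique minimal-length element with $\psi_a z^\la = \vs$ for $\tts$ the most dominant standard tableau with residue sequence $\bi^\ttt$, and $u$ lies in the stabilizer $H(\bi^\ttt)=\langle s_m \mid s_m\bi^\ttt=\bi^\ttt\rangle$, which is commutative. Then $\vtt = \psi_{r_1}\cdots\psi_{r_m}\vs$, and two facts finish the proof with almost no computation: (i) $\vs$ is the unique vector of largest degree with residue sequence $\bi^\ttt$, and $y_s$ has degree $2$, so $y_s\vs=0$ for every $s$ --- a degree argument, no commutation through $\psi_a$ at all; (ii) commutativity of $H(\bi^\ttt)$ forces at most one of $r-1$, $r$ to occur among $r_1,\dots,r_m$, so $y_r$ has to pass at most \emph{one} crossing, producing exactly one delta term, whose sign ($-$ from $y_r\psi_r = \psi_r y_{r+1}-1$, $+$ from $y_r\psi_{r-1}=\psi_{r-1}y_{r-1}+1$) matches your sign heuristic. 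If you want to keep your direct approach, you would need to supply an argument of comparable strength for why all the intermediate deleted-crossing terms vanish; the factorization through $\vs$ is exactly what makes that unnecessary.
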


\begin{proof}
We set $\scrd^\la(\bi) = \{ w^\ttt |\  \ttt \in \std(\la) \text{ and } \bi^\ttt = \bi \}$ define the subgroup of $\fkS_n$ $H(\bi):=\langle s_m \mid s_m\bi=\bi\rangle$ for $(m \ge 2) $.
Fix $x \in \fkS_n$ and let $\tts$ be the most dominant standard $\la$-tableau with residue sequence $\bi^\ttt$ such that $\tts=x \ttt^\la$.
By \cite[Proposition~2.13]{loub17}, there exists a unique minimal length permutation $a \in \scrd^\la(\bi^\ttt)$ such that $w^\ttt = u a$ for some $u \in H(\bi^\ttt)$.
Given a reduced expression $u = s_{r_1} \dots s_{r_a}$, we see that $\vtt = \psi_{r_1} \dots \psi_{r_a} \vs$.
Furthermore, $\deg(\vs)-\deg(\vtt) = 2a$, and so $\vs$ is the unique largest degree vector with residue sequence $\bi^\ttt$. In particular, $y_s v_\tts = 0$ for $1 \le s \le n$.
As $H(\bi^\ttt)$ is commutative, at most one of $r-1$ and $r$ is an element of $\{r_1, \dots, r_a\}$.

First, assume that $r \in \{r_1, \dots, r_a\}$, that is we have $r \in \Leg(\ttt)$ and $r+1 \in \Arm(\ttt)$ with $i_r = i_{r+1}$. As each $s_{r_i}$ involved in $w$ commutes with each other, we can assume that $r = r_1$. Then we have that
    \begin{align*}
         y_r \vtt &= y_r \psi_r \psi_{r_2} \dots \psi_{r_a} \vs \\
               &= (\psi_r y_{r+1} - 1) \psi_{r_2} \dots \psi_{r_a} \vs\\
               &= \psi_r \psi_{r_2} \dots \psi_{r_a} y_{r+1} \vs - \psi_{r_2} \dots \psi_{r_a} \vs \\
               &= -\psi_{r_2} \dots \psi_{r_a} \vs= -v_{ s_r \ttt }.
    \end{align*}

   Similarly, taking $r-1 \in \{r_1, \dots, r_a\}$ is the same as saying that $r-1 \in \Leg(\ttt)$, and $r \in \Arm(\ttt)$ with $i_{r-1} = i_r$. Suppose $r-1 = r_1$, then we can compute:
           \begin{align*}
         y_{r} \vtt &= y_{r} \psi_{r-1} \psi_{r_2} \dots \psi_{r_a} \vs \\
               &= (\psi_{r-1} y_{r-1} +1) \psi_{r_2} \dots \psi_{r_a} \vs\\
               &= \psi_{r-1} \psi_{r_2} \dots \psi_{r_a} y_{r-1} \vs + \psi_{r_2} \dots \psi_{r_a} \vs \\
               &= \psi_{r_2} \dots \psi_{r_a} \vs= v_{ s_{r-1} \ttt }.
    \end{align*}

    If $r,r-1\notin\{r_1, \dots, r_a\}$, then $y_r$ commutes with each $\psi_{r_i}$, and hence $y_r \psi_{r_1} \dots \psi_{r_a} \vs= \psi_{r_1} \dots \psi_{r_a} y_r \vs= 0$.
\end{proof}

\begin{rem}
Observe that due to the imposed residue conditions, for $r$ even $y_r\vtt \neq 0$ if and only if $r \in \Leg(\ttt)$ and $r+1\in \Arm(\ttt)$.
If $r\in\Arm(\ttt)$ and $r-1\in\Leg(\ttt)$, we would have $r-3$ entries to place in front of them in $\ttt$, where the number of entries in front of $r$ and $r-1$ needs to have the same parity in order to satisfy the condition $i_{r-1}=i_{r}$.
But as $r-3$ is odd, we see that this is impossible. 
\end{rem}

\begin{lem}\label{psi-1}
Let $\ttt \in \std(\la)$ and $\bi^\ttt = (i_1, \dots, i_n)$.
We have that $\psi_1 \vtt = 0$ and if $r \in \Arm(\ttt)$ and $r+1 \in \Leg(\ttt)$, then $\psi_r \vtt = v_{ s_{r} \ttt }$ for $2 \le r \le n-1$.
\end{lem}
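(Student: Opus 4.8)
The statement is really two independent assertions, and I would treat them separately. The first, $\psi_1\vtt=0$, I expect to follow immediately from the cyclotomic relation, with no case analysis on the shape of $\la$. The second, $\psi_r\vtt=v_{s_r\ttt}$, I would reduce to three combinatorial facts about $w^\ttt$ — that $s_r\ttt$ is again standard, that it is strictly less dominant than $\ttt$, and that passing from $\ttt$ to $s_r\ttt$ raises the length by exactly one — after which the claimed identity becomes the length-additive composition $\psi_r\psi_{w^\ttt}z^\la=\psi_{w^{s_r\ttt}}z^\la$.

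For the first part, note that in any $\ttt\in\std(\la)$ the entry $1$ occupies the corner $(1,1)$, of residue $0$, while the entry $2$ must occupy either $(1,2)$ or $(2,1)$, both of residue $1$; hence $i_1=0$ and $i_2=1$ for every standard hook tableau. Using $e(\bi^\ttt)\vtt=\vtt$ from \cite[Lemma~4.4]{bkw11} together with the relation $\psi_1 e(\bi^\ttt)=e(s_1\bi^\ttt)\psi_1$, I would write $\psi_1\vtt=e(s_1\bi^\ttt)\,\psi_1\vtt$. Since $(s_1\bi^\ttt)_1=i_2=1\neq0$, the cyclotomic relation $e(\bj)=0$ for $j_1\neq0$ gives $e(s_1\bi^\ttt)=0$, and therefore $\psi_1\vtt=0$.

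For the second part I would first check that $s_r\ttt\in\std(\la)$: since $r\in\Arm(\ttt)$ sits at some $(1,c)$ with left neighbour $<r$ and right neighbour $\ge r+2$, and $r+1\in\Leg(\ttt)$ sits at some $(x,1)$ with the cell above $\le r-1$ and the cell below $\ge r+2$, swapping $r$ and $r+1$ keeps every row and column increasing. Moving the entry $r$ out of row~$1$ down into the leg makes $s_r\ttt$ strictly less dominant than $\ttt$, i.e. $s_r\ttt\ldom\ttt$; by the Bruhat dictionary this reads $w^\ttt< w^{s_r\ttt}$, and since $s_r\ttt=(s_r w^\ttt)\ttt^\la$ we have $w^{s_r\ttt}=s_r w^\ttt$, so $\ell(w^{s_r\ttt})=\ell(w^\ttt)+1$. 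Taking the reduced word for $w^{s_r\ttt}$ to begin with $s_r$ followed by the fixed reduced word of $w^\ttt$ then yields $\psi_r\vtt=\psi_r\psi_{w^\ttt}z^\la=\psi_{w^{s_r\ttt}}z^\la=v_{s_r\ttt}$, exactly in the style of the displayed computations in the proof of \cref{standard-y}.

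The delicate point — and what I expect to be the main obstacle — is precisely this last step: the equality $\psi_r\psi_{w^\ttt}=\psi_{w^{s_r\ttt}}$ holds on the nose only if the fixed preferred reduced expressions are compatible, and for an arbitrary choice the two sides differ by elements $\psi_v$ with $\ell(v)<\ell(w^{s_r\ttt})$, contributing a correction $\sum_{\tts\gdom s_r\ttt}c_{\tts}v_{\tts}$ supported on strictly more dominant standard tableaux of residue sequence $\bi^{s_r\ttt}$ and the same degree as $v_{s_r\ttt}$. I would dispose of this either by fixing the preferred reduced expressions coherently along dominance covers starting from $\ttt^\la$, so that the identity holds by construction, or by a degree-and-residue count specific to $e=2$: the degree shift $-a_{i_r,i_{r+1}}$ together with the rigid alternating residue pattern of hook tableaux should leave no admissible more dominant correction term, forcing the sum to vanish. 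Alternatively, one can run the argument through the decomposition $w^\ttt=ua$ with $u\in H(\bi^\ttt)$ supplied by \cite[Proposition~2.13]{loub17}, as in \cref{standard-y}, which isolates the single $\psi_r$ responsible for the swap.
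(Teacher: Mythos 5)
Your argument for $\psi_1\vtt=0$ is exactly the paper's proof: the paper also observes that $\bi^\ttt$ begins $(0,1,\dots)$, writes $\psi_1\vtt=e(s_1\bi^\ttt)\psi_1\vtt$, and kills it with the cyclotomic relation. For the second assertion the paper literally says ``the second assertion is obvious'', and your length-additivity argument ($s_r\ttt$ is standard, $w^{s_r\ttt}=s_rw^\ttt$, $\ell(w^{s_r\ttt})=\ell(w^\ttt)+1$, so $\psi_r\psi_{w^\ttt}z^\la$ is a reduced word for $w^{s_r\ttt}$ applied to $z^\la$) is precisely what that ``obvious'' stands for; compare the identical step inside the proof of \cref{pre-even}, where the paper says $\psi_{r-1}\vtt$ ``is already in reduced form as $w^\ttt<w^{s_{r-1}\ttt}$''. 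So the approach is the same.

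Where you diverge is the ``delicate point'', and you are right that it is the only genuine content here: in a KLR algebra $\psi_w$ can depend on the chosen reduced expression, so $\psi_r\psi_{w^\ttt}=\psi_{w^{s_r\ttt}}$ is not automatic. However, your proposed repairs do not close the gap. You cannot re-choose the preferred reduced expressions ``coherently along dominance covers'': they were fixed once and for all in \cref{relations} before the basis vectors $\vtt$ were defined, so the lemma must hold for an arbitrary fixed choice. A degree-and-residue count is also insufficient: already for $\la=(3,1^2)$ the two standard tableaux with arm entries $\{2,3\}$ and $\{4,5\}$ have the same residue sequence $(0,1,0,1,0)$ and the same degree, so homogeneity plus the residue sequence does not rule out corrections supported on more dominant tableaux. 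The clean fix, which justifies the paper's ``obvious'', is full commutativity: for a hook, the one-line notation of $w^\tts$ (any $\tts\in\std(\la)$) is the increasing list of the row-one entries of $\tts$ followed by the increasing list of its leg entries, i.e.\ a union of two increasing subsequences; hence $w^\tts$ is $321$-avoiding and therefore fully commutative, so all of its reduced expressions are linked by commutation moves alone, and these hold exactly in $R_n^{\La}$. Consequently $\psi_{w^\tts}$ is independent of the reduced expression, and in particular $\psi_r\psi_{w^\ttt}=\psi_{w^{s_r\ttt}}$ on the nose, giving $\psi_r\vtt=v_{s_r\ttt}$ with no correction terms. (This is the same observation the paper itself makes for Garnir elements, where full commutativity of $u\in\scrd^A$ is invoked to make $\tau^A_u$ well defined.)
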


\begin{proof}
We see that $\bi^\ttt$ begins with  $(0,1,\dots)$.
By the cyclotomic relations (see \cref{relations}), $\psi_1\vtt=e(s_1\bi^\ttt)\psi_1\vtt$.
Hence, by looking at the residues we must have $\psi_1 \vtt = 0$.

The second assertion is obvious.
\end{proof}

\begin{lem}\label{psi-2}
    Let $\ttt \in \std(\la)$ and $\bi^\ttt = (i_1, \dots, i_n)$. If $r-1,r\in \Leg(\ttt)$, and $r+1,r+2 \in \Arm(\ttt)$, then 
\[ \psi_r \vtt =
\begin{cases}
             -2v_{ s_{r-1} s_{r+1} s_r \ttt } & \text{ if $i_{r-1}=i_{r+1}$ and $i_r=i_{r+2}$},\\
         0& \text{ otherwise.}
      \end{cases}
      \]
\end{lem}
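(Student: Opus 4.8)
The plan is to collapse $\psi_r\vtt$ into a square $\psi_r^2$ acting on a single, more dominant basis vector, and then to evaluate that square using the already-established action of the $y$-generators in \cref{standard-y}. To set this up I would put $\tts := s_r\ttt$, the $\la$-tableau obtained by interchanging the entries $r$ and $r+1$. Since $r\in\Leg(\ttt)$ sits in the first column and $r+1\in\Arm(\ttt)$ in the first row, the swap moves $r$ into the arm and $r+1$ into the leg; a direct check shows $\tts$ is standard with $r\in\Arm(\tts)$ and $r+1\in\Leg(\tts)$. Then \cref{psi-1} applies to $\tts$ and gives $\psi_r\vs = v_{s_r\tts} = \vtt$, so that $\psi_r\vtt = \psi_r^2\vs$.

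Next I would record the residues. As $r-1,r$ occupy adjacent cells of the first column and $r+1,r+2$ adjacent cells of the first row, we have $i_{r-1}\neq i_r$ and $i_{r+1}\neq i_{r+2}$; because $|I|=2$ this makes the two conditions $i_{r-1}=i_{r+1}$ and $i_r=i_{r+2}$ equivalent, so the dichotomy in the statement is governed by a single condition. Writing $\bi^\tts = s_r\bi^\ttt$, the entries in positions $r,r+1$ of $\bi^\tts$ are $i_{r+1}$ and $i_r$. In the non-matching case, $i_{r-1}\neq i_{r+1}$ together with $i_{r-1}\neq i_r$ forces $i_r=i_{r+1}$, so these two residues of $\tts$ coincide and $\psi_r^2 e(\bi^\tts)=0$ by the quadratic relation. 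Hence $\psi_r\vtt=0$, as required.

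In the matching case $i_{r-1}=i_{r+1}$ we instead get $i_r\neq i_{r+1}$, so $\psi_r^2 e(\bi^\tts) = (y_{r+1}-y_r)(y_r-y_{r+1})e(\bi^\tts)$ and therefore
\[
\psi_r\vtt = -(y_r-y_{r+1})^2\vs = -\bigl(y_r^2 - 2y_ry_{r+1}+y_{r+1}^2\bigr)\vs.
\]
It then remains to evaluate the three terms. The point is that in $\tts$ one has $i_{r-1}=i_r$ and $i_{r+1}=i_{r+2}$, so each first application of $y_r$ or $y_{r+1}$ lands in one of the two nonzero cases of \cref{standard-y}: I expect $y_r\vs = v_{s_{r-1}\tts}$ and $y_{r+1}\vs = -v_{s_{r+1}\tts}$. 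Tracking the resulting tableaux and applying \cref{standard-y} a second time, the second application falls into the ``otherwise'' case in each square, so $y_r^2\vs=y_{r+1}^2\vs=0$, while the mixed term gives $y_ry_{r+1}\vs = -\vtp$, where $\ttt' := s_{r-1}s_{r+1}s_r\ttt$ is precisely the standard tableau obtained from $\ttt$ by moving $r-1$ into the arm and $r+2$ into the leg. Substituting, $\psi_r\vtt = -2\vtp = -2\,v_{s_{r-1}s_{r+1}s_r\ttt}$.

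The routine but delicate part is the bookkeeping in this last step: at each stage I must confirm that the intermediate tableau is standard, that the Leg/Arm memberships and residue equalities needed to invoke \cref{standard-y} genuinely hold, and that the two chains terminate correctly (the squares vanishing and the mixed term producing exactly $-\vtp$ with the stated sign). I do not anticipate any conceptual obstacle beyond this careful tracking, since the whole argument is driven by \cref{psi-1} and \cref{standard-y} together with the single quadratic relation for $\psi_r^2$.
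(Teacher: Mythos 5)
Your proposal is correct and follows essentially the same route as the paper's own proof: write $\vtt=\psi_r\vs$ with $\tts=s_r\ttt$ (the paper asserts this directly from reduced expressions, you via \cref{psi-1}), split on whether the residues at positions $r,r+1$ coincide, and in the nontrivial case expand $\psi_r^2\vs=(y_{r+1}-y_r)(y_r-y_{r+1})\vs$, evaluating the squares to zero and the cross term to $-v_{s_{r-1}s_{r+1}s_r\ttt}$ via \cref{standard-y}. Your sign bookkeeping and the observation that, since $|I|=2$, the condition $i_{r-1}=i_{r+1}$, $i_r=i_{r+2}$ is equivalent to $i_r\neq i_{r+1}$ both match the paper's computation.
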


\begin{proof}
Assume that $r-1 \in \Leg(\ttt)$, and $r+1\in \Arm(\ttt)$, so we have that $\vtt = \psi_r v_{ s_{r} \ttt }$.
Thus $\psi_r \vtt = \psi_r^2 v_{ s_{r} \ttt }$.
If $i_r=i_{r+1}$, then $\psi_r^2 v_{ s_{r} \ttt } = 0$.

    If $i_r \neq i_{r+1}$, then $\psi_r^2 v_{ s_{r} \ttt }= (y_{r+1} - y_r)(y_r - y_{r+1})v_{ s_{r} \ttt }=(2y_{r+1} y_r - y_r^2 - y_{r+1}^2)v_{ s_{r} \ttt }$.
    We consider each of the three terms separately.
    
    As $r \in \Arm(s_r \ttt)$, \cref{standard-y} shows that if $y_r v_{ s_{r} \ttt } \neq 0$ then $r-1 \in \Leg(s_r \ttt)$ and $i_{r-1} = i_{r+1}$.
    In this case, $y_r v_{ s_{r} \ttt }= v_{ s_{r-1} s_{r} \ttt }$.
    Applying $y_r$ once again, we see that $y_r v_{ s_{r-1} s_{r} \ttt } \neq 0$ if and only if $r+1 \in \Arm(s_{r-1} s_r \ttt)$.
    But if $r+1$ is in $\Arm(s_{r-1} s_r \ttt)$, we must have that $r+1 \in \Leg(\ttt)$, a contradiction.
    Hence $y_r^2 v_{ s_{r} \ttt } = 0$. 
    
    We deal with the term $y_{r+1}^2 v_{ s_{r} \ttt }$ in a similar manner.
    As $r+1 \in \Leg(s_r \ttt)$, if $y_{r+1} v_{ s_{r} \ttt } \neq 0$ then $r+2 \in \Arm(s_r \ttt)$ and $i_{r+1} \neq i_{r+2}$.
    Then $y_{r+1} v_{ s_{r} \ttt }= -v_{ s_{r+1} s_{r} \ttt }$.
    Applying $y_{r+1}$ once again, we see that $y_{r+1} v_{ s_{r+1} s_{r} \ttt } \neq 0$ if and only if $r \in \Leg(s_{r+1} s_r \ttt)$. But if $r\in\Leg(s_{r+1} s_r \ttt)$, then we must have that $r \in \Arm(\ttt)$, which contradicts our original assumption.
    Thus $y_{r+1}^2 v_{ s_{r} \ttt } = 0$.

    Finally, we look at the cross term.
    If $y_{r+1} y_r v_{ s_{r} \ttt } \neq 0$, then we have that $y_r v_{ s_{r} \ttt } \neq 0$ if and only if $r-1 \in \Leg( s_r \ttt)$ and $i_{r-1} = i_{r+1}$. In this case, $y_r v_{ s_{r} \ttt } = v_{ s_{r-1}  s_{r} \ttt }$.
    We also see that $y_{r+1} v_{ s_{r-1}  s_{r} \ttt } \neq 0$ if and only if $r+2 \in \Arm(s_{r-1}  s_r \ttt)$ and $i_r = i_{r+ 2}$. But $r+2 \in \Arm(s_{r-1}  s_r \ttt)$ if and only if $r+2 \in \Arm(\ttt)$ and we conclude that $y_{r+1}y_r v_{ s_{r} \ttt } = -v_{ s_{r-1} s_{r+1} s_r \ttt }$.
\end{proof}

\begin{rem}
Note that if an even $r$ is as in the previous statement, then $\psi_r\vtt=0$.
To see this, observe that due to the residue conditions $i_{r-1}=i_{r+1}, i_r=i_{r+2}$, the number of entries in front of $r-1$ and $r+1$ in $\ttt$ must have the same parity.
In total, we have $r-3$ entries to divide between the arm and the leg of $\ttt$.
If $r$ is even, then $r-3$ is odd, hence it would be impossible to have both odd or even number of entries in front of $r-1$ and $r+1$.
\end{rem}

\begin{defn}[\cite{ls14}, Definition~5.1]\label{doms}
We define $\Psi_r := \psi_r \psi_{r+1} \psi_{r-1} \psi_r$.
For two odd integeres $3\le x\le y\le n-2$, we set:

\[\Psichaind{y}{x}:=\Psi_y\Psi_{y-2}\dots\Psi_x\quad\text{and}\quad\Psichainu{x}{y}:=\Psi_x\Psi_{x+2}\dots\Psi_{y}.\]

If $y<x$, we consider both of the above terms to be the identity.
\end{defn}

We will now describe how to write $\vtt$ as a product of $\Psichaind{y}{x}$'s.
Let $\ttt\in\std(\lambda)$ be a tableau in which all entries appear in pairs as in \cref{pairs}.
Let $d$ be the number of pairs in the arm of $\ttt$ which differ from the corresponding pairs in $\ttt^\la$, we will label these by $y_1',\dots,y_d'$ where $i_{j_i'}=1$ and define
\[
j_i:=\begin{cases}
j_i'-1&\text{if $j'_i$ is even}\\
j_i'-2 &\text{if $j'_i$ is odd.}
\end{cases}
\]
This way we will always put in the highest possible pair into a given position.
Since $\ttt\in\std(\la)$, we see that these are precisely the final $d$ pairs in $\Arm(\ttt)$.

\begin{eg}
    For example, if $\la=(7,1^6)$ and $\ttt\in\std(\la)$ is as below
\[
\ttt^\lambda=
\begin{array}{l}
\Tableau{{1,2,3,4,5,6,7},{8},{9},{\ten},{\eleven},{\twelve},{\thirteen}}
\end{array}
\qquad
\text{ and }
\qquad
\ttt=
\begin{array}{l}
\Tableau{{1,2,3,8,9,\twelve,\thirteen},{4},{5},{6},{7},{\ten},{\eleven}}
\end{array},
\]
then $d=2$ and we see that $j_1=7$ and $j_2=11$. 
\end{eg}

Observe that $\vtt$ has the reduced expression
\[
\vtt=
\Psichaind{j_1}{a-2d+2}\Psichaind{j_2}{a-2d+4}\dots\Psichaind{j_{d-1}}{a-2}\Psichaind{j_d}{a}z^\la.
\]
We will refer to this as the \textbf{standard form} for $\vtt$.
Notice that $j_{i+1}>j_i$ for all $i=1,\dots,d-1$.
Moreover, if $\vtt \in S^\la$ is in standard form, then any expression obtained from it by deleting $\Psi$ terms from the left is also in $S^\la$.

\begin{lemc}{ls14}{Lemma 5.3} \label{commuting}
Let $\la$ and $\ttt$ be as above. Then
\begin{enumerate}
\item $e(\bi^\ttt)\Psi_r=\Psi_re(\bi^\ttt)$ for all $r$;\\
\item $y_k\Psi_r=\Psi_ry_k$ for all $k\ge r+3$ and for all $k\le r-2$;\\
\item $\psi_k\Psi_r=\Psi_r\psi_k$ for all $k\ge r+3$ and for all $k\le r-3$.
\end{enumerate}
\end{lemc}

If both $n$ and $a$ are odd and all entries appear in pairs in $\ttt\in\std(\lambda)$, then \cite[Proposition 3.27]{thesis} gives a complete description of the action of $\psi_r$ for all $\vtt$ and $2\le r \le n-1$.
Now, we will extend these results for all $\ttt\in\std(\la)$ on a case by case approach.
Set $\ell:=\ttt(1,a)$.

First, we will illustrate how to obtain $\vtt$ for all $\ttt\in\std(\la)$.
\begin{enumerate}
\item Move $\ell$ into position $\ttt(1,a)$ using $\psi_{\ell-1} \psi_{\ell-2} \dots \psi_{a+1} \psi_a$ where $a<\ell \le n-1$ (this part only appears if $a$ is even).
\item Move the pairs into position using some $\Psichaind{x}{y}$ terms: $\tp_y\mapsto\tp_{x+2}$ where $x$ is maximal such that $x+1\le\ttt(1,y-1)$ and $x+2\le\ttt(1,y)$.
\item Using some individual $\psi$'s, we either
\begin{itemize}
\item ``break up'' the pairs (i.e.\  an even entry $j-1$ will not have $j$ next to it),  or
\item shift the residues of a pair (i.e.\ for some $\tp_j$ we will have that $\bi_{\tp_j}=(0,1)$ instead of $(1,0)$).
\end{itemize}
\end{enumerate}

\begin{eg}
Suppose that $\la=(10,1^7)$ and let $\ttt$ be as below.
\[
\ttt^\lambda=
\begin{array}{l}
\Tableau{{1,2,3,4,5,6,7,8,9,10},{\eleven},{\twelve},{\thirteen},{\fourteen},{\fifteen},{\sixteen},{\seventeen}}
\end{array}
\qquad
\qquad
\ttt=
\begin{array}{l}
\Tableau{{1,2,3,6,12,\thirteen,14,\fifteen,16,17},{4},{5},{7},{8},{9},{\ten},{\eleven}}
\end{array}
\]

As $a$ is even, we first need to move $17$ to the end of $\Arm(\ttt)$:
\[
\ttt_1=
\begin{array}{l}
\Tableau{{1,2,3,4,5,6,7,8,9,17},{10},{\eleven},{\twelve},{\thirteen},{\fourteen},{\fifteen},{\sixteen}}
\end{array}
v_{\ttt_1}=\psi_{16}\psi_{15}\dots\psi_{10}
z^\la
\]

Next, we identify $j_i'$ as explained after \cref{doms}:
\[
\ttt=
\begin{array}{l}
\qquad
\begin{tikzpicture}[scale=0.5,draw/.append style={thick,black}]
  \fill[magenta!40](7.5,-0.5)rectangle(9.5,0.5);
    \fill[yellow!50](5.5,-0.5)rectangle(7.5,0.5);
     \fill[blue!30](3.5,-0.5)rectangle(5.5,0.5);
  \newcount\col
  \foreach\Row/\row in {{1,2,3,6,12,13,14,15,16,17}/0,{4}/-1,{5}/-2,{7}/-3,{8}/-4,{9}/-5,{10}/-6,{11}/-7}{
     \col=1
    \foreach\k in \Row {
        \draw(\the\col,\row)+(-.5,-.5)rectangle++(.5,.5);
        \draw(\the\col,\row)node{\k};
    \global\advance\col by 1
     }
  }
    \draw (4,1) node[gray] {\scriptsize$j'_1=6$};
    \draw (6,-1) node[gray] {\scriptsize$j'_2=13$};
    \draw (8,1) node[gray] {\scriptsize$j'_3=15$};
\end{tikzpicture}
\end{array}
\]
so $j_1=5,j_2=11,j_3=13$, hence
\[
\ttt_2=
\begin{array}{l}
\Tableau{{1,2,3,6,7,12,13,14,15,17},{4},{5},{8},{9},{\ten},{\eleven},{16}}
\end{array}
v_{\ttt_2}=\Psi_5
\Psichaind{11}{7}
\Psichaind{13}{9}
\psi_{16}\psi_{15}\dots\psi_{10}
z^\la.
\]

Lastly, we see that we need to increase the entries $12,13,14,15$ by one and place $12$ in the position of $7$:
\[
\ttt_3=\ttt=
\begin{array}{l}
\Tableau{{1,2,3,6,12,13,14,15,16,17},{4},{5},{7},{8},{9},{\ten},{\eleven}}
\end{array},
\]
\[
v_{\ttt}=
\underbrace{\psi_{\eleven}\psi_{\ten}\psi_9\psi_8\psi_7}_{\scriptsize\text{``breaks" up $\tp_7$}}
\underbrace{\psi_{12}\psi_{13}\psi_{14}\psi_{15}}_{\scriptsize\text{puts $13,14,15,16$ in front of $17$}}
\Psi_5
\Psichaind{11}{7}
\Psichaind{13}{9}
\psi_{16}\psi_{15}\dots\psi_{10}
z^\la.
\]

We have $\tp_3,\tp_{13},\tp_{15},\tp_{17}\in\Arm(\ttt)$ and $\tp_5,\tp_9,\tp_{11}\in\Leg(\ttt)$, but $\tp_{13},\tp_{15}$ and $\tp_{17}$ have shifted residues, thus these numbers appear as indices in the front $\psi$-chain.
We will use two different expressions of $\vtt$ throughout our proofs: in one the $\Psi$ terms are enclosed by two individual $\psi$-chains (\cref{form1}) and in the other one we push the $\psi$ terms appearing on the left as much to the right as possible (\cref{form2}):
\begin{align}
\vtt&=
\psi_{\eleven}\psi_{\ten}\psi_9\psi_8\psi_7
\psi_{12}\psi_{13}
\psi_{14}\psi_{15}
\Psi_5
\Psichaind{11}{7}
\Psichaind{13}{9}
\psi_{16}\psi_{15}\dots\psi_{10}
z^\la \label{form1} \\
&=
\psi_{\eleven}\psi_{\ten}\psi_9\psi_8\psi_7
\Psi_5
\psi_{12}\psi_{13}
\Psichaind{11}{7}
\psi_{14}\psi_{15}
\Psichaind{13}{9}
\psi_{16}\psi_{15}\dots\psi_{10}
z^\la. \label{form2}
\end{align}
\end{eg}

In general, we can write \cref{form1} as:
\begin{equation}\label{short1}
    \vtt=\psi_w\Psichaind{y_1}{m_1}\Psichaind{y_2}{m_2}\dots\Psichaind{y_d}{a(-1)}(\psi_{\ell-1}\dots\psi_a)z^\la,
\end{equation}
for some minimal length $w\in \fkS_n$, where the bracketed terms only appear if $a$ is even.
We will be using this bracket notation throughout our proofs. 
(We will introduce a similar expression for \cref{form2} before \cref{3pairs}.)
When acting on $\vtt$ by a generator $\psi_k$, we will consider $\vtt$ in normal form as in \cref{form1}.
It will be useful to have compact notation for the front terms of $\vtt$ that do not play a role in our computations.

Let $\psi^*$ denote the leading terms (in the normal form for $\psi_\ttt$) that commute with $\psi_k$ ($\psi^*$ may be empty), so that $\vtt = 
\psi^* \vtp$ for some $\ttt'\ge \ttt$, and $\psi_k \vtt = \psi^* \psi_k \vtp$.
For example:
\begin{align*}
\psi_{12}\vtt&=
\psi_{12}\psi_{\ten}\psi_9\psi_8\psi_7
\Psi_5
\Psichaind{11}{7}
\psi_{14}\psi_{15}
\Psichaind{13}{9}
\psi_{16}\psi_{15}\dots\psi_{10}
z^\la\\
&=
\underbrace{
\psi_{\ten}\psi_9\psi_8\psi_7
\Psi_5}_{=\psi^*}
\psi_{12}
\underbrace{
\Psichaind{11}{7}
\psi_{14}\psi_{15}
\Psichaind{13}{9}
\psi_{16}\psi_{15}\dots\psi_{10}
z^\la}_{=\vtp}\\
&=
\psi^*
\psi_{12}
\Psi_{11}
\underbrace{
\Psichaind{9}{7}
\psi_{14}\psi_{15}
\Psichaind{13}{9}
\psi_{16}\psi_{15}\dots\psi_{10}
z^\la}_{=\vs \text{ for some } \tts>\ttt'}\\
&=
\psi^*
\psi_{12}
\Psi_{11}
\vs.
\end{align*}

\subsection{Action of $\psi_k$ with $k$ even}

The following statement describes the action of $y_r$ for any $r$ and $\psi_k$ for $k$ even (cf.\ \cite[Proposition 5.4]{ls14}).

\begin{prop}\label{pre-even}
Assume that $\ttt\in\std(\la)$ with corresponding $\vtt$.
Then
\begin{enumerate}
\item $y_r\vtt=-y_{r-1}\vtt=v_{s_{r-1}\ttt}$ if $r-1\in\Leg(\ttt),r\in\Arm(\ttt)$ and $r$ is odd;
\item $y_k\vtt=0$ otherwise;
\item $\psi_{r}\vtt=v_{s_{r}\ttt}$ if $r+1\in\Leg(\ttt),r\in\Arm(\ttt)$ and $r$ is even;
\item $\psi_{k}\vtt=0$ for all other even $k$, unless $\bi_{\tp_{k+1}}=(0,1)$, $\tp_{k+1}\in\Leg(\ttt)$.
\end{enumerate}
\end{prop}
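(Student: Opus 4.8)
The plan is to deduce (1)--(2) from \cref{standard-y} and (3)--(4) from \cref{psi-1} and \cref{psi-2}, in each case supplying the parity bookkeeping that determines precisely which residue coincidences can occur. The basic count I will use repeatedly is this: if a leg entry $m$ (sitting at row $x$) is immediately followed by the arm entry $m+1$ (sitting at column $c$), then counting the $m-1$ entries smaller than $m$, which fill $(1,1)$ together with the arm cells in columns $2,\dots,c-1$ and the leg cells in rows $2,\dots,x-1$, gives $x+c=m+2$. Hence the residue condition $i_m=i_{m+1}$, equivalently $x\equiv c\pmod2$, holds if and only if $m$ is even.

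For (1) and (2) I would apply \cref{standard-y} at the two adjacent indices $r-1$ and $r$. With $r$ odd and $r-1\in\Leg(\ttt)$, $r\in\Arm(\ttt)$, the count above (taking $m=r-1$, which is even) shows $i_{r-1}=i_r$, so the second branch of \cref{standard-y} applies at index $r$ to give $y_r\vtt=v_{s_{r-1}\ttt}$, while its first branch applies at index $r-1$ (a leg entry immediately followed by the arm entry $r$) to give $y_{r-1}\vtt=-v_{s_{r-1}\ttt}$. Together these are exactly $y_r\vtt=-y_{r-1}\vtt=v_{s_{r-1}\ttt}$. Conversely, the two nonzero branches of \cref{standard-y} are active only when the coincidence $i_m=i_{m+1}$ holds, which by the count forces the leg entry to be even and the adjacent arm entry odd; every index not of this shape lands in the third branch, giving (2).

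Part (3) is just \cref{psi-1} recorded for even $r$: the hypothesis $r\in\Arm(\ttt)$, $r+1\in\Leg(\ttt)$ is the one there, so $\psi_r\vtt=v_{s_r\ttt}$. For (4) I would fix an even $k$ outside the situation of (3) and write $\vtt=\psi^*\vtp$ with $\psi^*$ commuting with $\psi_k$ by \cref{commuting}, so that $\psi_k\vtt=\psi^*\psi_k\vtp$ and the computation becomes local. Since $k$ is even and (3) is excluded, the consecutive entries $k,k+1$ either satisfy $k\in\Leg(\ttt)$, $k+1\in\Arm(\ttt)$, or they form a pair $\tp_{k+1}$ lying entirely in the arm or entirely in the leg. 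In the first case the count gives $x+c=k+2$, hence $i_k=i_{k+1}$; then $s_k\ttt$ is standard with $k\in\Arm$, $k+1\in\Leg$, so \cref{psi-1} gives $\vtt=\psi_k v_{s_k\ttt}$ and therefore $\psi_k\vtt=\psi_k^2 v_{s_k\ttt}=0$ by the quadratic KLR relation.

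The remaining, and genuinely delicate, case is that of a pair $\tp_{k+1}$. Here the two entries are adjacent in a single row or column, and I would commute $\psi_k$ rightward through $\vtp$ until it meets the $\Psi$-term (of index $k-1$, as in the worked example preceding the proposition) that placed this pair; \cref{commuting} carries $\psi_k$ most of the way, and the residual overlap is resolved by the braid relation --- the last KLR relation, with its $e=2$ correction $y_{k-1}-2y_k+y_{k+1}$ --- after which the resulting standard vectors are collected using \cref{standard-y} and \cref{psi-2}. For an arm pair this should collapse to $0$, mirroring the initial tableau, where the Specht relation (iii) kills $\psi_k z^\la$ directly; the braid manipulation transports that vanishing to a general $\ttt$. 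The main obstacle is the leg pair, where the outcome depends on $\bi_{\tp_{k+1}}$: a normal pair $\bi_{\tp_{k+1}}=(1,0)$ should cancel to $0$, whereas a shifted pair $\bi_{\tp_{k+1}}=(0,1)$ need not --- this asymmetry between arm and leg ultimately comes from the cyclotomic relation forcing residue $0$ at the top node --- and this is precisely the excluded case. Throughout, the delicacy lies in tracking which $\delta$-terms in the KLR relations switch on, since each coincidence $i_m=i_{m+1}$ alters the result; the parity count above is what keeps this bookkeeping finite and deterministic.
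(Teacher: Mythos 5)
Your handling of parts (1)--(3) and of the two easy subcases of (4) is correct and is exactly the paper's argument: the parity count pinning down when $i_m=i_{m+1}$ can occur, \cref{standard-y} for the $y$-action, \cref{psi-1} for part (3), and the $\psi_k^2=0$ trick when $k\in\Leg(\ttt)$, $k+1\in\Arm(\ttt)$. The gap is in the pair case, which is where essentially all of the work lies, and it is twofold. First, your plan is a single pass: commute $\psi_k$ to the blocking term, braid once, collect with \cref{standard-y}. But one braid move does not close the argument. After braiding, the $y$-corrections indeed die by \cref{standard-y}, yet the surviving term is of the form $\psi^{*}\psi_{k-1}\psi_{k}\,\psi_{k-2}\psi_{k-1}\psi_{k-2}\vs$, i.e.\ an even-index generator $\psi_{k-2}$ hitting a standard basis vector whose expression contains one fewer $\Psi$-term --- a fresh instance of the very statement being proved. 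So the proof must be organised as an induction (the paper's cases $B_r$, $C_r$ on the number of $\Psi$-terms, invoking $B_{r-1}$, $C_{r-1}$, or the commuting case $A$), with the base case at $z^\la$ exactly as you describe. Your phrase ``the braid manipulation transports that vanishing to a general $\ttt$'' is precisely the step that needs this inductive set-up; without it the argument does not terminate.

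Second, you only draw the residue distinction for leg pairs, but it matters equally for arm pairs. When $\bi_{\tp_{k+1}}=(0,1)$ and $\tp_{k+1}\in\Arm(\ttt)$, the pair was produced not by a $\Psi$-chain but by the individual $\psi$-chain in the construction of $\vtt$, so commuting $\psi_k$ rightward meets a lone $\psi_{k-1}$ rather than any $\Psi$-term; your described mechanism (``until it meets the $\Psi$-term\dots that placed this pair'') never engages. The paper treats this as a separate induction on the length of $w^{\ttt'}$ (its case $D_s$), with subcases depending on where the pair sits relative to the end of the arm. Two smaller inaccuracies: for a leg pair with $\bi_{\tp_{k+1}}=(1,0)$ the blocking term is $\Psi_{k+1}$, not $\Psi_{k-1}$; and \cref{psi-2} plays no role in collecting terms here --- what is needed is \cref{standard-y} for the $y$-corrections together with the induction hypothesis for the residual $\psi$-word.
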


\begin{proof}
Cases $1$ and $2$ follow immediately from \cref{standard-y}.

For the third statement, if $r\in\Leg(\ttt)$ and $r-1\in\Arm(\ttt)$, then $\psi_{r-1}\vtt$ is already in reduced form as $w^\ttt < w^{s_{r-1}\ttt}$.

For case $4$, if $k$ is in $\Leg(\ttt)$ and $k+1$ is in $\Arm(\ttt)$, we have that $\psi_k\vtt=\psi^*\psi_k\vtp=\psi^*\psi_k^2v_{s_k\ttt'}=0$ as $i_k=i_{k+1}$ in the tableau $s_k\ttt'$.

Finally, assume that $\tp_{k+1}\in\Arm(\ttt)$ or $\tp_{k+1}\in\Leg(\ttt)$, that is $k$ and $k+1$ are adjacent in $\ttt$.
To prove case 4 $\psi_k$ on $\vtt$, we have five cases to consider:
\begin{itemize}
\item[$(A)$] $\psi_k$ commutes with $\psi_\ttt$;
\item[$(B_r)$] $\bi_{\tp_{k+1}}=(1,0)$, $\tp_{k+1}\in\Arm(\ttt)$ (i.e.\ the first term that $\psi_k$ does not commute with is $\Psi_{k-1}$ and $\psi_\ttt$ involves $r$ $\Psi$ terms  unless $r=0$);
\item[$(C_r)$] $\bi_{\tp_{k+1}}=(1,0)$, $\tp_{k+1}\in\Leg(\ttt)$ (i.e.\ the first term that $\psi_k$ does not commute with is $\Psi_{k+1}$ and $\psi_\ttt$ involves $r$ $\Psi$ terms unless $r=0$);
\item[$(D_s)$] $\bi_{\tp_{k+1}}=(0,1)$, $\tp_{k+1}\in\Arm(\ttt)$ (i.e.\ the first term that $\psi_k$ does not commute with is $\psi_{k-1}$ and $\ell(w^{\ttt'})=s$ );
\item[$(E)$] $\bi_{\tp_{k+1}}=(0,1)$, $\tp_{k+1}\in\Leg(\ttt)$ (i.e.\ the first term that $\psi_k$ does not commute with is $\psi_{k+1}$).
\underline{Note}: this is a special case which will be proved later in \cref{even}.
\end{itemize}

\underline{Case $A$}
Follows immediately from \cref{4-1}.

\underline{Cases $B$ and $C$}\\
The proof is essentially the same as \cite[Proposition 5.4]{ls14}.
We use induction on $r$, the number of $\Psi$ terms appearing in $\vtp$.
Note that $B_0$ can only occur if $\ttt=\ttt^\la$ and $C_0$ can only occur if $\ttt=\ttt^\la$ where $a$ is odd or $\vtt=\psi_{\ell-1}\dots\psi_az^\la$ where $a$ is even and $a\le \ell\le n$ (recall that $\ell=\ttt(1,a)$).
In the latter case if $k<\ell$ we have that:
\begin{align*}
    \psi_k\vtt&=\psi_k\psi_{\ell-1}\dots\psi_az^\la\\
    &=\psi_{\ell-1}\dots\psi_{k+2}(\psi_k\psi_{k+1}\psi_k)\psi_{k-1}\dots\psi_az^\la\\
    &=\psi_{\ell-1}\dots\psi_{k+2}(\psi_{k+1}\psi_k\psi_{k+1})\psi_{k-1}\dots\psi_az^\la\\
    &=0.
\end{align*}

If $k>\ell$, then we are in case $A$.

For $B_r$ we have that
\begin{align*}
\psi_k\vtt&=\psi_k\psi^*\Psi_{k-1}\vs\\
&=\psi^*(\psi_k\psi_{k-1}\psi_k)\psi_{k-2}\psi_{k-1}\vs\\
&=\psi^*\psi_{k-1}\psi_{k}(\psi_{k-1}\psi_{k-2}\psi_{k-1})\vs\\
&=\psi^*\psi_{k-1}\psi_{k}\psi_{k-2}\psi_{k-1}\psi_{k-2}\vs=0\text{ by $B_{r-1}$ or $A$}\\
&\begin{rcases}
-\psi^*\psi_{k-1}\psi_{k}y_{k-2}\vs\\
+2\psi^*\psi_{k-1}\psi_{k}y_{k-1}\vs\\
-\psi^*\psi_{k-1}\psi_{k}y_k\vs\\
\end{rcases}
=0\text{ by \cref{standard-y}}
\\
&=0.
\end{align*}
Similarly, for $C_r$ we have that
\begin{align*}
\psi_k\vtt&=\psi_k\psi^*\Psi_{k+1}\vs\\
&=\psi^*(\psi_k\psi_{k+1}\psi_k)\psi_{k+2}\psi_{k+1}\vs\\
&=\psi^*\psi_{k+1}\psi_{k}(\psi_{k+1}\psi_{k+2}\psi_{k+1})\vs\\
&=\psi^*\psi_{k+1}\psi_{k}\psi_{k+2}\psi_{k+1}\psi_{k+2}\vs=0\text{ by $C_{r-1}$ or $A$}\\
&\begin{rcases}
+\psi^*\psi_{k-1}\psi_{k}y_{k+1}\vs\\
-2\psi^*\psi_{k-1}\psi_{k}y_{k+2}\vs\\
+\psi^*\psi_{k-1}\psi_{k}y_{k+3}\vs
\end{rcases}
=0\text{ by \cref{standard-y}}\\ \nopagebreak
&=0.
\end{align*}

\underline{Case $D$}
We prove the statement by induction on $s$, the length of $\psi_{\ttt'}$.
Details can be included by switching the toggle in the {\tt arXiv} version of this paper.
\begin{answer}
For the base case, if $a$ is even, we have that $\vtt=\psi_{k-1}\psi_kz^\la$, so
\begin{align*}
\psi_k\vtt&=(\psi_k\psi_{k-1}\psi_{k})z^\la\\
&=(\psi_{k-1}\psi_k\psi_{k-1})z^\la\\
&=0.
\end{align*}
If $a$ is odd, then $\vtt=\psi_{k-1}\psi_k\psi_{\ell-1}\dots\psi_az^\la$, hence:
\begin{align*}
\psi_k\vtt&=(\psi_k\psi_{k-1}\psi_k)\psi_{\ell-1}\dots\psi_az^\la\\
&=(\psi_{k-1}\psi_k\psi_{k-1})\psi_{\ell-1}\dots\psi_az^\la\\
&=0,
\end{align*}
as $k-1=a$, otherwise we would have $\Psi$ terms appearing in the expression.

Now, we have four different subcases to consider.
\begin{itemize}

 \item[i)] The entry to the left of $k$ is either $k-2$ or $k-1$ and $\tp_{k+1}$ is at the end of the arm:
    
        \begin{align*}
\psi_k\vtt&=\psi_k\psi^*\psi_{k-1}\Psichaind{k-3}{a-1}\psi_k\psi_{k-1}\dots\psi_az^\la\\
&=\psi^*(\psi_k\psi_{k-1}\psi_k)\Psichaind{k-3}{a-1}\psi_{k-1}\dots\psi_az^\la\\
&=\psi^*(\psi_{k-1}\psi_k\psi_{k-1})\Psichaind{k-3}{a-1}\psi_{k-1}\dots\psi_az^\la\\
&=\psi^*\psi_{k-1}\psi_k\psi_{k-3}\psi_{k-4}(\psi_{k-1}\psi_{k-2}\psi_{k-1})\psi_{k-3}\Psichaind{k-5}{a-1}\psi_{k-2}\dots\psi_az^\la\\
&=\psi^*\psi_{k-1}\psi_k\psi_{k-3}\psi_{k-4}(\psi_{k-2}\psi_{k-1}\underbrace{\psi_{k-2})\psi_{k-3}\Psichaind{k-5}{a-1}\psi_{k-2}\dots\psi_az^\la}_{=0\text{\;by $D_s$ for some $s<r$}}\\
&\begin{rcases}
-\psi^*\psi_{k-1}\psi_k\psi_{k-3}\psi_{k-4}(y_{k-2})\psi_{k-3}\Psichaind{k-5}{a-1}\psi_{k-2}\dots\psi_az^\la\\
+2\psi^*\psi_{k-1}\psi_k\psi_{k-3}\psi_{k-4}(y_{k-1})\psi_{k-3}\Psichaind{k-5}{a-1}\psi_{k-2}\dots\psi_az^\la\\
-\psi^*\psi_{k-1}\psi_k\psi_{k-3}\psi_{k-4}(y_k)\psi_{k-3}\Psichaind{k-5}{a-1}\psi_{k-2}\dots\psi_az^\la
\end{rcases}
=0\text{ by \cref{standard-y}}\\ 
&=0.
\end{align*}

    \item[ii)] The entry to the left of $k$ is $x-1\neq k-1,k-2$ and $\tp_{k+1}$ is at the end of the arm:
    
    \begin{align*}
\psi_k\vtt&=\psi_k\psi^*\psi_{k-1}\psi_{k-2}\dots\psi_x\Psichaind{x-2}{a-1}\psi_k\psi_{k-1}\dots\psi_az^\la\\
&=\psi^*(\psi_k\psi_{k-1}\psi_k)\psi_{k-2}\dots\psi_x\Psichaind{x-2}{a-1}\psi_{k-1}\dots\psi_az^\la\\
&=\psi^*(\psi_{k-1}\psi_k\psi_{k-1})\psi_{k-2}\dots\psi_x\Psichaind{x-2}{a-1}\psi_{k-1}\dots\psi_az^\la\\
&=\psi^*\psi_{k-1}\psi_k(\psi_{k-1}\psi_{k-2}\psi_{k-1})\psi_{k-3}\dots\psi_x\Psichaind{x-2}{a-1}\psi_{k-2}\dots\psi_az^\la\\
&=\psi^*\psi_{k-1}\psi_k(\psi_{k-2}\psi_{k-1}\underbrace{\psi_{k-2})\psi_{k-3}\dots\psi_x\Psichaind{x-2}{a-1}\psi_{k-2}\dots\psi_az^\la}_{=0\text{\;by $D_s$ for some $s<r$}}\\
&\begin{rcases}
-\psi^*\psi_{k-1}\psi_ky_{k-2}\psi_{k-3}\dots\psi_x\Psichaind{x-2}{a-1}\psi_{k-2}\dots\psi_az^\la\\
+2\psi^*\psi_{k-1}\psi_ky_{k-1}\psi_{k-3}\dots\psi_x\Psichaind{x-2}{a-1}\psi_{k-2}\dots\psi_az^\la\\
-\psi^*\psi_{k-1}\psi_ky_{k}\psi_{k-3}\dots\psi_x\Psichaind{x-2}{a-1}\psi_{k-2}\dots\psi_az^\la
\end{rcases}
=0\text{ by \cref{standard-y}}\\ 
&=0.
\end{align*}

 \item[iii)] The entry to the left of $k$ is either $k-2$ or $k-1$ and $\tp_{k+1}$ is not at the end of the arm:
 
  \begin{align*}
\psi_k\vtt&=\psi_k\psi^*\psi_{k-1}\Psichaind{k-3}{p}\psi_k\psi_{k+1}\Psichaind{k-1}{p+2}\vs\\
&=\psi^*(\psi_k\psi_{k-1}\psi_k)\Psichaind{k-3}{p}\psi_{k+1}\Psichaind{k-1}{p+2}\vs\\
&=\psi^*(\psi_{k-1}\psi_k\psi_{k-1})\Psichaind{k-3}{p}\psi_{k+1}\Psichaind{k-1}{p+2}\vs\\
&=\psi^*\psi_{k-1}\psi_k\psi_{k-3}\psi_{k-4}(\psi_{k-1}\psi_{k-2}\psi_{k-1})\psi_{k-3}\Psichaind{k-5}{p}\psi_{k+1}\psi_k\psi_{k-2}\psi_{k-1}\Psichaind{k-3}{p+2}\vs\\
&=\psi^*\psi_{k-1}\psi_k\psi_{k-3}\psi_{k-4}(\psi_{k-2}\psi_{k-1}\underbrace{\psi_{k-2})\psi_{k-3}\Psichaind{k-5}{p}\psi_{k+1}\psi_k\psi_{k-2}\psi_{k-1}\Psichaind{k-3}{p+2}\vs}_{=0\text{\;by $D_s$ for some $s<r$}}\\
&\begin{rcases}
-\psi^*\psi_{k-1}\psi_k\psi_{k-3}\psi_{k-4}(y_{k-2})\psi_{k-3}\Psichaind{k-5}{p}\psi_{k+1}\psi_k\psi_{k-2}\psi_{k-1}\Psichaind{k-3}{p+2}\vs\\
+2\psi^*\psi_{k-1}\psi_k\psi_{k-3}\psi_{k-4}(y_{k-1})\psi_{k-3}\Psichaind{k-5}{p}\psi_{k+1}\psi_k\psi_{k-2}\psi_{k-1}\Psichaind{k-3}{p+2}\vs\\
-\psi^*\psi_{k-1}\psi_k\psi_{k-3}\psi_{k-4}(y_k)\psi_{k-3}\Psichaind{k-5}{p}\psi_{k+1}\psi_k\psi_{k-2}\psi_{k-1}\Psichaind{k-3}{p+2}\vs
\end{rcases}
=0\text{ by \cref{standard-y}}\\ 
&=0.
\end{align*}

    \item[iv)] The entry to the left of $k$ is $x-1\neq k-1,k-2$, $\tp_{k+1}$ is not at the end of the arm and the entry to the right of $k+1$ is $y\ge k+2$ (note that the chain $(\psi_y\dots\psi_{k+2})$ might be empty):

    \begin{align*}
\psi_k\vtt&=\psi_k\psi^*\psi_{k-1}\psi_{k-2}\dots\psi_x\Psichaind{x-2}{p}\psi_k(\psi_y\dots\psi_{k+2})\psi_{k+1}\Psichaind{k-1}{p+2}\vs\\
&=\psi^*(\psi_k\psi_{k-1}\psi_k)\psi_{k-2}\dots\psi_x\Psichaind{x-2}{p}(\psi_y\dots\psi_{k+2})\psi_{k+1}\Psichaind{k-1}{p+2}\vs\\
&=\psi^*(\psi_{k-1}\psi_k\psi_{k-1})\psi_{k-2}\dots\psi_x\Psichaind{x-2}{p}(\psi_y\dots\psi_{k+2})\psi_{k+1}\Psichaind{k-1}{p+2}\vs\\
&=\psi^*\psi_{k-1}\psi_k(\psi_{k-1}\psi_{k-2}\psi_{k-1})\psi_{k-3}\dots\psi_x\Psichaind{x-2}{p}(\psi_y\dots\psi_{k+2})\psi_{k+1}\psi_{k}\psi_{k-2}\psi_{k-1}\Psichaind{k-3}{p+2}\vs\\
&=\psi^*\psi_{k-1}\psi_k(\psi_{k-2}\psi_{k-1}\underbrace{\psi_{k-2})\psi_{k-3}\dots\psi_x\Psichaind{x-2}{p}(\psi_y\dots\psi_{k+2})\psi_{k+1}\psi_{k}\psi_{k-2}\psi_{k-1}\Psichaind{k-3}{p+2}\vs}_{=0\text{\;by $D_s$ for some $s<r$}}\\
&\begin{rcases}
-\psi^*\psi_{k-1}\psi_k(y_{k-2})\psi_{k-3}\dots\psi_x\Psichaind{x-2}{p}\\
 \hspace{4cm}\cdot\,(\psi_y\dots\psi_{k+2})\psi_{k+1}\psi_{k}\psi_{k-2}\psi_{k-1}\Psichaind{k-3}{p+2}\vs\\
+2\psi^*\psi_{k-1}\psi_k(y_{k-1})\psi_{k-3}\dots\psi_x\Psichaind{x-2}{p}\\
 \hspace{4cm}\cdot\,(\psi_y\dots\psi_{k+2})\psi_{k+1}\psi_{k}\psi_{k-2}\psi_{k-1}\Psichaind{k-3}{p+2}\vs\\
-\psi^*\psi_{k-1}\psi_k(y_{k})\psi_{k-3}\dots\psi_x\Psichaind{x-2}{p}\\
 \hspace{4cm}\cdot\,(\psi_y\dots\psi_{k+2})\psi_{k+1}\psi_{k}\psi_{k-2}\psi_{k-1}\Psichaind{k-3}{p+2}\vs
\end{rcases}
=0\text{ by \cref{standard-y}}\\ 
&=0.
\qedhere
\end{align*}
\end{itemize}
\end{answer}
\end{proof}

The next lemma is very similar to \cite[Lemma 5.5]{ls14}.
\begin{lem}\label{handy}
Suppose $k$ is odd, $\vtt$ is as in \cref{form2} and let $w\in\fkS_n$ be some permutation with corresponding reduced expression $\psi^*$ with which $\psi_k$ commutes.
(Note that in all cases below we have that $\ttt' \ledom \tts$ and that $\ell(w^\ttt)=\ell(w^\ttu)+4$ or $\ell(w^\ttt)=\ell(w^\tts)+8$.)
\begin{enumerate}
\item If $\vtt=\psi^*\vtp$ where $\vtp=\Psi_k v_\ttu$, then $\psi_k\vtt=-2\psi_k\psi^*v_\ttu$ (compare with \cref{standard-y});
\item  If $\vtt=\psi^*\vtp$ where $\vtp=\Psi_{k+2}\Psi_k\vs$, then $\psi_k\vtt=\psi_k\psi^*\vs$,
\item If $\vtt=\psi^*\vtp$ where $\vtp=\Psi_{k-2}\Psi_k \vs$, then $\psi_k\vtt=\psi_k\psi^*\vs$,
\end{enumerate}
\end{lem}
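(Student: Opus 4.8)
The plan is to reduce all three identities to the single-generator computations of \cref{standard-y} and \cref{psi-2}, using only the quadratic and braid relations of $R_n$ together with the commutation rules in \cref{commuting}. Since $\psi_k$ commutes with $\psi^*$ by hypothesis, in every case $\psi_k\vtt=\psi^*\psi_k\vtp$, so it suffices to compute $\psi_k\vtp$, where $\vtp$ is one or two $\Psi$-blocks applied to a standard vector. The length conditions recorded in the statement guarantee that the relevant products of $\psi$'s are reduced, which is what lets me read off the intermediate tableaux. Throughout, I would move the leading $\psi_k$ rightward past every generator whose index differs from $k$ by at least two, stopping at the first generator it does not commute with.

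For part (1), write $\Psi_k=\psi_k\,(\psi_{k+1}\psi_{k-1}\psi_k)$ and set $v_{\ttt''}:=\psi_{k+1}\psi_{k-1}\psi_k\,v_\ttu$, so that $\vtp=\psi_k v_{\ttt''}$ and $\psi_k\vtp=\psi_k^2 v_{\ttt''}$. Let $\hat\ttt:=s_k\ttt''$. Since $\Psi_k$ genuinely interchanges two pairs, $\hat\ttt$ is standard with $k-1,k\in\Leg(\hat\ttt)$ and $k+1,k+2\in\Arm(\hat\ttt)$, and $v_{\hat\ttt}=\psi_k v_{\ttt''}$; moreover the residue condition $i_{k-1}=i_{k+1},\ i_k=i_{k+2}$ picking out the nonzero branch holds because $k$ is odd, which fixes the residues of the four nodes involved. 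Hence $\psi_k\vtp=\psi_k^2 v_{\ttt''}=\psi_k v_{\hat\ttt}$, which \cref{psi-2} (with $r=k$, applied to $\hat\ttt$) evaluates as $-2\,v_{s_{k-1}s_{k+1}s_k\hat\ttt}$. Finally, using $\hat\ttt=s_k\ttt''$ and $\ttt''=s_{k+1}s_{k-1}s_k\ttu$, the permutation collapses through the cancellations $s_k^2=s_{k\pm1}^2=\id$ to $s_k\ttu$, so $\psi_k\vtp=-2\,\psi_k v_\ttu$ and therefore $\psi_k\vtt=-2\psi_k\psi^* v_\ttu$.

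For parts (2) and (3) I would expand both $\Psi$-blocks and commute the leading $\psi_k$ rightward using \cref{commuting} and the relation $\psi_r\psi_s=\psi_s\psi_r$ for $|r-s|\ge2$, until it forms the braid triple $\psi_k\psi_{k+1}\psi_k$ (part (2), where $\Psi_{k+2}$ meets the leading $\psi_k$ of $\Psi_k$) or $\psi_k\psi_{k-1}\psi_k$ (part (3)). Applying the braid relation splits the expression into a leading term, with the triple replaced by $\psi_{k+1}\psi_k\psi_{k+1}$ (resp.\ $\psi_{k-1}\psi_k\psi_{k-1}$), and a correction term. I expect the leading term to vanish after one or two further braid moves that expose a factor $\psi_r^2e(\bi)$ with $i_r=i_{r+1}$ (hence zero) or reduce it to a vanishing even-generator action covered by \cref{pre-even}; the correction term --- either the identity $e(\bi)$ or a short combination of $y$'s, according to the residues --- then cancels one of the two $\Psi$-blocks and, after evaluating any $y$'s by \cref{standard-y}, leaves exactly $\psi_k\vs$. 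Hence $\psi_k\vtt=\psi_k\psi^*\vs$.

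The main obstacle is the residue bookkeeping. At each use of the quadratic relation for $\psi_k^2$ or of the braid relation I must pin down the residues $i_{k-1},i_k,i_{k+1},i_{k+2}$ at the point of application in order to know which branch is triggered, and I must confirm that every leading braid term and every surviving $y$-term genuinely vanishes rather than producing a spurious standard vector. This is exactly where the hypothesis that $k$ is odd does the work: it fixes the parities, hence the residues, of the nodes holding $k-1,k,k+1,k+2$ in the form \cref{form2}, and it is the part I would write out in full detail, modelling the computation on the $B$- and $C$-case arguments inside \cref{pre-even}.
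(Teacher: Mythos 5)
Your part (1) is fine and is essentially the paper's own one-line argument: once you observe that $\Psi_k$ on the left of $\vtp$ forces $k-1,k\in\Leg(\ttt')$ and $k+1,k+2\in\Arm(\ttt')$, the claim is exactly \cref{psi-2} applied to $\ttt'$ (your detour through $\psi_k^2v_{\ttt''}$ is circular, since your $\hat\ttt$ is just $\ttt'$ again, but harmless).

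The genuine gap is in parts (2) and (3). Your plan is to braid the leading $\psi_k$ through, let the braided (``leading'') term die by a local relation --- a factor $\psi_r^2e(\bi)$ with $i_r=i_{r+1}$, or \cref{pre-even} --- and let the correction terms produce $\psi_k\vs$. The second half of this is right: in the actual computation the deformed braid relation $(\psi_k\psi_{k+1}\psi_k-\psi_{k+1}\psi_k\psi_{k+1})e(\bi)=(y_k-2y_{k+1}+y_{k+2})e(\bi)$ produces $y$-terms, most of which die by \cref{standard-y}, and the survivor eventually yields $\psi_k\psi^*\vs$. But the first half fails. Carrying the computation through, one is left with
\[
\psi_k\vtt=\psi_k\psi^*\vs+\psi^*\Psi_{k+2}\Psi_k\psi_{k+2}\vs
\qquad\Big(\text{resp. } \psi_k\vtt=\psi_k\psi^*\vs+\psi^*\Psi_{k-2}\Psi_k\psi_{k-2}\vs\Big),
\]
and the proof is only complete once one shows $\Psi_k\psi_{k+2}\vs=0$ and $\Psi_k\psi_{k-2}\vs=0$. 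These are not identities in $R_n^{\La}$ --- $\Psi_k\psi_{k\pm2}$ is a nonzero element of the algebra --- they are statements about the particular vectors $\vs$ occurring here, and they cannot be exposed by ``one or two further braid moves'': the obstruction propagates through every $\Psi$-chain in the normal form of $\vs$.

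The paper kills these extra terms by a simultaneous induction (its claims $(B_r)$ and $(C_r)$) on the number of $\Psi$-terms appearing in $\vs=\Psichaind{y_1}{m}\dots\Psichaind{y_d}{a(-1)}(\psi_{\ell-1}\dots\psi_a)z^\la$, whose base cases invoke the Specht-module relations $\psi_rz^\la=0$ (treated separately for $a$ odd and $a$ even), and whose inductive step splits into cases according to the position of $y_1$ relative to $k$; it moreover interlocks with special cases of \cref{pre-even} and \cref{even}, which belong to the same inductive package. This vanishing argument is the bulk of the paper's proof of parts (2) and (3), and it is neither present nor flagged in your outline, so the proposal as written does not establish either identity.
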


\begin{proof}

\begin{enumerate}

\item[1)] If $\vtp$ has $\Psi_k$ on the left, we see that this means that $k-1,k\in\Leg(\ttt')$ and $k+1,k+2\in\Arm(\ttt')$, so the result follows by \cref{psi-2}.

\item[2)] If $\vtp$ has $\Psi_{k+2}\Psi_k$ on the left, we see that this means that $k-1,k,k+1,k+2\in\Leg(\ttt')$ and $k+3,k+4\in\Arm(\ttt')$, so we have that:
\begin{align*}
\psi_k\vtt&=
\psi_k\psi^*\Psi_{k+2}\Psi_k \vs\\
&=\psi^*\psi_k\psi_{k+2}\psi_{k+3}\psi_{k+1}\psi_{k+2}\psi_k\psi_{k-1}\psi_{k+1}\psi_k \vs\\
&=\psi^*\psi_{k+2}\psi_{k+3}(\psi_k\psi_{k+1}\psi_k)\psi_{k+2}\psi_{k-1}\psi_{k+1}\psi_k \vs\\
&=\psi^*\psi_{k+2}\psi_{k+3}(\psi_{k+1}\psi_{k}\psi_{k+1})\psi_{k+2}\psi_{k-1}\psi_{k+1}\psi_k \vs\\
&+\psi^*\psi_{k+2}\psi_{k+3}(y_k)\psi_{k+2}\psi_{k-1}\psi_{k+1}\psi_k \vs\\
&\begin{rcases}
-2\psi^*\psi_{k+2}\psi_{k+3}(y_{k+1})\psi_{k+2}\psi_{k-1}\psi_{k+1}\psi_k \vs\\
+\psi^*\psi_{k+2}\psi_{k+3}(y_{k+2})\psi_{k+2}\psi_{k-1}\psi_{k+1}\psi_k \vs
\end{rcases}
=0\text{ by \cref{standard-y}}\\
&=\psi^*\psi_{k+2}\psi_{k+3}\psi_{k+1}\psi_{k}(\psi_{k+1}\psi_{k+2}\psi_{k+1})\psi_{k-1}\psi_k \vs\\
&+\psi^*\psi_{k+2}\psi_{k+3}\psi_{k+2}\psi_{k+1}\psi_k \vs\\
&=\psi^*\psi_{k+2}\psi_{k+3}\psi_{k+1}\psi_{k}(\psi_{k+2}\psi_{k+1}\psi_{k+2})\psi_{k-1}\psi_k \vs\\
&+\psi^*(\psi_{k+2}\psi_{k+3}\psi_{k+2})\psi_{k+1}\psi_k \vs\\
&=\psi^*\Psi_{k+2}\Psi_k\psi_{k+2} \vs\\
&+\psi^*(\psi_{k+3}\psi_{k+2}\psi_{k+3}+y_{k+2}-2y_{k+3}+y_{k+4})\psi_{k+1}\psi_k \vs\\
&=\psi^*\Psi_{k+2}\Psi_k\psi_{k+2} \vs\\
&+\psi^*\psi_{k+3}\psi_{k+2}\underbrace{\psi_{k+3}\psi_{k+1}\psi_k \vs}_{=0\text{ by \cref{even} case $4$}}\\
&+\psi^* (y_{k+2})\psi_{k+1}\psi_k \vs\\
&\begin{rcases}
-2\psi^* (y_{k+3})\psi_{k+1}\psi_k \vs\\
+\psi^*(y_{k+4})\psi_{k+1}\psi_k \vs
\end{rcases}
=0\text{ by \cref{standard-y}}\\
&=\psi_k\psi^* \vs+\psi^*\Psi_{k+2}\Psi_k\psi_{k+2} \vs\\
\end{align*}

\item[3)] Similarly, if $\vtt$ has $\Psi_{k-2}\Psi_k$ on the left, we see that this means that $k-3,k-2\in\Leg(\ttt')$ and $k-1,k,k+1,k+2\in\Arm(\ttt')$.
Then:
\begin{align*}
\psi_k\vtt&=
\psi_k\psi^*\Psi_{k-2}\Psi_k \vs\\
&=\psi^*\psi_k\psi_{k-2}\psi_{k-3}\psi_{k-1}\psi_{k-2}\psi_k\psi_{k-1}\psi_{k+1}\psi_k \vs\\
&=\psi^*\psi_{k-2}\psi_{k-3}(\psi_k\psi_{k-1}\psi_k)\psi_{k-2}\psi_{k-1}\psi_{k+1}\psi_k \vs\\
&=\psi^*\psi_{k-2}\psi_{k-3}(\psi_{k-1}\psi_{k}\psi_{k-1})\psi_{k-2}\psi_{k-1}\psi_{k+1}\psi_k \vs\\
&\begin{rcases}
-\psi^*\psi_{k-2}\psi_{k-3}(y_{k-1})\psi_{k-2}\psi_{k-1}\psi_{k+1}\psi_k \vs\\
+2\psi^*\psi_{k-2}\psi_{k-3}(y_{k})\psi_{k-2}\psi_{k-1}\psi_{k+1}\psi_k \vs
\end{rcases}
=0\text{ by \cref{standard-y}}\\
&-\psi^*\psi_{k-2}\psi_{k-3}(y_{k+1})\psi_{k-2}\psi_{k-1}\psi_{k+1}\psi_k \vs\\
&=\psi^*\psi_{k-2}\psi_{k-3}\psi_{k-1}\psi_{k}(\psi_{k-1}\psi_{k-2}\psi_{k-1})\psi_{k+1}\psi_k \vs\\
&+\psi^*\psi_{k-2}\psi_{k-3}\psi_{k-2}\psi_{k-1}\psi_k \vs\\
&=\psi^*\psi_{k-2}\psi_{k-3}\psi_{k-1}\psi_{k}(\psi_{k-2}\psi_{k-1}\psi_{k-2})\psi_{k+1}\psi_k \vs\\
&+\psi^*(\psi_{k-2}\psi_{k-3}\psi_{k-2})\psi_{k-1}\psi_k \vs\\
&=\psi^*\Psi_{k-2}\Psi_{k}\psi_{k-2} \vs\\
&+\psi^*(\psi_{k-3}\psi_{k-2}\psi_{k-3}-y_{k-3}+2y_{k-2}-y_{k-1})\psi_{k-1}\psi_k \vs\\
&=\psi^*\Psi_{k-2}\Psi_{k}\psi_{k-2} \vs\\
&+\psi^*\underbrace{(0-0+0-y_{k-1})}_{\text{by \cref{even,standard-y}}}\psi_{k-1}\psi_k \vs\\
&=\psi^*\Psi_{k-2}\Psi_{k}\psi_{k-2} \vs
+\psi^*\psi_k \vs
\end{align*}
\end{enumerate}

In order to prove the results in the second and third assertions, we will show that 
\begin{align*}
\Psi_k\psi_{k+2} \vs=0.&\hspace{3cm} (B)\\
\Psi_k\psi_{k-2} \vs=0,&\hspace{3cm} (C)
\end{align*}
where $\vtt=\psi^*\Psi_{k+2}\Psi_k\vs$ (case 2) and $\vtt=\psi^*\Psi_{k-2}\Psi_k\vs$ (case 3) for $\ttt\in\std(\la)$.
We will do this by simultaneous induction on $r$, the number of $\Psi$ terms appearing in $\vs$.

Suppose we can write $\vs$ above as $\vs=\Psichaind{y_1}{m}\dots\Psichaind{y_d}{a(-1)}(\psi_{\ell-1} \psi_{\ell-2} \dots \psi_{a+1} \psi_a)z^\la$.
We also denote $\vst=\Psichaind{y_2}{m+2}\dots\Psichaind{y_d}{a(-1)}(\psi_{\ell-1} \psi_{\ell-2} \dots \psi_{a+1} \psi_a)z^\la$. (Recall the bracket notation from \cref{short1}.)

Let $B_r$ and $C_r$ denote that statements $B$ and $C$ hold if $\vs$ contains $r$ $\Psi$ terms.

First, we prove that $(B_r)$ follows if both $(B_s)$ and $(C_s)$ hold for all $s<r$.
For $B_0$,
\begin{itemize}
\item[i)] if $a$ is odd, then
\[
\Psi_k\psi_{k+2}z^\la=\psi_k\psi_{k+1}\psi_{k-1}\psi_{k+2}\psi_k z^\la=0
\]
as at least one of $\psi_k$ and $\psi_{k+2}$ kills $z^\la$; and

\item[ii)] if $a$ is even, then
\begin{align*}
\Psi_k\psi_{k+2}\psi_{\ell-1}\dots\psi_az^\la&=\Psi_k\psi_{\ell-1}\dots\psi_{k+4}(\psi_{k+2}\psi_{k+3}\psi_{k+2})\psi_{k+1}\dots\psi_az^\la\\
&=\Psi_k\psi_{\ell-1}\dots\psi_{k+4}(\psi_{k+3}\psi_{k+2}\psi_{k+3})\psi_{k+1}\dots\psi_az^\la\\
&+\Psi_k\psi_{\ell-1}\dots \psi_{k+4}y_{k+2}\psi_{k+1}\dots\psi_az^\la\\
&-2\Psi_k\psi_{\ell-1}\dots\psi_{k+4} y_{k+3}\psi_{k+1}\dots\psi_az^\la\\
&+\Psi_k\psi_{\ell-1}\dots\psi_{k+4} y_{k+4}\psi_{k+1}\dots\psi_az^\la\\
&=0.
\end{align*}
\end{itemize}

Next assume $r>0$ and that $B_s$ holds for all $s<r$.

\begin{itemize}

\item[a)] Let $y_1\ge k+6$. As $\ttt$ is standard, this is only possible if $\Psi_{k+2}\Psi_k$ formed the chain on the right of $\Psi_{y_1}$, hence we must have that $m_1=k+2$, then
\[
\Psi_k\psi_{k+2}\vs=\Psi_k\psi_{k+2}\Psichaind{y_1}{k+2}\vst=\Psi_{y_1}\Psi_k\psi_{k+2}\Psichaind{y_1-2}{k+2}\vst=0\text{ by $B_{r-1}$.}
\]

\item[b)] Let $y_1=k+4$.
Once again $m_1=k+2$, hence
\begin{align*}
\Psi_k(\psi_{k+2}\Psi_{k+4}\Psi_{k+2})\Psichaind{k+4}{k+2}\vst&=\Psi_k(\psi_{k+2}+\Psi_{k+4}\Psi_{k+2}\psi_{k+4})\vst\text{ by our previous result}\\
&=0 \text{ by $B_{r-2}$.}
\end{align*}

\item[c)] Finally, let $y_1=k-2$.
We immediately see that in this case $y_2\ge k+4$ (as $\vtp=\Psi_{k+2}\Psi_k\vs$, that is $\Psichaind{k+2}{k}$ sat on top of the first $\Psi$ chain).
Let $\vsk=\Psichaind{y_3}{m_3}\dots\Psichaind{y_d}{a(-1)}(\psi_{\ell-1}\dots\psi_a)z^\la$.
Then
\begin{align*}
\Psi_k\psi_{k+2}\vs&=\Psi_k\psi_{k+2}\Psichaind{k-2}{m_1}\Psichaind{y_2}{k+6}\Psi_{k+4}\Psi_{k+2}\Psichaind{k}{m_2}\vsk\\
&=\Psi_k\Psichaind{y_2}{k+6}\Psichaind{k-2}{m_1}(\psi_{k+2}\Psi_{k+4}\Psi_{k+2})\Psichaind{k}{m_2}\vsk\\
&=\Psi_k\Psichaind{y_2}{k+6}\Psichaind{k-2}{m_1}(\psi_{k+2}+\Psi_{k+4}\underbrace{\Psi_{k+2}\psi_{k+4})\Psichaind{k}{m_2}\vsk}_{=0\text{ by $B_{s}$ for some $s<r$}}\\
&=\Psichaind{y_2}{k+6}\Psi_k\psi_{k+2}\Psichaind{k-2}{m_1}\Psichaind{k}{m_2}\vsk\\
&=\Psichaind{y_2}{k+6}\psi_k\psi_{k-1}\psi_{k+1}\psi_{k+2}(\psi_k\Psi_{k-2}\Psi_k)\Psichaind{k-4}{m_1}\Psichaind{k-2}{m_2}\vsk\\
&=\Psichaind{y_2}{k+6}\psi_k\psi_{k-1}\psi_{k+1}\psi_{k+2}\underbrace{(\psi_k+\Psi_{k-2}\Psi_k\psi_{k-2})}_{\text{ by our previous result}}\Psichaind{k-4}{m_1}\Psichaind{k-2}{m_2}\vsk\\
&=\Psichaind{y_2}{k+6}\psi_k\psi_{k-1}\psi_{k+1}\psi_{k+2}\psi_k\Psichaind{k-4}{m_1}\Psichaind{k-2}{m_2}\vsk\\
&+\Psichaind{y_2}{k+6}\psi_k\psi_{k-1}\psi_{k+1}\psi_{k+2}\Psi_{k-2}\underbrace{\Psi_k\psi_{k-2}\Psichaind{k-4}{m_1}\Psichaind{k-2}{m_2}\vsk}_{=0\text{ by $C_s$ for some $s<r$}}\\
&=\Psichaind{y_2}{k+6}\Psichaind{k-4}{m_1}\Psi_k\psi_{k+2}\Psichaind{k-2}{m_2}\vsk\\
&=0\text{ by $B_s$ for some $s<r$.}
\end{align*}

\end{itemize}

Now we prove that $(C_r)$ holds if $(B_s)$ and $(C_s)$ hold for all $s<r$.
For $C_0$,
\begin{itemize}
    \item[i)] if $a$ is odd, then
\[
\Psi_k\psi_{k-2}z^\la=\psi_k\psi_{k+1}\psi_{k-1}\psi_{k-2}\psi_k z^\la=0
\]
since at least one of $\psi_k$ and $\psi_{k-2}$ kills $z^\la$; and

\item[ii)] if $a$ is even, then

\begin{align*}
\Psi_k\psi_{k-2}\psi_{\ell-1}\dots\psi_az^\la&=\Psi_k\psi_{\ell-1}\dots(\psi_{k-2}\psi_{k-1}\psi_{k-2})\psi_{k-3}\dots\psi_az^\la\\
&=\Psi_k\psi_{\ell-1}\dots(\psi_{k-1}\psi_{k-2}\psi_{k-1})\psi_{k-3}\dots\psi_az^\la\\
&-\Psi_k\psi_{\ell-1}\dots y_{k-2}\psi_{k-3}\dots\psi_az^\la\\
&+2\Psi_k\psi_{\ell-1}\dots y_{k-1}\psi_{k-3}\dots\psi_az^\la\\
&-\Psi_k\psi_{\ell-1}\dots y_k\psi_{k-3}\dots\psi_az^\la\\
&=0.
\end{align*}
\end{itemize}

Next assume $r>0$  and that $C_s$ holds for all $s<r$.
Let $\vs, \vst$, $\vsk$ be as above and write $\vsf=\Psichaind{y_4}{m_4}\dots\Psichaind{y_d}{a(-1)}(\psi_{\ell-1}\dots\psi_a)z^\la$.
As $\ttt\in\std(\la)$ and $\vtt=\Psi_{k-2}\Psi_k\vs$, we immediately see that $\vs$ starts with $\Psi_{k-4}$, $\Psi_{k-2}$ or some $\Psi_y$ such that $y\ge k+2$.

\begin{itemize}
\item[a)] If $y_1\ge k+2$, then $m_1=k+2$ by a similar reasoning as above, hence
\[
\Psi_k\psi_{k-2}\vs=0,
\]
as in this case $\psi_{k-2}$ commutes with everything to its left because the lowest indexed $\Psi$-term in $\vst$ is $\Psi_{k+4}$.

\item[b)] Let $y_1=k-4$.
We immediately see that in this case $y_2= k-2$, as $\vtp=\Psi_{k-2}\Psi_k\vs$.
Here we have two subcases to consider.
\begin{itemize}

    \item [i)] $y_3\ge k+2$:
    
    \begin{align*}
        \Psi_k\psi_{k-2}\vs&=\Psi_k\psi_{k-2}\Psi_{k-4}\Psi_{k-2}\Psichaind{k-6}{m_1}\Psichaind{k-4}{m_2}\Psichaind{y_3}{m_3}\vsf\\
    &=\Psi_k\underbrace{(\psi_{k-2}+\Psi_{k-4}\Psi_{k-2}\psi_{k-4})}_{\text{by our previous result}}\Psichaind{k-6}{m_1}\Psichaind{k-4}{m_2}\Psichaind{y_3}{m_3}\vsf\\
    &=\Psi_k\psi_{k-2}\Psichaind{k-6}{m_1}\Psichaind{k-4}{m_2}\Psichaind{y_3}{m_3}\vsf\\
    &+\Psi_k\Psi_{k-4}\Psi_{k-2}\psi_{k-4}\Psichaind{k-6}{m_1}\Psichaind{k-4}{m_2}\Psichaind{y_3}{m_3}\vsf=0\text{ by $C_{r-2}$}\\
    &=\Psichaind{y_3}{k+4}\Psichaind{k-6}{m_1}\psi_k\psi_{k-1}\psi_{k+1}\psi_{k-2}\Psichaind{k-4}{m_2}\psi_k\Psi_{k+2}\Psi_{k}\Psichaind{k-2}{m_3}\vsf\\
    &=\Psichaind{y_3}{k+4}\Psichaind{k-6}{m_1}\psi_k\psi_{k-1}\psi_{k+1}\psi_{k-2}\Psichaind{k-4}{m_2}\underbrace{(\psi_k+\Psi_{k+2}\Psi_{k}\psi_{k+2})}_{\text{by our previous result}}\Psichaind{k-2}{m_3}\vsf\\
    &=\Psichaind{y_3}{k+4}\Psichaind{k-6}{m_1}\psi_k\psi_{k-1}\psi_{k+1}\psi_{k-2}\Psichaind{k-4}{m_2}\psi_k\Psichaind{k-2}{m_3}\vsf\\
    &+\Psichaind{y_3}{k+4}\Psichaind{k-6}{m_1}\psi_k\psi_{k-1}\psi_{k+1}\psi_{k-2}\Psichaind{k-4}{m_2}\Psi_{k+2}\underbrace{\Psi_{k}\psi_{k+2}\Psichaind{k-2}{m_3}\vsf}_{=0\text{ $B_s$ for some $s<r$}}\\
    &=\Psichaind{y_3}{k+4}\Psichaind{k-6}{m_1}\Psi_k\psi_{k-2}\Psichaind{k-4}{m_2}\Psichaind{k-2}{m_3}\vsf\\
    &=0\text{ by $C_s$ for some $s<r$.}
    \end{align*}

\item [ii)] $y_3=k$:

\begin{align*}
        \Psi_k\psi_{k-2}\vs&=\Psi_k\psi_{k-2}\Psi_{k-4}\Psi_{k-2}\Psichaind{k-6}{m_1}\Psichaind{k-4}{m_2}\Psichaind{k}{m_3}\vsf\\
    &=\Psi_k\underbrace{(\psi_{k-2}+\Psi_{k-4}\Psi_{k-2}\psi_{k-4})}_{\text{by our previous result}}\Psichaind{k-6}{m_1}\Psichaind{k-4}{m_2}\Psichaind{k}{m_3}\vsf\\
    &=\Psi_k\psi_{k-2}\Psichaind{k-6}{m_1}\Psichaind{k-4}{m_2}\Psichaind{k}{m_3}\vsf\\
    &+\Psi_k\Psi_{k-4}\Psi_{k-2}\psi_{k-4}\Psichaind{k-6}{m_1}\Psichaind{k-4}{m_2}\Psichaind{k}{m_3}\vsf=0\text{ by $C_{r-2}$}\\
    &=\Psichaind{k-6}{m_1}\psi_k\psi_{k+1}\psi_{k-1}\psi_{k-2}\Psichaind{k-4}{m_2}\psi_k\Psichaind{k}{m_3}\vsf\\
    &=-2\Psichaind{k-6}{m_1}\psi_k\psi_{k+1}\psi_{k-1}\psi_{k-2}\Psichaind{k-4}{m_2}\psi_k\Psichaind{k-2}{m_3}\vsf\text{ by case $A$}\\
    &=-2\Psichaind{k-6}{m_1}\Psi_k\psi_{k-2}\Psichaind{k-4}{m_2}\Psichaind{k-2}{m_3}\vsf\\
    &=0\text{ by $C_s$ for some $s<r$.}
    \qedhere
    \end{align*}

\end{itemize}
\end{itemize}
\end{proof}

Now we are ready to prove the remaining case from \cref{pre-even}:

\begin{prop}\label{even}
If $k$ is even and $k$ and $k+1$ are adjacent in $\ttt\in\std(\la)$, then $\psi_k\vtt=0$.
\end{prop}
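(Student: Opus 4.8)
The plan is to reduce to the one configuration left open in the proof of \cref{pre-even}, namely its Case $E$: $k$ is even, $\tp_{k+1}\in\Leg(\ttt)$ and $\bi_{\tp_{k+1}}=(0,1)$, so that $i_k=0$ and $i_{k+1}=1$. Every other way in which $k$ and $k+1$ can be adjacent (Cases $A$--$D$) was already shown there to give $\psi_k\vtt=0$, so only Case $E$ remains. Writing $\vtt$ in the normal form as in \cref{form2} and absorbing into $\psi^*$ every leading generator that commutes with $\psi_k$, the defining feature of Case $E$ is that the first factor $\psi_k$ meets is the individual generator $\psi_{k+1}$ coming from the residue shift. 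Thus $\vtt=\psi^*\psi_{k+1}\vs$ for some more dominant $\vs$, and since $\psi^*$ commutes with $\psi_k$ it suffices to prove $\psi_k\psi_{k+1}\vs=0$.

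I would argue by induction on the number $r$ of $\Psi$-terms occurring in $\vs$, exactly the inductive quantity used in \cref{pre-even} and \cref{handy}. For the base case $\vs$ collapses to $z^\la$, preceded at most by the commuting chain $\psi_{\ell-1}\dots\psi_a$ present when $a$ is even, and the pair $k,k+1$ occupies two consecutive leg nodes $(x,1),(x+1,1)$. The crucial observation is that $\psi_k z^\la=0$ whenever $k$ and $k+1$ lie in the same column of $\ttt^\la$: the Garnir belt of $(x,1)$ then consists of just these two nodes, contains no $A$-bricks, and the Garnir element degenerates to $\psi_k$ itself, so the homogeneous Garnir relation (iv) in the definition of $S^\la$ reads $\psi_k z^\la=0$. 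This single fact is the ultimate source of the vanishing.

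For the inductive step I would commute $\psi_k$ rightwards through $\psi_{k+1}\vs$ until it reaches a companion generator with which it can braid. Because $i_k\ne i_{k+1}$, the relevant braid relation is inhomogeneous, $\psi_k\psi_{k+1}\psi_k=\psi_{k+1}\psi_k\psi_{k+1}+(\text{a $y$-combination or }1)$, and applying it splits the expression into a leading braided term together with correction terms each carrying some $y_j$ with $j\in\{k-1,k,k+1,k+2\}$. Precisely as in the $B_r,C_r,D_s$ computations of \cref{pre-even}, every such $y_j$ lands on a vector whose pair structure forbids it from acting nontrivially, so the correction terms vanish by \cref{standard-y}. The braided term has strictly fewer obstructions and is either already of the form $\psi_k\vtp$ with $\tp_{k+1}$ in one of the settled Cases $A$--$D$, or it is a Case $E$ instance with fewer $\Psi$-terms to which the inductive hypothesis applies; the auxiliary identities $\Psi_{k+1}\psi_{k+3}\vs=0$ and $\Psi_{k+1}\psi_{k-1}\vs=0$ supplied by \cref{handy} are what allow one to reabsorb the stray generators produced by the braid.

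The step I expect to be hardest is the bookkeeping involved in moving $\psi_k$ past the chain realising the $(0,1)$ residue shift: one must follow how the shifted pair interacts with the neighbouring $\Psi$-terms and determine exactly which reduced instance each braided summand falls into, mirroring the four-subcase dissection (entry to the left equal to $k-1$, $k-2$, or smaller; $\tp_{k+1}$ at or before the end of the leg) carried out for Case $D$ in \cref{pre-even}. A second, genuine subtlety is circularity: \cref{handy} is itself proved using the present proposition, so the two must be run as a single simultaneous induction on the number of $\Psi$-terms, with the Case $E$ computation at level $r$ invoking \cref{pre-even} and \cref{handy} only at levels strictly below $r$.
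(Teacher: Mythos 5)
Your proposal is correct and follows essentially the same route as the paper's proof: reduction to the residue-shifted leg-pair case via \cref{pre-even}, a base case resting on the degenerate Garnir relation $\psi_u z^\la=0$ at leg nodes, an inductive step driven by the inhomogeneous braid relation with the $y$-correction terms killed by \cref{standard-y} and stray factors absorbed via \cref{handy}, and an interleaved induction with \cref{handy} to break the apparent circularity. The only difference is bookkeeping: the paper runs a reverse induction on the column index $m$ of the first $\Psi$-chain that $\psi_k$ fails to commute with, whereas you induct on the total number of $\Psi$-terms; both are well-founded for the same reason, namely that every cross-appeal between the two statements lands on an expression with strictly fewer $\Psi$-factors.
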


\begin{answer}
\begin{rmk}
Notice that if $k+1=n$, the result holds by \cref{even} case $4$ part $A$, so we will assume that $k+1\le n-1$.

Let $\ttt(1,m-1)=x+1$.
In the proofs below, it might seem that we require $x \le k-2$, however this is not an extra condition, but follows immediately from the reduced expression for $\vtt$: if $x=k-1$ we have an additional $\psi_{k-2}$ on the left.
As $\psi_k$ and $\psi_{k-2}$ commute, $\psi_{k-2}$ is in $\psi^*$ and it is irrelevant for our computations.

By a similar reasoning, we only need to consider the cases when $\ttt(1,m)=k+2$: if $\ttt'(1,m)=f$ for some $f>k+2$, then $\psi^*$ involves an additional chain $\psi_{f-1}\dots\psi_{k+2}$.
\end{rmk}
\end{answer}

\begin{proof}
By \cref{pre-even}, we only need to consider the case when $\tp_{k+1}\in\Leg(\ttt)$ and $\bi_{\tp_{k+1}}=(0,1)$.

Let $m$ be minimal such that $\ttt(1,m-1)\ge m+1$ and $\ttt(1,m)\ge m+2$ for $m$ odd and let $E_m$ denote that the statement holds if $\Psichaind{x}{m}$ is the first $\Psi$ chain that $\psi_k$ does not commute with.
We will proceed by reverse induction on $m$.
Details can be included by switching the toggle in the {\tt arXiv} version of this paper.
\begin{answer}
For the base case, assume $m>\ell$, so we have no $\Psi$ terms appearing in $\vtt$.
If $k>\ell$ ($\ell=\ttt(1,a)$), the result holds by \cref{pre-even} case $4$.

If $k<\ell$, we have two cases to consider.
\begin{itemize}
    \item[i)] If $a$ is odd, then 
\begin{align*}
\psi_k\vtt&=\psi^*\psi_k\psi_{\ell-1}\dots\psi_az^\la\\
&=\psi^*\psi_{\ell-1}\dots\psi_{k+2}(\psi_k\psi_{k+1}\psi_k)\psi_{k-1}\dots\psi_az^\la\\
&=\psi^*\psi_{\ell-1}\dots\psi_{k+2}(\psi_{k+1}\psi_k\psi_{k+1})\psi_{k-1}\dots\psi_az^\la\\
&=0.
\end{align*}

\item[ii)] If $a$ is even, we have that
\begin{align*}
\psi_k\vtt&=\psi^*\psi_k\psi_{k+1}\dots\psi_{a-1}\psi_{k+2}\psi_{k+1}\dots\psi_az^\la\\
&=\psi^*(\psi_k\psi_{k+1}\psi_k)\dots\psi_{a-1}\psi_{k+2}\psi_{k+1}\dots\psi_az^\la\\
&=\psi^*\psi_{k+1}\psi_k\dots\psi_{a-1}(\psi_{k+1}\psi_{k+2}\psi_{k+1})\psi_k\dots\psi_az^\la\\
&=\psi^*\psi_{k+1}\psi_k\dots\psi_{a-1}(\psi_{k+2}\psi_{k+1}\psi_{k+2})\psi_k\dots\psi_az^\la=0\\
&+\psi^*\psi_{k+1}\psi_k\dots\psi_{a-1}(y_{k+1})\psi_k\dots\psi_az^\la\\
&\begin{rcases}
-\psi^*2\psi_{k+1}\psi_k\dots\psi_{a-1}(y_{k+2})\psi_k\dots\psi_az^\la\\
+\psi^*\psi_{k+1}\psi_k\dots\psi_{a-1}(y_{k+3})\psi_k\dots\psi_az^\la\\
\end{rcases}
=0\text{ by \cref{standard-y}}\\
&=\psi^*\psi_{k+1}\psi_k\dots\psi_{a-1}\psi_{k-1}\dots\psi_az^\la\\
&=\psi^*\psi_{k+1}\psi_k(\psi_{k-1}\psi_{k-2}\psi_{k-1})\dots\psi_{a-1}\psi_{k-2}\dots\psi_az^\la\\
&=\psi^*\psi_{k+1}\psi_k(\psi_{k-2}\psi_{k-1}\underbrace{\psi_{k-2})\dots\psi_{a-1}\psi_{k-2}\dots\psi_az^\la}_{=0\text{ by \cref{pre-even} case $4$}}\\
&\begin{rcases}
-\psi^*\psi_{k+1}\psi_k(y_{k-2})\dots\psi_{a-1}\psi_{k-2}\dots\psi_az^\la\\
+\psi^*2\psi_{k+1}\psi_k(y_{k-1})\dots\psi_{a-1}\psi_{k-2}\dots\psi_az^\la\\
-\psi^*\psi_{k+1}\psi_k(y_k)\dots\psi_{a-1}\psi_{k-2}\dots\psi_az^\la\\
\end{rcases}
=0\text{ by \cref{standard-y}}\\
&=0.
\end{align*}

\end{itemize}

For the induction step, we have several subcases to look at.

\begin{itemize}
\item[i)] $k+2\in\Arm(\ttt)$ and $\ttt(1,a)=k+2$ (i.e.\ $m=a$):

Observe that here we must have that $a$ is odd and $x\le k-3$.
\begin{align*}
    \psi_k\vtt&=\psi^*(\psi_k\psi_{k+1}\psi_k)\psi_{k-1}\dots\psi_{x+2}\Psichaind{x}{a}z^\la\\
    &=\psi^*(\psi_{k+1}\psi_k\psi_{k+1})\psi_{k-1}\dots\psi_{x+2}\Psichaind{x}{a}z^\la\\
    &=\psi^*\psi_{k+1}\psi_k\psi_{k-1}\dots\psi_{x+2}\Psichaind{x}{a}\psi_{k+1}z^\la\\
    &=0
\end{align*}

\item[ii)] $k+2\in\Arm(\ttt)$ and the entry to the right $k+2$ is $\ttt(1,a)=\ell$ (i.e.\ $m=a-1$):

Note that here we must have that $a$ is even and $x\le k-3$.

\begin{align*}
    \psi_k\vtt&=\psi^*(\psi_k\psi_{k+1}\psi_k)\psi_{k-1}\dots\psi_{x+2}\Psichaind{x}{a-1}\psi_{\ell-1}\dots\psi_az^\la\\
    &=\psi^*(\psi_{k+1}\psi_k\psi_{k+1})\psi_{k-1}\dots\psi_{x+2}\Psichaind{x}{a-1}\psi_{\ell-1}\dots\psi_az^\la\\
    &=\psi^*\psi_{k+1}\psi_k\psi_{k-1}\dots\psi_{x+2}\Psichaind{x}{a-1}\psi_{\ell-1}\dots\psi_{k+3}(\psi_{k+1}\psi_{k+2}\psi_{k+1})\psi_k\dots\psi_az^\la\\
    &=\psi^*\psi_{k+1}\psi_k\psi_{k-1}\dots\psi_{x+2}\Psichaind{x}{a-1}\psi_{\ell-1}\dots\psi_{k+3}(\psi_{k+2}\psi_{k+1}\underbrace{\psi_{k+2})\psi_k\dots\psi_az^\la}_{=0}\\
    &+\psi^*\psi_{k+1}\psi_k\psi_{k-1}\dots\psi_{x+2}\Psichaind{x}{a-1}\psi_{\ell-1}\dots\psi_{k+3}(y_{k+1})\psi_k\dots\psi_az^\la\\
    &\begin{rcases}
    -2\psi^*\psi_{k+1}\psi_k\psi_{k-1}\dots\psi_{x+2}\Psichaind{x}{a-1}\\
     \hspace{4cm}\cdot\psi_{\ell-1}\dots\psi_{k+3}(y_{k+2})\psi_k\dots\psi_az^\la\\
    +\psi^*\psi_{k+1}\psi_k\psi_{k-1}\dots\psi_{x+2}\Psichaind{x}{a-1}\\
     \hspace{4cm}\cdot\psi_{\ell-1}\dots\psi_{k+3}(y_{k+3})\psi_k\dots\psi_az^\la
    \end{rcases}
    =0\text{ by \cref{standard-y}}\\
     &=\psi^*\psi_{k+1}\psi_k\psi_{k-1}\dots\psi_{x+2}\Psichaind{x}{a-1}\psi_{\ell-1}\dots\psi_{k+3}\psi_{k-1}\dots\psi_az^\la\\
    &=0
\end{align*}

\item[iii)] $k+2,k+3\in\Arm(\ttt)$, $\ttt'(1,a)\neq k+3$ and $a\ge m+2$:

We have two cases to consider.
\begin{itemize}
    \item[a)] $m\le x=k-3$:

\begin{align*}
    \psi_k\vtt&=\psi^*(\psi_k\psi_{k+1}\psi_k)\psi_{k-1}\Psichaind{k-3}{m}\psi_{k+2}\psi_{k+3}\Psichaind{k+1}{m+2}\\
    &\hspace{3.5cm}\cdot\underbrace{\Psichaind{y_2}{m+4}\Psichaind{y_3}{m+6}\dots\Psichaind{y_d}{a(-1)}(\psi_{\ell-1}\dots\psi_a)z^\la}_{=\vtp}\\
    &=\psi^*(\psi_{k+1}\psi_k\psi_{k+1})\psi_{k-1}\Psichaind{k-3}{m}\psi_{k+2}\psi_{k+3}\Psichaind{k+1}{m+2}\vtp\\
    &=\psi^*\psi_{k+1}\psi_k\psi_{k-1}\Psichaind{k-3}{m}(\psi_{k+1}\psi_{k+2}\psi_{k+1})\psi_{k+3}\psi_k\psi_{k+2}\psi_{k+1}\Psichaind{k-1}{m+2}\vtp\\
    &=\psi^*\psi_{k+1}\psi_k\psi_{k-1}\Psichaind{k-3}{m}(\psi_{k+2}\psi_{k+1}\underbrace{\psi_{k+2})\psi_{k+3}\psi_k\psi_{k+2}\psi_{k+1}\Psichaind{k-1}{m+2}\vtp}_{=0\text{\;by $E_{m+2}$}}\\
    &+\psi^*\psi_{k+1}\psi_k\psi_{k-1}\Psichaind{k-3}{m}(y_{k+1})\psi_{k+3}\psi_k\psi_{k+2}\psi_{k+1}\Psichaind{k-1}{m+2}\vtp\\
     &\begin{rcases}
    -2\psi^*\psi_{k+1}\psi_k\psi_{k-1}\Psichaind{k-3}{m}(y_{k+2})\\
     \hspace{4cm}\cdot\psi_{k+3}\psi_k\psi_{k+2}\psi_{k+1}\Psichaind{k-1}{m+2}\vtp\\
    +\psi^*\psi_{k+1}\psi_k\psi_{k-1}\Psichaind{k-3}{m}(y_{k+3})\cdot\\
     \hspace{4cm}\cdot\psi_{k+3}\psi_k\psi_{k+2}\psi_{k+1}\Psichaind{k-1}{m+2}\vtp
    \end{rcases}\
    =0\text{ by \cref{standard-y}}\\
    &=\psi^*\psi_{k+1}\psi_k\psi_{k-1}\Psichaind{k-3}{m}\psi_{k+3}\psi_{k+2}\psi_{k+1}\Psichaind{k-1}{m+2}\vtp\\
    &=\psi^*\psi_{k+1}\psi_k\psi_{k-3}\psi_{k-4}\psi_{k-2}\psi_{k-1}\cdot\\
    &\hspace{1cm}\cdot\underbrace{\psi_{k-2}\psi_{k-3}\Psichaind{k-5}{m}\psi_{k+3}\psi_{k+2}\psi_{k+1}\psi_{k-2}\psi_{k}\psi_{k-1}\Psichaind{k-3}{m+2}\vtp}_{=0\text{ by \cref{pre-even} case $4$}}\\
    &\begin{rcases}
    +\psi^*\psi_{k+1}\psi_k\psi_{k-3}\psi_{k-4}(y_{k-2})\psi_{k-3}\Psichaind{k-5}{m}\\
     \hspace{2cm}\cdot\psi_{k+3}\psi_{k+2}\psi_{k+1}\psi_{k-2}\psi_{k}\psi_{k-1}\Psichaind{k-3}{m+2}\vtp\\
    -2\psi^*\psi_{k+1}\psi_k\psi_{k-3}\psi_{k-4}(y_{k-1})\psi_{k-3}\Psichaind{k-5}{m}\\
     \hspace{2cm}\cdot\psi_{k+3}\psi_{k+2}\psi_{k+1}\psi_{k-2}\psi_{k}\psi_{k-1}\Psichaind{k-3}{m+2}\vtp\\
    +\psi^*\psi_{k+1}\psi_k\psi_{k-3}\psi_{k-4}(y_k)\psi_{k-3}\Psichaind{k-5}{m}\\
     \hspace{2cm}\cdot\psi_{k+3}\psi_{k+2}\psi_{k+1}\psi_{k-2}\psi_{k}\psi_{k-1}\Psichaind{k-3}{m+2}\vtp
    \end{rcases}
    =0\text{ by \cref{standard-y}}\\
    &=0,
\end{align*}
where $\Psichaind{k-5}{m}$ and $\Psichaind{k-3}{m+2}$might be empty.

    \item[b)] $m\le x < k-3$:

\begin{align*}
    \psi_k\vtt&=\psi^*(\psi_k\psi_{k+1}\psi_k)\psi_{k-1}\dots\psi_x\Psichaind{x}{m}\psi_{k+2}\psi_{k+3}\Psichaind{k+1}{m+2}\\
    &\hspace{3.5cm}\cdot\underbrace{\Psichaind{y_2}{m+4}\Psichaind{y_3}{m+6}\dots\Psichaind{y_d}{a(-1)}(\psi_{\ell-1}\dots\psi_a)z^\la}_{=\vtp}\\
    &=\psi^*(\psi_{k+1}\psi_k\psi_{k+1})\psi_{k-1}\dots\psi_x\Psichaind{x}{m}\psi_{k+2}\psi_{k+3}\Psichaind{k+1}{m+2}\vtp\\
    &=\psi^*\psi_{k+1}\psi_k\psi_{k-1}\dots\psi_x\Psichaind{x}{m}(\psi_{k+1}\psi_{k+2}\psi_{k+1})\psi_{k+3}\psi_k\psi_{k+2}\psi_{k+1}\Psichaind{k-1}{m+2}\vtp\\
     &=\psi^*\psi_{k+1}\psi_k\psi_{k-1}\dots\psi_x\Psichaind{x}{m}(\psi_{k+2}\psi_{k+1}\underbrace{\psi_{k+2})\psi_{k+3}\psi_k\psi_{k+2}\psi_{k+1}\Psichaind{k-1}{m+2}\vtp}_{=0\text{ by $E_{m+2}$}}\\
    &+\psi^*\psi_{k+1}\psi_k\psi_{k-1}\dots\psi_x\Psichaind{x}{m}(y_{k+1})\psi_{k+3}\psi_k\psi_{k+2}\psi_{k+1}\Psichaind{k-1}{m+2}\vtp\\
     &\begin{rcases}
    -2\psi^*\psi_{k+1}\psi_k\psi_{k-1}\dots\psi_x\Psichaind{x}{m}\\
     \hspace{2cm}\cdot(y_{k+2})\psi_{k+3}\psi_k\psi_{k+2}\psi_{k+1}\Psichaind{k-1}{m+2}\vtp\\
    +\psi^*\psi_{k+1}\psi_k\psi_{k-1}\dots\psi_x\Psichaind{x}{m}\\
     \hspace{2cm}\cdot(y_{k+3})\psi_{k+3}\psi_k\psi_{k+2}\psi_{k+1}\Psichaind{k-1}{m+2}\vtp
    \end{rcases}\
    =0\text{ by \cref{standard-y}}\\
    &=\psi^*\psi_{k+1}\psi_k\psi_{k-1}\dots\psi_x\Psichaind{x}{m}\psi_{k+3}\psi_{k+2}\psi_{k+1}\Psichaind{k-1}{m+2}\vtp\\
    &=\psi^*\psi_{k+1}\psi_k(\psi_{k-1}\psi_{k-2}\psi_{k-1})\psi_{k-3}\dots\psi_x\Psichaind{x}{m}\\
    &\hspace{4cm}\cdot\psi_{k+3}\psi_{k+2}\psi_{k+1}\psi_{k-2}\psi_{k}\psi_{k-1}\Psichaind{k-3}{m+2}\vtp\\
    &=\psi^*\psi_{k+1}\psi_k\psi_{k-2}\psi_{k-1}\\
    &\hspace{1cm}\underbrace{\psi_{k-2}\psi_{k-3}\dots\psi_x\Psichaind{x}{m}\psi_{k+3}\psi_{k+2}\psi_{k+1}\psi_{k-2}\psi_{k}\psi_{k-1}\Psichaind{k-3}{m+2}\vtp}_{=0\text{ by \cref{pre-even} case $4$}}\\
    &\begin{rcases}
    +\psi^*\psi_{k+1}\psi_k(y_{k-2})\psi_{k-3}\dots\psi_x\Psichaind{x}{m}\\
     \hspace{2cm}\cdot\psi_{k+3}\psi_{k+2}\psi_{k+1}\psi_{k-2}\psi_{k}\psi_{k-1}\Psichaind{k-3}{m+2}\vtp\\
    -2\psi^*\psi_{k+1}\psi_k(y_{k-1})\psi_{k-3}\dots\psi_x\Psichaind{x}{m}\\
     \hspace{2cm}\cdot\psi_{k+3}\psi_{k+2}\psi_{k+1}\psi_{k-2}\psi_{k}\psi_{k-1}\Psichaind{k-3}{m+2}\vtp\\
    +\psi^*\psi_{k+1}\psi_k(y_{k})\psi_{k-3}\dots\psi_x\Psichaind{x}{m}\\
     \hspace{2cm}\cdot\psi_{k+3}\psi_{k+2}\psi_{k+1}\psi_{k-2}\psi_{k}\psi_{k-1}\Psichaind{k-3}{m+2}\vtp
    \end{rcases}
    =0\text{ by \cref{standard-y}}\\
    &=0
\end{align*}
\end{itemize}

\item[iv)] $k+3\in\Leg(\ttt)$, $k+2\in\Arm(\ttt)$ and $a\ge m+2$:

\begin{itemize}
    \item[a)] $m=x=k-3$:
    
    \begin{align*}
    \psi_k\vtt&=\psi^*(\psi_k\psi_{k+1}\psi_k)\psi_{k-1}\Psi_{k-3}\Psichaind{y_1}{k-1}\Psichaind{y_2}{k+1}\underbrace{\Psichaind{y_3}{k+3}\dots\Psichaind{y_d}{a(-1)}(\psi_{\ell-1}\dots\psi_a)z^\la}_{=\vtp}\\
    &=\psi^*(\psi_{k+1}\psi_k\psi_{k+1})\psi_{k-1}\Psi_{k-3}\Psichaind{y_1}{k-1}\Psichaind{y_2}{k+1}\vtp\\
    &=\psi^*\psi_{k+1}\psi_k\Psichaind{y_1}{k+5}\psi_{k-1}\Psi_{k-3}\psi_{k+1}\Psichaind{k+3}{k+1}\Psi_{k-1}\Psichaind{y_2}{k+1}\vtp\\
    &=\psi^*\psi_{k+1}\psi_k\Psichaind{y_1}{k+5}\psi_{k-1}\Psi_{k-3}\psi_{k+1}\Psi_{k-1}\Psichaind{y_2}{k+1}\vtp\text{ by \cref{handy} case $B$}\\
    &=\psi^*\psi_{k+1}\psi_k\Psichaind{y_1}{k+5}\psi_{k+1}\psi_{k-1}\Psi_{k-3}\Psi_{k-1}\Psichaind{y_2}{k+1}\vtp\\
    &=\psi^*\psi_{k+1}\psi_k\Psichaind{y_1}{k+5}\psi_{k+1}\psi_{k-1}\Psichaind{y_2}{k+1}\vtp \text{ by \cref{handy} case $C$}\\
    &=\psi^*\psi_{k+1}\psi_k\Psichaind{y_1}{k+5}\Psichaind{y_2}{k+5}\psi_{k-1}\psi_{k+1}\Psichaind{k+3}{k+1}\vtp\\
    &=\psi^*\psi_{k+1}\psi_k\Psichaind{y_1}{k+5}\Psichaind{y_2}{k+5}\psi_{k-1}\psi_{k+1}\vtp \text{ by \cref{handy} case $B$}\\
    &=0,
    \end{align*}
    as $\psi_{k-1}$ commutes with every $\psi$ term in $\vtp$.
    
  \item[b)] $m\le x \le k-5$:
\begin{align*}
    \psi_k\vtt&=\psi^*(\psi_k\psi_{k+1}\psi_k)\psi_{k-1}\dots\psi_{x+2}\Psichaind{x}{m}\Psichaind{y_1}{m+2}\\
    &\hspace{3.5cm}\cdot\underbrace{\Psichaind{y_2}{m+4}\Psichaind{y_3}{m+6}\dots\Psichaind{y_d}{a(-1)}(\psi_{\ell-1}\dots\psi_a)z^\la}_{=\vtp}\\
    &=\psi^*(\psi_{k+1}\psi_k\psi_{k+1})\psi_{k-1}\dots\psi_{x+2}\Psichaind{x}{m}\Psichaind{y}{m+2}\vtp\\
    &=\psi^*\Psichaind{y}{k+5}\psi_{k+1}\psi_k\psi_{k-1}\dots\psi_{x+2}\Psichaind{x}{m}\psi_{k+1}\Psi_{k+3}\Psi_{k+1}\Psichaind{k-1}{m+2}\vtp\\
    &=\psi^*\Psichaind{y}{k+5}\psi_{k+1}\psi_k\psi_{k-1}\dots\psi_{x+2}\Psichaind{x}{m}\psi_{k+1}\Psichaind{k-1}{m+2}\vtp\text{ by \cref{handy} case $B$}\\
   &=\psi^*\Psichaind{y}{k+5}\psi_{k+1}\psi_k(\psi_{k-1}\psi_{k-2}\psi_{k-1})\psi_{k-3}\dots\psi_{x+2}\\
   &\hspace{4cm}\cdot\Psichaind{x}{m}\psi_{k+1}\psi_k\psi_{k-2}\psi_{k-1}\Psichaind{k-3}{m+2}\vtp\\
    &=\psi^*\Psichaind{y}{k+5}\psi_{k+1}\psi_k\psi_{k-2}\psi_{k-1}\\
    &\hspace{1cm}\underbrace{\psi_{k-2}\psi_{k-3}\dots\psi_{x+2}\Psichaind{x}{m}\psi_{k+1}\psi_k\psi_{k-2}\psi_{k-1}\Psichaind{k-3}{m+2}\vtp}_{=0\text{ by \cref{pre-even} case $4$}}\\
    &\begin{rcases}
    +\psi^*\Psichaind{y}{k+5}\psi_{k+1}\psi_k(y_{k-2})\psi_{k-3}\dots\psi_{x+2}\Psichaind{x}{m}\\
    \hspace{4cm}\cdot\psi_{k+1}\psi_k\psi_{k-2}\psi_{k-1}\Psichaind{k-3}{m+2}\vtp\\
    -2\psi^*\Psichaind{y}{k+5}\psi_{k+1}\psi_k(y_{k-1})\psi_{k-3}\dots\psi_{x+2}\Psichaind{x}{m}\\
    \hspace{4cm}\cdot\psi_{k+1}\psi_k\psi_{k-2}\psi_{k-1}\Psichaind{k-3}{m+2}\vtp\\
    +\psi^*\Psichaind{y}{k+5}\psi_{k+1}\psi_k(y_k)\psi_{k-1}\dots\psi_{x+2}\Psichaind{x}{m}\\
    \hspace{4cm}\cdot\psi_{k+1}\psi_k\psi_{k-2}\psi_{k-1}\Psichaind{k-3}{m+2}\vtp
    \end{rcases}
    =0\text{ by \cref{standard-y}}\\
    &=0.
    \qedhere
    \end{align*}
\end{itemize}
\end{itemize}
\end{answer}
\end{proof}

\subsection{Three consecutive elements in $\Leg(\ttt)$ or $\Arm(\ttt)$}

From now on, we will be working with \cref{form2}.
In order to fully describe the action of the $\psi$-generators on the basis elements $\vtt$, we still need to consider the cases when the first element in $\vtt$ that $\psi_k$ does not commute with is $\psi_{k-1}$ or $\psi_{k+1}$.
We will do this in the next two statements where we also give examples of the types of tableaux corresponding to such expressions.

\begin{lem}\label{halfdom-arm}
Let $k$ be odd and $\ttt,\ttt'\in\std(\la)$ with corresponding $\vtt,\vtp$.
\begin{itemize}
    \item If $\vtt=\psi^{*}\psi_{k-1}\psi_{k-2}(\psi_{k-3}\dots\psi_{x+2})\Psichaind{x}{m-2}\Psichaind{k}{m}\Psichaind{y_1}{m+2}\vtp$ where $x\le k-4$ and $y_1\ge k+2$, then $\psi_k\vtt=\psi^{*}\psi_{k-2}\psi_{k-1}\psi_k\Psichaind{x}{m-2}\Psichaind{k-2}{m}\vtp$.
    \item If $\vtt=\psi^*\psi_{k-1}\psi_{k}\psi_{k+1}\psi_{k+2}\Psichaind{k-2}{m-2}\Psichaind{k}{m}\Psichaind{y_1}{m+2}\vtp$ where $y_1\ge k+4$, then $\psi_k\vtt=\psi^*\psi_{k+2}\Psichaind{k-2}{m-2}\Psichaind{k}{m}\Psichaind{y_1}{m+2}\vtp$.
\end{itemize}
\end{lem}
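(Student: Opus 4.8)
The plan is to treat the two items separately but by the single mechanism that runs through all of \cref{hookaction}: first commute $\psi_k$ rightwards past the leading block $\psi^*$ (legitimate by the defining property of $\psi^*$), then expand the relevant $\Psi$-terms into their four $\psi$-factors using \cref{doms}, and finally use the KLR braid relation from \cref{relations} together with the commutation rules of \cref{commuting} to manoeuvre a triple $\psi_k\psi_{k-1}\psi_k$ into position. Every such braid move rewrites $\psi_k\psi_{k-1}\psi_k$ as $\psi_{k-1}\psi_k\psi_{k-1}$ plus a correction $-(y_{k-1}-2y_k+y_{k+1})$, and under the residue constraints that the pairs straddling $k$ impose, this correction is annihilated by \cref{standard-y}. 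This is exactly the pattern already executed in \cref{handy} and \cref{even}, so I would cite those results---and in particular the two internal identities $\Psi_k\psi_{k+2}\vs=0$ and $\Psi_k\psi_{k-2}\vs=0$ proved inside \cref{handy}---rather than reprove them.

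For the second item the braid is immediate: after passing $\psi^*$ we have $\psi_k\psi_{k-1}\psi_k\psi_{k+1}\psi_{k+2}W$ with $W=\Psichaind{k-2}{m-2}\Psichaind{k}{m}\Psichaind{y_1}{m+2}\vtp$, and the first three factors already form the triple. Braiding and dropping the $y$-corrections by \cref{standard-y}, the aim is to push the surplus $\psi_{k-1}$ and $\psi_k$ to the right, collapse them against the leading $\Psi_k$ and $\Psi_{k-2}$ of $W$ using the second and third cases of \cref{handy} and the identities $\Psi_k\psi_{k\pm2}\vs=0$, and show that everything cancels except a single surviving $\psi_{k+2}$, yielding $\psi^*\psi_{k+2}W$.

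The first item is the harder one, and I expect the real obstacle to sit here. Now $\psi_k$ meets the decreasing chain $\psi_{k-1}\psi_{k-2}\cdots\psi_{x+2}$ and, since $x\le k-4$, no $\psi_k$ is adjacent to braid against. The key idea is to import a $\psi_k$ from the chain $\Psichaind{k}{m}$, whose leading term is $\Psi_k=\psi_k\psi_{k+1}\psi_{k-1}\psi_k$: by \cref{commuting} both the $\psi_k$ and the $\psi_{k+1}$ of this factor commute leftwards through the whole intervening block $\psi_{k-2}\cdots\psi_{x+2}\Psichaind{x}{m-2}$ (whose indices are all at most $k-2$, respectively at most $x\le k-4$). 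This delivers a $\psi_k$ against the leading $\psi_k\psi_{k-1}$, creating the triple; after braiding, clearing $y$-corrections and re-commuting, the front should reorganise into $\psi_{k-2}\psi_{k-1}\psi_k$. The remaining and fiddliest step is to show that the high chain $\Psichaind{y_1}{m+2}$ ($y_1\ge k+2$) is absorbed: since every index occurring in it exceeds $k$, I would peel it off one $\Psi$ at a time from the right using the absorption $\psi_k\Psi_{k+2}\Psi_k\vs=\psi_k\vs$ of \cref{handy} together with $\Psi_k\psi_{k+2}\vs=0$, which is what simultaneously reduces $\Psichaind{k}{m}$ to $\Psichaind{k-2}{m}$ and removes $\Psichaind{y_1}{m+2}$.

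The main difficulties are thus organisational rather than conceptual: verifying that the residues of the pairs around $k$ force every braid correction into the $y$-form killed by \cref{standard-y} rather than the constant form; tracking the numerous commutations through the nested $\Psi$-chains carefully enough to expose the precise configurations $\Psi_{k+2}\Psi_k$, $\Psi_{k-2}\Psi_k$ and $\Psi_k\psi_{k\pm2}$ to which \cref{handy} applies; and confirming that the absorption of $\Psichaind{y_1}{m+2}$ terminates in exactly the stated single-term answer. Should the absorption require it, I would set up an induction on the number of $\Psi$-terms in $\vtp$ and run it for both items at once, mirroring the simultaneous induction on $\Psi_k\psi_{k+2}\vs=0$ and $\Psi_k\psi_{k-2}\vs=0$ in the proof of \cref{handy}.
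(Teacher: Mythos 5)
Your scaffolding --- commute $\psi_k$ through $\psi^{*}$, and (for the first item) import the missing $\psi_k$ by extracting the leading $\Psi_k$ from $\Psichaind{k}{m}$ and commuting it leftwards so that a triple $\psi_k\psi_{k-1}\psi_k$ forms --- is exactly how the paper's proof begins. The genuine gap is in what you do with the braid relation afterwards. You assert that the whole correction $-(y_{k-1}-2y_k+y_{k+1})$ is annihilated by \cref{standard-y}, and that the stated answer is then to be recovered from the braided word $\psi_{k-1}\psi_k\psi_{k-1}\cdots$ by pushing the surplus factors rightwards and collapsing them with \cref{handy}. The paper distributes the labour in precisely the opposite way: the braided word is identically zero, because after the commutations its trailing even-index factor $\psi_{k-1}$ annihilates the vector to its right by \cref{even}; only the $y_{k-1}$ and $y_k$ pieces of the correction die by \cref{standard-y}, while the $y_{k+1}$ piece survives and is the sole source of the answer. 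Indeed, commuting it past $\psi_{k+1}$ via $y_{k+1}\psi_{k+1}e(\bi)=(\psi_{k+1}y_{k+2}-\delta_{i_{k+1},i_{k+2}})e(\bi)$, the $y_{k+2}$ term dies on $\vs$ by \cref{standard-y} and the $\delta$-term yields $\psi^{*}\psi_{k-2}\psi_{k-1}\psi_k\vs$ (first item), resp.\ $\psi^{*}(\psi_x\dots\psi_{k+3})\psi_{k+2}\vs$ (second item). Under your bookkeeping every term on the right-hand side is zero, so your argument, carried through correctly, would prove $\psi_k\vtt=0$, contradicting the very statement you are trying to establish.

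A secondary misdirection: nothing needs to be ``absorbed''. The chain $\Psichaind{y_1}{m+2}$ is never touched; it simply rides along inside the vector $\vs$ on which everything acts, and the reduction $\Psichaind{k}{m}\to\Psichaind{k-2}{m}$ happens at the very first step, when $\Psi_k$ is pulled out of that chain and consumed by the braid move. Consequently \cref{handy}, its internal identities $\Psi_k\psi_{k+2}\vs=0$ and $\Psi_k\psi_{k-2}\vs=0$, and the proposed induction on the number of $\Psi$-terms are not needed at all: the paper's proof is a single direct computation, split into the two residue cases $i_{k-1}=i_k=0$ and $i_{k-1}=i_k=1$ (which is what distinguishes the two items of the lemma), resting only on \cref{even}, \cref{standard-y} and the defining relations.
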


\begin{lem}\label{halfdom-leg}
Let $k$ be odd $\ttt,\ttt'\in\std(\la)$ with corresponding $\vtt,\vtp$.
\begin{itemize}
    \item If $\vtt=\psi^*\psi_{k+1}\psi_k\psi_{k-1}\psi_{k-2}(\psi_{k-3}\dots\psi_{x+2})\Psichaind{x}{m}\Psichaind{y_1}{m+2}\vtp$ where $x\le k-4$ and $y_1\ge k+2$, then $\psi_k\vtt=\psi^{*}\psi_{k-2}(\psi_{k-3}\dots\psi_{x+2})\Psichaind{x}{m}\Psichaind{y_1}{m+2}\vtp$.
    \item If $\vtt=\psi^*\psi_{k+1}\psi_{k+2}\Psichaind{k}{m}\Psichaind{y_1}{m+2}\vtp$ where $y_1\ge k+4$, then $\psi_k\vtt=\psi^*\psi_{k+2}\psi_{k+1}\Psichaind{k-2}{m}\Psichaind{y_1}{m+2}\vtp$.
\end{itemize}
\end{lem}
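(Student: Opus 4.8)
The plan is to prove both identities by direct manipulation inside $R_n^\La$, mirroring the inductive computations in the proofs of \cref{handy} and \cref{even}. In each case I first commute $\psi_k$ through the prefix $\psi^*$, which is legitimate by the very definition of $\psi^*$, so that $\psi_k$ stands immediately to the left of the first factor it does not commute with, namely $\psi_{k+1}$. The engine of both computations is the braid relation together with the quadratic relation. Since $k$ is odd, the relevant residues satisfy $i_k=i_{k+2}\neq i_{k+1}$, exactly as in the proof of \cref{handy}, so that $\psi_k\psi_{k+1}\psi_k=\psi_{k+1}\psi_k\psi_{k+1}+(y_k-2y_{k+1}+y_{k+2})$, and similarly $\psi_{k+1}\psi_{k+2}\psi_{k+1}=\psi_{k+2}\psi_{k+1}\psi_{k+2}+(y_{k+1}-2y_{k+2}+y_{k+3})$ whenever such a triple is produced. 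At every stage the $y$-corrections are pushed to the right past the commuting factors (using \cref{commuting}) and are killed by \cref{standard-y} and \cref{pre-even}, because the tableaux that would survive fail the adjacency and residue conditions needed for $y_r$ to act nontrivially.

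For the first part I would prepend $\psi_k$ to obtain the leading block $\psi_k\psi_{k+1}\psi_k$ and apply the braid relation. In the surviving term $\psi^*\psi_{k+1}\psi_k\psi_{k+1}\psi_{k-1}\psi_{k-2}(\psi_{k-3}\dots\psi_{x+2})\Psichaind{x}{m}\Psichaind{y_1}{m+2}\vtp$ the rightmost $\psi_{k+1}$ then slides to the right by \cref{commuting}, since it commutes with every $\psi_j$ of index at most $k-1$ and with the whole chain $\Psichaind{x}{m}$ (as $x\le k-4$), until it meets the sub-chain $\Psi_{k+2}\Psi_k$ sitting inside $\Psichaind{y_1}{m+2}$ (here one uses $m+2\le k-2$ and $y_1\ge k+2$, so that both $\Psi_{k+2}$ and $\Psi_k$ occur consecutively, and $\psi_{k+1}$ commutes past the top of the chain down to $\Psi_{k+2}$). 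Evaluating this interaction together with the leading $\psi_{k+1}\psi_k\psi_{k+1}$, via \cref{pre-even}, \cref{even} and the reductions \cref{handy}(2)--(3), should collapse the $\Psi_k$ and cancel the leading block, leaving precisely $\psi^*\psi_{k-2}(\psi_{k-3}\dots\psi_{x+2})\Psichaind{x}{m}\Psichaind{y_1}{m+2}\vtp$.

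For the second part the $\Psi_k$ to be absorbed is the top factor of $\Psichaind{k}{m}=\Psi_k\Psichaind{k-2}{m}$. After commuting $\psi_{k+2}$ past $\psi_k$ and braiding at $k$ and then at $k+1$, the main term is reduced, again modulo $y$-corrections killed by \cref{standard-y}, to a shape in which \cref{handy}(2) removes the $\Psi_k$; the net effect is to replace $\psi_{k+1}\psi_{k+2}\Psichaind{k}{m}$ by $\psi_{k+2}\psi_{k+1}\Psichaind{k-2}{m}$, with the tail $\Psichaind{y_1}{m+2}$ (where $y_1\ge k+4$) untouched, which is exactly the claimed output.

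I expect the real obstacle to be the propagation of the freed even-indexed generator — $\psi_{k+1}$ in the first part and $\psi_{k+2}$ in the second — through the chain $\Psichaind{y_1}{m+2}$ and its interaction with the embedded $\Psi_{k+2}\Psi_k$: there is no stand-alone even-index analogue of \cref{handy}, so this has to be unwound by hand. The cleanest route is a simultaneous induction on the number of $\Psi$ terms in the tail, as in the proof of \cref{handy}, verifying at each stage that the intermediate vectors stay in the normal form \cref{form2} so that \cref{standard-y}, \cref{pre-even} and \cref{even} continue to apply. Keeping the index bookkeeping consistent across the two chains $\Psichaind{x}{m}$ and $\Psichaind{y_1}{m+2}$ — in particular tracking which endpoint of each chain is affected — is where the genuine care is needed.
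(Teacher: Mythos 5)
Your proposal inverts the roles of the two terms produced by the deformed braid relation, and this inversion is fatal. Take the first case and write $\vs=(\psi_{k-3}\dots\psi_{x+2})\Psichaind{x}{m}\Psichaind{y_1}{m+2}\vtp$, so that $\psi_k\vtt=\psi^*(\psi_k\psi_{k+1}\psi_k)\psi_{k-1}\psi_{k-2}\vs$. After applying $\psi_k\psi_{k+1}\psi_k=\psi_{k+1}\psi_k\psi_{k+1}+(y_k-2y_{k+1}+y_{k+2})$, the braid-product term is the one that \emph{dies}, not the one that survives: its rightmost $\psi_{k+1}$ commutes past $\psi_{k-1}\psi_{k-2}$ and every other factor of index at most $k-1$, and then annihilates $\vs$ by \cref{even}, because in the tableau of $\vs$ the entries $k+1$ and $k+2$ occupy adjacent leg positions. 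So the term you designate as ``surviving'', whose $\psi_{k+1}$ you plan to propagate through $\Psichaind{y_1}{m+2}$ until it meets $\Psi_{k+2}\Psi_k$ and is resolved by \cref{handy}(2)--(3), vanishes before any of that analysis can begin; there is no interaction to evaluate, and the obstacle you flag as the ``genuine'' difficulty is a phantom.

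Conversely, the $y$-corrections do not all die, and this is where the right-hand side actually comes from. Only $y_{k+1}$ and $y_{k+2}$ commute through $\psi_{k-1}\psi_{k-2}$ and are killed by \cref{standard-y}. The term $y_k$ cannot be ``pushed to the right past the commuting factors'': \cref{commuting} says nothing about $y_k\psi_{k-1}$, and in fact $\psi_{k-1}\psi_{k-2}\vs$ is itself a standard basis vector whose tableau has $i_{k-1}=i_k$ with $k-1$ in the leg and $k$ in the arm, so \cref{standard-y} gives $y_k\,\psi_{k-1}\psi_{k-2}\vs=\psi_{k-2}\vs\neq 0$, which is precisely the claimed image. (In the second case the same mechanism applies after expanding $\Psi_k$ out of $\Psichaind{k}{m}$: the surviving term is $y_k\psi_{k+2}\psi_{k-1}\psi_{k+1}\psi_k\vs=\psi_{k+2}\psi_{k+1}\psi_k\vs$.) Executed as written --- braid term kept, all $y$-terms discarded --- your plan yields $\psi_k\vtt=0+0=0$, contradicting the lemma and the remark following it, which exhibits the nonzero image explicitly. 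Note also that in the second case an honest computation produces $\psi^*\psi_{k+2}\psi_{k+1}\psi_k\Psichaind{k-2}{m}\Psichaind{y_1}{m+2}\vtp$, i.e.\ with an extra $\psi_k$ (one checks directly that $\psi^*\psi_{k+2}\psi_{k+1}\Psichaind{k-2}{m}\Psichaind{y_1}{m+2}\vtp$ is zero, since $\psi_{k+1}$ hits a vector in which $k+1,k+2$ are adjacent), so the formula you claim to recover ``exactly'' is not what the algebra gives; this is further evidence that the proposed route was not actually carried through.
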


\begin{rmk}
Observe that \cref{halfdom-arm} can be visualised the following way: if $k-1\in\Leg(\ttt)$, $k,k+1,k+2\in\Arm(\ttt)$ and $i_{k-1}=i_k$, then $\psi_k\vtt=\vs$ where $\tts$ is the tableau with $k-1,k,k+1\in\Arm(\tts),k+2\in\Leg(\tts)$ and outside of these entries it agrees with $\ttt$.

Similarly, \cref{halfdom-leg} corresponds to the case when $k-1,k,k+1\in\Leg(\ttt), k+2\in\Arm(\ttt)$ and $i_{k+1}=i_{k+2}$. Then $\psi_k\vtt=\vs$ where $\tts$ is the tableau with $k,k+1,k+2\in\Leg(\tts),k-1\in\Arm(\tts)$ and outside of these entries it agrees with $\ttt$.
\end{rmk}

\begin{eg}
Let $\la=(8,1^6)$ and take
\[
\ttt=
\begin{array}{l}
\qquad
\begin{tikzpicture}[scale=0.5,draw/.append style={thick,black}]
     \fill[magenta!40](3.5,-0.5)rectangle(6.5,0.5);
      \fill[yellow!50](0.5,-5.5)rectangle(1.5,-2.5);
  \newcount\col
  \foreach\Row/\row in {{1,2,3,9,10,11,12,14}/0,{4}/-1,{5}/-2,{6}/-3,{7}/-4,{8}/-5,{13}/-6}{
     \col=1
    \foreach\k in \Row {
        \draw(\the\col,\row)+(-.5,-.5)rectangle++(.5,.5);
        \draw(\the\col,\row)node{\k};
    \global\advance\col by 1
     }
  }
    \draw (2,1) node[red] {\scriptsize$1$};
     \draw (3,1) node[red] {\scriptsize$0$};
      \draw (4,1) node[red] {\scriptsize$1$};
    \draw (5,1) node[red] {\scriptsize$0$};
    \draw (6,1) node[red] {\scriptsize$1$};
     \draw (7,1) node[red] {\scriptsize$0$};
    \draw (8,1) node[red] {\scriptsize$1$};
     \draw (0,-1) node[red] {\scriptsize$1$};
    \draw (0,-2) node[red] {\scriptsize$0$};
  \draw (0,-3) node[red] {\scriptsize$1$};
        \draw (0,-4) node[red] {\scriptsize$0$};   
       \draw (0,-5) node[red] {\scriptsize$1$};
        \draw (0,-6) node[red] {\scriptsize$0$};              
\end{tikzpicture}
\end{array}
v_{\ttt}=
\psi_{8}\psi_{9}\psi_{10}\psi_{11}
\Psichaind{7}{5}
\Psichaind{9}{7}
\psi_{13}\psi_{12}\dots\psi_{8}
z^\la.
\]
(Above, we added the corresponding residue next to each node for the reader's convenience.)
Here we have that $i_8=i_9=1$. Then
\[
\tts=
\begin{array}{l}
\qquad
\begin{tikzpicture}[scale=0.5,draw/.append style={thick,black}]
     \fill[blue!30](3.5,-0.5)rectangle(6.5,0.5);
      \fill[blue!30](0.5,-5.5)rectangle(1.5,-4.5);
  \newcount\col
  \foreach\Row/\row in {{1,2,3,8,9,10,12,14}/0,{4}/-1,{5}/-2,{6}/-3,{7}/-4,{11}/-5,{13}/-6}{
     \col=1
    \foreach\k in \Row {
        \draw(\the\col,\row)+(-.5,-.5)rectangle++(.5,.5);
        \draw(\the\col,\row)node{\k};
    \global\advance\col by 1
     }
  }             
\end{tikzpicture}
\end{array}
\vs=\psi_9v_{\ttt}=
\psi_{11}
\Psichaind{7}{5}
\Psichaind{9}{7}
\psi_{13}\psi_{12}\dots\psi_{8}
z^\la.
\]

Similarly,
\[
\ttp=
\begin{array}{l}
\qquad
\begin{tikzpicture}[scale=0.5,draw/.append style={thick,black}]
     \fill[blue!30](3.5,-0.5)rectangle(4.5,0.5);
      \fill[blue!30](0.5,-5.5)rectangle(1.5,-2.5);
  \newcount\col
  \foreach\Row/\row in {{1,2,3,6,10,11,12,14}/0,{4}/-1,{5}/-2,{7}/-3,{8}/-4,{9}/-5,{13}/-6}{
     \col=1
    \foreach\k in \Row {
        \draw(\the\col,\row)+(-.5,-.5)rectangle++(.5,.5);
        \draw(\the\col,\row)node{\k};
    \global\advance\col by 1
     }
  }        
\end{tikzpicture}
\end{array}
v_\ttp=\psi_7v_{\ttt}=
\psi_{9}\psi_8\psi_7\psi_{10}\psi_{11}
\Psi_5
\Psichaind{9}{7}
\psi_{13}\psi_{12}\dots\psi_{8}
z^\la.
\]

\end{eg}

\begin{proof}[Proof of \cref{halfdom-arm}]
We have two options: $i_{k-1}=0=i_k$ or $i_{k-1}=1=i_k$.
Fix some odd $m<a$.

\underline{Case $1$: $i_{k-1}=0=i_k$} 
Assume that $\ttt(1,m)=k$. In this case $\vtt=\psi^{*}\psi_{k-1}\psi_{k-2}\linebreak(\psi_{k-3}\dots\psi_{x+2})\Psichaind{x}{m-2}\Psichaind{k}{m}\Psichaind{y_1}{m+2}\dots \Psichaind{y_d}{a(-1)}(\psi_{\ell}\psi_{\ell-1}\dots\psi_a ) z^\la$ (here $x\le k-4$ and $y_1\ge k+2$).
\begin{align*}
\psi_k\vtt&=\psi_k\psi^{*}\psi_{k-1}\psi_{k-2}(\psi_{k-3}\dots\psi_{x+2})\Psichaind{x}{m-2}\Psichaind{k}{m}\Psichaind{y_1}{m+2}\dots \Psichaind{y_d}{a(-1)}( \psi_{\ell}\psi_{\ell-1}\dots\psi_a) z^\la \\
&=\psi^{*}\psi_k \psi_{k-1}\psi_{k-2}\Psi_k\underbrace{(\psi_{k-3}\dots\psi_{x+2})\Psichaind{x}{m-2}\Psichaind{k-2}{m}\Psichaind{y_1}{m+2}\dots \Psichaind{y_d}{a(-1)} ( \psi_{\ell}\psi_{\ell-1}\dots\psi_a )z^\la }_{\scriptsize\vs}\\
&=\psi^{*}\psi_k\psi_{k-1}\psi_{k-2}\psi_k\psi_{k-1}\psi_{k+1}\psi_k\vs\\
&=\psi^{*}(\psi_k\psi_{k-1}\psi_k)\psi_{k-2}\psi_{k-1}\psi_{k+1}\psi_k\vs\\
&=\psi^{*}(\psi_{k-1}\psi_{k}\underbrace{\psi_{k-1})\psi_{k-2}\psi_{k-1}\psi_{k+1}\psi_k\vs}_{=0\text{ by \cref{even}}}\\
&\begin{rcases}
-\psi^{*}(y_{k-1})\psi_{k-2}\psi_{k-1}\psi_{k+1}\psi_k\vs\\
+2\psi^{*}(y_{k})\psi_{k-2}\psi_{k-1}\psi_{k+1}\psi_k\vs\\
\end{rcases}
=0\text{ by \cref{standard-y}}\\
&-\psi^{*}(y_{k+1})\psi_{k-2}\psi_{k-1}\psi_{k+1}\psi_k\vs\\
&=\psi^{*}\psi_{k-2}\psi_{k-1}\psi_k\vs
\end{align*}

\underline{Case $2$: $i_{k-1}=1=i_k$}
Assume that $\ttt(1,m-3)=k$, then $\vtt=\psi^*\psi_{k-1}\psi_{k}\psi_{k+1}\linebreak\psi_{k+2}\Psichaind{k-2}{m-2}\Psichaind{k}{m}\Psichaind{y_1}{m+2}\dots \Psichaind{y_d}{a(-1)} (\psi_{\ell-1}\dots\psi_a) z^\la$ (here $y_1\ge k+4$ and $x<y_1$).
\nopagebreak
\begin{align*}
\psi_k\vtt&
=\psi_k\psi^{*} \psi_{k-1}\psi_k\psi_{k+1}(\psi_x\dots\psi_{k+3})\psi_{k+2}\underbrace{\Psichaind{k-2}{m-2}\Psichaind{k}{m}\Psichaind{y_1}{m+2}\dots \Psichaind{y_d}{a(-1)}(\psi_{\ell-1}\dots\psi_a )z^\la }_{\vs}\\
&=\psi^{*}(\psi_k\psi_{k-1}\psi_{k})\psi_{k+1}(\psi_x\dots\psi_{k+3})\psi_{k+2}\vs\\
&=\psi^{*}(\psi_{k-1}\psi_{k}\underbrace{\psi_{k-1})\psi_{k+1}(\psi_x\dots\psi_{k+3})\psi_{k+2}\vs}_{=0\text{ by \cref{even}}}\\
&\begin{rcases}
-\psi^{*}(y_{k-1})\psi_{k+1}(\psi_x\dots\psi_{k+3})\psi_{k+2}\vs\\
+2\psi^{*}(y_{k})\psi_{k+1}(\psi_x\dots\psi_{k+3})\psi_{k+2}\vs
\end{rcases}
=0\text{ by \cref{standard-y}}\\
&-\psi^{*}(y_{k+1})\psi_{k+1}(\psi_x\dots\psi_{k+3})\psi_{k+2}\vs\\
&=\psi^{*}(\psi_x\dots\psi_{k+3})\psi_{k+2}\vs
\end{align*}
Note that if $y_1=k+2$, we have that $\vtt=\psi^*\psi_{k-1}\psi_{k}\psi_{k+1}\psi_{k+2}\Psichaind{k-2}{m-2}\Psichaind{k}{m}\psi_{k+3}\psi_{k+4}\Psichaind{k+2}{m+2}\Psichaind{y_2}{m+4}\dots \Psichaind{y_d}{a(-1)} (\psi_{\ell-1}\dots\psi_a) z^\la$, but the proof is identical.
\end{proof}

\begin{proof}[Proof of \cref{halfdom-leg}]
Similar to the proof of \cref{halfdom-arm}.
Details can be included by switching the toggle in the {\tt arXiv} version of this paper.
\begin{answer}
We have two cases to consider: $i_{k+1}=0=i_{k+2}$ or $i_{k+1}=1=i_{k+2}$.

\underline{Case $1$: $i_{k+1}=0=i_{k+2}$} 
Suppose that $\ttt(1,m)=k+2$. In this case $\vtt=\psi^*\psi_{k+1}\psi_k\psi_{k-1}\psi_{k-2}\linebreak(\psi_{k-3}\dots\psi_{x+2})\Psichaind{x}{m}\Psichaind{y_1}{m+2}\dots  \Psichaind{y_d}{a(-1)}(\psi_{\ell-1}\dots\psi_a )z^\la$ (here $x\le k-4$ and $y_1\ge k+2$).
\begin{align*}
\psi_k\vtt&=\psi_k\psi^*\psi_{k+1}\psi_k\psi_{k-1}\psi_{k-2}
\underbrace{(\psi_{k-3}\dots\psi_{x+2})\Psichaind{x}{m}\Psichaind{y_1}{m+2}\dots  \Psichaind{y_d}{a(-1)}(\psi_{\ell-1}\dots\psi_a )z^\la }_{\scriptsize\vs}\\
&=\psi^*(\psi_k\psi_{k+1}\psi_k)\psi_{k-1}\psi_{k-2}\vs\\
&=\psi^*(\psi_{k+1}\psi_{k}\underbrace{\psi_{k+1})\psi_{k-1}\psi_{k-2}\vs}_{=0\text{ by \cref{even}}}\\
&+\psi^*(y_{k})\psi_{k-1}\psi_{k-2}\vs\\
&\begin{rcases}
-2\psi^*(y_{k+1})\psi_{k-1}\psi_{k-2}\vs\\
+\psi^*(y_{k+2})\psi_{k-1}\psi_{k-2}\vs
\end{rcases}
=0\text{ by \cref{standard-y}}\\
&=\psi^*\psi_{k-2}\vs
\end{align*}

\underline{Case $2$: $i_{k+1}=1=i_{k+2}$} 
Suppose that $\ttt(1,m-1)=k+2$, so $\vtt=\psi^*\psi_{k+1}\psi_{k+2}\Psichaind{k}{m}\Psichaind{y_1}{m+2}\dots \Psichaind{y_d}{a(-1)}(\psi_{\ell-1}\dots\psi_a) z^\la$ (here $y_1\ge k+4$).
\begin{align*}
\psi_k\vtt&=\psi_k\psi^{*}\psi_{k+1}\psi_{k+2}\Psichaind{k}{m}\Psichaind{y_1}{m+2}\dots \Psichaind{j_d}{a(-1)}(\psi_{\ell-1}\dots\psi_a )z^\la\\
&=\psi^{*}\psi_k \psi_{k+1}\psi_{k+2}\Psi_k\underbrace{\Psichaind{k-2}{m}\Psichaind{y_1}{m+2}\dots \Psichaind{j_d}{a(-1)}(\psi_{\ell-1}\dots\psi_a )z^\la}_{\scriptsize\vs}\\
&=\psi^{*}(\psi_k\psi_{k+1}\psi_k)\psi_{k+2}\psi_{k-1}\psi_{k+1}\psi_k\vs\\
&=\psi^{*}(\psi_{k+1}\psi_{k}\underbrace{\psi_{k+1}\psi_{k+2}\psi_{k-1}\psi_{k+1}\psi_k\vs}_{=0\text{ by \cref{even}}}\\
&+\psi^{*}(y_{k})\psi_{k+2}\psi_{k-1}\psi_{k+1}\psi_k\vs\\
&\begin{rcases}
-2\psi^{*}(y_{k+1})\psi_{k+2}\psi_{k-1}\psi_{k+1}\psi_k\vs\\
+\psi^{*}(y_{k+2})\psi_{k+2}\psi_{k-1}\psi_{k+1}\psi_k\vs
\end{rcases}
=0\text{ by \cref{standard-y}}\\
&=\psi^{*}\psi_{k+2}\psi_{k+1}\psi_k\vs
\end{align*}
Note that if $y_1=k+2$, we have $\vtt=\psi^*\psi_{k+1}\psi_{k+2}\Psichaind{k}{m}\psi_{k+3}\psi_{k+4}\Psichaind{k+2}{m+2}\Psichaind{y_2}{m+4}\dots \Psichaind{y_d}{a(-1)}(\psi_{\ell-1}\dots\psi_a) z^\la$, but the proof is identical.
\end{answer}
\end{proof}

\subsection{Two consecutive pairs in $\Leg(\ttt)$ or $\Arm(\ttt)$}

Now, we have all the tools necessary to prove the analogous statements of Corollaries $3.21$ and $3.25$ in \cite{thesis} for all $\ttt\in\std(\la)$.

For the rest of proofs in this section, we will be working with \cref{form2}.
To further simplify notation, we let $\chi_i$ denote the terms sandwiched between the $\Psichaind{y_i}{m_i}$-terms, that is $\chi_i$ corresponds to the individual $\psi$ terms between $\psi_{m_i}$ and $\psi_{y_i}$.
If $i=1$, $\chi_1$ denotes the first $\psi$-chain in $\vtt$ on the left.
For example (compare with \cref{form2}):
\begin{align}\label{short2}
\nonumber\vtt&=\psi_{y_1+1}\psi_{y_1+3}\psi_{y_1+2}\Psichaind{y_1}{m_1}\psi_{y_2+2}\Psichaind{y_2}{m_2}\Psichaind{y_3}{m_3}\psi_{y_4+1}\psi_{y_4+2}\Psichaind{y_4}{m_4}\vtp\\
&=\chi_1\Psichaind{y_1}{m_1}\chi_2\Psichaind{y_2}{m_2}\chi_3\Psichaind{y_3}{m_3}\chi_4\Psichaind{y_4}{m_4}\vtp,
\end{align}
where $\vtp$ corresponds to the rest of $\vtt$ on the right (it is in reduced form and shorter than $\vtt$).
Here we have
\begin{itemize}
    \item $\chi_1=\psi_{y_1+1}\psi_{y_1+3}\psi_{y_1+2}$;
    \item $\chi_2=\psi_{y_2+2}$;
    \item $\chi_3=\id$; and
    \item $\chi_4=\psi_{y_4+1}\psi_{y_4+2}$.
\end{itemize}

Unless $\chi_i=\id$, we define $\chi_i(\last)$ to be the leftmost $\psi$-term in $\chi_i$.
Note that $\chi_i(\last)=\psi_{y_i+2}$.
We also set $\chi_i(\first)=\psi_{y_i+1}$ ($\chi_i(\first)$ might not appear).
Also, consistently with this new notation, if $a$ is even, we write $\chi_\ell=\psi_{\ell-1}\dots\psi_a$.

The next proposition is the analogous version of \cite[Corollary 3.17]{thesis}.

\begin{prop}\label{3pairs}
Suppose $\ttt\in\std(\la)$ and $1\le k\le n-1$ is odd.
\begin{enumerate}
\item If $\tp_k,\tp_{k+2}\in\Leg(\ttt)$ and $\tp_{c}\in\Arm(\ttt)$ is minimal such that $c>k+2$ and the entries $k+3,\dots,c-1$ are all unpaired, then $\psi_k\vtt=\psi_k\vs$ where $\tts\in\std(\la)$ agrees with $\ttt$ outside of these three pairs, but has $\tp_k\in\Arm(\tts)$ and $\tp_{k+2},\tp_c\in\Leg(\tts)$.
\item If $\tp_{k},\tp_{k+2}\in\Arm(\ttt)$ and $\tp_c\in\Leg(\ttt)$ is maximal such that $c<k$ and the entries $c+1,\dots,k-2$ are all unpaired, then $\psi_k\vtt=\psi_k\vs$ where $\tts\in\std(\la)$ agrees with $\ttt$ outside of these three pairs, but has $\tp_{k+2}\in\Leg(\tts)$ and $\tp_c,\tp_k\in\Arm(\tts)$.
\end{enumerate}
\end{prop}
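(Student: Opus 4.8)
The plan is to prove (1) by a direct computation with $\vtt$ written in the normal form \cref{form2}/\cref{short2}, and to deduce (2) by the analogous mirror-image argument in which the roles of $\Arm(\ttt)$ and $\Leg(\ttt)$ -- equivalently of $\psi_{k+1}$ and $\psi_{k-1}$, and of \cref{halfdom-leg} and \cref{halfdom-arm} -- are interchanged. For (1), the hypothesis $\tp_k,\tp_{k+2}\in\Leg(\ttt)$ places the values $k-1,k,k+1,k+2$ in four consecutive leg nodes, so by \cref{handy}(1) the two leg pairs are produced in \cref{form2} by chains whose leftmost factors are $\Psi_k$ (for $\tp_k$) and $\Psi_{k+2}$ (for $\tp_{k+2}$); the arm pair $\tp_c$ contributes a further chain, and the intervening unpaired entries $k+3,\dots,c-1$ contribute only individual $\psi$-terms on indices $\ge k+3$. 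By \cref{commuting} every factor built from indices outside $\{k-2,\dots,k+2\}$ commutes with $\psi_k$, so all of these may be gathered into a leading $\psi^*$, and it remains to understand how $\psi_k$ acts on the block carrying $\Psi_k$, $\Psi_{k+2}$ and the chain realising $\tp_c$.

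The computation itself I would organise as an induction on the number of $\Psi$-terms standing to the right of $\Psi_k$ in $\vtp$, exactly as in the proof of \cref{handy}. Pushing $\psi_k$ to the right, the first factor it fails to commute with is the chain beginning with $\Psi_k$; feeding this into \cref{handy}(1)--(3) absorbs the $\Psi_k$-factor and detaches $\tp_k$ from the leg, after which the surviving $\psi_k$ propagates rightward to the chain of $\tp_c$ and, via the boundary moves recorded in \cref{halfdom-arm} and \cref{halfdom-leg}, drives $\tp_c$ down into the leg. The unpaired entries $k+3,\dots,c-1$ are crossed using \cref{commuting} together with the absorption of the factors $\Psi_{k+2}\Psi_k$ and $\Psi_{k-2}\Psi_k$ supplied by \cref{handy}(2)--(3), which is where the induction hypothesis is invoked; the base case $c=k+4$ (no intervening unpaired entries) is handled directly by a single application of \cref{handy} and the relevant half-dominance lemma. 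Crucially, every application of \cref{handy} returns an expression of the shape $\psi_k\psi^*(\cdots)$, so the outer $\psi_k$ is never consumed.

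The final step is to read off the resulting reduced word as the \cref{form2} expression for $\psi_k\vs$, where $\tts$ is the tableau agreeing with $\ttt$ away from the three pairs and having $\tp_k\in\Arm(\tts)$ and $\tp_{k+2},\tp_c\in\Leg(\tts)$; here one checks, using the minimality of $c$ and the standardness of $\ttt$, that $\tts\in\std(\la)$ and that the word obtained is genuinely the normal form of $\psi_k\vs$. I expect the principal obstacle to be precisely this bookkeeping. Unlike the half-dominance lemmas, where $\psi_k\vtt$ collapses to a single standard basis vector, here the two entries $k,k+1$ both lie in the leg with $i_k\ne i_{k+1}$, so $\psi_k$ cannot be cleared and the identity must be established as an equality of the \emph{same} non-reduced product $\psi_k\cdot(\text{word for }\vs)$ on both sides. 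Keeping the chain indices aligned through the $\Psi_k$- and $\Psi_{k\pm2}\Psi_k$-cancellations, tracking the scalar factors produced by \cref{handy}(1), and verifying at each step that the residue hypotheses needed to invoke \cref{handy}, \cref{standard-y} and the half-dominance lemmas hold -- all of which follow from the alternating residues along a hook -- is where the genuine work lies.
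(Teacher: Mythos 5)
Your overall plan --- write $\vtt$ in the normal form \cref{form2}, gather the factors commuting with $\psi_k$ into $\psi^*$ via \cref{commuting}, absorb the block carrying the two leg pairs using \cref{handy}, finish with the half-dominance lemmas, and obtain part (2) by the mirrored argument with \cref{handy}(3) and \cref{halfdom-arm} --- is the same skeleton as the paper's proof. But there is a genuine gap: you assume a single uniform structure for $\vtt$, in which the only factors not commuting with $\psi_k$ are $\Psi$-chains, and everything contributed by the unpaired entries $k+3,\dots,c-1$ lives on indices $\ge k+3$. That is true only when $\bi_{\tp_k}=(1,0)$. The paper's proof necessarily splits into six subcases according to $\bi_{\tp_k}$, $\bi_{\tp_c}$, and whether $\tp_c$ sits at the end of the arm. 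When $\bi_{\tp_k}=(0,1)$ (subcases d)--f)), the normal form begins with a chain of \emph{individual} $\psi$'s containing $\psi_{k+2}\psi_{k+1}\psi_k\psi_{k-1}\psi_{k-2}$; the first generator that $\psi_k$ fails to commute with is then $\psi_{k+1}$, and the surviving term arises not from transporting $\psi_k$ through a $\Psi$-chain but from the correction term $y_k$ in the $e=2$ braid relation $\psi_k\psi_{k+1}\psi_k=\psi_{k+1}\psi_k\psi_{k+1}+y_k-2y_{k+1}+y_{k+2}$: the braided term is killed by \cref{even}, two of the $y$-terms by \cref{standard-y}, and $y_k$ produces the answer. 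None of this is reachable by \cref{handy} alone, so the induction you describe proves the proposition only in the unshifted-residue case. The residue of $\tp_c$ matters as well: when $\tp_c$ lies at the end of the arm with $\bi_{\tp_c}=(0,1)$ one needs the separate computation of $\chi_h(\last)\chi_\ell$ against $z^\la$, whereas \cref{halfdom-leg} is what is needed when it does not.

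Two smaller points. First, your structural claim misreads \cref{form2}: in the normal form the $\Psi$-chains record pairs moved into the \emph{arm}, so two adjacent leg pairs appear as the consecutive leading factors $\Psi_{k+2}\Psi_k$ of a single chain $\Psichaind{k+2}{m_1}$ --- exactly the configuration of \cref{handy}(2) --- not as two chains headed by $\Psi_k$ and $\Psi_{k+2}$; and \cref{handy}(1) says nothing about this (it is the reverse normal form, introduced only later in the paper, whose chains record leg pairs). Second, \cref{handy}(1) applies when $\tp_k\in\Leg(\ttt)$ but $\tp_{k+2}\in\Arm(\ttt)$, which is excluded by the hypotheses of the proposition, and the conclusion $\psi_k\vtt=\psi_k\vs$ carries coefficient exactly $1$; if your computation ever invoked \cref{handy}(1) it would introduce a spurious factor $-2$, so ``tracking the scalar factors produced by \cref{handy}(1)'' signals a misidentification of which lemma does the work.
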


\begin{rem}
    In fact, the converse of \cref{3pairs} is also true: if there exists no $\tp_c$ satisfying cases 1 or 2, then $\psi_k\vtt=0$.
    We will explain the proof in the  remarks after \cref{4killing-1,4killing-2}.
\end{rem}

\begin{proof}
Throughout the proof we omit $\psi^*$ on the left (as usual $\psi^*$ denotes everything that $\psi_k$ commutes through on the left).
We also write
\[
\vtp=\chi_{h+1}\Psichaind{y_{h+1}}{m_{h+1}}\chi_{h+2}\Psichaind{y_{h+2}}{m_{h+2}}\dots\chi_{d}\Psichaind{y_{d}}{m_{d}}\chi_\ell z^\la.
\]

\underline{Case 1}

\begin{itemize}

    \item[a)] $\bi_{\tp_k}=(1,0)$, $\bi_{\tp_c}=(0,1)$ and $\tp_c$ appears at the end of the arm:
    
    We see that if $\ttt$ satisfies the conditions of the statement, then it has corresponding reduced expression
    \[
    \vtt=\chi_1\Psichaind{k+2}{m_1}(\chi_2\Psichaind{k+6}{m_2}\dots\chi_{h-1}\Psichaind{y_{h-1}}{m_{h-1}})\chi_{h}\Psichaind{y_h}{m_h}\chi_\ell z^\la,\\
    \]
   
where the indices $k+2, k+6, \dots, y_{h-1},y_h$ must be four apart by the minimality of $c$.
Hence we have that $y_{h-1}=k+2+4(h-2)$ and $y_h=k+2+4(h-1)= c-4$.
It is also clear that the first entry that $\psi_k$ does not commute with is $\Psi_{k+2}$ as $\tp_k,\tp_{k+2}$ are in the leg and $\bi_k=0$.
We also know that $\chi_i$ and $\chi_{i+1}$ commute if the corresponding $\Psi_{y_i}$ commute, that is $y_i\le y_{i+1}-4$.
Further, $\chi_1$ is always nonempty.
Then by \cref{handy}:
    
    \begin{align*}
\psi_k\vtt&=\psi_k\chi_1\Psichaind{k+2}{m_1}(\chi_2\Psichaind{k+6}{m_2}\dots\chi_{h-1}\Psichaind{k+2+4(h-2)}{m_{h-1}})\chi_{h}\Psichaind{c-4}{m_h}\chi_\ell z^\la\\
    &=\chi_1\psi_k\Psi_{k+2}\Psi_k\Psichaind{k-2}{m_1}(\chi_2\Psichaind{k+6}{m_2}\dots\chi_{h-1}\Psichaind{k+2+4(h-2)}{m_{h-1}})\chi_{h}\Psichaind{c-4}{m_h}\chi_\ell z^\la\\
    &=\chi_1\psi_k\Psichaind{k-2}{m_1}(\chi_2\Psichaind{k+6}{m_2}\dots\chi_{h-1}\Psichaind{k+2+4(h-2)}{m_{h-1}})\chi_{h}\Psichaind{c-4}{m_h}\chi_\ell z^\la\\
    &=\psi_k\Psichaind{k-2}{m_1}(\chi_2\chi_1\Psichaind{k+6}{m_2}\dots\chi_{h-1}\Psichaind{k+2+4(h-2)}{m_{h-1}})\chi_{h}\Psichaind{c-4}{m_h}\chi_\ell z^\la\\
    &=\psi_k\Psichaind{k-2}{m_1}(\underbrace{\chi_2\chi_1\Psi_{k+6}\Psi_{k+4}}_{\text{as $\chi_1$ and $\chi_2$ commute and $\chi_1(\last)=\psi_{k+4}$}}\Psichaind{k+2}{m_2}\dots\chi_{h-1}\Psichaind{k+2+4(h-2)}{m_{h-1}}\chi_{h}\Psichaind{c-4}{m_h}\chi_\ell z^\la\\
    &=\psi_k\Psichaind{k-2}{m_1}(\chi_2\chi_1\Psichaind{k+2}{m_2}\dots\chi_{h-1}\Psichaind{k+2+4(h-2)}{m_{h-1}})\chi_{h}\Psichaind{c-4}{m_h}\chi_\ell z^\la\\
    &\;\,\vdots\\
    &=\psi_k\Psichaind{k-2}{m_1}(\chi_1\Psichaind{k+2}{m_2}\chi_2\Psichaind{k+6}{m_3}\dots\chi_{h-2}\Psichaind{k+2+4(h-3)}{m_{h-1}})\chi_{h-1}\chi_h\Psichaind{c-4}{m_h+1}\chi_\ell z^\la\\
&=\psi_k\Psichaind{k-2}{m_1}\chi_1(\Psichaind{k+2}{m_2}\chi_2\dots\chi_{h-2}\Psichaind{k+2+4(h-3)}{m_{h-1}})\chi_{h-1}\Psichaind{c-8}{m_h}\chi_h\chi_\ell z^\la.
\end{align*}
    
We must have $\chi_h(\last)=\psi_{c-2}$, otherwise we cannot have $\tp_c\in\Arm(\ttt)$.
We will write $\chi_h=\widehat{\chi}_h\chi_h(\last)$.
Then
\begin{align*}
\chi_h(\last)\chi_\ell&=\psi_{c-2}\psi_{c-1}\psi_{c-2}\psi_{c-3}\dots\psi_az^\la  \\
&=\psi_{c-2}\psi_{c-1}\psi_{c-2}\psi_{c-3}\dots\psi_az^\la  \\
&=(\psi_{c-1}\psi_{c-2}\psi_{c-1})\psi_{c-3}\dots\psi_az^\la=0  \\
&+(y_{c-2})\psi_{c-3}\psi_{c-4}\dots\psi_az^\la  \\
&-2(y_{c-1})\psi_{c-3}\psi_{c-4}\dots\psi_az^\la =0 \\
&+(y_{c})\psi_{c-3}\psi_{c-4}\dots\psi_az^\la =0 \\
&=\psi_{c-4}\psi_{c-5}\dots\psi_az^\la,
\end{align*}

hence
\[
\psi_k\vtt=\psi_k\Psichaind{k-2}{m_1}\chi_1(\Psichaind{k+2}{m_2}\chi_2\dots\chi_{h-2}\Psichaind{k+2+4(h-3)}{m_{h-1}})\chi_{h-1}\Psichaind{c-8}{m_h}\widehat{\chi}_h\psi_{c-4}\psi_{c-5}\dots\psi_az^\la
\]
as required.

   \item[b)] $\bi_{\tp_k}=(1,0),\bi_{\tp_c}=(0,1)$ and $\tp_c$ is not at the end of the arm:
   
   We have that
   \[
   \vtt=\psi_k\chi_1\Psichaind{k+2}{m_1}(\chi_2\Psichaind{k+6}{m_2}\dots\chi_{h-2}\Psichaind{y_{h-3}}{m_{h-2}}\chi_{h-1}\Psichaind{y_{h-1}}{m_{h-1}})\chi_{h}\Psichaind{y_h}{m_h}\vtp,
   \]
   
   where $k+2,k+6,\dots,y_{h-1}$ are all four apart.
   We also have that $y_{h-1}=k+2+4(h-2)=c-4$ and $y_h=c-2$, thus $\chi_{h-1}$ and $\chi_h$ do not commute.
   Then as before, we have that
    \begin{align*}
\psi_k\vtt&=\psi_k\chi_1\Psichaind{k+2}{m_1}(\chi_2\Psichaind{k+6}{m_2}\dots\chi_{h-2}\Psichaind{k+2+4(h-3)}{m_{h-2}}\chi_{h-1}\Psichaind{c-4}{m_{h-1}})\chi_{h}\Psichaind{c-2}{m_h}\vtp\\
&\;\,\vdots\\
&=\psi_k\Psichaind{k-2}{m_1}\chi_1(\Psichaind{k+2}{m_2}\chi_2\dots\chi_{h-3}\Psichaind{k+2+4(h-4)}{m_{h-2}}\chi_{h-2}\Psichaind{k+2+4(h-3)}{m_{h-1}})\chi_{h-1}\chi_h\Psichaind{c-2}{m_h}\vtp.
\end{align*}

   Due to the conditions on $\ttt$ we must have that $\chi_h=\psi_{c-1}(\psi_x\dots\psi_{c+1})\psi_c$ where $x\ge c$, otherwise $\tp_c$ cannot be in the arm.
   Hence
   \begin{align*}
   \chi_{h-1}(\last)\chi_h\Psichaind{c-2}{m_h}&=\psi_{c-2}\psi_{c-1}(\psi_x\dots\psi_{c+1})\psi_c\Psichaind{c-2}{m_h}\vtp\\
   &=(\psi_x\dots\psi_{c+1})\psi_c\psi_{c-1}\psi_{c-2}\Psichaind{c-4}{m_h}\vtp
   \end{align*}
  by \cref{halfdom-leg}, so
   \[
   \psi_k\vtt=\psi_k\Psichaind{k-2}{m_1}\chi_1(\Psichaind{k+2}{m_2}\chi_2\dots\chi_{h-2}\Psichaind{k+2+4(h-3)}{m_{h-1}})\widehat{\chi}_{h-1}(\psi_x\dots\psi_{c+1})\psi_c\psi_{c-1}\psi_{c-2}\Psichaind{c-4}{m_h}\vtp.
   \]
\end{itemize}
 
    \item[c)] $\bi_{\tp_k}=(1,0),\bi_{\tp_c}=(1,0)$:
   
   Here we have that
   \[
   \vtt=\psi_k\chi_1\Psichaind{k+2}{m_1}(\chi_2\Psichaind{k+6}{m_2}\dots\chi_{h-1}\Psichaind{k+2+4(h-2)}{m_{h-1}})\chi_{h}\Psichaind{c-2}{m_h}\vtp.
   \]
   
   Once again, the indices $k+2, k+6, \dots, c-6=k+2+4(h-2),c-2$ are all four apart and we also see that $\chi_h=1$, hence
   
 \begin{align*}
\psi_k\vtt&=\psi_k\chi_1\Psichaind{k+2}{m_1}(\chi_2\Psichaind{k+6}{m_2}\dots\chi_{h-1}\Psichaind{k+2+4(h-1)}{m_{h-1}})\chi_{h}\Psichaind{c-2}{m_h}\vtp\\
&\,\;\vdots\\
&=\psi_k\Psichaind{k-2}{m_1}\chi_1(\Psichaind{k+2}{m_2}\chi_2\dots\chi_{h-2}\Psichaind{k+2+4(h-2)}{m_{h-1}})\chi_{h-1}\Psichaind{c-6}{m_h}\vtp.
\end{align*}

The second part of the proof considers the three cases when $\bi_{\tp_k}=(0,1)$, in which case $\psi_k$ does not commute with $\chi_1$.
     
     \begin{itemize}
 \item[d)] $\bi_{\tp_k}=(0,1),\bi_{\tp_c}=(0,1)$ and $\tp_c$ appears at the end of the arm:
 
Here
 \[
 \vtt=\psi_{k+2}\psi_{k+1}\psi_k\psi_{k-1}\psi_{k-2}\Psichaind{k-4}{m_1}(\chi_2\Psichaind{k+4}{m_2}\dots\chi_{h-1}\Psichaind{k+2+4(h-2)}{m_{h-1}})\chi_{h}\Psichaind{c-4}{m_h}\chi_\ell z^\la,
 \]
where the chain on the left might contain the additional terms $\psi_{k-3}$ and $\psi_{k+3}$ depending on whether $k-2$ and $k+4$ are in the arm.
In the proof, we will omit writing these as they do not play an active part in the rotation.
It is clear that $y_1=k-4$ and $y_2=k+4$ as both $\tp_k$ and $\tp_{k+2}$ are in the leg.
After these two $\Psi$-chains, just as before, $k+8,\dots,k+2+4(i-1),\dots,y_{h-1},y_h$ are all $4$ apart.
Observe that $y_h\neq c-4$ can only happen if $c=k+4$, in which case $h=1$ and $\vtt=\psi^*\chi_{h}\Psichaind{y_h}{m_h}\chi_\ell z^\la$.
We will deal with this special case at the end of the proof, and for now we will assume that $y_h=c-4$.

  \begin{align*}
\psi_k\vtt&=\psi_k\psi_{k+2}\psi_{k+1}\psi_k\psi_{k-1}\psi_{k-2}\Psichaind{k-4}{m_1}(\chi_2\Psichaind{k+4}{m_2}\dots\chi_{h-1}\Psichaind{k+2+4(h-2)}{m_{h-1}})\chi_{h}\Psichaind{c-4}{m_h}\chi_\ell z^\la\\
&=\psi_{k+2}(\psi_k\psi_{k+1}\psi_k)\psi_{k-1}\psi_{k-2}\Psichaind{k-4}{m_1}(\chi_2\Psichaind{k+4}{m_2}\dots\chi_{h-1}\Psichaind{k+2+4(h-2)}{m_{h-1}})\chi_{h}\Psichaind{c-4}{m_h}\chi_\ell z^\la\\
&=\psi_{k+2}(\psi_{k+1}\psi_k\underbrace{\psi_{k+1})\psi_{k-1}\psi_{k-2}\Psichaind{k-4}{m_1}(\chi_2\Psichaind{k+4}{m_2}\dots\chi_{h-1}\Psichaind{k+2+4(h-2)}{m_{h-1}})\chi_{h}\Psichaind{c-4}{m_h}\chi_\ell z^\la}_{=0\text{ by \cref{even}}}\\
&+\psi_{k+2}(y_k)\psi_{k-1}\psi_{k-2}\Psichaind{k-4}{m_1}(\chi_2\Psichaind{k+4}{m_2}\dots\chi_{h-1}\Psichaind{k+2+4(h-2)}{m_{h-1}})\chi_{h}\Psichaind{c-4}{m_h}\chi_\ell z^\la\\
&\begin{rcases}
-2\psi_{k+2}(y_{k+1})\psi_{k-1}\psi_{k-2}\Psichaind{k-4}{m_1}\\
\hspace{3cm}\cdot(\chi_2\Psichaind{k+4}{m_2}\dots\chi_{h-1}\Psichaind{k+2+4(h-2)}{m_{h-1}})\chi_{h}\Psichaind{c-4}{m_h}\chi_\ell z^\la\\
+\psi_{k+2}(y_{k+2})\psi_{k-1}\psi_{k-2}\Psichaind{k-4}{m_1}\\
\hspace{3cm}\cdot(\chi_2\Psichaind{k+4}{m_2}\dots\chi_{h-1}\Psichaind{k+2+4(h-2)}{m_{h-1}})\chi_{h}\Psichaind{c-4}{m_h}\chi_\ell z^\la
\end{rcases}
=0\text{ by \cref{standard-y}}\\
&=\psi_{k+2}\psi_{k-2}\Psichaind{k-4}{m_1}(\chi_2\Psichaind{k+4}{m_2}\dots\chi_{h-1}\Psichaind{k+2+4(h-2)}{m_{h-1}})\chi_{h}\Psichaind{c-4}{m_h}\chi_\ell z^\la\\
&=\psi_{k-2}\Psichaind{k-4}{m_1}(\chi_2\psi_{k+2}\Psi_{k+4}\Psi_{k+2}\Psichaind{k}{m_2}\dots\chi_{h-1}\Psichaind{k+2+4(h-2)}{m_{h-1}})\chi_{h}\Psichaind{c-4}{m_h}\chi_\ell z^\la\\
&=\psi_{k-2}\Psichaind{k-4}{m_1}(\chi_2\psi_{k+2}\Psichaind{k}{m_2}\dots\chi_{h-1}\Psichaind{k+2+4(h-2)}{m_{h-1}})\chi_{h}\Psichaind{c-4}{m_h}\chi_\ell z^\la\\
&=\psi_{k-2}\Psichaind{k-4}{m_1}(\psi_{k+2}\Psichaind{k}{m_2}\chi_3\chi_2\Psichaind{y_3}{m_3}\dots\chi_{h-1}\Psichaind{k+2+4(h-2)}{m_{h-1}})\chi_{h}\Psichaind{c-4}{m_h}\chi_\ell z^\la\\
&=\psi_{k-2}\Psichaind{k-4}{m_1}(\psi_{k+2}\Psichaind{k}{m_2}\chi_3(\chi_2\Psi_{y_3}\Psi_{y_3-2})\Psichaind{y_2}{m_3}\dots\chi_{h-1}\Psichaind{k+2+4(h-2)}{m_{h-1}})\chi_{h}\Psichaind{c-4}{m_h}\chi_\ell z^\la\\
&=\psi_{k-2}\Psichaind{k-4}{m_1}(\psi_{k+2}\Psichaind{k}{m_2}\chi_3\chi_2\Psichaind{y_2}{m_3}\dots\chi_{h-1}\Psichaind{k+2+4(h-2)}{m_{h-1}})\chi_{h}\Psichaind{c-4}{m_h}\chi_\ell z^\la\\
&\,\;\vdots\\
&=\psi_{k-2}\Psichaind{k-4}{m_1}(\psi_{k+2}\Psichaind{k}{m_2}\chi_2\dots\chi_{h-2}\Psichaind{k+2+4(h-3)}{m_{h-1}})\chi_{h-1}\Psichaind{c-8}{m_h}\chi_h\chi_\ell z^\la\\
&=\psi_{k-2}\Psichaind{k-4}{m_1}(\psi_{k+2}\Psichaind{k}{m_2}\chi_2\dots\chi_{h-2}\Psichaind{k+2+4(h-3)}{m_{h-1}})\chi_{h-1}\Psichaind{c-8}{m_h}\widehat{\chi}_h\psi_{c-4}\dots\psi_az^\la
\end{align*}

where the product $\chi_h\chi_\ell $ gives the same result as in case a).

Finally, suppose that $y_h< c-4$ and $c=k+4$:
\begin{align*}
\psi_k\vtt&=\psi_k\chi_{h}\Psichaind{y_h}{m_h}\chi_\ell z^\la\\
&=\psi_k\psi_{k+2}\psi_{k+1}\dots\psi_{y_h+2}\Psichaind{y_h}{m_h}\chi_\ell z^\la\\
&=\psi_{k+2}(\psi_k\psi_{k+1}\psi_k)\psi_{k-1}\dots\psi_{y_h+2}\Psichaind{y_h}{m_h}\chi_\ell z^\la\\
&=\psi_{k+2}(\psi_{k+1}\psi_k\underbrace{\psi_{k+1})\psi_{k-1}\dots\psi_{y_h+2}\Psichaind{y_h}{m_h}\chi_\ell z^\la}_{=0\text{ by \cref{even}}}\\
&+\psi_{k+2}(y_k)\psi_{k-1}\dots\psi_{y_h+2}\Psichaind{y_h}{m_h}\chi_\ell z^\la\\
&\begin{rcases}
-2\psi_{k+2}(y_{k+1})\psi_{k-1}\dots\psi_{y_h+2}\Psichaind{y_h}{m_h}\chi_\ell z^\la\\
+\psi_{k+2}(y_{k+2})\psi_{k-1}\dots\psi_{y_h+2}\Psichaind{y_h}{m_h}\chi_\ell z^\la
\end{rcases}
=0\text{ by \cref{standard-y}}\\
&=\psi_{k+2}\psi_{k-2}\dots\psi_{y_h+2}\Psichaind{y_h}{m_h}\chi_\ell z^\la\\
&=\psi_{k-2}\dots\psi_{y_h+2}\Psichaind{y_h}{m_h}(\psi_{k+2}\psi_{k+3}\psi_{k+2})\psi_{k+1}\dots\psi_a z^\la\\
&\,\;\vdots\qquad\text{ see the computation for $\chi_h(\last)\chi_\ell$ above}\\
&=\psi_{k-2}\dots\psi_{y_h+2}\Psichaind{y_h}{m_h}\psi_{k}\dots\psi_a z^\la\\
&=\psi_k\psi_{k-2}\dots\psi_{y_h+2}\Psichaind{y_h}{m_h}\psi_{k-1}\dots\psi_a z^\la.
\end{align*}
 
 \item[e)] $\bi_{\tp_k}=(0,1)$, $\bi_{\tp_c}=(0,1)$ and $\tp_c$ is not at the end of the arm:
 
   Similar to case b).
   Here, the chain ends with $\chi_{h-1}\Psichaind{k+2+4(h-2)}{m_{h-1}}\chi_{h}\Psichaind{c-2}{m_h}\vtp$ where $k+2+4(h-2)\le c-4$.
   Further, because of the shifted residue of $\tp_k$, we have that $y_1\le k-4$.
   If $k+2+4(h-2)= c-4$, $\chi_{h-1}$ and $\chi_h$ do not commute:
    \begin{align*}
\psi_k\vtt&=\psi_k\chi_1\Psichaind{y_1}{m_1}(\chi_2\Psichaind{k+4}{m_2}\dots\chi_{h-1}\Psichaind{c-4}{m_{h-1}})\chi_{h}\Psichaind{c-2}{m_h}\vtp\\
&=\psi_k\psi_{k+2}\psi_{k+1}\psi_k\psi_{k-1}\psi_{k-2}\dots\psi_{y_1+2}\Psichaind{y_1}{m_1}(\chi_2\Psichaind{k+4}{m_2}\dots\chi_{h-1}\Psichaind{c-4}{m_{h-1}})\chi_{h}\Psichaind{c-2}{m_h}\vtp\\
&\;\,\vdots\text{ see the computation in case $d)$}\\
&=\psi_{k-2}\dots\psi_{y_1+2}\Psichaind{y_1}{m_1}(\psi_{k+2}\Psichaind{k}{m_2}\chi_2\dots\chi_{h-2}\Psichaind{k+2+4(h-3)}{m_{h-1}})\chi_{h-1}\chi_h\Psichaind{c-2}{m_h}\vtp\\
&\;\,\vdots\\
&=\psi_{k-2}\dots\psi_{y_1+2}\Psichaind{y_1}{m_1}(\psi_{k+2}\Psichaind{k}{m_2}\chi_2\dots\chi_{h-2}\Psichaind{k+2+4(h-3)}{m_{h-1}})\\
&\cdot\widehat{\chi}_{h-1}(\psi_x\dots\psi_{c+1})\psi_c\psi_{c-1}\psi_{c-2}\Psichaind{c-4}{m_h}\vtp.
\end{align*}

If $k+2+4(h-2)< c-4$, we have that $\vtt=\psi_{k+2}\psi_{k+1}\dots\psi_{x+1}\Psichaind{x}{m_1}\chi_{h}\Psichaind{c-2}{m_h}\vtp$ where $x<k-4$, but the proof is essentially the same.
 
 \item[f)] $\bi_{\tp_k}=(0,1), \bi_{\tp_c}=(1,0)$:
 
 As in case c), $\chi_h=\id$ and $\psi_k\chi_1$ behaves the same way as in the previous case.
  \begin{align*}
\psi_k\vtt&=\psi_k\chi_1\Psichaind{y_1}{m_1}(\chi_2\Psichaind{k+4}{m_2}\dots\chi_{h-1}\Psichaind{k+2+4(h-2)}{m_{h-1}})\chi_{h}\Psichaind{c-2}{m_h}\vtp\\
&=\psi_k\psi_{k+2}\psi_{k+1}\psi_k\dots\psi_{y_1+2}\Psichaind{y_1}{m_1}(\chi_2\Psichaind{k+4}{m_2}\dots\chi_{h-1}\Psichaind{k+2+4(h-2)}{m_{h-1}})\chi_{h}\Psichaind{c-2}{m_h}\vtp\\
&\,\;\vdots\\
&=\psi_{k-2}\dots\psi_{y_1+2}\Psichaind{y_1}{m_1}(\psi_{k+2}\Psichaind{k}{m_2}\chi_2\dots\chi_{h-2}\Psichaind{k+2+4(h-3)}{m_{h-1}})\chi_{h-1}\Psichaind{c-6}{m_h}\vtp
\end{align*}
\end{itemize}

\underline{Case 2}

We have the same six subcases as in Case $1$ and with some slight modifications these work the same way as before, however $\psi_k$ will start acting on the end of the chain (that is the highest indexed $\Psi$-term) and we will be using \cref{handy} part $3$ and \cref{halfdom-arm} (instead of \cref{handy} part $2$ and \cref{halfdom-leg}).
Details can be included by switching the toggle in the {\tt arXiv} version of this paper.
\begin{answer}
Furthermore, the indices in the brackets in the middle all have the upper indices four apart to ensure that there are other paired entries between $\tp_c$ and $\tp_k$ in $\ttt$.

\begin{itemize}

    \item[a)] $\bi_{\tp_c}=(1,0)$, $\bi_{\tp_k}=(1,0)$:
    
As $\bi_{\tp_k}=(1,0)=\bi_{\tp_{k+2}}$, we have that $\chi_{h-1}=\id=\chi_h$. Hence:
    
\begin{align*}
\psi_k\vtt&=\psi_k\chi_1\Psichaind{c}{m_1}(\chi_2\Psichaind{c+4}{m_2}\dots\chi_{h-2}\Psichaind{k-6}{m_{h-2}}\chi_{h-1}\Psichaind{k-2}{m_{h-1}})\chi_h\Psichaind{k}{m_h}\vtp\\
&=\chi_1\Psichaind{c}{m_1}(\chi_2\Psichaind{c+4}{m_2}\dots\chi_{h-2}\Psichaind{k-6}{m_{h-2}}\chi_{h-1}\psi_k\Psi_{k-2}\Psi_k\Psichaind{k-4}{m_{h-1}})\chi_h\Psichaind{k-2}{m_h}\vtp\\
&=\chi_1\Psichaind{c}{m_1}(\chi_2\Psichaind{c+4}{m_2}\dots\chi_{h-2}\Psichaind{k-6}{m_{h-2}}\chi_{h-1}\psi_k\Psichaind{k-4}{m_{h-1}})\chi_h\Psichaind{k-2}{m_h}\vtp\\
&=\chi_1\Psichaind{c}{m_1}(\chi_2\Psichaind{c+4}{m_2}\dots\chi_{h-2}\Psichaind{k-6}{m_{h-2}}\Psichaind{k-4}{m_{h-1}})\psi_k\underbrace{\chi_{h-1}\chi_h}_{=1}\Psichaind{k-2}{m_h}\vtp\\
&=\chi_1\Psichaind{c}{m_1}(\chi_2\Psichaind{c+4}{m_2}\dots\chi_{h-2}\Psi_{k-6}\Psi_{k-4}\Psichaind{k-8}{m_{h-2}}\Psichaind{k-6}{m_{h-1}})\psi_k\Psichaind{k-2}{m_h}\vtp\\
&=\Psichaind{c-4}{m_0}\chi_1\Psichaind{c}{m_1}(\chi_2\Psichaind{c+4}{m_2}\dots\Psichaind{k-8}{m_{h-2}}\chi_{h-2}\Psichaind{k-6}{m_{h-1}})\psi_k\Psichaind{k-2}{m_h}\vtp\\
&=\Psichaind{c-4}{m_0}\chi_1\Psichaind{c}{m_1}(\chi_2\Psichaind{c+4}{m_2}\dots\chi_{h-3}\Psichaind{k-10}{m_{h-3}}\Psichaind{k-8}{m_{h-2}}\chi_{h-2}\Psichaind{k-6}{m_{h-1}})\psi_k\Psichaind{k-2}{m_h}\vtp\\
&=\Psichaind{c-4}{m_0}\chi_1\Psichaind{c}{m_1}(\chi_2\Psichaind{c+4}{m_2}\dots\chi_{h-3}\Psi_{k-10}\Psi_{k-8}\Psichaind{k-12}{m_{h-3}}\Psichaind{k-10}{m_{h-2}}\chi_{h-2}\Psichaind{k-6}{m_{h-1}})\psi_k\Psichaind{k-2}{m_h}\vtp\\
&=\Psichaind{c-4}{m_0}\chi_1\Psichaind{c}{m_1}(\chi_2\Psichaind{c+4}{m_2}\dots\chi_{h-3}\Psichaind{k-12}{m_{h-3}}\Psichaind{k-10}{m_{h-2}}\chi_{h-2}\Psichaind{k-6}{m_{h-1}})\psi_k\Psichaind{k-2}{m_h}\vtp\\
&=\Psichaind{c-4}{m_0}\chi_1\Psichaind{c}{m_1}(\chi_2\Psichaind{c+4}{m_2}\dots\Psichaind{k-12}{m_{h-3}}\chi_{h-3}\Psichaind{k-10}{m_{h-2}}\chi_{h-2}\Psichaind{k-6}{m_{h-1}})\psi_k\Psichaind{k-2}{m_h}\vtp\\
&\,\;\vdots\\
&=\psi_k\Psichaind{c-2}{m_1}\chi_1(\Psichaind{c}{m_2}\chi_2\dots\chi_{h-2}\Psichaind{k-6}{m_{h-1}})\Psichaind{k-2}{m_h}\vtp.
\end{align*}

   \item[b)] $\bi_{\tp_c}=(1,0),\bi_{\tp_k}=(0,1)$ and $\tp_{k+2}$ is at the end of the arm;
   
   Here we must have that $\chi_d=\psi_{k+1}\psi_k\dots\psi_a$ and that $\chi_h=\psi_{k-1}\psi_k$, otherwise we cannot have $\tp_k,\tp_{k+2}\in\Arm(\ttt)$. Then
   
\begin{align*}
\psi_k\vtt&=\psi_k\chi_1\Psichaind{c}{m_1}(\chi_2\Psichaind{c+4}{m_2}\dots\chi_{h-2}\Psichaind{k-8}{m_{h-2}}\chi_{h-1}\Psichaind{k-4}{m_{h-1}})\chi_h\Psichaind{k-2}{m_h}\chi_dz^\la\\
&=\chi_1\Psichaind{c}{m_1}(\chi_2\Psichaind{c+4}{m_2}\dots\chi_{h-2}\Psichaind{k-8}{m_{h-2}}\chi_{h-1}\Psichaind{k-4}{m_{h-1}})(\psi_k\psi_{k-1}\psi_k)\Psichaind{k-2}{m_h}\chi_dz^\la\\
&=\chi_1\Psichaind{c}{m_1}(\chi_2\Psichaind{c+4}{m_2}\dots\chi_{h-2}\Psichaind{k-8}{m_{h-2}}\chi_{h-1}\Psichaind{k-4}{m_{h-1}})(\psi_{k-1}\psi_k\underbrace{\psi_{k-1})\Psichaind{k-2}{m_h}\chi_dz^\la}_{=0\text{ by \cref{even}}}\\
&\begin{rcases}
-\chi_1\Psichaind{c}{m_1}(\chi_2\Psichaind{c+4}{m_2}\dots\chi_{h-2}\Psichaind{k-8}{m_{h-2}}\chi_{h-1}\Psichaind{k-4}{m_{h-1}})(y_{k-1})\Psichaind{k-2}{m_h}\chi_dz^\la\\
+2\chi_1\Psichaind{c}{m_1}(\chi_2\Psichaind{c+4}{m_2}\dots\chi_{h-2}\Psichaind{k-8}{m_{h-2}}\chi_{h-1}\Psichaind{k-4}{m_{h-1}})(y_k)\Psichaind{k-2}{m_h}\chi_dz^\la
\end{rcases}
=0\text{ by \cref{standard-y}}\\
&-\chi_1\Psichaind{c}{m_1}(\chi_2\Psichaind{c+4}{m_2}\dots\chi_{h-2}\Psichaind{k-8}{m_{h-2}}\chi_{h-1}\Psichaind{k-4}{m_{h-1}})(y_{k+1})\Psichaind{k-2}{m_h}\chi_dz^\la\\
&=\chi_1\Psichaind{c}{m_1}(\chi_2\Psichaind{c+4}{m_2}\dots\chi_{h-2}\Psichaind{k-8}{m_{h-2}}\chi_{h-1}\Psichaind{k-4}{m_{h-1}})\Psichaind{k-2}{m_h}\psi_k\dots\psi_az^\la\\
&=\chi_1\Psichaind{c}{m_1}(\chi_2\Psichaind{c+4}{m_2}\dots\chi_{h-2}\Psichaind{k-8}{m_{h-2}}\chi_{h-1}\Psi_{k-4}\Psi_{k-2}\Psichaind{k-6}{m_{h-1}})\Psichaind{k-4}{m_h}\psi_k\dots\psi_az^\la\\
&=\chi_1\Psichaind{c}{m_1}(\chi_2\Psichaind{c+4}{m_2}\dots\chi_{h-2}\Psichaind{k-8}{m_{h-2}}\Psichaind{k-6}{m_{h-1}})\chi_{h-1}\Psichaind{k-4}{m_h}\psi_k\dots\psi_az^\la\\
&\,\;\vdots\\
&=\Psichaind{c-2}{m_1}\chi_1(\Psichaind{c}{m_2}\chi_2\dots\chi_{h-2}\Psichaind{k-8}{m_{h-1}})\chi_{h-1}\Psichaind{k-4}{m_h}\psi_k\psi_{k-1}\psi_{k-2}\dots\psi_az^\la.
\end{align*}

   \item[c)] $\bi_{\tp_c}=(1,0),\bi_{\tp_k}=(0,1)$ and $\tp_{k+2}$ is not at the end of the arm:
   
   Here we have that $\chi_h=\psi_{k+1}\psi_{k+2}$ and that $\chi_{h-1}=\psi_{k-1}\psi_k$, otherwise we cannot have $\tp_k,\tp_{k+2}\in\Arm(\ttt)$. Then
   
  \begin{align*}
\psi_k\vtt&=\psi_k\chi_1\Psichaind{c}{m_1}(\chi_2\Psichaind{c+4}{m_2}\dots\chi_{h-2}\Psichaind{k-4}{m_{h-2}})\chi_{h-1}\Psichaind{k-2}{m_{h-1}}\chi_h\Psichaind{k}{m_h}\vtp\\
&=\chi_1\Psichaind{c}{m_1}(\chi_2\Psichaind{c+4}{m_2}\dots\chi_{h-2}\Psichaind{k-4}{m_{h-2}})\psi_k\chi_{h-1}\Psichaind{k-2}{m_{h-1}}\chi_h\Psichaind{k}{m_h}\vtp,
\end{align*}

where
\begin{align*}
    \psi_k\chi_{h-1}\Psichaind{k-2}{m_{h-1}}\chi_h\Psichaind{k}{m_h}\vtp&=   (\psi_k\psi_{k-1}\psi_k)\Psichaind{k-2}{m_{h-1}}\psi_{k+1}\psi_{k+2}\Psichaind{k}{m_h}\vtp\\
    &=(\psi_{k-1}\psi_k\underbrace{\psi_{k-1})\Psichaind{k-2}{m_{h-1}}\psi_{k+1}\psi_{k+2}\Psichaind{k}{m_h}\vtp}_{=0\text{ by \cref{even}}}\\
    &\begin{rcases}
 -(y_{k-1})\Psichaind{k-2}{m_{h-1}}\psi_{k+1}\psi_{k+2}\Psichaind{k}{m_h}\vtp\\
    +2(y_k)\Psichaind{k-2}{m_{h-1}}\psi_{k+1}\psi_{k+2}\Psichaind{k}{m_h}\vtp
    \end{rcases}
    =0\text{ by \cref{standard-y}}\\
    &-(y_{k+1})\Psichaind{k-2}{m_{h-1}}\psi_{k+1}\psi_{k+2}\Psichaind{k}{m_h}\vtp\\
   &=\Psichaind{k-2}{m_{h-1}}\psi_{k+2}\Psichaind{k}{m_h}\vtp.
\end{align*}

Thus

\begin{align*}
\psi_k\vtt&=\chi_1\Psichaind{c}{m_1}(\chi_2\Psichaind{c+4}{m_2}\dots\chi_{h-2}\Psichaind{k-4}{m_{h-2}})\Psichaind{k-2}{m_{h-1}}(\psi_x\dots)\psi_{k+2}\Psichaind{k}{m_h}\vtp\\
&=\chi_1\Psichaind{c}{m_1}(\chi_2\Psichaind{c+4}{m_2}\dots\chi_{h-2}\Psi_{k-4}\Psi_{k-2}\Psichaind{k-6}{m_{h-2}})\Psichaind{k-4}{m_{h-1}}(\psi_x\dots)\psi_{k+2}\Psichaind{k}{m_h}\vtp\\
&=\chi_1\Psichaind{c}{m_1}(\chi_2\Psichaind{c+4}{m_2}\dots\Psichaind{k-6}{m_{h-2}})\chi_{h-2}\Psichaind{k-4}{m_{h-1}}(\psi_x\dots)\psi_{k+2}\Psichaind{k}{m_h}\vtp\\
&\vdots\\
&=\Psichaind{c-2}{m_1}\chi_1(\Psichaind{c}{m_2}\chi_2\Psichaind{c+4}{m_3}\dots\chi_{h-2}\Psichaind{k-4}{m_{h-1}})(\psi_x\dots)\psi_{k+2}\Psichaind{k}{m_h}\vtp.
\end{align*}
\end{itemize}

The second part of the proof considers the three cases when $\bi_{\tp_c}=(0,1)$, in which case we need to consider an additional $\chi$ and an extra $\Psi$-chain on the left. This $\chi$-term, denoted $\chi_1$, shifts the residues of $\tp_c$, hence $\chi_1=\psi_c\psi_{c-1}\psi_{c-2}$.

\begin{itemize}
     
 \item[d)] $\bi_{\tp_c}=(0,1),\bi_{\tp_k}=(1,0)$:
  
  Similarly to a) we have that $\chi_{h-1}=\id=\chi_h$.
  We also have that $y_1\le c-4$.
 
   \begin{align*}
\psi_k\vtt&=\psi_k\chi_1\Psichaind{y_1}{m_1}(\chi_2\Psichaind{c+2}{m_2}\chi_3\dots\chi_{h-2}\Psichaind{k-6}{m_{h-2}})\chi_{h-1}\Psichaind{k-2}{m_{h-1}}\chi_h\Psichaind{k}{m_h}\vtp\\
&\,\;\vdots\\
&=\chi_1\Psichaind{y_1}{m_1}(\Psichaind{c}{m_2}\chi_2\dots\chi_{h-3}\Psichaind{k-8}{m_{h-2}})\chi_{h-2}\Psichaind{k-4}{m_{h-1}}\psi_k\Psichaind{k-2}{m_h}\vtp\\
&=\psi_c\psi_{c-1}\psi_{c-2}\dots\psi_{y_1+2}\Psichaind{y_1}{m_1}(\Psichaind{c}{m_2}\chi_2\dots\chi_{h-3}\Psichaind{k-8}{m_{h-2}})\chi_{h-2}\Psichaind{k-4}{m_{h-1}}\psi_k\Psichaind{k-2}{m_h}\vtp
\end{align*}

But $\psi_c\psi_{c-1}\psi_{c-2}\dots\psi_{y_1+2}\Psichaind{y_1}{m_1}\Psichaind{c}{m_2}$ satisfy the conditions stated in \cref{halfdom-arm} Case $1$.
So

\[
\psi_k\vtt=\psi_{c-2}\Psichaind{c-4}{m_1}\psi_{c-1}\psi_c\Psichaind{c-2}{m_2}(\chi_2\dots\chi_{h-2}\Psichaind{k-6}{m_{h-1}})\psi_k\Psichaind{k-2}{m_h}\vtp.
\]
 
  \item[e)] $\bi_{\tp_c}=(0,1),\bi_{\tp_k}=(0,1)$ and $\tp_{k+2}$ is at the end of the arm:
 
 Works almost the same way as case b) and we have that $y_1 \le c-4$ as in case d).

 \begin{align*}
\psi_k\vtt&=\psi_k\chi_1\Psichaind{y_1}{m_1}(\chi_2\Psichaind{c+2}{m_2}\chi_3\dots\chi_{h-2}\Psichaind{k-8}{m_{h-2}})\chi_{h-1}\Psichaind{k-4}{m_{h-1}}\chi_h\Psichaind{k-2}{m_h}\chi_dz^\la\\
&\,\;\vdots\\
&=\chi_1\Psichaind{y_1}{m_1}(\Psichaind{c}{m_2}\chi_2\dots\chi_{h-2}\Psichaind{k-8}{m_{h-1}})\chi_{h-1}\Psichaind{k-4}{m_h}\psi_{k-1}\psi_{k-2}\dots\psi_az^\la\\
&=\psi_c\psi_{c-1}\psi_{c-2}\dots\psi_{y_1+2}\Psichaind{y_1}{m_1}(\Psichaind{c}{m_2}\chi_2\dots\chi_{h-2}\Psichaind{k-8}{m_{h-1}})\chi_{h-1}\Psichaind{k-4}{m_h}\psi_{k-1}\psi_{k-2}\dots\psi_az^\la\\
&=\psi_k\psi_{c-2}\dots\psi_{y_1+2}\Psichaind{y_1}{m_1}\psi_{c-1}\psi_c\Psichaind{c-2}{m_2}(\chi_2\dots\chi_{h-2}\Psichaind{k-8}{m_{h-1}})\chi_{h-1}\Psichaind{k-4}{m_h}\psi_{k-1}\psi_{k-2}\dots\psi_az^\la
\end{align*}

 \item[f)] $\bi_{\tp_c}=(0,1),\bi_{\tp_k}=(0,1)$ and $\tp_{k+2}$ is not at the end of the arm:
 
 Once again, $y_1\le c-4$.
 Very similar to case c), the same conditions hold on $\chi_{h-1}$ and $\chi_h$.
   \begin{align*}
\psi_k\vtt&=\psi_k\chi_1\Psichaind{y_1}{m_1}(\chi_2\Psichaind{c+2}{m_2}\chi_3\dots\chi_{h-2}\Psichaind{k-4}{m_{h-2}})\chi_{h-1}\Psichaind{k-2}{m_{h-1}}\chi_h\Psichaind{k}{m_h}\vtp\\
&\,\;\vdots\\
&=\chi_1\Psichaind{y_1}{m_1}(\Psichaind{c}{m_2}\chi_2\dots\chi_{h-2}\Psichaind{k-4}{m_{h-1}})\psi_{k+2}\Psichaind{k}{m_h}\vtp\\
&=\psi_c\psi_{c-1}\psi_{c-2}\dots\psi_{y_1+2}\Psichaind{y_1}{m_1}(\Psichaind{c}{m_2}\chi_2\dots\chi_{h-3}\Psichaind{k-8}{m_{h-2}})\chi_{h-2}\Psichaind{k-4}{m_{h-1}}\psi_{k+2}\Psichaind{k}{m_h}\vtp\\
&=\psi_{c-2}\dots\psi_{y_1+2}\Psichaind{y_1}{m_1}\psi_{c-1}\psi_c\Psichaind{c-2}{m_2}(\chi_2\dots\chi_{h-3}\Psichaind{k-8}{m_{h-2}})\chi_{h-2}\Psichaind{k-4}{m_{h-1}}\psi_{k+2}\Psichaind{k}{m_h}\vtp,
\end{align*}
by \cref{halfdom-arm}.
\end{itemize}
\end{answer}
\end{proof}

\subsection{At least three consecutive pairs in $\Leg(\ttt)$ or $\Arm(\ttt)$}

Now that we have shown what happens when we act with $\psi_k$ when $\tp_k,\tp_{k+2}$ are in $\Leg(\ttt)$ and $\tp_c$ is in $\Arm(\ttt)$ (or vice versa, $\tp_k,\tp_{k+2}\in\Arm(\ttt)$ and $\tp_c\in\Leg(\ttt)$), we will extend with these results for at least three pairs in the leg (or in the arm) of $\ttt$.

\begin{lem}[cf.\cite{thesis}, Lemma~3.19]\label{4killing-1}
Let $\ttt\in\std(\la)$ with $\tp_k,\tp_{k+2}\in\Leg(\ttt)$.

\begin{enumerate}
    \item If $a$ is odd, and
 \begin{itemize}
 \item[a)] $\bi_{\tp_k}=(1,0)$, let $\vtt$ have normal form
\[
\vtt=\psi^*\chi_1\Psichaind{y_1}{m_1}\chi_2\dots\chi_{h}\Psichaind{y_h}{m_h}z^\la,
\]
where $m_h=a$. Suppose $3\le k \le n-1$ is odd and that $y_1$ is minimal such that $m_1 \le k \le y_1$.
    Then $\psi_k\vtt=0$ if and only if for all $i=1,2,\dots,h$ either $y_{i}\ge 4i+k$ or $y_i = 4i+k-2$ and $\chi_i\neq \id$;
\item[b)] $\bi_{\tp_k}=(0,1)$, let $\vtt$ have normal form
\[
\vtt=\psi^*\chi_0\Psichaind{y_0}{m_0}\chi_1\Psichaind{y_1}{m_1}\chi_2\dots\chi_{h}\Psichaind{y_h}{m_h}z^\la,
\]
where $m_h=a$. Suppose $3\le k \le n-1$ is odd and that $y_1$ is minimal such that $m_1 \le k+2 \le y_1$.
    Then $\psi_k\vtt=0$ if and only if for all $i=1,2,\dots,h$ either $y_{i}\ge 4i+k+2$ or $y_i = 4i+k$ and $\chi_i\neq \id$. 
     \end{itemize}
     
    \item If $a$ is even, and
    \begin{itemize}
 \item[c)] $\bi_{\tp_k}=(1,0)$, let $\vtt$ have normal form
\[
\vtt=\psi^*\chi_1\Psichaind{y_1}{m_1}\chi_2\dots\chi_{h}\Psichaind{y_h}{m_h}\chi_dz^\la,
\]
where $m_h=a-1$. Suppose $3\le k \le n-1$ is odd and that $y_1$ is minimal such that $m_1 \le k \le y_1$.
Then $\psi_k\vtt=0$ if and only if for all $i=1,2,\dots,h-1$ either $y_{i}\ge 4i+k$ or $y_i= 4i+k-2$ and $\chi_i\neq\id$ and $y_h\ge 4h+k$ or $y_h=4h+k-2,\chi_h\neq\id$ and $\ell>4h+k+2$;
\item[d)] $\bi_{\tp_k}=(0,1)$, let $\vtt$ have normal form
\[
\vtt=\psi^*\chi_0\Psichaind{y_0}{m_0}\chi_1\Psichaind{y_1}{m_1}\chi_2\dots\chi_{h}\Psichaind{y_h}{m_h}\chi_dz^\la,
\]
where $m_h=a-1$. Suppose $3\le k \le n-1$ is odd and that $y_1$ is minimal such that $m_1 \le k+2 \le y_1$.
Then $\psi_k\vtt=0$ if and only if for all $i=1,2,\dots,h-1$ either $y_{i}\ge 4i+k+2$ or $y_i= 4i+k$ and $\chi_i\neq\id$ and $y_h\ge 4h+k+2$ or $y_h=4h+k,\chi_h\neq\id$ and $\ell>4h+k+4$.
 \end{itemize}
\end{enumerate}
\end{lem}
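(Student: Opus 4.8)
The plan is to prove the biconditional in all four cases by a single induction on $h$, the number of $\Psi$-chains lying to the right of the leftmost term that $\psi_k$ fails to commute with. Cases (b) and (d) differ from (a) and (c) only by the initial term $\chi_0\Psichaind{y_0}{m_0}$ that shifts the residue of $\tp_k$, and cases (c),(d) differ from (a),(b) only by the terminal tail $\chi_d=\psi_{\ell-1}\dots\psi_a$; so I would carry out case (a) in full and record the modifications. The conceptual point is that the index conditions are exactly the combinatorial translation of the statement ``there is no pair $\tp_c\in\Arm(\ttt)$ eligible for the rotation of \cref{3pairs}'': the inequalities $y_i\ge 4i+k$ (respectively $y_i=4i+k-2$ with $\chi_i\neq\id$) assert that the pairs of $\Leg(\ttt)$ between $\tp_k$ and the top of the chain are packed as tightly as standardness allows, leaving no gap into which $\psi_k$ could rotate a leg-pair out into the arm.

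For the nonvanishing direction I would argue contrapositively. If the packing condition first fails at some index $i$, the resulting gap produces precisely a pair $\tp_c\in\Arm(\ttt)$ with $c>k+2$ and $k+3,\dots,c-1$ unpaired, of the kind demanded by case~1 of \cref{3pairs}. Applying \cref{3pairs} gives $\psi_k\vtt=\psi_k\vs$ with $\tts\in\std(\la)$ the rotated tableau having $\tp_k\in\Arm(\tts)$ and $\tp_{k+2},\tp_c\in\Leg(\tts)$; a further application of \cref{halfdom-arm} (or, when $\tp_c$ sits at the end of the arm, the individual-generator action of \cref{pre-even}) then rewrites $\psi_k\vs$ as $\pm v_\ttu$ for a standard $\ttu\in\std(\la)$, which is nonzero by the Kleshchev--Mathas--Ram basis theorem. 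Hence $\psi_k\vtt\neq0$.

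For the vanishing direction I would push $\psi_k$ rightward through the chain using \cref{handy}. In case (a) the first non-commuting term is $\Psi_{k+2}\Psi_k$, and \cref{handy}(2) replaces it, reducing $\psi_k\vtt$ to $\psi_k$ acting on a strictly shorter vector while the auxiliary identities $\Psi_k\psi_{k+2}\vs=0$ and $\Psi_k\psi_{k-2}\vs=0$ established inside the proof of \cref{handy} kill the error terms. Under the tight-packing hypothesis each successive chain again presents a $\Psi_{k'+2}\Psi_{k'}$ pattern --- this is exactly what $y_i=4i+k-2$ together with $\chi_i\neq\id$ guarantees, the term $\chi_i(\last)=\psi_{y_i+2}$ supplying the individual generator needed to form it --- so the reduction iterates down the whole chain. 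At the bottom, when $a$ is odd, one is left with $\psi_k\cdots z^\la$ in which $\psi_k$ (or a braid-transposed $\psi_{k+1}$) meets $z^\la$ with $k,k+1$ in the same row of $\ttt^\la$, so relation~(iii) of the Specht presentation annihilates it, exactly as in the $B_0,C_0$ base computations of \cref{handy,even}. When $a$ is even the surviving tail is $\chi_d$, and the extra hypotheses on the final chain (either $y_h\ge 4h+k$, or $y_h=4h+k-2$ with $\chi_h\neq\id$ and $\ell>4h+k+2$) are precisely what make the same braid and $y$-relation cancellation run against $\chi_d$ rather than against $z^\la$.

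The main obstacle will be the boundary bookkeeping that separates $y_i\ge 4i+k$ from the borderline $y_i=4i+k-2,\ \chi_i\neq\id$, and, in the even cases, the role of $\ell>4h+k+2$ on the last chain. These borderline configurations are exactly where a single $\psi$-term ($\chi_i(\last)$ or $\chi_i(\first)$) either completes a braid relation that forces the reduction to continue --- giving vanishing --- or, when absent, opens the gap that feeds \cref{3pairs} --- giving nonvanishing. Checking that the induction closes uniformly across all four cases and that the thresholds $4i+k$ and $4i+k+2$ are simultaneously sharp in both directions is the delicate part; the remaining manipulations are careful but routine applications of \cref{handy}, \cref{halfdom-arm}, \cref{halfdom-leg}, \cref{3pairs} and \cref{standard-y}.
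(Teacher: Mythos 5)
Your vanishing (``if'') direction follows essentially the same route as the paper: push $\psi_k$ rightward with \cref{handy}, use the tight-packing hypothesis to recreate the $\Psi_{k'+2}\Psi_{k'}$ pattern at each chain, and kill the base case against $z^\la$ (odd $a$) or against $\chi_d$ (even $a$). That outline is sound.

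The nonvanishing (converse) direction, however, contains a genuine gap. You claim that if the packing condition first fails at index $t$, ``the resulting gap produces precisely a pair $\tp_c\in\Arm(\ttt)$ with $c>k+2$ and $k+3,\dots,c-1$ unpaired, of the kind demanded by case~1 of \cref{3pairs}.'' This is false whenever $t\ge 2$: the hypothesis of \cref{3pairs} requires that \emph{no} pairs lie between $\tp_{k+2}$ and $\tp_c$, but the very fact that the condition holds at indices $1,\dots,t-1$ means that the chains $\Psichaind{y_1}{m_1},\dots,\Psichaind{y_{t-1}}{m_{t-1}}$ have already placed arm pairs (and interspersed leg pairs) strictly between $\tp_{k+2}$ and the chain where the failure occurs. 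So \cref{3pairs} cannot be invoked; the relevant move is the multi-pair rotation of \cref{pair-leg}, in which one must count $m+1$ leg pairs against $m$ arm pairs across all the intermediate chains. Establishing that $\psi_k\vtt$ really equals $\psi_k v_{(\ttt)^{2r+k,k}}$ in reduced form in this situation is the bulk of the paper's proof: it sets up two partial-rotation identities (the paper's \cref{claim1,claim2}), proves them by induction on the chain index --- re-normalising each intermediate chain as $\psi_k$ passes through, using \cref{handy} and \cref{halfdom-leg} --- and only then analyses the $t$-th chain in four subcases ($\chi_{t-1},\chi_t$ trivial or not, with the extra $\ell$-boundary case when $a$ is even) to exhibit an expression in standard form, hence a nonzero basis vector. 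Note also that the paper deliberately derives the converse of \cref{3pairs} and \cref{cor:psi-leg} \emph{from} \cref{4killing-1}, not the other way around, so there is no already-available lemma that performs this multi-chain rotation for you; your proposal assumes away exactly the part of the argument that cannot be borrowed.
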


\begin{proof}
Just like in the proof of \cref{3pairs}, we omit writing $\psi^*$ on the left.
We will also write $\chi_i=\widehat{\chi}_i\chi_i(\last)$ (recall that $\chi_i(\last)=\psi_{y_i+2}$).
We prove the statement by induction on the length of $\vtt$.
The proof itself is very similar to the proof of \cref{3pairs}.

\underline{``If'' part:}
\begin{enumerate}
\item \textbf{Odd $a$}

As $\tp_k$ and $\tp_{k+2}$ are both in the leg, we immediately see that $y_1\ge k+2$.
In addition, we only care about $\chi_1$ if $y_1=k+2$ because in all other cases the chain is long enough to propagate the leftmost $\psi_{k+4}$ in $\Psi_{k+4}$ from $\Psichaind{y_1}{m_1}$, instead of $\psi_{k+4}$ from $\chi_1$.
Now we can break down the proof into several subcases:
\begin{itemize}
    \item[a)] Assume that $\bi_{\tp_k}=(1,0)$ and $y_1\ge k+4$.

If $h=1$, we have that

\begin{align*}
   \psi_k \vtt&=\psi_k\chi_1\Psichaind{y_1}{k+6}\Psi_{k+4}\Psi_{k+2}\Psi_k\Psichaind{k-2}{m_1}z^\la\\
   &=\psi_k\chi_1\Psichaind{y_1}{k+6}\Psi_{k+4}\Psichaind{k-2}{m_1}z^\la\\
&=\psi_k\chi_1\Psichaind{y_1}{k+6}\Psichaind{k-2}{m_1}\Psi_{k+4}z^\la\\
&=0.
\end{align*}

Now suppose that $\vtt=\chi_1\Psichaind{y_1}{m_1}\chi_2\dots\chi_{h}\Psichaind{y_h}{m_h}z^\la$ where $h>1$ as stated in the lemma.
Then from the base case we have that 
\[
\psi_k\vtt=\psi_k\chi_k\Psichaind{y_1}{k+6}\Psichaind{k-2}{m_1}\Psi_{k+4}\underbrace{\chi_{2}\Psichaind{y_{2}}{m_{2}}\chi_{3}\dots\chi_h\Psichaind{y_h}{m_h}z^\la}_{=\vtp}.
\]
As $m_1\le k$, it implies that $m_{2}\le k+2$ and by assumption, we also have that $k+6\le y_2$.
\begin{itemize}
    \item[$\circ$] If $y_2\ge k+8$ we have that
\[
\Psichaind{y_{2}}{m_{2}}=\Psichaind{y_{2}}{k+10}\Psichaind{k+8}{k+2}\Psichaind{k}{m_2},
\]
 where the first and last chains might be empty, but our conditions guarantee that $\Psichaind{k+8}{k+2}$ appears.
This forces $\tp_{k+4},\tp_{k+6}$ to be in the leg of $\ttt'$.
Thus
\[
\psi_k\vtt=\psi_k\Psichaind{y_1}{k+6}\Psichaind{k-2}{m_1}\Psi_{k+4}\chi_{2}\Psichaind{y_{2}}{k+10}\Psichaind{k+8}{k+2}\Psichaind{k}{m_2}\chi_{3}\dots\chi_h\Psichaind{y_h}{m_h}z^\la=0
\]
by induction, since for each $i=1,2,\dots,h-1$, we have that either $y_{i+1}\ge 4i+(k+4)$ or $y_{i+1}\ge 4i+(k+2)$ (these follow immediately from the conditions on $\vtt$).
    \item[$\circ$] If $y_2= k+6$ we have that
    \[
\Psichaind{y_{2}}{m_{2}}=\Psichaind{k+6}{k+2}\Psichaind{k}{m_2}
\]
and we also see that $\chi_2\neq\id$.
Once again we see that $\tp_{k+4}$ and $\tp_{k+6}$ are in the leg of $\ttt'$, hence
\[
    \psi_k\vtt=\psi_k\Psichaind{y_1}{k+6}\Psichaind{k-2}{m_1}\Psi_{k+4}\chi_{2}\Psichaind{k+6}{k+2}\Psichaind{k}{m_2}\chi_{3}\dots\chi_h\Psichaind{y_h}{m_h}z^\la=0
\]
by induction for the same reasoning as in the previous case.
\end{itemize}

Similarly, we also need to consider the case when $y_1=k+2$.
By assumption, we must have that $\chi_1\neq \id$.

If $h=1$, we have that

\begin{align*}
   \psi_k \vtt&=\psi_k\chi_1\Psichaind{k+2}{m_1}z^\la\\
   &=\chi_1\psi_k\Psi_{k+2}\Psi_k\Psichaind{k-2}{m_1}z^\la\\
   &=\chi_1\psi_k\Psichaind{k-2}{m_1}z^\la\\
&=\psi_k\widehat{\chi}_1\Psichaind{k-2}{m_1}\psi_{k+4}z^\la=0,
\end{align*}
where $\chi_1=\widehat{\chi}_1\psi_{k+4}$.

Now suppose that $h>1$ and $\vtt=\chi_1\Psichaind{y_1}{m_1}\chi_2\dots\chi_{h}\Psichaind{y_h}{m_h}z^\la$ is as in the statement.
From the base case we have that 
\[
\psi_k\vtt=\psi_k\widehat{\chi}_1\Psichaind{k-2}{m_1}\psi_{k+4}\underbrace{\chi_{2}\Psichaind{y_{2}}{m_{2}}\chi_{3}\dots\chi_h\Psichaind{y_h}{m_h}z^\la}_{=\vtp}.
\]

By assumption $y_2\ge k+6$, and this forces $\tp_{k+4}$ and $\tp_{k+6}$ into $\Leg(\ttt')$.
We also know that $\psi_{k+4}$ and $\chi_2$ commute.
Then we have two options:
\begin{itemize}
    \item[$\circ$] if $y_2\ge k+8$, we have that
    \[
\Psichaind{y_{2}}{m_{2}}=\Psichaind{y_{2}}{k+10}\Psichaind{k+8}{k+2}\Psichaind{k}{m_2}
\]
and then
    \[
\psi_k\vtt=\psi_k\widehat{\chi}_1\Psichaind{k-2}{m_1}\psi_{k+4}\chi_{2}\Psichaind{y_{2}}{m_{2}}\vtp=0
    \]
    by induction (see the proof of $y_1\ge k+4$); or
     \item[$\circ$] if $y_2= k+6$, we have that
        \[
\Psichaind{y_{2}}{m_{2}}=\Psichaind{k+6}{k+2}\Psichaind{k}{m_2}
\]
and then
    \[
\psi_k\vtt=\psi_k\widehat{\chi}_1\Psichaind{k-2}{m_1}\psi_{k+4}\chi_{2}\Psichaind{y_{2}}{m_{2}}\vtp=0
\]
by induction (works entirely the same as the previous cases).
\end{itemize}

\item[b)] Assume that $\bi_{\tp_k}=(0,1)$.

In this case, we will have an additional chain on the left of $\Psi_{y_1}$ such that $\psi_k$ appears in $\chi_0$: as $\tp_{k},\tp_{k+2}$ are in the leg in $(0,1)$-positions, we have some $r\le k-2$ in $\Arm(\ttt)$ in the position $\ttt(1,m_0-1)$ and some $s\ge k+2$ in the position $\ttt(1,m_0)$.
Hence the chain $\chi_0$ which moves $s$ into the right position must involve the terms $\psi_{k+2}\psi_{k+1}\psi_k\psi_{k-1}\psi_{k-2}$ in this order.
Notice that $\psi_k$ acts on these additional terms exactly the same way as described in cases d)-f) in \cref{3pairs}:
\begin{align*}
    \psi_k\vtt&=\chi_0\Psichaind{y_0}{m_0}\chi_1\Psichaind{y_1}{m_1}\chi_2\dots\chi_{h}\Psichaind{y_h}{m_h}z^\la\\
    &=\psi_{s-1}\dots\psi_{k+2}\psi_{k+1}\psi_k\psi_{k-1}\psi_{k-2}\dots\psi_{y_0+2}\Psichaind{y_0}{m_0}\chi_1\Psichaind{y_1}{m_1}\chi_2\dots\chi_{h}\Psichaind{y_h}{m_h}z^\la\\
    &\,\;\vdots\\
    &=\psi_{s-1}\dots\psi_{k+2}\psi_{k-2}\dots\psi_{y_0+2}\Psichaind{y_0}{m_0}\chi_1\Psichaind{y_1}{m_1}\chi_2\dots\chi_{h}\Psichaind{y_h}{m_h}z^\la\\
    &=\psi_{s-1}\dots\psi_{k+3}\psi_{k-2}\dots\psi_{y_0+2}\Psichaind{y_0}{m_0}\psi_{k+2}\chi_1\underbrace{\Psichaind{y_1}{m_1}\chi_2\dots\chi_{h}\Psichaind{y_h}{m_h}z^\la}_{=\vtp}\\
    &=0 \text{ by case a).}
\end{align*}
(The chain $\Psichaind{y_0}{m_0}$ might be empty though.)

Now we see why the statement requires different lower bounds for $y_i$ if $\bi_{\tp_k}=(0,1)$.
Even though we would like to determine the action of $\psi_k$ on $\vtt$, in fact we are looking at $\psi_{k+2}\vtp$ (as $\tp_k,\tp_{k+2}\in\Leg(\ttt)$ with shifted residues it follows that $y_1\ge k+2$).
Applying case a) to this expression and shifting the indices appropriately completes the proof.
\end{itemize}

\item \textbf{Even $a$}
\begin{itemize}

   \item[c)] Suppose that $\bi_{\tp_k}=(1,0)$ and $y_1\ge k+2$.
   If $h=1$ and $y_1\ge k+4$, we have that

\begin{align*}
   \psi_k \vtt&=\psi_k\chi_1\Psichaind{y_1}{k+6}\Psi_{k+4}\Psi_{k+2}\Psi_k\Psichaind{k-2}{m_1}\chi_dz^\la\\
   &=\psi_k\chi_1\Psichaind{y_1}{k+6}\Psi_{k+4}\Psichaind{k-2}{m_1}\chi_dz^\la\\
&=\psi_k\chi_1\Psichaind{y_1}{k+6}\Psichaind{k-2}{m_1}\Psi_{k+4}\underbrace{(\psi_{\ell-1}\dots\psi_{k+8})\psi_{k+7}\psi_{k+6}\psi_{k+5}\psi_{k+4}\dots\psi_a}_{=\chi_d} z^\la\\
&\,\;\vdots\,\;\text{for the computation see \cref{3pairs} Case $1a)$}\\
&=\psi_k\chi_1\Psichaind{y_1}{k+6}\Psichaind{k-2}{m_1}\psi_{k+4}\psi_{k+3}\psi_{k+5}(\psi_{\ell-1}\dots\psi_{k+8})\psi_{k+7}\psi_{k+2}\dots\psi_a\psi_{k+6} z^\la\\
&=0.
\end{align*}
Note that $\psi_{\ell-1}\dots\psi_{k+8}$ might be empty, but our conditions guarantee that $\psi_{k+6}$ is there to eliminate $z^\la$.

A similar reasoning as in the previous cases shows that if $h>1$, then for all $i=1,2,\dots,h-2$ we either have that $y_{i+1}\ge 4i+(k+4)$ or $y_{i+1}\ge 4i+(k+2)$ if $\chi_{i+1}\neq 1$.
Then, unless $y_h=4h+k-2$, it immediately follows that $\ell>4h+k$ and if $y_h=4h+k-2$, we know that $\ell>y_h+4$ by assumption.
We see that on the right end of the chain we will have an additional braid relation (for shorthand notation, let $s:=4h+k$), where $\psi_s$ comes from either $\chi_h$ (if $y_h\neq s$) or from $\Psi_{s}$ (if $y_h=s$).
Hence, in order to complete the proof, it suffices to consider the following:
\begin{align*}
\psi_{s}\chi_dz^\la&=\psi_{s}\psi_{\ell-1}\dots\psi_{s+2}\psi_{s+1}\psi_{s}\psi_{s-1}\psi_{s-2}\dots\psi_az^\la\\
&=\psi_x\dots\psi_{s+2}(\psi_{s}\psi_{s+1}\psi_{s})\psi_{s-1}\psi_{s-2}\dots\psi_az^\la\\
    &\,\;\vdots\\
    &=\psi_{\ell-1}\dots\psi_{s+3}\psi_{s+2}\psi_{s-2}\dots\psi_az^\la\\
    &=\psi_{\ell-1}\dots\psi_{s+3}\psi_{s-2}\dots\psi_a\psi_{s+2}z^\la\\
    &=0.
\end{align*}

 \item[d)] Take $\bi_{\tp_k}=(1,0)$.
 
The proof is a combination of the proofs of cases b) and c).
\end{itemize}
\end{enumerate}

\underline{Converse:}
If $\bi_{\tp_k}=(1,0)$, let $t$ be minimal such that $y_{t}\le 4t+k-4$ or $y_t=4t+k-2$ and $\chi_{t}=\id$ (or if $a$ is even and $t=h$, then let $y_t=4t+k-2,\chi_{t}=\id$ or $\ell=4t+k+2$).
We see that if such a $t$ exists, it means that there is at least one more pair $\tp_c\in\Arm(\ttt)$ such that $k+2<c$.
Indeed, if $y_t=4t+k-2$ and $\chi_{t}=\id$, it is easy to see that $\tp_{4t+k}\in\Arm(\ttt)$.
If $y_{t}\le 4t+k-4$, by minimality of $t$ we know that either $y_{t-1}\ge 4t+k-4$ or $y_{t-1}=4t+k-6$ with $\chi_{t-1}\neq\id$.
If $y_{t-1}\ge 4t+k-4$ then $y_{t-1}\nless y_t$.
Hence, $y_{t-1}=4t+k-6$ and $\chi_{t-1}\neq\id$.
But this immediately implies that $\chi_t\neq\id$ too, thus $\tp_{4t+k-2}\in\Arm(\ttt)$.
For $\bi_{\tp_k}=(0,1)$, we can repeat the previous argument with increasing all indices by $2$.

If $\tp_c\in\Arm(\ttt)$ and there are no other pairs between $\tp_{k+2}$ and $\tp_c$, we have already shown in \cref{3pairs} that $\psi_k\vtt\neq0$.
Hence, from now on we will assume that there exists some minimal $c$ such that $k+2<c<d$ where $\tp_c\in\Leg(\ttt)$ and $\tp_d\in\Arm(\ttt)$.

By assumption, $y_1\ge k$, but as $\tp_{k+2}\in\Leg(\ttt)$, we have that $y_1\ge k+2$.
If $y_1=k+2$ and $\chi_1=\id$, then we would have that $\tp_{k+4}\in\Arm(\ttt)$.
Thus we can assume that $t\ge 2$.

If $t=2$ and $\tp_k=(1,0)$, by minimality of $t$ we have the following:
\begin{itemize}
    \item $y_1=k+2$ with $\chi_1\neq\id$ or $y_1\ge k+4$, and
    \item $y_2\le k+4$ or $y_2=k+6$ and $\chi_2=\id$ or $y_2=k+6,\chi_2\neq\id$ and $\ell=k+10$.
\end{itemize}
As $y_1<y_2$, we see that $k+2\le y_1\le k+4$.

If $t=2$ and $\tp_k=(0,1)$, we must have that:
\begin{itemize}
    \item $y_1=k+4$ with $\chi_1\neq \id$ or $y_1\ge k+6$,
    \item $y_2\le k+6$ or $y_2=k+8$ and $\chi_2=\id$ or $y_2=k+8,\chi_2\neq\id$ and $\ell=k+12$.
\end{itemize}
As $y_1<y_2$, we see that $k+4\le y_1\le k+6$.

Hence, for $t=2$ we need to consider only the following cases:
\begin{enumerate}
    \item $\bf{\bi_{\tp_k}=(1,0)}$

\begin{itemize}
    \item[a)] $y_1=k+4,y_2=k+6$ and $\chi_2=\id$, which forces $\chi_1=\id$ ($a$ can be even or odd):
    \begin{align*}
        \psi_k\vtt&=\psi_k\Psichaind{k+4}{m_1}\Psichaind{k+6}{m_2}\chi_3\Psichaind{y_3}{m_3}\underbrace{\dots\chi_{h}\Psichaind{y_h}{m_h}(\chi_d)z^\la}_{=\vtp}\\
        &=\psi_k\Psi_{k+4}\Psichaind{k-2}{m_1}\Psichaind{k+6}{m_2}\chi_3\Psichaind{y_3}{m_3}\vtp\\
        &=\psi_k\Psichaind{k-2}{m_1}\Psi_{k+4}\Psichaind{k+6}{m_2}\chi_3\Psichaind{y_3}{m_3}\vtp\\
        &=\psi_k\Psichaind{k-2}{m_1}\Psichaind{k+4}{m_2}\chi_3\Psichaind{y_3}{m_3}\vtp
    \end{align*}
\end{itemize}
The next two sets of conditions deal with the special case when $a$ is even, $t=2=h$ and $\tp_{k+10}$ appears at the end of $\Arm(\ttt)$.
These immediately force $y_2=k+6,\chi_2\neq\id$ and $\ell=k+10$.
\begin{itemize}
  \item[b)] $y_1=k+2$ and $\chi_1\neq\id$:
   \begin{align*}
   \psi_k\vtt&=\chi_1\psi_k\Psichaind{k+2}{m_1}\chi_2\Psichaind{k+6}{m_2}\psi_{k+9}\psi_{k+8}\dots\psi_az^\la\\
        &=\chi_1\psi_k\Psichaind{k-2}{m_1}\chi_2\Psichaind{k+6}{m_2}\psi_{k+9}\psi_{k+8}\dots\psi_az^\la\\
        &=\psi_k\Psichaind{k-2}{m_1}\chi_2\chi_1\Psi_{k+6}\Psi_{k+4}\Psichaind{k+2}{m_2}\psi_{k+9}\psi_{k+8}\dots\psi_az^\la\\
        &=\psi_k\Psichaind{k-2}{m_1}\chi_1\chi_2\Psichaind{k+2}{m_2}\psi_{k+9}\psi_{k+8}\dots\psi_az^\la\\
        &=\psi_k\Psichaind{k-2}{m_1}\chi_1\Psichaind{k+2}{m_2}\underbrace{\psi_{k+7}\psi_{k+8}}_{\chi_2}\psi_{k+9}\psi_{k+8}\dots\psi_az^\la\\
        &\,\;\vdots\\
        &=\psi_k\Psichaind{k-2}{m_1}\chi_1\Psichaind{k+2}{m_2}\psi_{k+7}\psi_{k+6}\psi_{k+5}\dots\psi_az^\la
    \end{align*}
    
   \item[c)] $y_1=k+4$. Here $\chi_1=\id$ or $\chi_1=(\psi_{k+5})\psi_{k+6}$:
   \begin{align*}
   \psi_k\vtt&=\chi_1\psi_k\Psichaind{k+4}{m_1}\chi_2\Psichaind{k+6}{m_2}\psi_{k+9}\psi_{k+8}\dots\psi_az^\la\\
        &=\chi_1\Psi_{k+4}\psi_k\Psichaind{k-2}{m_1}\chi_2\Psichaind{k+6}{m_2}\psi_{k+9}\psi_{k+8}\dots\psi_az^\la\\
        &=\psi_k\chi_1\Psichaind{k-2}{m_1}\chi_2\Psi_{k+4}\Psichaind{k+6}{m_2}\psi_{k+9}\psi_{k+8}\dots\psi_az^\la\\
        &=\psi_k\chi_1\Psichaind{k-2}{m_1}\chi_2\Psi_{k+4}\Psichaind{k+2}{m_2}\psi_{k+9}\psi_{k+8}\dots\psi_az^\la\\
        &=\psi_k\Psichaind{k-2}{m_1}\chi_1\Psichaind{k+4}{m_2}\underbrace{(\psi_{k+7})\psi_{k+8}}_{\chi_2}\psi_{k+9}\psi_{k+8}\dots\psi_az^\la\\
        &\,\;\vdots\\
        &=\psi_k\Psichaind{k-2}{m_1}\chi_1\Psichaind{k+4}{m_2}(\psi_{k+7})\psi_{k+6}\psi_{k+5}\dots\psi_az^\la.
    \end{align*}
    
\end{itemize}

 \item $\bf{\bi_{\tp_k}=(0,1)}$
 
 Recall from the previous part that $\chi_0$ contains the subchain $\psi_{k+2}\psi_{k+1}\dots\psi_{k-2}$.
 Without loss of generality, we may assume that the leftmost entry in $\chi_0$ is precisely $\psi_{k+2}$ as $\psi$ terms with larger indices can be absorbed into $\psi^*$ on the left.
 For shorthand notation we also write $\chi'_0=\psi_{k-2}\psi_{k-3}\dots\psi_{y_0+2}$.
 For the computation of $\psi_k\chi_0$, details can be found in part 1b) of this proof.

 \begin{itemize}
 
    \item[d)] $y_1=k+6,y_2=k+8$ and $\chi_2=\id$, which forces $\chi_1=\id$:
 \begin{align*}
        \psi_k\vtt&=\psi_k\chi_0\Psichaind{y_0}{m_0}\Psichaind{k+6}{m_1}\Psichaind{k+8}{m_2}\underbrace{\chi_3\Psichaind{y_3}{m_3}\dots\chi_{h}\Psichaind{y_h}{m_h}\chi_dz^\la}_{=\vtp}\\
         &\,\;\vdots\\
         &=\chi'_0\Psichaind{y_0}{m_0}\psi_{k+2}\Psichaind{k+6}{m_1}\Psichaind{k+8}{m_2}\vtp\\
         &=\chi'_0\Psichaind{y_0}{m_0}\Psi_{k+6}\psi_{k+2}\Psichaind{k}{m_1}\Psichaind{k+8}{m_2}\vtp\\
    &=\chi'_0\Psichaind{y_0}{m_0}\psi_{k+2}\Psichaind{k}{m_1}\Psi_{k+6}\Psichaind{k+8}{m_2}\vtp\\
    &=\chi'_0\Psichaind{y_0}{m_0}\psi_{k+2}\Psichaind{k}{m_1}\Psichaind{k+6}{m_2}\vtp.
    \end{align*}

     \item[e)] $y_1=k+4,\chi_1\neq\id,y_2=k+8$ and $\chi_2=\id$:
\begin{align*}
        \psi_k\vtt&=\psi_k\chi_0\Psichaind{y_0}{m_0}\chi_1\Psichaind{k+4}{m_1}\Psichaind{k+8}{m_2}\vtp\\
         &\,\;\vdots\\
         &=\chi_0'\Psichaind{y_0}{m_0}\underbrace{\psi_{k+2}\chi_1\Psichaind{k+4}{m_1}\Psichaind{k+8}{m_2}\vtp}_{\text{apply \cref{3pairs}}}\\
    &=\chi_0'\Psichaind{y_0}{m_0}\psi_{k+2}\Psichaind{k}{m_1}\chi_1\Psichaind{k+4}{m_2}\vtp.
    \end{align*}

     \item[f)] $y_1=k+4,y_2=k+6$ and $\chi_1\neq\id\neq\chi_2$ (note that in this case $\chi_1=\psi_{k+5}\psi_{k+6},\chi_2=(\psi_{k+9})\psi_{k+7}\psi_{k+8}$, so they do not commute with each other):
\begin{align*}
        \psi_k\vtt&=\psi_k\chi_0\Psichaind{y_0}{m_0}\chi_1\Psichaind{k+4}{m_1}\chi_2\Psichaind{k+6}{m_2}\vtp\\
         &\,\;\vdots\\
         &=\chi'_0\Psichaind{y_0}{m_0}\chi_1\psi_{k+2}\Psichaind{k+4}{m_1}\chi_2\Psichaind{k+6}{m_2}\vtp\\
        &=\chi'_0\Psichaind{y_0}{m_0}\chi_1\psi_{k+2}\Psichaind{k}{m_1}\chi_2\Psichaind{k+6}{m_2}\vtp\\
        &=\chi'_0\Psichaind{y_0}{m_0}\psi_{k+2}\chi_1\Psichaind{k}{m_1}\chi_2\Psichaind{k+6}{m_2}\vtp\\
         &=\chi_0'\Psichaind{y_0}{m_0}\psi_{k+2}\Psichaind{k}{m_1}\psi_{k+5}\underbrace{\psi_{k+6}\psi_{k+7}\psi_{k+8}\Psichaind{k+6}{m_2}}_{\text{apply \cref{halfdom-leg}}}\vtp\\
         &\,\;\vdots\\
    &=\chi'_0\Psichaind{y_0}{m_0}\chi_0'\Psichaind{k}{m_1}\psi_{k+5}\psi_{k+8}\psi_{k+7}\psi_{k+6}\Psichaind{k+4}{m_2}\vtp.
    \end{align*}
\end{itemize}
Finally, as for $\bi_{\tp_k}=(1,0)$, we consider the case when $a$ is even $t=2=h$ and $\tp_{k+12}$ is at the end of $\Arm(\ttt)$ separately.
Here we immediately see that $y_2=k+8,\chi_2\neq\id$ and $\ell=k+12$.
\begin{itemize}
     \item[g)] $y_1=k+4$ and $\chi_1\neq\id$; and
     \item[h)] $y_1=k+6$ and $\chi_1=\id$ or $\chi_1=\psi_{k+8}$.
     
     We omit these proofs as they are essentially the same as in the $(1,0)$-case.
 \end{itemize}
\end{enumerate}

Now suppose that $t>2$.

First, we look at the different combinations of the first two $\Psi$-chains on the left.
(As $t>2$, it ensures that the propagation cannot stop here.)
We have several cases to consider:
\begin{enumerate}
     \item $\bf{\bi_{\tp_k}=(0,1)}$

\begin{itemize}
    \item [a)] If $y_1=k+2,y_2=k+6$ and $\chi_1\neq\id\neq\chi_2$, we have that:
    \begin{align*}
    \psi_k\vtt&=\psi_k\chi_1\Psichaind{k+2}{m_1}\chi_2\Psichaind{k+6}{m_2}\chi_3\Psichaind{y_3}{m_3}\dots\chi_{h}\Psichaind{y_h}{m_h}\chi_dz^\la\\
       &=\chi_1\psi_k\Psichaind{k-2}{m_1}\chi_2\Psichaind{k+6}{m_2}\chi_3\Psichaind{y_3}{m_3}\dots\chi_{h}\Psichaind{y_h}{m_h}\chi_dz^\la\\
   &=\psi_k\Psichaind{k-2}{m_1}\chi_2\chi_1\Psi_{k+6}\Psi_{k+4}\Psichaind{k+2}{m_2}\chi_3\Psichaind{y_3}{m_3}\dots\chi_{h}\Psichaind{y_h}{m_h}\chi_dz^\la\\
    &=\psi_k\Psichaind{k-2}{m_1}\chi_1\Psichaind{k+2}{m_2}\chi_2\chi_3\Psichaind{y_3}{m_3}\dots\chi_{h}\Psichaind{y_h}{m_h}\chi_d z^\la.
    \end{align*}
    
\item [b)] If $y_1=k+2,y_2\ge k+8$ and $\chi_1\neq\id$ (note that $\chi_2$ might be empty), then:
    \begin{align*}
    \psi_k\vtt&=\chi_1\psi_k\Psichaind{k+2}{m_1}\chi_2\Psichaind{y_2}{m_2}\chi_3\Psichaind{y_3}{m_3}\dots\chi_{h}\Psichaind{y_h}{m_h}\chi_dz^\la\\
       &=\chi_1\psi_k\Psichaind{k-2}{m_1}\chi_2\Psichaind{y_2}{m_2}\chi_3\Psichaind{y_3}{m_3}\dots\chi_{h}\Psichaind{y_h}{m_h}\chi_dz^\la\\
   &=\psi_k\widehat{\chi}_1\Psichaind{k-2}{m_1}\chi_2\Psichaind{y_2}{k+8}\psi_{k+4}\Psi_{k+6}\Psi_{k+4}\Psichaind{k+2}{m_2}\chi_3\Psichaind{y_3}{m_3}\dots\chi_{h}\Psichaind{y_h}{m_h}\chi_dz^\la\\
    &=\psi_k\Psichaind{k-2}{m_1}\chi_1\Psichaind{k+2}{m_2}\chi_2\Psichaind{y_2}{k+8}\chi_3\Psichaind{y_3}{m_3}\dots\chi_{h}\Psichaind{y_h}{m_h}\chi_d z^\la.
    \end{align*}
    
    \item [c)] If $y_1= k+4,y_2= k+6$ and $\chi_2\neq\id$ (note that $\chi_1$ might be empty), we have that:
    \begin{align*}
    \psi_k\vtt&=\psi_k\chi_1\Psichaind{k+4}{m_1}\chi_2\Psichaind{k+6}{m_2}\chi_3\Psichaind{y_3}{m_3}\dots\chi_{h}\Psichaind{y_h}{m_h}\chi_dz^\la\\
&=\chi_1\Psi_{k+4}\psi_k\Psi_{k+2}\Psi_k\Psichaind{k-2}{m_1}\chi_2\Psichaind{k+6}{m_2}\chi_3\Psichaind{y_3}{m_3}\dots\chi_{h}\Psichaind{y_h}{m_h}\chi_dz^\la\\
       &=\psi_k\Psichaind{k-2}{m_1}\chi_1\chi_2\Psi_{k+4}\Psichaind{k+6}{m_2}\chi_3\Psichaind{y_3}{m_3}\dots\chi_{h}\Psichaind{y_h}{m_h}\chi_dz^\la\\
   &=\psi_k\Psichaind{k-2}{m_1}\chi_1\chi_2\Psi_{k+4}\Psi_{k+6}\Psi_{k+4}\Psichaind{k+2}{m_2}\chi_3\Psichaind{y_3}{m_3}\dots\chi_{h}\Psichaind{y_h}{m_h}\chi_dz^\la\\
    &=\psi_k\Psichaind{k-2}{m_1}\chi_1\Psichaind{k+4}{m_2}\chi_2\chi_3\Psichaind{y_3}{m_3}\dots\chi_{h}\Psichaind{y_h}{m_h}\chi_d z^\la.
    \end{align*}
    
     \item [d)] If $y_1\ge k+4,y_2\ge k+8$ (note that both $\chi_1$ and $\chi_2$ might be empty), then we have that:
    \begin{align*}
    \psi_k\vtt&=\psi_k\chi_1\Psichaind{y_1}{m_1}\chi_2\Psichaind{y_2}{m_2}\chi_3\Psichaind{y_3}{m_3}\dots\chi_{h}\Psichaind{y_h}{m_h}\chi_dz^\la\\
    &=\chi_1\Psichaind{y_1}{k+4}\psi_k\Psi_{k+2}\Psi_k\Psichaind{k-2}{m_1}\chi_2\Psichaind{y_2}{m_2}\chi_3\Psichaind{y_3}{m_3}\dots\chi_{h}\Psichaind{y_h}{m_h}\chi_dz^\la\\
       &=\chi_1\Psichaind{y_1}{k+6}\psi_k\Psichaind{k-2}{m_1}\chi_2\Psi_{k+4}\Psichaind{y_2}{m_2}\chi_3\Psichaind{y_3}{m_3}\dots\chi_{h}\Psichaind{y_h}{m_h}\chi_dz^\la\\
   &=\psi_k\Psichaind{k-2}{m_1}\chi_1\Psichaind{y_1}{k+6}\chi_2\Psichaind{y_2}{k+8}\Psi_{k+4}\Psi_{k+6}\Psi_{k+4}\Psichaind{k+2}{m_2}\chi_3\Psichaind{y_3}{m_3}\dots\chi_{h}\Psichaind{y_h}{m_h}\chi_dz^\la\\
   &=\psi_k\Psichaind{k-2}{m_1}\chi_1\Psichaind{y_1}{k+6}\chi_2\Psichaind{y_2}{k+8}\Psichaind{k+4}{m_2}\chi_3\Psichaind{y_3}{m_3}\dots\chi_{h}\Psichaind{y_h}{m_h}\chi_dz^\la\\
    &=\psi_k\Psichaind{k-2}{m_1}\chi_1\Psichaind{y_1}{m_2}\chi_2\Psichaind{y_2}{k+8}\chi_3\Psichaind{y_3}{m_3}\dots\chi_{h}\Psichaind{y_h}{m_h}\chi_d z^\la.
    \end{align*}

\end{itemize}

 \item $\bf{\bi_{\tp_k}=(0,1)}$
\begin{itemize}
    \item[a)] $y_1=k+4,y_2=k+8$ and $\chi_1\neq\id\neq\chi_2$;
    \item[b)] $y_1=k+4,y_2\ge k+10$ and $\chi_1\neq\id$ ($\chi_2$ might be empty);
     \item[c)] $y_1\ge k+6,y_2= k+8$ and $\chi_2\neq\id$ ($\chi_1$ might be empty);
   \item[b)] $y_1\ge k+6$ and $y_2\ge k+10$ (both $\chi_1$ and $\chi_2$ might be empty).
\end{itemize}
   We observe that these are the same conditions as for $\bi_{\tp_k}=(1,0)$, except that the indices are increased by $2$.
   The computations are identical.
\end{enumerate}

For $2\le i \le t-1$, we claim that whenever $4i+k\le y_{i}$, then
\begin{equation}\label{claim1}
\psi_k\vtt=\psi_k(\Psichaind{k-2}{m_1}\chi_1\Psichaind{y_1}{m_{2}}\chi_{2}\Psichaind{y_{2}}{m_{3}}\dots\chi_{i-1}\Psichaind{y_{i-1}}{m_{i}})\chi_{i}\Psichaind{y_{i}}{4i+k}\chi_{i+1}\Psichaind{y_{i+1}}{m_{i+1}}\dots\chi_h\Psichaind{y_h}{m_h}\chi_dz^\la
\end{equation}

and whenever $4i+k-2=y_i$ and $\chi_i\neq\id$, then
\begin{equation}\label{claim2}
\psi_k\vtt=\psi_k(\Psichaind{k-2}{m_1}\chi_1\Psichaind{y_1}{m_{2}}\chi_{2}\Psichaind{y_{2}}{m_{3}}\dots\chi_{i-1}\Psichaind{y_{i-1}}{m_{i}})\chi_{i}\chi_{i+1}\Psichaind{y_{i+1}}{m_{i+1}}\dots\chi_h\Psichaind{y_h}{m_h}\chi_dz^\la.
\end{equation}

We will prove \cref{claim1,claim2} by induction on $i$.
If $i=2$ and $k+4\le y_1$, then
\begin{equation*}
\psi_k\vtt=\psi_k\Psichaind{k-2}{m_1}\chi_1\Psichaind{y_1}{k+4}\chi_{2}\Psichaind{y_{2}}{m_{2}}\dots\chi_h\Psichaind{y_h}{m_h}\chi_dz^\la,
\end{equation*}
and if $y_1=k+2$ and $\chi_1\neq\id$, then
\begin{equation*}
\psi_k\vtt=\psi_k\Psichaind{k-2}{m_1}\chi_1\chi_{2}\Psichaind{y_{2}}{m_{2}}\dots\chi_h\Psichaind{y_h}{m_h}\chi_dz^\la.
\end{equation*}

For the induction step, we break down the proof into several subcases.
We have four cases to consider:
\begin{enumerate}
    \item We show that if for all $2\le i-1 \le t-2$ either \cref{claim1} or \cref{claim2} holds  and $y_{i-1}\ge4i+k-4$ and $y_{i}\ge4i+k$, then \cref{claim1} follows for $i$.
    
As $y_{i-1}\ge4i+k-4$ we can assume by induction that
\begin{equation}\label{ass1}
 \psi_k\vtt=\psi_k(\Psichaind{k-2}{m_1}\chi_1\Psichaind{y_1}{m_{2}}\chi_{2}\Psichaind{y_{2}}{m_{3}}\dots\chi_{i-2}\Psichaind{y_{i-2}}{m_{i-1}})\chi_{i-1}\Psichaind{y_{i-1}}{4i+k-4}\chi_{i}\Psichaind{y_{i}}{m_{i}}\dots\chi_h\Psichaind{y_h}{m_h}\chi_dz^\la.
\end{equation}

As $i-1\le t-2$ and $y_{i}\ge 4i+k$ we have that
\begin{align*}
&\,\,\,\chi_{i-1}\Psichaind{y_{i-1}}{4i+k-4}\chi_{i}\Psichaind{y_{i}}{m_{i}}\dots\chi_h\Psichaind{y_h}{m_h}\chi_dz^\la\\
&=\chi_{i-1}\Psichaind{y_{i-1}}{4i+k-2}\chi_{i}\Psichaind{y_{i}}{4i+k}\Psi_{4i+k-4}\Psichaind{4i+k-2}{m_{i}}\dots\chi_h\Psichaind{y_h}{m_h}\chi_dz^\la\\
&=\chi_{i-1}\Psichaind{y_{i-1}}{4i+k-2}\chi_{i}\Psichaind{y_{i}}{4i+k}\Psichaind{4i+k-4}{m_{i}}\dots\chi_h\Psichaind{y_h}{m_h}\chi_dz^\la\\
&=\chi_{i-1}\Psichaind{y_{i-1}}{m_{i}}\chi_{i}\Psichaind{y_{i}}{4i+k}\chi_{i+1}\Psichaind{y_{i+1}}{m_{i+1}}\dots\chi_h\Psichaind{y_h}{m_h}\chi_dz^\la.
\end{align*}
    
    \item We show that if for all $2\le i-1 \le t-2$ either \cref{claim1} or \cref{claim2} holds and $y_{i-1}\ge4i+k-4,y_{i}=4i+k-2,\chi_i\neq\id$, then \cref{claim2} follows for $i$.

As $y_{i-1}\ge4i+k-4$ we can assume that \cref{ass1} holds.
Then:
\begin{align*}
&\,\,\,\chi_{i-1}\Psichaind{y_{i-1}}{4i+k-4}\chi_{i}\Psichaind{4i+k-2}{m_{i}}\dots\chi_h\Psichaind{y_h}{m_h}\chi_dz^\la\\
&=\chi_{i-1}\Psichaind{y_{i-1}}{4i+k-2}\chi_{i}\Psi_{4i+k-4}\Psichaind{4i+k-2}{m_{i}}\dots\chi_h\Psichaind{y_h}{m_h}\chi_dz^\la\\
&=\chi_{i-1}\Psichaind{y_{i-1}}{4i+k-2}\chi_{i}\Psichaind{4i+k-4}{m_{i}}\dots\chi_h\Psichaind{y_h}{m_h}\chi_dz^\la\\
&=\chi_{i-1}\Psichaind{y_{i-1}}{m_{i}}\chi_{i}\chi_{i+1}\Psichaind{y_{i+1}}{m_{i+1}}\dots\chi_h\Psichaind{y_h}{m_h}\chi_dz^\la.
\end{align*}

    \item We show that if for all $2\le i-1 \le t-2$ either \cref{claim1} or \cref{claim2} holds and $y_{i-1}=4i+k-6,\chi_{i-1}\neq\id$ and $y_{i}\ge4i+k$ then \cref{claim1} follows for $i$.

As $y_{i-1}=4i+k-6$ and $\chi_{i-1}\neq\id$ we can assume by induction that
\begin{equation}\label{ass2}
 \psi_k\vtt=\psi_k(\Psichaind{k-2}{m_1}\chi_1\Psichaind{y_1}{m_{2}}\chi_{2}\Psichaind{y_{2}}{m_{3}}\dots\chi_{i-2}\Psichaind{y_{i-2}}{m_{i-1}})\chi_{i-1}\chi_{i}\Psichaind{y_{i}}{m_{i}}\dots\chi_h\Psichaind{y_h}{m_h}\chi_dz^\la.
\end{equation}

As $y_{i}\ge4i+k$, $\chi_{i-1}$ and $\chi_i$ commute, so:
\begin{align*}
&\,\,\,\chi_{i-1}\chi_i\Psichaind{y_{i}}{m_i}\chi_{i+1}\Psichaind{y_{i+1}}{m_{i+1}}\dots\chi_h\Psichaind{y_h}{m_h}\chi_dz^\la\\
&=\chi_{i}\widehat{\chi}_{i-1}\Psichaind{y_i}{4i+k}\psi_{4i+k-4}\Psichaind{4i+k-2}{m_{i}}\chi_{i+1}\Psichaind{y_{i+1}}{m_{i+1}}\dots\chi_h\Psichaind{y_h}{m_h}\chi_dz^\la\\
&=\chi_{i}\widehat{\chi}_{i-1}\Psichaind{y_i}{4i+k}\psi_{4i+k-4}\Psichaind{4i+k-6}{m_{i}}\chi_{i+1}\Psichaind{y_{i+1}}{m_{i+1}}\dots\chi_h\Psichaind{y_h}{m_h}\chi_dz^\la\\
&=\chi_{i-1}\Psichaind{y_{i-1}}{m_{i}}\chi_{i}\Psichaind{y_i}{4i+k}\chi_{i+1}\Psichaind{y_{i+1}}{m_{i+1}}\dots\chi_h\Psichaind{y_h}{m_h}\chi_dz^\la
\end{align*}
as $y_{i-1}=4i+k-6$.

    \item We show that if for all $2\le i-1 \le t-2$ either \cref{claim1} or \cref{claim2} holds and $y_{i-1}=4i+k-6,\chi_{i-1}\neq\id,y_{i}=4i+k-2$ and $\chi_i\neq\id$, then \cref{claim2} follows for $i$.

As $y_{i-1}=4i+k-6$ and $\chi_{i-1}\neq\id$ we can assume that \cref{ass2} holds.
As $y_{i}=4i+k-2$, $\chi_{i-1}$ and $\chi_i$ commute, so:
\begin{align*}
&\,\,\,\chi_{i-1}\chi_i\Psichaind{4i+k-2}{m_i}\chi_{i+1}\Psichaind{y_{i+1}}{m_{i+1}}\dots\chi_h\Psichaind{y_h}{m_h}\chi_dz^\la\\
&=\chi_{i}\chi_{i-1}\Psichaind{4i+k-2}{4i+k-4}\Psichaind{4i+k-6}{m_{i}}\chi_{i+1}\Psichaind{y_{i+1}}{m_{i+1}}\dots\chi_h\Psichaind{y_h}{m_h}\chi_dz^\la\\
&=\chi_{i}\chi_{i-1}\Psichaind{4i+k-6}{m_{i}}\chi_{i+1}\Psichaind{y_{i+1}}{m_{i+1}}\dots\chi_h\Psichaind{y_h}{m_h}\chi_dz^\la\\
&=\chi_{i-1}\Psichaind{y_{i-1}}{m_{i}}\chi_{i}\chi_{i+1}\Psichaind{y_{i+1}}{m_{i+1}}\dots\chi_h\Psichaind{y_h}{m_h}\chi_dz^\la,
\end{align*}
as $y_{i-1}=4i+k-6$.
\end{enumerate}

Finally, we will use \cref{claim1,claim2} to see what happens at the $t$-th chain.
We write $s=4t+k$.
\begin{itemize}
\item[a)] Suppose that $\chi_{t-1}=\id=\chi_{t}$.
By minimality of $t$, we have that $y_{t-1}\ge s-4$.
On the other hand, $s-2\ge y_t>y_{t-1}\ge s-4$, so in fact $y_{t-1}=s-4$ and $y_t=s-2$.

\begin{align*}
    \psi_k\vtt&=\psi_k(\Psichaind{k-2}{m_1}\chi_1\Psichaind{y_1}{m_{2}}\chi_{2}\Psichaind{y_{2}}{m_{3}}\dots\chi_{t-2}\Psichaind{y_{t-2}}{m_{t-1}})\Psichaind{y_{t-1}}{s-4}\Psichaind{y_{t}}{m_{t}}\dots\chi_h\Psichaind{y_h}{m_h}\chi_dz^\la\\
    &\qquad\text{by \cref{claim1}}\\
    &=\psi_k(\Psichaind{k-2}{m_1}\chi_1\Psichaind{y_1}{m_{2}}\chi_{2}\Psichaind{y_{2}}{m_{3}}\dots\chi_{t-2}\Psichaind{y_{t-2}}{m_{t-1}})\Psichaind{y_{t-1}}{m_{t}}\chi_{t+1}\Psichaind{y_{t+1}}{m_{t+1}}\dots\chi_h\Psichaind{y_h}{m_h}\chi_dz^\la
\end{align*}
We see that the last line is in our standard form and is non-zero.

\item[b)] Suppose $\chi_{t-1}\neq\id\neq\chi_t$.
As $\chi_{t-1}\neq\id$, we have that $y_{t-1}\ge s-6$.
By minimality of $t$ and using the fact that $\chi_t\neq\id$, we also see that $y_t\le s-4$ or if $t=h$ then $y_t=s-2$.
On the other hand, $s-4\ge y_t>y_{t-1}\ge s-6$.
Thus, $y_t=s-4$ and $y_{t-1}=s-6$.
We also see that $\chi_{t-1}=(\psi_{s-5})\psi_{s-4}$ and $\chi_{t}=(\psi_x\dots\psi_{s-1})\psi_{s-3}\psi_{s-2}$ and they do not commute (compare with case 2f) above).
Here $x+1=\ttt(1,m_t)$.
    
\begin{align*}
    \psi_k\vtt&=\psi_k(\Psichaind{k-2}{m_1}\chi_1\Psichaind{y_1}{m_{2}}\chi_{2}\Psichaind{y_{2}}{m_{3}}\dots\chi_{t-2}\Psichaind{y_{t-2}}{m_{t-1}})\chi_{t-1}\chi_{t}\Psichaind{s-2}{m_{t}}\dots\chi_h\Psichaind{y_h}{m_h}\chi_dz^\la\\
    &\qquad\text{by \cref{claim2}}\\
    &=\psi_k(\Psichaind{k-2}{m_1}\chi_1\Psichaind{y_1}{m_{2}}\chi_{2}\Psichaind{y_{2}}{m_{3}}\dots\chi_{t-2}\Psichaind{y_{t-2}}{m_{t-1}})\chi_{t-1}'\Psichaind{y_{t-1}}{m_{t}}\chi_{t+1}\Psichaind{y_{t+1}}{m_{t+1}}\dots\chi_h\Psichaind{y_h}{m_h}\chi_dz^\la
\end{align*}
using \cref{halfdom-leg}.
Here $\chi_{t-1}'=(\psi_{s-5}\psi_x\dots\psi_{s-1})\psi_{s-2}\psi_{s-3}\psi_{s-4}$.
If $t\neq h$ or $\chi_d=\id$, then the second line is in our standard form and is non-zero.
Otherwise, we need to consider the special case where $t=h$, $a$ is even and $\ell=s+2$.
In this case, $\chi_t=\psi_{s-3}\psi_s\psi_{s-1}\psi_{s-2}$.
Then:
\begin{align*}
    \psi_k\vtt&=\psi_k(\Psichaind{k-2}{m_1}\chi_1\Psichaind{y_1}{m_{2}}\chi_{2}\Psichaind{y_{2}}{m_{3}}\dots\chi_{t-2}\Psichaind{y_{t-2}}{m_{t-1}})\chi_{t-1}\chi_{t}\Psichaind{s-4}{m_{t}}\chi_dz^\la\\
    &\qquad\text{by \cref{claim2}}\\
&=\psi_k(\Psichaind{k-2}{m_1}\chi_1\Psichaind{y_1}{m_{2}}\chi_{2}\Psichaind{y_{2}}{m_{3}}\dots\chi_{t-2}\Psichaind{y_{t-2}}{m_{t-1}})\chi_{t-1}\Psichaind{s-6}{m_{t}}\chi_{t}\chi_dz^\la\\
&\,\;\vdots\\
&=\psi_k(\Psichaind{k-2}{m_1}\chi_1\Psichaind{y_1}{m_{2}}\chi_{2}\Psichaind{y_{2}}{m_{3}}\dots\chi_{t-2}\Psichaind{y_{t-2}}{m_{t-1}})\chi_{t-1}\Psichaind{y_{t-1}}{m_{t}}\psi_{s-1}\psi_{s-2}\psi_{s-3}\dots\psi_az^\la.
\end{align*}

\item[c)] Suppose $\chi_{t-1}\neq\id$ and $\chi_t=\id$.
As $\chi_{t-1}\neq\id$, we have that $s-4\ge y_{t-1}\ge s-6$.
By minimality of $t$, we also see that $y_t\le s-2$.
On the other hand, $s-6\le y_{t-1}< y_t\le s-2$.
If $y_t=y_{t-1}+2$, this would force $\chi_t\neq\id$ (see case b)).
Hence, $y_t=s-2$ and $y_{t-1}=s-6$.

\begin{align*}
   \psi_k\vtt&=\psi_k(\Psichaind{k-2}{m_1}\chi_1\Psichaind{y_1}{m_{2}}\chi_{2}\Psichaind{y_{2}}{m_{3}}\dots\chi_{t-2}\Psichaind{y_{t-2}}{m_{t-1}})\chi_{t-1}\Psichaind{y_{t}}{m_{t}}\dots\chi_h\Psichaind{y_h}{m_h}\chi_dz^\la\\
    &\qquad\text{by \cref{claim2}}\\
    &=\psi_k(\Psichaind{k-2}{m_1}\chi_1\Psichaind{y_1}{m_{2}}\chi_{2}\Psichaind{y_{2}}{m_{3}}\dots\chi_{t-2}\Psichaind{y_{t-2}}{m_{t-1}})\chi_{t-1}\Psichaind{y_{t-1}}{m_{t}}\chi_{t+1}\Psichaind{y_{t+1}}{m_{t+1}}\dots\chi_h\Psichaind{y_h}{m_h}\chi_dz^\la
\end{align*}
We see that the final line is in our standard form and is non-zero.

\item[d)] Suppose that $\chi_{t-1}=\id$ and $\chi_t\neq\id$.
Observe that this case can only happen if $a$ is even with $\ell=s+2,t=h$ and $y_t=s-2$.
By minimality of $t$, we have that $y_{t-1}\ge s-4$.
On the other hand, $s-2=y_t>y_{t-1}\ge s-4$, thus $y_{t-1}=s-4$.
We also see that $\chi_t=(\psi_{s-1})\psi_s$.
Using \cref{claim1}, we have that:

\begin{align*}
    \psi_k\vtt&=\psi_k(\Psichaind{k-2}{m_1}\chi_1\Psichaind{y_1}{m_{2}}\chi_{2}\Psichaind{y_{2}}{m_{3}}\dots\chi_{t-2}\Psichaind{y_{t-2}}{m_{t-1}})\Psi_{s-4}\chi_{t}\Psichaind{s-2}{m_{t}}\chi_dz^\la\\
&=\psi_k(\Psichaind{k-2}{m_1}\chi_1\Psichaind{y_1}{m_{2}}\chi_{2}\Psichaind{y_{2}}{m_{3}}\dots\chi_{t-2}\Psichaind{y_{t-2}}{m_{t-1}})\Psichaind{s-4}{m_{t}}\chi_t\chi_dz^\la\\
&\,\;\vdots\\
&=\psi_k(\Psichaind{k-2}{m_1}\chi_1\Psichaind{y_1}{m_{2}}\chi_{2}\Psichaind{y_{2}}{m_{3}}\dots\chi_{t-2}\Psichaind{y_{t-2}}{m_{t-1}})\Psichaind{y_{t-1}}{m_{t}}(\psi_{s-1})\psi_{s-2}\psi_{s-3}\dots\psi_az^\la.
\end{align*}
We see that the last line is in our standard form and is non-zero.
\end{itemize}

For $\bi_{\tp_k}=(0,1)$, we need to consider cases a)--\,d) as above, but with the appropriate indices increased by $2$.
\end{proof}

\begin{rem}
    Observe that \cref{3pairs} part 1 corresponds to the case where $t=c$ and for all $1\le i < t$ we have that $y_i=4i+k-2$ and $\chi_i\neq\id$ (or $y_i=4i+k$ and $\chi_i\neq\id$ for $\bi_{\tp_k}=(0,1)$).
    Thus, the first part of the proof of \cref{4killing-1} also shows the converse of \cref{3pairs} part 1, that is if there exists no $\tp_c$ as given in the statement then $\psi_k\vtt=0$.
\end{rem}

\begin{defn}\label{pair-leg}
Suppose that $k$ is odd such that
\begin{center}
    \begin{tikzpicture}
    \scalefont{0.8}
    \Yboxdimx{30pt}
    \Yboxdimy{20pt}
    \tgyoung(0cm,3cm,1^1\hdts v^1\hdts <p_1{-}1><p_1>^1\hdts <p_m{-}1><p_m>^1\hdts\ell,'11\vdts,<k{-}1>,k,<k{+}1>,<k{+}2>,'11\vdts,<x{-}1>,x,'11\vdts)
    \node at (-0.8,3.4) (a) {{\normalsize{$\ttt=$}}};
    \end{tikzpicture}
\end{center}
where $v$ is minimal such that $k<v$, the section of $\Arm(\ttt)$ $v,\dots,p_1-1,p_1,\dots,p_m=2r+k$ contains $r$ entries and the section of $\Leg(\ttt)$ $k-1,k,\dots,x$ contains $r+2$ entries such that there are $m$ (resp., $m+1$) pairs amongst these $r$ (resp., $r+2$) entries.
Note that if $v+1\in\Arm(\ttt)$ then $v=p_1-1$.

In this case, we define $(\ttt)^{2r+k,k}$ to be the standard $\la$-tableau where $\tp_k$ is moved into $\Arm(\ttt), \tp_{p_m}=\tp_{2r+k}$ is moved into $\Leg(\ttt)$ and all other elements of $\Arm((\ttt)^{2r+k,k})$ and $\Leg((\ttt)^{2r+k,k})$ agree with those of $\ttt$.
This can be easily visualised as a `clockwise' rotation of the entries $k-1,k,\dots,2r+k$.
Thus,\nopagebreak
\begin{center}
    \begin{tikzpicture}
    \scalefont{0.8}
    \Yboxdimx{30pt}
    \Yboxdimy{20pt}
    \tgyoung(0cm,3cm,1^1\hdts <k{-}1>k^1\hdts v^1\hdts<p_1{-}1><p_1>^1\hdts\ell,'11\vdts,<k{+}1>,<k{+}2>,'11\vdts,<x{-}1>,x,'11\vdts,<p_m{-}1>,<p_m>,'11\vdts)
    \node at (-1,3.4) (a) {{\normalsize{$(\ttt)^{p_m,k}=$}}};
    \end{tikzpicture}
\end{center}
where the omitted parts are identical to those in $\ttt$.
 \end{defn}

 \begin{eg}
Consider $\la=(8, 1^9)$ and take $k=3$. If $r=7$ and $\ttt$ is as below, we have that:
\begin{center}
    \begin{tikzpicture}
    \Yboxdim{15pt}
\tgyoung(0cm,0cm,17\eleven\twelve\thirteen\fourteen\sixteen\seventeen,2,3,4,5,6,8,9,\ten,\fifteen)
{\Ylinethick{2pt}
\tgyoung(0cm,0cm,:::;\twelve;\thirteen:;\sixteen;\seventeen,;2,;3,;4,;5,:,;8,;9)}
\tgyoung(8cm,0cm,1237\eleven\twelve\thirteen\fourteen,4,5,6,8,9,\ten,\fifteen,\sixteen,\seventeen).
{\Ylinethick{2pt}
\tgyoung(8cm,0cm,:;2;3::;\twelve;\thirteen:,;4,;5,:,;8,;9:,:,:,;\sixteen,;\seventeen).}
    \node at (-0.6,0.3) (a) {{\normalsize{$\ttt=$}}};
    \node at (7,0.3) (a) {{\normalsize{$(\ttt)^{17,3}=$}}};
    \end{tikzpicture}
\end{center}
Observe that in this case
\[\vtt=\psi_6\psi_{10}\psi_9\psi_8\psi_7\Psichaind{5}{3}\Psichaind{11}{5}\psi_{15}\Psichaind{13}{7}\psi_{16}\psi_{15}\dots\psi_8z^\la
\]
and
\[v_{(\ttt)^{17,3}}=\psi_6\psi_{10}\psi_9\psi_8\psi_7\Psi_5\Psichaind{11}{7}\psi_{13}\psi_{12}\dots\psi_8z^\la.
\]
\end{eg}

\begin{cor}\label{cor:psi-leg}
Assume that $\ttt\in\std(\la)$ and $\tp_k,\tp_{k+2}\in\Leg(\ttt)$.
\begin{enumerate}
    \item If $\bi_{\tp_k}=(1,0)$, let
\[
\vtt=\psi^*\chi_1\Psichaind{y_1}{m_1}\chi_2\dots\chi_{h}\Psichaind{y_h}{m_h}\chi_dz^\la.
\]
Suppose $\psi_k\vtt\neq0$ and that $t$ is minimal such that $y_t\le4t+k-4$ or $y_t=4t+k-2$ and $\chi_t=\id$ or if $a$ is even and $t=h$, assume that $y_t=4t+k-2$ and either $\chi_{t}=\id$ or $\ell=4t+k+2$.
\item If $\bi_{\tp_k}=(0,1)$, let
\[
\vtt=\psi^*\chi_0\Psichaind{y_0}{m_0}\chi_1\Psichaind{y_1}{m_1}\chi_2\dots\chi_{h}\Psichaind{y_h}{m_h}\chi_dz^\la.
\]
Suppose $\psi_k\vtt\neq0$ and that $t$ is minimal such that $y_t\le4t+k-2$ or $y_t=4t+k$ and $\chi_t=\id$ or if $a$ is even and $t=h$, assume that $y_t=4t+k$ and either $\chi_{t}=\id$ or $\ell=4t+k+4$.
 \end{enumerate}
 Then there exists some minimal $r$ such that $\ttt$ is as in \cref{pair-leg} and $\psi_k\vtt=\psi_kv_{(\ttt)^{2r+k,k}}$ which is in reduced form.
\end{cor}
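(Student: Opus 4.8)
The plan is to read this corollary off directly from the converse half of the proof of \cref{4killing-1}, reinterpreting its algebraic output combinatorially. Since $\psi_k\vtt\neq0$ by hypothesis, that proof guarantees the minimal halting index $t$ described in the statement exists, and (as explained in the remark following \cref{4killing-1}) this forces a pair $\tp_c\in\Arm(\ttt)$ with $k+2<c$. First I would fix $c$ to be this pair and verify that, on setting $2r+k:=c$, the tableau $\ttt$ is exactly of the shape required in \cref{pair-leg}: the entries $k-1,k,\dots,c$ occupy an initial segment of $\Leg(\ttt)$ together with a segment of $\Arm(\ttt)$, and the constraints forced by minimality of $t$ (namely that the intermediate $\Psi$-chain upper indices are four apart, i.e.\ $y_i=4i+k-2$ with $\chi_i\neq\id$, or $y_i\geq 4i+k$, for $i<t$) yield precisely $m$ pairs in the relevant arm section and $m+1$ in the leg section. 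Minimality of $r$ is inherited from minimality of $t$, and the two residue regimes $\bi_{\tp_k}=(1,0)$ and $\bi_{\tp_k}=(0,1)$ correspond to the two index conventions in the statement.

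Next I would match the algebraic output to the rotation. The converse computation in \cref{4killing-1} pushes the leading $\psi_k$ rightwards through each $\Psichaind{y_i}{m_i}$ using \cref{handy} (parts 2 and 3) and \cref{halfdom-leg}, arriving at one of the final displayed lines of the cases a)--d) for the $t$-th chain. In each such line the result has the form $\psi_k$ times a word already in reduced standard form. The claim to check is that this trailing word is exactly the standard form of $v_{(\ttt)^{2r+k,k}}$: the net effect of the propagation is that the leftmost chain $\Psichaind{y_1}{m_1}$ (which begins $\cdots\Psi_{k+2}\Psi_k$) collapses to $\Psichaind{k-2}{m_1}$, seating $\tp_k$ in the arm, while the $t$-th chain is reorganised by \cref{halfdom-leg} so as to seat $\tp_c=\tp_{2r+k}$ in the leg — precisely the clockwise rotation of \cref{pair-leg}. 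I would verify this node-by-node by reading the residue sequence and pair positions off the resulting reduced expression and checking they agree with $(\ttt)^{2r+k,k}$. For $\bi_{\tp_k}=(0,1)$ the extra left chain $\chi_0$ (containing $\psi_{k+2}\psi_{k+1}\psi_k\psi_{k-1}\psi_{k-2}$) is dispatched exactly as in part 1b) of \cref{4killing-1}, reducing to the $(1,0)$ case after shifting all indices by two, as the stated bounds anticipate.

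Finally, the reduced-form assertion is immediate once this identification is made: each final line of the converse computation was shown to be ``in our standard form and non-zero,'' so no braid or commutation relation shortens $\psi_k v_{(\ttt)^{2r+k,k}}$, and it therefore equals a single standard basis vector. The main obstacle I anticipate is precisely the bookkeeping of the middle step — translating the $\chi_i$ and $\Psichaind{y_i}{m_i}$ data of the reduced expression into the node-by-node description of $(\ttt)^{2r+k,k}$, and confirming that the chain lengths and $\chi$-terms emerging from the propagation reproduce the \emph{standard} form of the rotated tableau rather than merely some reduced expression for it. Getting the off-by-constant relations between $t$, $m$, $r$ and $c$ exactly right, consistently across the $(1,0)$ and $(0,1)$ residue cases, is where the care is needed.
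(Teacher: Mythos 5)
Your proposal is correct and follows essentially the same route as the paper's proof: both read the corollary off the converse half of the proof of \cref{4killing-1}, use the case analysis at the $t$-th chain (with $s=4t+k$) to locate the witness pair $\tp_{2r+k}$ in $\Arm(\ttt)$, count the arm and leg entries forced by minimality of $t$ to verify that $\ttt$ matches \cref{pair-leg} and to pin down $r$ (which the paper records case-by-case as $2t$, $2t-1$ or $2t+1$), and then identify the final reduced expression as $\psi_k v_{(\ttt)^{2r+k,k}}$. Your anticipated "middle-step bookkeeping" is exactly the entry/pair-counting argument the paper carries out, and your reduction of the $(0,1)$ case to the $(1,0)$ case by an index shift is also how the paper disposes of it.
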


\begin{proof}
We will continue to use the notation $s=4t+k$ from the proof of \cref{4killing-1}.
Observe that the final part of the proof of \cref{4killing-1} has all the necessary results to prove the statement.
First, we assume that $\bi_{\tp_k}=(1,0)$.
The case $\bi_{\tp_k}=(0,1)$ can be proved analogously.
\begin{itemize}
    \item[a)] If $\chi_t=\id$, then this corresponds to cases a) and c) in the proof of \cref{4killing-1}.
    In this case,
\[
    \psi_k\vtt=\psi_k(\Psichaind{k-2}{m_1}\chi_1\Psichaind{y_1}{m_{2}}\dots\chi_{t-2}\Psichaind{y_{t-2}}{m_{t-1}})(\chi_{t-1})\Psichaind{y_{t-1}}{m_{t}}\chi_{t+1}\Psichaind{y_{t+1}}{m_{t+1}}\dots\chi_h\Psichaind{y_h}{m_h}\chi_dz^\la.
\]
We are considering the entries from $k-1$ up to $y_t+2=s$, out of which $2t$ are in the arm and $2t+2$ are in the leg.

Setting $r=2t$, it is easy to see that $\ttt$ is as in \cref{pair-leg}.
Assume that the set $k+3,k+4,\dots,s-3,s-2$ contains $2d$ entries in total.
We have just shown that these entries are evenly distributed between the arm and the leg, that is there are $d$ many of them in $\Leg(\ttt)$.
If we start counting the entries from $k+3$ up to $s-2$ in the natural order, whenever an even (resp.\ odd) entry $g$ is unpaired in $\Leg(\ttt)$, then $g+1$ (resp.\ $g-1$) is in $\Arm(\ttt)$.
We can assume that there are $f-1$ pairs between $k+3$ and $s-2$ in $\Arm(\ttt)$.
Hence, there must be $f-1+2=f+1$ pairs in $\Leg(\ttt)$ between $k-1$ and $s-2$ (the plus two accounts for the pairs $\tp_k$ and $\tp_{k+2}$).
Thus, we see that $\ttt$ is indeed as in \cref{pair-leg} where $\tp_s=\tp_{p_m},f=m$ and $r=2t$.
Looking at the reduced expression, this is the rotation that moves $\tp_k$ into the arm and $\tp_{s}$ into the leg.

\item[b)] If $\chi_t\neq\id$ and either $t\neq h$ or $\chi_d=\id$, then this is case b) in the proof of \cref{4killing-1} and
\[
   \psi_k\vtt=\psi_k(\Psichaind{k-2}{m_1}\chi_1\Psichaind{y_1}{m_{2}}\dots\chi_{t-2}\Psichaind{y_{t-2}}{m_{t-1}})\chi'_{t-1}\Psichaind{y_{t-1}}{m_{t}}\chi_{t+1}\Psichaind{y_{t+1}}{m_{t+1}}\dots\chi_h\Psichaind{y_h}{m_h}\chi_dz^\la.
\]
We are considering the entries from $k-1$ up to $y_t+2=s-2$.
But $\bi_{s-2}=1$, so in total we move $2t-1$ entries in the arm and $2t+1$ in the leg.
Setting $r=2t-1$ and applying the arguments from the previous paragraph, we see that $\ttt$ is as in \cref{pair-leg}.

\item[c)] Finally, if $\chi_t\neq\id,t=h$ and $\chi_d\neq\id$, then this is case d) and the special case of case b) in the proof of \cref{4killing-1}.
We have that
\[
    \psi_k\vtt=\psi_k(\Psichaind{k-2}{m_1}\chi_1\Psichaind{y_1}{m_{2}}\dots\chi_{t-2}\Psichaind{y_{t-2}}{m_{t-1}})\chi_{t-1}\Psichaind{y_{t-1}}{m_{t}}(\psi_{s-1})\psi_{s-2}\psi_{s-3}\dots\psi_az^\la.
\]
We are considering the entries $k-1,k,\dots,s+2$.
In total, we have $4t+4$ of them, $2t+1$ are in the arm and $2t+3$ are in the leg.
Setting $r=2t+1$ and applying the arguments from case a), we see that $\ttt$ is as in \cref{pair-leg}.
\qedhere
\end{itemize}
\end{proof}

Recall from \cref{doms} that $\Psichainu{x}{y}:=\Psi_x\Psi_{x+2}\dots\Psi_{y}$.

\begin{defn}[cf.~\cite{thesis}, Def.~3.22]
    Let $\ttt\in\std(\la)$.
    The corresponding $\vtt$ can be written the following way, which we call reverse normal form RNF (compare with \cref{short2}).
    \[
    \vtt=\rho_0\rho_1\Psichainu{y_1}{m_1}\rho_2\Psichainu{y_2}{m_2}\dots\rho_h\Psichainu{y_h}{m_h}\rho_dz^\la
    \]
    Here $y_{i+1}\le y_i-2$ and $m_{i+1}=m_i-2$ for $1\le i \le h$.
    If $a$ is odd, then $m_h=a$ and $\rho_d=\id$.
    Otherwise (when $a$ is even), $m_h=a-1$ and $\rho_d=\psi_{\ell-1}\dots\psi_a=\chi_d$.
 \end{defn}

     Similarly to $\chi_i$, unless $\rho_i=\id$, we denote the rightmost entry of $\rho_i$ by $\rho_i(\last)=y_{i+1}-2$.
       Observe that the leftmost entry in $\rho_{i+1}$ is strictly less than $y_i-2$.
       It is easy to see that $\rho_i=\id$ whenever $y_i+4\ge y_{i-1}$.
 Further, $\rho_0$ corresponds to a chain of single $\psi$'s with increasing indices from left to right where $\rho_0(\last)=\psi_{m_1+2}$. 
 This chain appears if the pair at the end of $\Arm(\ttt)$ has shifted residues.

\begin{rem}
The action of RNF can be visualised as placing entries into $\Leg(\ttt)$, as opposed to the normal form where we were placing entries into $\Arm(\ttt)$.
To be more specific, RNF puts the pairs $\tp_{y_h},\tp_{y_{h-1}},\dots,\tp_{y_1}$ at the beginning of $\Leg(\ttt)$ (with some possible shifts coming from the $\rho$'s).
\end{rem}

\begin{eg}
Let $n=18,\la=(8,1^{\ten})$ and assume that $\ttt$ is as below.
\begin{center}
    \begin{tikzpicture}
    \Yboxdim{14pt}
\tgyoung(0cm,0cm,168\ten\eleven\fourteen\sixteen<18>,2,3,4,5,7,9,\twelve,\thirteen,\fifteen,\seventeen)
    \node at (-0.6,0.25) (a) {{\normalsize{$\ttt=$}}};
    \end{tikzpicture}
\end{center}
Then
\begin{align*}
    \vtt&=\psi_7\Psichaind{5}{3}\Psichaind{9}{5}\psi_{15}\Psichaind{13}{7}\psi_{17}\psi_{16}\dots\psi_{8}\\
    &=\psi_{15}\Psi_{13}\psi_{7}\Psichainu{9}{11}\Psichainu{5}{9}\Psichainu{3}{7}\psi_{17}\psi_{16}\dots\psi_{8}.
\end{align*}
\end{eg}

\begin{eg}
    We will illustrate using RNF by going through the different cases outlined in \cref{3pairs} part 2.
    \begin{itemize}
        \item[a)] $\bi_{\tp_c}=(1,0)$, $\bi_{\tp_k}=(1,0)$:
    
As $\bi_{\tp_c}=(1,0)=\bi_{\tp_{k}}$, we have that $\rho_{0}=\id=\rho_h$.
Also, note that that for each $1\le i \le h$ the chain $\Psichaind{y_{i+1}}{m_{i+1}}$ contains one more $\Psi$ than $\Psichaind{y_{i}}{m_{i}}$ except for $i=h-1$.
Hence:

\begin{align*}
\psi_k\vtt&=\psi_k\chi_1\Psichaind{c}{m_1}(\chi_2\Psichaind{c+4}{m_2}\dots\chi_{h-2}\Psichaind{k-6}{m_{h-2}}\chi_{h-1}\Psichaind{k-2}{m_{h-1}})\chi_h\Psichaind{k}{m_h}\vtp\\
    &=\psi_k\rho_1\Psichainu{k-2}{y_1}\rho_2\Psichainu{k-6}{y_2}\dots\rho_{h-1}\Psichainu{c+4}{m_{h-1}}\Psichainu{c}{m_h}\vtp\\
    &=\rho_1\psi_k\Psichainu{k-2}{y_1}\rho_2\Psichainu{k-6}{y_2}\dots\rho_{h-1}\Psichainu{c+4}{m_{h-1}}\Psichainu{c}{m_h}\vtp\\
    &=\rho_1\psi_k\Psichainu{k+2}{y_1}\rho_2\Psichainu{k-6}{y_2}\dots\rho_{h-1}\Psichainu{c+4}{m_{h-1}}\Psichainu{c}{m_h}\vtp\\
    &=\psi_k\Psichainu{k+2}{y_1}\rho_2\rho_1\Psichainu{k-6}{y_2}\dots\rho_{h-1}\Psichainu{c+4}{m_{h-1}}\Psichainu{c}{m_h}\vtp\\
    &=\psi_k\Psichainu{k+2}{y_1}\rho_2\rho_1\Psichainu{k-2}{y_2}\dots\rho_{h-1}\Psichainu{c+4}{m_{h-1}}\Psichainu{c}{m_h}\vtp\\
&\,\;\vdots\\
 &=\psi_k\Psichainu{k+2}{y_1}\rho_1\Psichainu{k-2}{y_2}\dots\rho_{h-2}\Psichainu{c+8}{m_{h-1}}\rho_{h-1}\Psichainu{c+4}{m_h}\vtp\\
 &=\psi_k\Psichaind{c-2}{m_1}\chi_1(\Psichaind{c}{m_2}\chi_2\dots\chi_{h-2}\Psichaind{k-6}{m_{h-1}})\Psichaind{k-2}{m_h}\vtp.
\end{align*}

 \item[b)] $\bi_{\tp_c}=(1,0)$, $\bi_{\tp_k}=(0,1)$ and $\tp_{k+2}$ is at the end of the arm:
 
 As $\bi_{\tp_c}=(1,0)$ and $\bi_{\tp_{k}}=(0,1)$, we have that $\rho_{0}=(\psi_{k-3})\psi_{k-2}\psi_{k-1}\psi_k,\rho_h=\id,\rho_d=\psi_{k+1}\psi_k\dots\psi_a$ and $m_1=k-2$.
 Observe that $\rho_0$ is the concatenation of $\chi_{h-1}$ and $\chi_h$.

\begin{align*}
\psi_k\vtt&=\psi_k\chi_1\Psichaind{c}{m_1}(\chi_2\Psichaind{c+4}{m_2}\dots\chi_{h-2}\Psichaind{k-8}{m_{h-2}}\chi_{h-1}\Psichaind{k-4}{m_{h-1}})\chi_h\Psichaind{k-2}{m_h}\chi_dz^\la\\
&=\psi_k\rho_0\rho_1\Psichainu{k-4}{k-2}\rho_2\Psichainu{k-8}{k-4}\dots\rho_{h-1}\Psichainu{c+4}{m_{h-1}}\rho_{h}\Psichainu{c}{m_h}\dots\rho_dz^\la\\
&=(\psi_{k-3})\psi_{k-2}\psi_k\psi_{k-1}\psi_k\rho_1\Psichainu{k-4}{k-2}\rho_2\Psichainu{k-8}{k-4}\dots\rho_{h-1}\Psichainu{c+4}{m_{h-1}}\rho_{h}\Psichainu{c}{m_h}\dots\rho_dz^\la\\
&\,\;\vdots\\
&=(\psi_{k-3})\psi_{k-2}\rho_1\Psichainu{k-4}{k-2}\rho_2\Psichainu{k-8}{k-4}\dots\rho_{h-1}\Psichainu{c+4}{m_{h-1}}\rho_{h}\Psichainu{c}{m_h}\underbrace{\dots\widehat{\rho}_dz^\la}_{=\vtp}\\
&\,\;\vdots\\
&=(\psi_{k-3})\psi_{k-2}\rho_1\Psi_{k-4}
\rho_2\Psichainu{k-8}{k-6}\dots\rho_{h-2}\Psichainu{c+8}{m_{h-1}}\rho_{h-1}\Psichainu{c+4}{m_h}\vtp\\
&=\Psichaind{c-2}{m_1}\chi_1(\Psichaind{c}{m_2}\chi_2\dots\chi_{h-2}\Psichaind{k-8}{m_{h-1}})\chi_{h-1}\Psichaind{k-4}{m_h}\widehat{\rho}_dz^\la.
\end{align*}
where $\widehat{\rho}_d=\psi_k\psi_{k-1}\dots\psi_a$.

Details for the rest of the cases can be included by switching the toggle in the {\tt arXiv} version of this paper.
\begin{answer}
 \item[c)] $\bi_{\tp_c}=(1,0)$, $\bi_{\tp_k}=(0,1)$ and $\tp_{k+2}$ is not at the end of the arm:
 
 Same as the the previous case, except that here $\rho_{0}=(\psi_{k-3})\psi_{k-2}\psi_{k-1}\psi_k\psi_{k+1}\psi_{k+2}$ and $m_1\ge k$.

\begin{align*}
\psi_k\vtt&=\psi_k\chi_1\Psichaind{c}{m_1}(\chi_2\Psichaind{c+4}{m_2}\dots\chi_{h-2}\Psichaind{k-4}{m_{h-2}})\chi_{h-1}\Psichaind{k-2}{m_{h-1}}\chi_h\Psichaind{k}{m_h}\vtp\\
&=\psi_k\rho_0\rho_1\Psichainu{k-4}{m_1}\rho_2\Psichainu{k-8}{m_2}\dots\rho_{h}\Psichainu{c}{m_h}\vtp\\
&\,\;\vdots\\
&=(\psi_{k-3})\psi_{k-2}\psi_{k+2}\Psichainu{k}{m_1}\rho_1\Psichainu{k-4}{m_2}\dots\rho_{h-1}\Psichainu{c-2}{m_h}\vtp\\
&=\Psichaind{c+4}{m_1}\chi_1(\Psichaind{c}{m_2}\chi_2\Psichaind{c-2}{m_3}\dots\chi_{h-2}\Psichaind{k-4}{m_{h-1}})(\psi_x\dots)\psi_{k+2}\Psichaind{k}{m_h}\vtp
\end{align*}
 
  \item[d)] $\bi_{\tp_c}=(0,1)$, $\bi_{\tp_k}=(1,0)$:
  
We have $\rho_{h-1}\neq\id\neq\rho_h$.

   \begin{align*}
\psi_k\vtt&=\psi_k\chi_1\Psichaind{y_1}{m_1}(\chi_2\Psichaind{c+2}{m_2}\dots\chi_{h-2}\Psichaind{k-6}{m_{h-2}})\chi_{h-1}\Psichaind{k-2}{m_{h-1}}\chi_h\Psichaind{k}{m_h}\vtp\\
&=\psi_k\rho_1\Psichainu{k-2}{y_1}\rho_2\Psichainu{k-6}{y_2}\dots\rho_{h-1}\Psichainu{c+2}{m_{h-1}}\rho_h\Psichainu{c}{m_h}\vtp\\
&\,\;\vdots\\
&=\psi_k\rho_1\Psichainu{k-2}{y_1}\rho_2\Psichainu{k-6}{y_2}\dots\rho_{h-2}\Psichainu{c+6}{m_{h-1}}\underbrace{\rho_{h-1}\rho_h\Psi_c}_{\text{\cref{halfdom-arm}}}\Psichainu{c+2}{m_h}\vtp\\
&=\psi_k\rho_1\Psichainu{k-2}{y_1}\rho_2\Psichainu{k-6}{y_2}\dots\rho_{h-2}\Psichainu{c+6}{m_{h-1}}\psi_{c-2}\psi_{c-1}\psi_c\Psichainu{c+2}{m_h}\vtp\\
&=\psi_{c-2}\Psichaind{c-4}{m_1}\psi_{c-1}\psi_c\Psichaind{c-2}{m_2}(\chi_2\Psichaind{c+2}{m_3}\dots\chi_{h-2}\Psichaind{k-6}{m_{h-1}})\psi_k\Psichaind{k-2}{m_h}\vtp
\end{align*}

  \item[e)] $\bi_{\tp_c}=(0,1)$, $\bi_{\tp_k}=(0,1)$:

  As shown in cases b) and c), whether $\tp_{k+2}$ is at the end of the arm does not modify the computations significantly, so we will assume that $\tp_{k+2}$ appears somewhere along the arm.
  Recall that if $k+2=\ttt(1,a)$ with $a$ even, then
\[
\vtt=\chi_1\Psichaind{y_1}{m_1}(\chi_2\Psichaind{c+2}{m_2}\chi_3\dots\chi_{h-2}\Psichaind{k-8}{m_{h-2}})\chi_{h-1}\Psichaind{k-4}{m_{h-1}}\chi_h\Psichaind{k-2}{m_h}\chi_dz^\la.
\]
Otherwise,
\[
\vtt=\chi_1\Psichaind{y_1}{m_1}(\chi_2\Psichaind{c+2}{m_2}\chi_3\dots\chi_{h-2}\Psichaind{k-4}{m_{h-2}})\chi_{h-1}\Psichaind{k-2}{m_{h-1}}\chi_h\Psichaind{k}{m_h}\vtp.
\]

Then it is easy to see that
  
\begin{align*}
\psi_k\vtt&=\psi_k\rho_0\rho_1\Psichainu{k-2}{y_1}\rho_2\Psichainu{k-6}{y_2}\dots\rho_{h-2}\Psichainu{c+6}{m_{h-2}}\rho_{h-1}\Psichainu{c+2}{m_{h-1}}\rho_h\Psichainu{c}{m_h}\vtp\\
&=\psi_k\psi^*\rho_1\Psichainu{k-2}{y_1}\rho_2\Psichainu{k-6}{y_2}\dots\rho_{h-2}\Psichainu{c+6}{m_{h-1}}\psi_{c-2}\psi_{c-1}\psi_c\Psichainu{c+2}{m_h}\vtp\\
&=\psi_{c-2}\Psichaind{c-4}{m_1}\psi_{c-1}\psi_c\Psichaind{c-2}{m_2}(\chi_2\Psichaind{c+2}{m_3}\dots\chi_{h-2}\Psichaind{k-6}{m_{h-1}})\psi_k\Psichaind{k-2}{m_h}\vtp
\end{align*}
where depending on the position of $k+2$ in $\Arm(\ttt)$ either $\rho_0$ or $\rho_d$ got shorter by a $\psi_{k+1}$.
\end{answer}
    \end{itemize}
\end{eg}

\begin{lem}[cf.\cite{thesis}, Lemma~3.23]\label{4killing-2}
Let $\ttt\in\std(\la)$ with $\tp_k,\tp_{k+2}\in\Arm(\ttt)$.
\begin{itemize}
 \item[a)] If $\bi_{\tp_k}=(1,0)$, let $\vtt$ have reverse normal form
\[
\vtt=\psi^*\rho_1\Psichainu{y_1}{m_1}\rho_2\Psichainu{y_2}{m_2}\dots\rho{h}\Psichainu{y_h}{m_h}(\rho_d)z^\la,
\]
where $m_h=a(-1)$. Suppose $3\le k \le n-1$ is odd and that $y_1$ is minimal such that $y_1 \le k-2$ and $m_1\ge k$.
    Then $\psi_k\vtt=0$ if and only if for all $i=1,2,\dots,h$ either $y_{i}\le k-4i$ or $y_i = k-4i+2$ and $\rho_i\neq \id$.
    
\item[b)] If $\bi_{\tp_k}=(0,1)$ and $\tp_{k+2}$ is not at the end of $\Arm(\ttt)$, let $\vtt$ have reverse normal form
\[
\vtt=\psi^*\rho_0\Psichainu{y_0}{m_0}\rho_1\Psichainu{y_1}{m_1}\rho_2\Psichainu{y_2}{m_2}\dots\rho_{h}\Psichainu{y_h}{m_h}(\rho_d)z^\la,
\]
where $m_h=a(-1)$. Suppose $3\le k \le n-1$ is odd and that $y_1$ is minimal such that $y_1 \le k-4$ and $m_1\ge k$.
 Then $\psi_k\vtt=0$ if and only if for all $i=1,2,\dots,h$ either $y_{i}\le k-4i-2$ or $y_i = k-4i$ and $\rho_i\neq \id$.

 \item[c)] If $\bi_{\tp_k}=(0,1)$ and $\tp_{k+2}$ is at the end of $\Arm(\ttt)$, let $\vtt$ have reverse normal form
\[
\vtt=\psi^*\rho_0\Psichainu{y_0}{m_0}\rho_1\Psichainu{y_1}{m_1}\rho_2\Psichainu{y_2}{m_2}\dots\rho_{h}\Psichainu{y_h}{m_h}\rho_dz^\la,
\]
where $m_h=a-1$.
Suppose $3\le k \le n-1$ is odd and that $y_1$ is minimal such that $y_1 \le k-4$ and $m_1=k-2$.
 Then $\psi_k\vtt=0$ if and only if for all $i=1,2,\dots,h$ either $y_{i}\le k-4i-2$ or $y_i = k-4i$ and $\rho_i\neq \id$.
     \end{itemize}
\end{lem}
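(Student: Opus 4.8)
The plan is to mirror the proof of \cref{4killing-1} under the arm--leg duality. Since now $\tp_k,\tp_{k+2}\in\Arm(\ttt)$ rather than $\Leg(\ttt)$, I work throughout with the reverse normal form, so that $\psi_k$ first meets a non-commuting $\Psi$-term at the highest-indexed end of the relevant chain; at each step I substitute \cref{handy} part $3$ and \cref{halfdom-arm} for the roles played by \cref{handy} part $2$ and \cref{halfdom-leg} in the leg case. As there, I omit $\psi^*$ on the left and write $\rho_i=\widehat{\rho}_i\rho_i(\last)$ with $\rho_i(\last)=\psi_{y_{i+1}-2}$. By \cref{pre-even} only the patterns $\bi_{\tp_k}=(1,0)$ and $\bi_{\tp_k}=(0,1)$ occur; the latter contributes the extra leading chain $\rho_0$ (the shift chain), through which $\psi_k$ propagates exactly as in cases d)--f) of \cref{3pairs}, so parts b) and c) reduce to part a) after raising all indices by $2$. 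Part c) differs from b) only in that $\tp_{k+2}$ sits at the end of $\Arm(\ttt)$, which for $a$ even produces the trailing chain $\rho_d$; this is handled by the same end-of-chain computation as in \cref{3pairs} Case $2$.

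For the ``if'' direction I would induct on the length of $\vtt$. The base case $h=1$ is a direct computation: the collapse of $\Psi_{k-2}\Psi_k$ (the reflection of the $\Psi_{k+2}\Psi_k$ collapse in \cref{4killing-1}) together with \cref{handy} part $3$ shows $\psi_k\vtt=0$ once $y_1\le k-2$, after which $\psi_k$ commutes past the remainder and kills $z^\la$ (for $a$ odd) or is absorbed into the $\rho_d$-computation (for $a$ even). For the inductive step the hypotheses ``$y_i\le k-4i$, or $y_i=k-4i+2$ with $\rho_i\ne\id$'' guarantee that, after $\psi_k$ has propagated through the top chain, the tail of $\vtt$ again satisfies the hypotheses of the lemma with one fewer $\Psi$-chain, so induction applies; this reflects verbatim the two subcases (whether $y_2$ is strictly large or at the boundary with $\rho_2\ne\id$) of part $1$ of \cref{4killing-1}.

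For the converse I would let $t$ be minimal such that the annihilation condition fails, i.e.\ $y_t\ge k-4t+4$, or $y_t=k-4t+2$ with $\rho_t=\id$ (with the evident modification for $a$ even and $t=h$). Exactly as in the leg case, this forces the existence of a pair $\tp_c\in\Leg(\ttt)$ with $c<k$ and no paired entries strictly between $\tp_c$ and $\tp_k$, so \cref{3pairs} part $2$ already gives $\psi_k\vtt=\psi_k\vs\ne0$. The substance is then the analogues of the two rotation identities \cref{claim1,claim2} from the proof of \cref{4killing-1}: up to the $t$-th chain the action of $\psi_k$ merely rotates the chains, yielding an expression in reverse normal form. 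I would establish these by the same four-case induction on $i$, with every inequality reflected through $k$ and with \cref{halfdom-arm} replacing \cref{halfdom-leg} at the boundary.

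I expect the main obstacle to be purely bookkeeping: once the reverse normal form reverses the order of the chains, one must keep the direction of propagation straight and correctly pair the boundary cases---in particular the special subcases with $a$ even, $t=h$, and $\tp_{k+2}$ at the end of the arm in part c)---with the corresponding special cases of \cref{4killing-1}. No genuinely new algebraic input is needed, since every braid move and every vanishing $y$-term is already licensed by \cref{standard-y}, \cref{even}, \cref{handy}, \cref{halfdom-arm} and \cref{3pairs}.
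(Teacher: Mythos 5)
Your proposal is correct and takes exactly the paper's route: the paper's own proof of \cref{4killing-2} is the one-line reduction ``after modifying the appropriate indices and notations, the proof is almost identical to the proof of \cref{4killing-1}'', and your outline spells out precisely that mirroring, with the same substitutions the paper itself prescribes (reverse normal form, \cref{handy} part 3 and \cref{halfdom-arm} in place of part 2 and \cref{halfdom-leg}, the index shift by $2$ for $\bi_{\tp_k}=(0,1)$, and the converse via a minimal $t$ together with the analogues of \cref{claim1,claim2} and \cref{3pairs} part 2). No discrepancy with the paper's argument.
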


\begin{rem}
    In \cref{4killing-1}, when $a$ is even, we had to consider some additional restrictions on $\ell$.
    However, the above lemma already states that $\tp_k,\tp_{k+2}\in\Arm(\ttt)$, hence if $\tp_{k+2}$ is at the end of the arm, this already forces $\ell=k+2$.
\end{rem}

\begin{proof}
    After modifying the appropriate indices and notations, the proof is almost identical to the proof of \cref{4killing-1}.
\end{proof}

\begin{rem}
    Note that \cref{3pairs} part 2 corresponds to the case where $t=c$ and for all $1\le i < t$ we have that $y_i=k-4i+2$ and $\chi_i\neq\id$ (or $y_i=k-4i$ and $\chi_i\neq\id$ if $\bi_{\tp_k}=(0,1)$).
    Thus, \cref{4killing-2} also shows the converse of \cref{3pairs} part 2, that is if there exists no $\tp_c$ as given in the statement then $\psi_k\vtt=0$.
\end{rem}

\begin{defn}\label{pair-arm}
Suppose that
\begin{center}
    \begin{tikzpicture}
    \scalefont{0.8}
    \Yboxdimx{28pt}
    \Yboxdimy{20pt}
    \tgyoung(0cm,3cm,1^1\hdts <x{-}1>x^1\hdts <k{-}1>k<k{+}1><k{+}2>^1\hdts\ell,'11\vdts,<p_m{-}1>,<p_m>,'11\vdts,<p_1{-}1>,<p_1>,'11\vdts,v,'11\vdts)
    \node at (-0.8,3.4) (a) {{\normalsize{$\ttt=$}}};
    \end{tikzpicture}
\end{center}
where $v$ is maximal such that $v<k-1$, the section of $\Leg(\ttt)$ $p_m-1,p_m,\dots,p_1-1,\dots,v$ contains $m$ pairs and the section of the arm $x-1,x, \dots,k+2$ contains $m+1$ pairs (observe that $p_m=k+2-2r$).
Note that if $v-1\in\Leg(\ttt)$ then we have that $v=p_1$.

In this case, we define $(\ttt)_{k+2-2r,k+2}$ to be the standard $\la$-tableau where $\tp_{k+2}$ is moved into $\Leg(\ttt), \tp_{p_m}$ is moved into $\Arm(\ttt)$ and all other entries of $\Leg((\ttt)_{k+2-2r,k+2})$ and $\Arm((\ttt)_{k+2-2r,k+2})$ agree with those of $\ttt$.
This can be easily visualised as a `clockwise' rotation of the entries $p_m-1,p_m,\dots,k+2$.
Thus,
\begin{center}
    \begin{tikzpicture}
    \scalefont{0.8}
    \Yboxdimx{28pt}
    \Yboxdimy{20pt}
    \tgyoung(0cm,3cm,1^1\hdts <p_m{-}1><p_m>^1\hdts<x{-}1>x^1\hdts <k{-}1>k^1\hdts\ell,'11\vdts,<p_1{-}1>,<p_1>,'11\vdts,v,'11\vdts,<k{+}1>,<k{+}2>,'11\vdts)
    \node at (-1,3.4) (a) {{\normalsize{$(\ttt)_{m,k}=$}}};
    \end{tikzpicture}
\end{center}
where the omitted parts are identical to those in $\ttt$.
 \end{defn}
 
  \begin{eg}
  Consider $\la=(9, 1^{9})$ and take $k=15$. If $r=6$ and $\ttt$ is as below, we have that:
\begin{center}
    \begin{tikzpicture}
    \Yboxdim{15pt}
\tgyoung(0cm,0cm,18\ten\eleven\twelve\fourteen\sixteen\seventeen,2,3,4,5,6,7,9,\thirteen,<18>)
{\Ylinethick{2pt}
\tgyoung(0cm,0cm,::;\ten;\eleven:;\fourteen;\fifteen;\sixteen;\seventeen,:,:,;4,;5,;6,;7)}
\tgyoung(8.7cm,0cm,1458\ten\eleven\twelve\fourteen\fifteen,2,3,6,7,9,\thirteen,\sixteen,\seventeen,<18>).
{\Ylinethick{2pt}
\tgyoung(8.7cm,0cm,:;4;5:;\ten;\eleven:;\fourteen;\fifteen,:,:,;6,;7,:,:,;\sixteen,;\seventeen).}
    \node at (-0.6,0.3) (a) {{\normalsize{$\ttt=$}}};
    \node at (7.7,0.3) (a) {{\normalsize{$(\ttt)_{5,17}=$}}};
    \end{tikzpicture}
\end{center}
Observe that in this case
\[
\vtt=\psi_9\Psichaind{7}{3}\psi_{10}\psi_{11}\Psichaind{9}{5}\Psichaind{13}{7}\Psichaind{15}{9}
=\psi_{10}\psi_{11}\Psichainu{13}{15}\psi_9\Psichainu{7}{13}\Psichainu{5}{11}\Psichainu{3}{9}
\]
and
\[
v_{(\ttt)_{5,17}}=\Psi_3\psi_9\Psichaind{7}{5}\psi_{10}\psi_{11}\Psichaind{9}{7}\Psichaind{13}{9}=
\psi_{10}\psi_{11}\Psi_{13}\psi_9\Psichainu{7}{11}\Psichainu{3}{9}.
\]
\end{eg}

\begin{cor}\label{cor:psi-arm}
Assume $\ttt\in\std(\la)$ with $\tp_k,\tp_{k+2}\in\Arm(\ttt)$.
\begin{itemize}
 \item[a)] If $\bi_{\tp_k}=(1,0)$, let $\vtt$ have reverse normal form
\[
\vtt=\psi^*\rho_1\Psichainu{y_1}{m_1}\rho_2\Psichainu{y_2}{m_2}\dots\rho{h}\Psichainu{y_h}{m_h}(\rho_d)z^\la.
\]
Suppose that $\psi_k\vtt\neq0$ and that $t$ is minimal such that $y_t > k-4t$ or $y_t=k-4t+2$ and $\rho_t=\id$.
    
\item[b)] If $\bi_{\tp_k}=(0,1)$, let $\vtt$ have reverse normal form
\[
\vtt=\psi^*\rho_0\Psichainu{y_0}{m_0}\rho_1\Psichainu{y_1}{m_1}\rho_2\Psichainu{y_2}{m_2}\dots\rho_{h}\Psichainu{y_h}{m_h}(\rho_d)z^\la.
\]
Suppose that $\psi_k\vtt\neq0$ and that $t$ is minimal such that $y_t > k-4t-2$ or $y_t=k-4t$ and $\rho_t=\id$.
     \end{itemize}
Then there exists some minimal $r$ such that $\ttt$ is as in \cref{pair-arm} and $\psi_k\vtt=\psi_kv_{(\ttt)_{k+2-2r,k+2}}$ which is in reduced form.
\end{cor}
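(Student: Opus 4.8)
The plan is to mirror the proof of \cref{cor:psi-leg}, interchanging the roles of $\Arm(\ttt)$ and $\Leg(\ttt)$ and replacing the normal form by the reverse normal form, so that $\psi_k$ now propagates \emph{downward} through the $\Psi$-chains. All of the braid relations needed are already recorded in the terminal part of the proof of \cref{4killing-2} (the arm-analogue of \cref{4killing-1}), which uses \cref{handy} part (3) and \cref{halfdom-arm} in place of \cref{handy} part (2) and \cref{halfdom-leg}; so no fresh relation-chasing is required. Throughout I write $s:=k-4t+2$, the dual of the quantity $s=4t+k$ from the leg case; this is the index at which the propagation of $\psi_k$ terminates, and I note that $s=k+2-4t=p_m$ in the notation of \cref{pair-arm}.

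Since $\psi_k\vtt\neq0$, \cref{4killing-2} forces the killing inequalities on the lower chain-indices $y_i$ to fail somewhere, and $t$ is by definition the minimal such index (that is $y_t>k-4t$, or $y_t=k-4t+2$ with $\rho_t=\id$, in case (a), and the versions shifted by $2$ in cases (b),(c)). I would then split on the behaviour of $\rho_t$, exactly as in \cref{cor:psi-leg}:
\begin{itemize}
\item[(a)] $\rho_t=\id$, in which case $r=2t$;
\item[(b)] $\rho_t\neq\id$ with either $t\neq h$ or $\rho_d=\id$, in which case $r=2t-1$;
\item[(c)] $\rho_t\neq\id$, $t=h$ and $\rho_d\neq\id$, in which case $r=2t+1$.
\end{itemize}
In each case the explicit reduced expression for $\psi_k\vtt$ is read off from the matching terminal formula in the proof of \cref{4killing-2}; the parity correction $\bi_{s-2}=1$ produces the shift $r=2t-1$ in (b), and the even-$a$ boundary factor $\rho_d$ produces $r=2t+1$ in (c), just as on the leg side.

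The substantive step, which I expect to be the main obstacle, is translating the RNF data into the assertion that $\ttt$ has the shape prescribed by \cref{pair-arm}. In case (a) the entries under consideration run from $p_m-1=s-1$ up to $k+2$, and I must show that the chain-lengths $y_i$ together with the non-emptiness of the $\rho_i$ force these $2r+2$ entries to be split as $r$ in $\Leg(\ttt)$ and $r+2$ in $\Arm(\ttt)$. For this I would re-run the counting argument of \cref{cor:psi-leg}: the inner block strictly between $\tp_s$ and $\tp_{k+2}$ is distributed evenly between arm and leg, and whenever an entry $g$ there is unpaired in the arm its partner $g\pm1$ is forced into the leg, so the numbers of inner pairs on the two sides agree; adjoining the two distinguished pairs $\tp_k,\tp_{k+2}$, which lie in the arm, then shows that the arm carries exactly $m+1$ pairs against $m$ in the leg. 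This is precisely the configuration of \cref{pair-arm} with $\tp_{p_m}=\tp_s$, so $\ttt$ is of the required form and the reduced expression obtained coincides with $\psi_k v_{(\ttt)_{k+2-2r,k+2}}$, the clockwise rotation of $p_m-1,\dots,k+2$. Cases (b) and (c) alter only the parity of $r$ and the endpoint of the block, but the counting is otherwise identical.

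Finally I would observe that this expression is already in reduced form, since the terminal formulae imported from \cref{4killing-2} are, and that the case $\bi_{\tp_k}=(0,1)$ follows verbatim with every index increased by $2$, exactly as on the leg side.
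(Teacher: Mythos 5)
Your proposal is correct and takes essentially the same route as the paper: the paper's entire proof is the single sentence that, just as for \cref{cor:psi-leg}, the result follows immediately from \cref{4killing-2}. Your write-up — the three-way split on $\rho_t$ with $r=2t$, $2t-1$, $2t+1$, and the dualized pair-counting argument identifying $\tp_s=\tp_{p_m}$ so that $\ttt$ matches \cref{pair-arm} — is exactly the expansion of that analogy which the paper leaves implicit.
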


\begin{proof}
   Just as \cref{cor:psi-leg}, the result follows immediately from \cref{4killing-2}.
\end{proof}

If we have a tableau $\ttt$ with $\tp_k,\tp_{k+2}$ in the arm or in the leg, we can easily calculate whether $\psi_k$ rotates the dominoes as given in \cref{pair-arm,pair-leg} or if $\psi_k\vtt=0$.
Assume that $\tp_k,\tp_{k+2}\in\Leg(\ttt)$ and that we wish to determine the action of $\psi_k$ on $\vtt$.
We start counting the pairs in the natural order (starting with $\tp_k$).
If we have counted $m+1$ pairs in the leg and $m$ in the arm, we stop and perform the rotation as given in \cref{cor:psi-leg}.
If we reach the highest indexed pair without ever satisfying this condition, $\psi_k\vtt=0$.

Similarly, assume that $\tp_k,\tp_{k+2}\in\Arm(\ttt)$ and consider the expression $\psi_k\vtt$.
We start counting the pairs backwards from $\tp_{k+2}$.
If at any point we have counted $m+1$ pairs in the arm and $m$ in the leg, we stop and perform the rotation as given in \cref{cor:psi-arm}.
If we arrive at the lowest indexed pair without ever satisfying this condition, once again we see that $\psi_k\vtt=0$.
Note that the highest (resp.\ lowest) indexed pair is not necessarily $\tp_n$ (resp.\ $\tp_3$).

\begin{prop}
    The algorithms above give the correct reduced form for $\psi_k\vtt$.
\end{prop}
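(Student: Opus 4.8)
The plan is to recognise the final proposition as a purely combinatorial repackaging of results already proved: \cref{cor:psi-leg} together with \cref{4killing-1} in the leg case, and \cref{cor:psi-arm} together with \cref{4killing-2} in the arm case. (Recall that $\tp_k$ only exists for odd $k$, so $k$ is automatically odd throughout.) There are two symmetric situations. When $\tp_k,\tp_{k+2}\in\Leg(\ttt)$ I would write $\vtt$ in the normal form \eqref{short1} and appeal to \cref{cor:psi-leg,4killing-1}; when $\tp_k,\tp_{k+2}\in\Arm(\ttt)$ I would write $\vtt$ in reverse normal form and appeal to \cref{cor:psi-arm,4killing-2}. Since the arm case is obtained from the leg case by reversing and replacing each $\Psichaind{y}{m}$ by the corresponding $\Psichainu{m}{y}$ and each $\chi_i$ by $\rho_i$ (with the backward count from $\tp_{k+2}$ playing the role of the forward count from $\tp_k$), it suffices to carry out the argument carefully in the leg case.

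The heart of the proof is the dictionary between the running pair-count and the index data $(y_i,\chi_i)$, and this is essentially already contained in the proof of \cref{cor:psi-leg}. Reading the pairs $\tp_k,\tp_{k+2},\dots$ in increasing index order, let $L$ and $A$ be the running numbers of pairs seen in $\Leg(\ttt)$ and $\Arm(\ttt)$; since $\tp_k,\tp_{k+2}\in\Leg(\ttt)$ we start with $L-A=2$, and each further pair changes $L-A$ by $\pm1$. The event ``$L=m+1,\ A=m$'' (that is, $L-A=1$) first occurs exactly when the block of entries from $k-1$ up to $2r+k$ contains $r+2$ leg-entries carrying $m+1$ pairs and $r$ arm-entries carrying $m$ pairs, which is precisely the configuration of \cref{pair-leg} for the minimal such $r$. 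In the proof of \cref{cor:psi-leg} this block is counted directly, and its termination is matched to the first index $t$ at which the inequalities $y_i\ge 4i+k$ (respectively $y_i=4i+k-2$ with $\chi_i\neq\id$) of \cref{4killing-1} fail, with $r=2t$, $2t-1$ or $2t+1$ according to whether $\chi_t=\id$, $\chi_t\neq\id$ with $t\neq h$ or $\chi_d=\id$, or $\chi_t\neq\id$ with $t=h$ and $\chi_d\neq\id$. Thus the algorithm's ``minimal $r$'' and the corollary's ``minimal $t$'' single out the same termination point. The residue case $\bi_{\tp_k}=(0,1)$ is handled exactly as in part (b) of \cref{4killing-1}: after absorbing the leading chain $\chi_0$ one applies the $(1,0)$ count with all indices shifted by $2$; the parity of $a$ only governs the presence of the trailing chain $\chi_d$, already built into the case distinction.

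With the dictionary in hand the conclusion is immediate. If the balancing event occurs, then $\ttt$ is as in \cref{pair-leg} for the minimal $r$, and \cref{cor:psi-leg} gives $\psi_k\vtt=\psi_k v_{(\ttt)^{2r+k,k}}$ in reduced form, which is exactly the rotation the algorithm outputs. If no balancing event occurs before the highest-indexed pair, then for every $i$ the relevant inequality of \cref{4killing-1} holds, so $\psi_k\vtt=0$. The case $\tp_k,\tp_{k+2}\in\Arm(\ttt)$ follows verbatim, with \cref{cor:psi-arm,4killing-2} replacing their leg counterparts and the backward count matching the minimal $t$ of \cref{cor:psi-arm}. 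I expect the main obstacle to be precisely this bookkeeping: one must verify that the naive pair-count, with its initial surplus $L-A=2$, translates uniformly into the index inequalities of \cref{4killing-1,4killing-2} across the residues $(1,0)$ and $(0,1)$ and both parities of $a$, and in particular that the three relations $r=2t,\,2t-1,\,2t+1$ reconcile the algorithm's ``minimal $r$'' with the corollaries' ``minimal $t$''.
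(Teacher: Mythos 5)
Your proposal is correct and follows essentially the same route as the paper: the paper's proof is exactly a translation between the algorithm's running pair-count (at the $r$th arm pair one must have counted at least $r+2$ leg pairs, forcing $y_r\ge 4r+k$ or $y_r=4r+k-2$ with $\chi_r\neq\id$) and the conditions of \cref{4killing-1}, invoking \cref{cor:psi-leg} when the count balances and \cref{4killing-1} for vanishing, with the shifted-residue case and the arm algorithm handled analogously. Your extra bookkeeping (the $L-A$ surplus and the matching $r=2t,\,2t-1,\,2t+1$) is just an explicit unwinding of what the paper delegates to the proof of \cref{cor:psi-leg}.
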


\begin{proof}
    We start with the first algorithm.
    The proof itself is just a translation between the algorithm and \cref{cor:psi-leg}.
    Assume that $y_1$ is minimal such that $m_1\le k\le y_1$ , that is the first pair the algorithm counts in $\Arm(\ttt)$ is $\tp_{y_1}$.

    When we get to the $r$th pair in $\Arm(\ttt)$, we must have counted at least $r+2$ in $\Leg(\ttt)$, otherwise the algorithm would terminate.
    Thus we see that we counted at least $2r+2$ pairs in total, starting with $\tp_k$ and so $y_r\ge 4r+k$ or $y_r=4r+k-2$ and $\chi_r\neq \id$.

    If the algorithm gets to the final pair without ever satisfying the condition that we have counted one more pair in $\Leg(\ttt)$ than in $\Arm(\ttt)$, it is easy to see that the above inequality (resp.\ equality) holds for all $r$, so we know by \cref{4killing-1} that $\psi_k\vtt=0$.
    Otherwise, if we counted $f$ many pairs in $\Arm(\ttt)$ and $f+1$ in $\Leg(\ttt)$, then by \cref{cor:psi-leg}, we have the desired result.
    
    The case when $y_1$ is minimal such that $m_1\le k+2\le y_1$ and the second algorithm can be proved analogously.
\end{proof}

At last, we have completely determined the action of the different generators on the basis elements $\vtt$ of $S^\la$.
We summarise our results in the next statement.

\begin{prop}\label{action}
Let $\la=(a,1^b)$, $\ttt\in\std(\la)$ with corresponding $\vtt$.
\begin{enumerate}
    \item $e(\bj) \vtt = \delta_{\bi^\ttt, \bj} \vtt$
    \item \[ y_k \vtt = 
    \begin{cases}
            -v_{ s_k \ttt }& \text{ if $k$ is odd, $i_k = i_{k+1}$, $k \in \Leg(\ttt)$, and $k+1 \in \Arm(\ttt)$,}\\[.5em]
              v_{ s_{k-1} \ttt }& \text{ if $k$ is odd, $i_{k-1} = i_k$, $k-1 \in \Leg(\ttt)$, and $k \in \Arm(\ttt)$,}\\[.5em]
             0& \text{ otherwise.}
              \end{cases}
              \]
    \item For $k$ even:
    \[\psi_k\vtt=
    \begin{cases} v_{s_k\ttt}& \text{ if $k \in \Arm(\ttt)$, and $k+1 \in \Leg(\ttt)$},\\[.5em]
    0& \text{ otherwise.}\\
    \end{cases}\]
     \item For $k$ odd:
     \[\psi_k\vtt=
    \begin{cases}
    v_{s_k\ttt}& \text{ if $1<k \in \Arm(\ttt)$, and $k+1 \in \Leg(\ttt)$},\\[.5em]
    -2v_{s_{k-1}s_{k+1}s_k\ttt}& \text{ if $\tp_k\in \Leg(\ttt),\tp_{k+2}\in\Arm(\ttt)$ and $\bi_{\tp_k}=\bi_{\tp_{k+2}}$},\\[.5em]
    \psi_k\vtt=\vs& \text{ if $k-1\in\Leg(\ttt)$, $k,k+1,k+2\in\Arm(\ttt)$,}\\ 
    & \text{ $i_{k-1}=i_k$ and $\tts$ is as in \cref{halfdom-arm}},\\[.5em]
    \psi_k\vtt=v_\ttp& \text{ if $k-1,k,k+1\in\Leg(\ttt), k+2\in\Arm(\ttt)$,}\\
    & \text{ $i_{k+1}=i_{k+2}$ and $\ttp$ is as in \cref{halfdom-leg}},\\[.5em]
    \psi_r\vtt=\psi_rv_{(\ttt)^{2r,k}}& \text{ if $\tp_k,\tp_{k+2}\in\Leg(\ttt)$ and $\ttt$ satisfies}\\
    & \text{ the conditions of \cref{pair-leg},}\\[.5em]
    \psi_r\vtt=\psi_rv_{(\ttt)_{2r,k}}& \text{ if $\tp_k,\tp_{k+2}\in\Arm(\ttt)$ and $\ttt$ satisfies}\\
    & \text{ the conditions of \cref{pair-arm},}\\[.5em]
    0& \text{ otherwise.}
    \end{cases}
    \]
\end{enumerate}
\end{prop}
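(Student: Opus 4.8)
The plan is to read off the statement as a dictionary: the proposition is a complete case analysis, every branch of which has already been computed in this section, so the only genuine work is to verify that the listed cases are exhaustive and mutually exclusive and to cite where each value was obtained. Part (1) is precisely the quoted lemma \cite[Lemma~4.4]{bkw11}. Part (2) is immediate from \cref{standard-y}: the two nonzero cases stated there are exactly the two nonzero cases listed, and the remark following \cref{standard-y} explains why the active index $k$ is forced to be odd, since the residue condition $i_k=i_{k+1}$ (resp.\ $i_{k-1}=i_k$) together with the hook shape of $\la$ constrains the parity of the number of entries preceding $\tp$.

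For part (3) I would collect \cref{pre-even}(3)--(4) with \cref{even}. Case~3 of \cref{pre-even} gives the single nonzero value $v_{s_k\ttt}$ when $k\in\Arm(\ttt)$ and $k+1\in\Leg(\ttt)$, while cases~1,2,4 of \cref{pre-even} show $\psi_k\vtt=0$ in every other even situation \emph{except} the deferred subcase where $k,k+1$ are adjacent with $\bi_{\tp_{k+1}}=(0,1)$ and $\tp_{k+1}\in\Leg(\ttt)$; this last subcase is exactly what \cref{even} disposes of, showing it too vanishes. Hence the two-line formula in part~(3) is complete.

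For part (4) I would organise the argument around the local configuration of the entries $k-1,k,k+1,k+2$ in $\ttt$ relative to the pairing structure. The first branch ($k\in\Arm(\ttt),\,k+1\in\Leg(\ttt)$) is \cref{psi-1}; the second ($\tp_k\in\Leg(\ttt),\tp_{k+2}\in\Arm(\ttt)$ with $\bi_{\tp_k}=\bi_{\tp_{k+2}}$) is \cref{psi-2}. The two ``half-dominant'' branches, where three consecutive entries sit on one side and the fourth on the other with a matching residue at the boundary, are \cref{halfdom-arm} and \cref{halfdom-leg} respectively (and the remark after \cref{halfdom-leg} gives the tableau-theoretic description of $\tts$ and $\ttp$). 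The two rotation branches, where $\tp_k$ and $\tp_{k+2}$ lie on the same side, are supplied by \cref{cor:psi-leg} (through \cref{pair-leg}) and \cref{cor:psi-arm} (through \cref{pair-arm}).

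The main obstacle is justifying the terminal ``otherwise $=0$'' branch, which demands that the enumerated nonzero cases genuinely exhaust every configuration with nonzero image. This is where the converses are essential: \cref{4killing-1} and \cref{4killing-2}, together with the remarks identifying them as the converses of \cref{3pairs}, show that when $\tp_k,\tp_{k+2}$ lie on the same side but no admissible rotation partner $\tp_c$ exists, the image vanishes. I would finish by confirming that the parity-counting arguments appearing in the remarks after \cref{standard-y} and \cref{psi-2} eliminate any residual configuration (for instance they force the relevant index to be odd in the half-dominant and rotation branches), so that each $\ttt\in\std(\la)$ lands in exactly one branch and the formula is both well defined and complete.
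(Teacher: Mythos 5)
Your proposal is correct and takes essentially the same approach as the paper: \cref{action} is stated there as a pure summary with no standalone proof, and your citation map (\cref{standard-y} and its remark for part (2); \cref{pre-even} cases 3--4 together with \cref{even} for part (3); \cref{psi-1,psi-2,halfdom-arm,halfdom-leg,cor:psi-leg,cor:psi-arm} plus the converses \cref{4killing-1,4killing-2} for the terminal ``otherwise $=0$'' branch of part (4)) is precisely how the paper assembles the result. One caveat you inherit from the statement itself: in the first case of part (2) the residue condition $i_k=i_{k+1}$ together with the hook shape forces $k$ to be \emph{even}, not odd (this is what the remark after \cref{standard-y} actually computes, and what \cref{pre-even} records), so your parity gloss there repeats a slip in the proposition rather than the content of the remark.
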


\section{Homomorphisms into \texorpdfstring{$S^\la$}{S-lambda}}\label{hookhoms}

As before, let $\la = (a,1^b)$ and fix $\mu = (\mu_1, \dots, \mu_Z)$ a partition of $n$.
If there exists a nonzero homomorphism $\varphi:S^\mu \rightarrow S^\la$ and $\varphi(z^\mu)= v=\sum c_\ttt\vtt$, then $c_\ttt\neq0$ if and only if $\bi^\mu=\bi^\ttt$.
We call a node $(a,b) \in \mu$ an {\em arm node} (resp.\ {\em leg node}) with respect to $\ttt$ if $c_\ttt\neq0$ and $\ttt^\mu(i,j)$ is in $\Arm(\ttt)$ (resp.\ in $\Leg(\ttt)$).

\subsection{A novel way to label hook partitions}

In this section, we prove some results that will be used in the proof of our main theorem. 
In particular, we consider the action of the $y$ and $\psi$ generators on the basis elements $\vtt\in S^\la$.
In order to do this, we introduce an usual, albeit extremely important set of partitions and a rule to assign a coefficient to each.
Then, we show that the sum of the coefficients of the partitions in this set always equals to a certain constant (\cref{prop:2c}).
Next, we introduce a convenient way to label tableaux of hook shapes whose arms contain only pairs and unpaired even numbers.
The rest of the section is dedicated to showing that if we label tableaux in this way, then the resulting labels and the set of partitions from earlier coincide and all $\vtt$ involved in $v$ is labelled by a tableaux of this form.

\begin{lem}\label{y-action}
If $v=\sum c_\ttt\vtt$, then $(J_1^\mu + J_2^\mu)v = 0$ if and only if for any $\ttt \in \std(\la)$ with $\bi^\mu=\bi^\ttt$, $c_\ttt \neq 0$ and any $1 \le a < k$ for which $\mu_a$  is even, $(a, \mu_a)$ is an arm node and $(a+1, 1)$ is a leg node.
\end{lem}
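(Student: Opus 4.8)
The plan is to expand
$(J_1^\mu+J_2^\mu)v=\sum_{\ttt}c_\ttt\,(J_1^\mu+J_2^\mu)\vtt$
and read off the vanishing condition one standard basis vector at a time, using that $\{v_\tts\mid\tts\in\std(\la)\}$ is a basis of $S^\la$ (\cite[Corollary~6.24]{kmr}) together with the explicit $y$-action of \cref{standard-y}. Recall that $y_r$ either annihilates $\vtt$ or carries it to $\pm v_{s_r\ttt}$ or $\pm v_{s_{r-1}\ttt}$, and that it is nonzero only when two values of equal residue straddle the arm--leg boundary of $\ttt$.

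First I would localise the computation to the row boundaries. For a row $a<k$ with $\mu_a$ even, the relevant summands of $J_1^\mu$ and $J_2^\mu$ act only on the two consecutive entries $m:=\ttt^\mu(a,\mu_a)$ and $m+1:=\ttt^\mu(a+1,1)$ separating rows $a$ and $a+1$. The hypothesis that $\mu_a$ is even is exactly what gives $\res(a,\mu_a)\equiv\res(a+1,1)\pmod 2$, so $i_m=i_{m+1}$ and the residue requirement of \cref{standard-y} is automatically satisfied. Conversely, two consecutive integers of equal residue can occupy neither two cells of the single increasing row $\Arm(\ttt)$ nor two cells of the single increasing column $\Leg(\ttt)$, since such cells would be adjacent and hence carry opposite residues. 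Hence, for every $\ttt$ with $c_\ttt\ne0$, exactly one of $m,m+1$ lies in the arm and the other in the leg, and I would compute $(J_1^\mu+J_2^\mu)\vtt$ in these two configurations: the configuration opposite to the one named in the statement is the one that produces boundary terms $\pm v_{s_{m-1}\ttt}$ and $\pm v_{s_{m+1}\ttt}$ not forced to cancel locally, while in the configuration of the statement the contributions are compatible with $(J_1^\mu+J_2^\mu)\vtt=0$.

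For the reverse implication I would assume the combinatorial condition for every $\ttt$ in the support and show termwise vanishing, invoking the defining relation $\psi_s z^\mu=0$ for $s,s+1$ in a common row of $\ttt^\mu$ (relation (iii) for $S^\mu$, hence $\psi_s v=0$): this forces whole rows of $\mu$ to be placed consistently into the arm or the leg and thereby annihilates the remaining boundary terms, so each $(J_1^\mu+J_2^\mu)\vtt=0$. For the forward implication I argue by contraposition. If some $\ttt_0$ with $c_{\ttt_0}\ne0$ fails the condition at an even row $a$, then $(J_1^\mu+J_2^\mu)v_{\ttt_0}$ contains a nonzero term $v_{\tts_0}$, where $\tts_0$ arises from $\ttt_0$ by a single transposition across the row-$a$ boundary; choosing such a $\ttt_0$ extremal in the dominance order on $\std(\la)$ and noting that this transposition changes dominance monotonically, I would show that no other pair $(\ttt',a')$ occurring in the sum can map onto $v_{\tts_0}$. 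Linear independence then forces $c_{\ttt_0}=0$, a contradiction.

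The main obstacle is precisely this control of cross-cancellation in the forward direction: a priori several basis vectors could be carried by $J_1^\mu+J_2^\mu$ onto the same $v_{\tts}$, so a surviving term might cancel spuriously. Ruling this out rests on the rigidity of the fixed residue sequence $\bi^\mu$ (which determines exactly which entries are exchanged) and on the fact that each even row contributes a single, row-determined boundary swap, most cleanly isolated by a top term in the dominance order. A secondary point to keep track of, given that we work over an arbitrary field with $e=2$, is to phrase every nonvanishing conclusion without dividing by integers, so that no argument silently assumes $2$ is invertible.
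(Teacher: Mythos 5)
Your reverse implication rests on a step that fails: you invoke relation (iii) of the Specht presentation, ``$\psi_s z^\mu=0$, hence $\psi_s v=0$''. That relation constrains the cyclic generator $z^\mu$ inside $S^\mu$; it says nothing about an arbitrary vector $v=\sum c_\ttt\vtt$ in $S^\la$. The condition $\psi_s v=0$ is precisely the $J_3^\mu$-condition, which the lemma deliberately excludes --- it is neither a hypothesis nor automatic, and verifying it for candidate vectors $v$ is the hard part of the rest of the paper (it is exactly where the $\De$-coefficients enter). Using it would also destroy the lemma as an ``if and only if'' about arbitrary $v$ of the given form. The fix is that no such input is needed: once $\bi^\ttt=\bi^\mu$, consecutive equal residues in $\bi^\mu$ occur only at $r=\ttt^\mu(a,\mu_a)$ with $\mu_a$ even, and in the configuration of the statement ($r$ an arm node, $r+1$ a leg node) neither nonvanishing case of \cref{standard-y} applies: case 1 at $r$ and case 2 at $r+1$ both need $r\in\Leg(\ttt)$, while case 2 at $r$ would need $i_{r-1}=i_r$, which is false because $r-1,r$ are adjacent in row $a$. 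So $y_r\vtt=0$ termwise, which is exactly how the paper argues.

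In the forward direction you correctly identify cross-cancellation as the crux, but you leave it as a sketch (an extremal tableau in the dominance order) and miss the simple observation that settles it. Fix $r$ with $i_r=i_{r+1}$. By \cref{standard-y}, a term $-c_\ttt v_{s_r\ttt}$ produced by a bad tableau ($r\in\Leg(\ttt)$, $r+1\in\Arm(\ttt)$) could only be cancelled by a contribution of the form $y_r\vtp=+v_{s_{r-1}\ttt'}$, which requires $i_{r-1}=i_r$. Since every tableau in the support carries the same residue sequence $\bi^\mu$, this would force $i_{r-1}=i_r=i_{r+1}$, which is impossible: $i_{r-1}=i_r$ says $r-1$ ends an even-length row so $r$ starts the next row, while $i_r=i_{r+1}$ says $r$ ends its row, so that row has length $1$, which is odd. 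Hence for fixed $r$ all nonzero contributions to $y_rv$ are $-c_\ttt v_{s_r\ttt}$ over distinct standard tableaux $s_r\ttt$, and linear independence of the basis forces $c_\ttt=0$ for every bad $\ttt$ --- no dominance-extremal choice is needed. (A small slip in the same passage: in the bad configuration the two boundary terms are both proportional to $v_{s_m\ttt}$, coming from $y_m$ and $y_{m+1}$ respectively, not $v_{s_{m-1}\ttt}$ and $v_{s_{m+1}\ttt}$; and since $J_2^\mu$ is generated by the individual $y_r$, each of $y_m v$ and $y_{m+1}v$ must vanish separately, so these two terms are never in a position to cancel one another.)
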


\begin{proof}
Let $v = \sum c_{\ttt} \vtt$ and assume that $(J_1^\mu + J_2^\mu)v = 0$, then we have $y_r v =0$ for all $1 \le r \le n$. 
By~\cref{standard-y}, $y_r \vtt = 0$ unless $r$ and $r+1$ or $r$ and $r-1$ have the same residue. 
We only show the case where $i_r=i_{r+1}$, the other has a very similar proof.
We can only have $i_r=i_{r+1}$ if $r$ and $r+1$ are in different rows in $\ttt^\mu$, with $r$ at the end of a row and $r+1$ in the first node of the next row.
Now, this is only possible if the row containing $r$ has even length, otherwise $r$ and $r+1$ have different residues and $y_r \vtt = 0$ for all such $r$.
If $i_r = i_{r+1}$ with $r\in \Leg(\ttt)$ and $r+1 \in \Arm(\ttt)$ for some $\ttt$, then $y_r\vtt=-v_{s_r \ttt}$.
Then $y_r v \neq 0$, unless there is some $\ttt'$ involved in $v$ that gives $y_r\vtp=v_{s_r \ttt}$ with $i_r=i_{r+1}$.
But this is impossible, hence if $J_2^\mu v =0$, we must have that whenever $i_r=i_{r+1}$ then $r$ is in $\Arm(\ttt)$ or $r+1$ is in $\Leg(\ttt)$. Furthermore, as $i_r=i_{r+1}$ whenever $r$ is in $\Arm(\ttt)$, we must have that $r+1$ is in $\Leg(\ttt)$, because they cannot be next to each other with the same residue.

On the other hand, suppose that $r=\ttt^\mu(a,\mu_a)$ is an arm node and $r+1=\ttt^\mu(a+1, 1)$ is a leg node for every even length row $\mu_a$ in $\ttt^\mu$ with $\bi^\ttt=\bi^\mu=(i_1, \dots, i_n)$ for all $\ttt\in\std(\la)$ with $c_\ttt\neq0$.
Fix $\ttt$ such that $c_\ttt\neq0$ for $1\le r \le n-1$.
If $r-1,r,r+1$ are in different rows in $\ttt^\mu$, then $i_r\neq i_{r-1},i_{r+1}$, so by \cref{standard-y}, $y_r\vtt=0$ 
We immediately see that $y_1\vtt=0$ as $i_1\neq i_2$.
If $r$ is at the end of an even length row in $\ttt^\mu$, then by our assumption $r$ is in $\Arm(\ttt)$, hence $y_r\vtt=0=y_{r+1}\vtt$.
Finally, if $n-1$ and $n$ are in the same row in $\ttt^\mu$, then $i_{n-1}\neq i_n$, so $y_n\vtt=0$. If $n-1=\ttt^\mu(a-1,\mu_{a-1})$ and $\mu_{a-1}$ is odd, then $i_{n-1}\neq i_n$ and if $\mu_{a-1}$, by our assumption $n-1\in\Arm(\ttt)$ and $n\in\Leg(\ttt)$.
Hence $y_r \vtt =  0=y_{r+1} \vtt$ for all $\ttt$, thus it follows that $y_r v =0$ for all $1 \le r \le n$.
\end{proof}

\begin{lem}\label{arm-odd}
If $v=\sum c_\ttt\vtt$ such that $(J_1^\mu + J_2^\mu)v = 0$, $c_\ttt\neq0$ and $r,r+1,r+2\in\Arm(\ttt)$ for $r$ odd, then we must also have that $r-1\in\Arm(\ttt)$.
\end{lem}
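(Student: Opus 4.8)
The plan is to argue by contradiction: I would assume $r-1\in\Leg(\ttt)$ and exhibit a standard basis vector whose coefficient in $y_{r-1}v$ is forced to be $-c_\ttt$, contradicting $c_\ttt\neq0$. First I would record the consequence of the hypothesis: by the proof of \cref{y-action}, the condition $(J_1^\mu+J_2^\mu)v=0$ yields $y_sv=0$ for all $1\le s\le n$; in particular $y_{r-1}v=0$. The key feature of the argument is that the index $r-1$ is \emph{even}, which is exactly the regime governed by the remark following \cref{standard-y}.

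Next I would show $y_{r-1}\vtt\neq0$. Since $r$ is odd, $r-1$ is even, and under our assumption we are in the configuration $r-1\in\Leg(\ttt)$, $r\in\Arm(\ttt)$. By the remark following \cref{standard-y}, for an even index this is precisely the configuration in which the operator acts nontrivially, and then \cref{standard-y} gives $y_{r-1}\vtt=-v_{s_{r-1}\ttt}\neq0$. For completeness I would note the residue equality $i_{r-1}=i_r$ underlying this is forced by a parity count: if $r$ sits in the arm node $(1,y)$ and $r-1$ in the leg node $(x,1)$, then the entries $\le r$ occupy the corner together with the arm nodes $(1,2),\dots,(1,y)$ and the leg nodes $(2,1),\dots,(x,1)$, so $1+(y-1)+(x-1)=r$; as $r$ is odd this forces $x\equiv_2 y$, whence $i_{r-1}=i_r$.

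The crucial observation is that for the even index $r-1$ the \emph{second} branch of \cref{standard-y} never occurs: the remark shows the configuration $r-2\in\Leg$, $r-1\in\Arm$, $i_{r-2}=i_{r-1}$ is impossible for an even index. Hence for every $\ttp\in\std(\la)$ we have either $y_{r-1}\vtp=0$, or else $r-1\in\Leg(\ttp),\ r\in\Arm(\ttp)$ and $y_{r-1}\vtp=-v_{s_{r-1}\ttp}$. Writing $\mathcal A$ for the set of standard tableaux in the latter case,
\[
0=y_{r-1}v=-\sum_{\ttp\in\mathcal A}c_{\ttp}\,v_{s_{r-1}\ttp}.
\]
Since $\ttp\mapsto s_{r-1}\ttp$ is injective and each $s_{r-1}\ttp$ is standard, the vectors $\{v_{s_{r-1}\ttp}:\ttp\in\mathcal A\}$ are distinct elements of the standard basis $\{\vtt\mid\ttt\in\std(\la)\}$, so linear independence forces $c_{\ttp}=0$ for all $\ttp\in\mathcal A$. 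As our $\ttt$ lies in $\mathcal A$, this gives $c_\ttt=0$, a contradiction.

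Finally I would note that $r-1\ge2$ (as $r\ge3$), so $r-1$ is a genuine arm or leg node; having excluded $r-1\in\Leg(\ttt)$ we conclude $r-1\in\Arm(\ttt)$. The step I expect to be the only real obstacle is the no-cancellation claim, i.e.\ that the even-index operator $y_{r-1}$ produces no ``second branch'' terms that could collide with $v_{s_{r-1}\ttt}$; this is exactly where the parity remark after \cref{standard-y} is indispensable. It is worth observing that the argument uses only $r\in\Arm(\ttt)$ and $r$ odd, so the full strength of the hypothesis $r,r+1,r+2\in\Arm(\ttt)$ is not actually needed.
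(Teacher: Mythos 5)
Your proof is correct and follows essentially the same route as the paper's: argue by contradiction, use the parity count to force $i_{r-1}=i_r$, and then use the vanishing of $y_{r-1}$ on $v$ to rule out the configuration $r-1\in\Leg(\ttt)$, $r\in\Arm(\ttt)$. The only difference is cosmetic: the paper simply cites \cref{y-action} at this point, whereas you inline its no-cancellation argument (distinctness of the basis vectors $v_{s_{r-1}\ttp}$ plus the remark after \cref{standard-y} excluding the second branch for even indices), which is exactly the content of the proof of \cref{y-action}.
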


\begin{proof}
Assume otherwise, i.e.\ $r-1\in\Leg(\ttt)$ and $r,r+1,r+2\in\Arm(\ttt)$. Due to the parity conditions, we must have that $i_{r-1}=i_r$. Then by the previous lemma, we see that in this case we must have $r-1\in\Arm(\ttt)$ and $r\in\Leg(\ttt)$, a contradiction. Hence, if $c_\ttt\neq0$ and $r,r+1,r+2\in\Arm(\ttt)$, we have that $r-1$ is also in $\Arm(\ttt)$.
\end{proof}

\begin{lem}\label{leg-odd}
If $v=\sum c_\ttt\vtt$ such that $(J_1^\mu + J_2^\mu)v = 0$, $\ct\neq0$ and $r-1,r,r+1\in\Leg(\ttt)$ for $r$ odd, then $r+2\in\Leg(\ttt)$.
\end{lem}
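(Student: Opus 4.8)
The plan is to prove \cref{leg-odd} as the exact mirror image of \cref{arm-odd}, arguing by contradiction: suppose that $r$ is odd, $r-1,r,r+1\in\Leg(\ttt)$, and yet $r+2\in\Arm(\ttt)$. The goal is to manufacture a pair of consecutive entries sharing a residue but sitting in the forbidden arm/leg configuration, which \cref{y-action} rules out. First I would record the residue bookkeeping for hook tableaux that makes everything run: for any $\ttt\in\std(\la)$, the entries of $\Arm(\ttt)$ listed in increasing order fill the successive arm positions $(1,2),(1,3),\dots$ and those of $\Leg(\ttt)$ fill $(2,1),(3,1),\dots$, and in both cases the residues of these positions alternate $1,0,1,\dots$. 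Consequently, if a value $m$ is the $k$-th smallest entry of $\Arm(\ttt)$ (respectively of $\Leg(\ttt)$), then $i_m\equiv_2 k$.

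Next I would extract the key residue identity $i_{r+1}=i_{r+2}$. Among the $r$ values $2,3,\dots,r+1$ (none of which is the corner entry $1$), write $\alpha$ for the number lying in $\Arm(\ttt)$ and $\beta$ for the number lying in $\Leg(\ttt)$, so that $\alpha+\beta=r$. Since $r+1\in\Leg(\ttt)$ is the largest leg entry not exceeding $r+1$, it is the $\beta$-th smallest leg entry and $i_{r+1}\equiv_2\beta$; since $r+2\in\Arm(\ttt)$ there are exactly $\alpha+1$ arm entries not exceeding $r+2$, so $r+2$ is the $(\alpha+1)$-th smallest arm entry and $i_{r+2}\equiv_2\alpha+1$. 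Because $r$ is odd, $\alpha+\beta$ is odd, hence $\alpha+1\equiv_2\beta$, and therefore $i_{r+1}=i_{r+2}$.

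Finally I would invoke \cref{y-action}. Since $(J_1^\mu+J_2^\mu)v=0$ we have $y_s v=0$ for all $s$, and as established inside the proof of \cref{y-action}, for any $\ttt$ with $c_\ttt\neq0$ this forces that whenever two consecutive entries satisfy $i_s=i_{s+1}$ one must have $s\in\Arm(\ttt)$ and $s+1\in\Leg(\ttt)$ (the opposite placement would produce the uncancellable term $y_s\vtt=-v_{s_s\ttt}$ of \cref{standard-y}). Applying this to the pair $s=r+1$, the identity $i_{r+1}=i_{r+2}$ forces $r+1\in\Arm(\ttt)$ and $r+2\in\Leg(\ttt)$, contradicting our assumption that $r+1\in\Leg(\ttt)$ and $r+2\in\Arm(\ttt)$. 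Hence $r+2\in\Leg(\ttt)$, as claimed.

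The only delicate point is the parity bookkeeping in the second step, where the oddness of $r$ is exactly what converts $\alpha+\beta=r$ into $\alpha+1\equiv_2\beta$; everything else is a routine transcription of \cref{arm-odd} with the roles of arm and leg interchanged and the relevant index window shifted from $\{2,\dots,r-1\}$ to $\{2,\dots,r+1\}$.
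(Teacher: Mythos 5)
Your proof is correct and matches the paper's intended argument: the paper disposes of \cref{leg-odd} with the single line ``analogous to the proof of \cref{arm-odd}'', and your argument is exactly that mirror image — assume $r+2\in\Arm(\ttt)$, use the alternating-residue/parity count (with $r$ odd making the window $\{2,\dots,r+1\}$ of odd size) to force $i_{r+1}=i_{r+2}$, then invoke the forbidden leg-then-arm configuration from \cref{y-action} to get a contradiction. The only difference is that you spell out the parity bookkeeping that the paper leaves implicit, which is a faithful elaboration rather than a different route.
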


\begin{proof}
Analogous to the proof of \cref{arm-odd}.
\end{proof}

Recall the notion of a pair from \cref{pairs}.

\begin{defn}
For $\la=(a,1^b)$, we define the set of \emph{pair-tableaux} of shape $\la$, denoted by $\P(\la)$, to be the set of tableaux in which if $k\in\Leg(\ttt)$ for some even $2\le k \le n-1$, then we must have that $k+1\in\Leg(\ttt)$ and if $k\in\Arm(\ttt)$ for some odd $3\le k \le n$, then we must have that $k-1\in\Arm(\ttt)$.
\end{defn}

The following is an immediate consequence of the previous three lemmas.

\begin{cor}\label{cor:pair}
    Suppose $\varphi\in\hom(S^\mu,S^\la)$ and $\varphi(z^\mu)=\sum\ct\vtt$.
    If $\ct\neq 0$, then $\ttt\in\P(\la)$.
\end{cor}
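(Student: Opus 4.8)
The plan is to exploit that $v=\varphi(z^\mu)$ is the image of the cyclic generator under an $R_n^{\La}$-module map, so it inherits the defining relations of $S^\mu$. In particular, relation (ii) in the definition of $S^\mu$ gives $y_r z^\mu=0$ for every $r$, so by $R_n^{\La}$-linearity $y_r v=\varphi(y_r z^\mu)=0$ for all $1\le r\le n$; equivalently $(J_1^\mu+J_2^\mu)v=0$. This is exactly the hypothesis common to \cref{y-action}, \cref{arm-odd} and \cref{leg-odd}, so all three lemmas become available for any $\ttt$ with $\ct\neq0$.

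The key simplification I would make next is to note that the two defining clauses of a pair-tableau collapse into a single forbidden configuration. Writing $m=k-1$, the clause ``for odd $k$, $k\in\Arm(\ttt)$ forces $k-1\in\Arm(\ttt)$'' is the negation of ``$m\in\Leg(\ttt)$ and $m+1\in\Arm(\ttt)$ with $m$ even'', which is precisely the negation of the other clause. Hence it suffices to show, for each $\ttt$ with $\ct\neq0$, that there is no even $m$ with $2\le m\le n-1$, $m\in\Leg(\ttt)$ and $m+1\in\Arm(\ttt)$. I would then split on whether $i_m=i_{m+1}$: if $i_m\neq i_{m+1}$, the parity obstruction recorded after \cref{standard-y} shows that the configuration $m\in\Leg(\ttt)$, $m+1\in\Arm(\ttt)$ cannot occur in a standard hook tableau at all, so there is nothing to forbid; if $i_m=i_{m+1}$, then $m$ must terminate an even-length row of $\mu$ lying above a further row (the only way two consecutive entries of $\ttt^\mu$ can share a residue), and \cref{y-action} then forces the node of $m$ to be an arm node, i.e. $m\in\Arm(\ttt)$, contradicting $m\in\Leg(\ttt)$. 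Either way the bad configuration is excluded, which is exactly what \cref{arm-odd} and \cref{leg-odd} record for the arm and leg versions, and so $\ttt\in\P(\la)$.

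I expect the only genuinely delicate point to be the non-cancellation step that is already absorbed into \cref{y-action}: from $y_m v=0$ one must deduce $y_m\vtt=0$ for each \emph{individual} $\ttt$ in the sum, and this succeeds because for even $m$ every nonzero $y_m v_{\ttt'}$ equals $-v_{s_m\ttt'}$, with the tableaux $s_m\ttt'$ pairwise distinct, so no cancellation among distinct standard basis vectors can take place. Since that argument has been carried out upstream in the three lemmas, the corollary reduces to the bookkeeping dichotomy described above, and I anticipate no further obstacle beyond keeping the residue parities straight.
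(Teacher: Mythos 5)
Your proposal is correct and takes essentially the same route as the paper, whose entire proof is the single assertion that the corollary is an immediate consequence of \cref{y-action,arm-odd,leg-odd}: you inherit the defining relations of $S^\mu$ through $\varphi$ and then combine \cref{y-action} with the residue-parity bookkeeping, which is exactly the content of those lemmas and of the remark following \cref{standard-y}. If anything your write-up is more complete than the paper's one-liner, since \cref{arm-odd,leg-odd} as stated only treat the special configurations $r,r+1,r+2\in\Arm(\ttt)$ and $r-1,r,r+1\in\Leg(\ttt)$, whereas your observation that the two defining clauses of $\P(\la)$ collapse to the single forbidden pattern (an even $m\in\Leg(\ttt)$ with $m+1\in\Arm(\ttt)$), excluded uniformly by the parity obstruction together with \cref{y-action}, covers every case at once.
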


\begin{lem}\label{y-pair}
    If $\ttt\in\P(\la)$, then
    \begin{itemize}
        \item[i)] $y_k\vtt=0$ for all $1\le k \le n$,
        \item[ii)] $\psi_k\vtt=0$ for all even $2\le k\le n-1$ such that $k$ and $k+1$ are adjacent in $\ttt$, and
        \item[iii)] $\psi_k\vtt=v_{s_k\ttt}$ for all even $2\le k\le n-1$ such that $k\in\Arm(\ttt)$ and $k+1\in\Leg(\ttt)$.
    \end{itemize} 
\end{lem}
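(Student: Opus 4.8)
The plan is to deduce all three parts from facts already proved for arbitrary standard tableaux, using the defining constraints of $\P(\la)$ only to eliminate the potentially non-zero cases. The bulk of the work is part (i); parts (ii) and (iii) will be immediate citations.

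For (i), I would begin with \cref{standard-y}, which says that $y_k\vtt$ is non-zero only when a consecutive pair of equal-residue integers crosses the leg--arm boundary: either $i_k=i_{k+1}$ with $k\in\Leg(\ttt)$ and $k+1\in\Arm(\ttt)$, or $i_{k-1}=i_k$ with $k-1\in\Leg(\ttt)$ and $k\in\Arm(\ttt)$. The key point is that the equal-residue requirement pins down the parity of $k$: by the counting argument of the remark following \cref{standard-y} and its symmetric analogue, the first configuration can occur only when $k$ is even and the second only when $k$ is odd. Feeding this into the definition of a pair-tableau yields an immediate contradiction in each case: in the first, $k$ even with $k\in\Leg(\ttt)$ forces $k+1\in\Leg(\ttt)$, against $k+1\in\Arm(\ttt)$; in the second, $k$ odd with $k\in\Arm(\ttt)$ forces $k-1\in\Arm(\ttt)$, against $k-1\in\Leg(\ttt)$. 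Hence $y_k\vtt=0$ for $2\le k\le n-1$, and I would dispose of the boundary values separately, handling $k=1$ via $y_1=0$ in $R_n^\La$ and $k=n$ by noting that the non-vanishing case would require the non-existent entry $n+1$ to lie in the arm.

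Parts (ii) and (iii) then require no new computation. Since $\P(\la)\subseteq\std(\la)$, part (ii) is exactly the conclusion of \cref{even} under its hypothesis that $k$ is even and $k,k+1$ are adjacent, and part (iii) is precisely case (3) of \cref{pre-even}, so I would simply invoke these. The only point needing real care — and the main obstacle — is the parity bookkeeping in (i): I must verify that the residue equality genuinely forces the stated parity of $k$, and that the relevant index actually lies in the range where the pair-tableau conditions are imposed (even $k$ in $[2,n-1]$, odd $k$ in $[3,n]$), so that each contradiction is legitimately available. Once parity and range are confirmed, everything collapses to a one-line appeal to the definition of $\P(\la)$.
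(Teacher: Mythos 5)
Your proposal is correct in substance and follows essentially the same route as the paper: the paper's proof of \cref{y-pair} is a one-line appeal to the definition of $\P(\la)$ together with \cref{action}, and \cref{action} is precisely the package of \cref{standard-y} (plus the parity remark), \cref{pre-even} and \cref{even} that you invoke, so parts (ii) and (iii) and the main body of (i) match the intended argument. Your explicit parity check in (i) is in fact worth doing: the first case of the $y$-action as printed in \cref{action} carries a misprint (``$k$ is odd'' where, as your computation and \cref{pre-even} show, it must be $k$ even), so unwinding the parity rather than quoting the summary is the safer path.

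The one inaccuracy is your boundary treatment of $k=n$ in part (i). You claim the only potentially non-vanishing case there would require the non-existent entry $n+1$ to lie in the arm, but the second case of \cref{standard-y} --- $i_{n-1}=i_n$ with $n-1\in\Leg(\ttt)$ and $n\in\Arm(\ttt)$ --- makes no mention of $n+1$ and genuinely occurs for standard hook tableaux (for instance $\la=(2,1)$ with $3$ in the arm and $2$ in the leg). For $\ttt\in\P(\la)$ it is excluded by exactly the contradiction you already use for odd $k$: the residue equality forces $n$ to be odd, and the pair-tableau condition for odd entries in $[3,n]$ then forces $n-1\in\Arm(\ttt)$. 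So the slip costs nothing --- your case analysis for the second configuration already covers $k=n$ once you let $k$ run to $n$ instead of stopping at $n-1$ --- but as written the $k=n$ case is not actually disposed of by the reason you give.
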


\begin{proof}
    Follows immediately from the definition of $\P(\ttt)$ and \cref{action}.
\end{proof}

\begin{defn}\label{def:coeff}
Let $(\de_1^{h_1},\de_2^{h_2},\dots,\de_d^{h_d})$ be a weakly decreasing sequence of non-negative integers such that each $\de_i$ appears with multiplicity $h_i$.
We define the \emph{delta (coefficient) function} 
\[
  \De(\de_1^{h_1},\dots,\de_d^{h_d}):=\prod_1^d \fkd_x(\de_1^{h_1},\dots,\de_d^{h_d}).
\]
where
\[
\fkd_x(\de_1^{h_1},\dots,\de_d^{h_d})=\binom{\de_x-\caly+\calx}{\calx},
\]
and $\caly$ and $\calx$ are determined by the following two rules:
\begin{enumerate}
    \item If $\de_{x-1}-\de_x\ge h_x$, we set $\calx:=h_x$ (if $x=1$, we define $\calx:=h_1$).
    Otherwise, we find the maximal $1\le y<x-1$ such that $\de_y-\de_x \ge\sum_{y+1}^x h_\rho$ and set $\calx:=\sum_{y+1}^x h_\rho$.
    If no such $y$ exists, we set $\calx:=\sum_{1}^x h_\rho$.
    \item If $\de_{y}-\de_z\ge \sum_{y+1}^zh_\rho$ for all $1\le y\le x$ and $x+1\le z\le d$ or $x=d$, we set $\caly:=0$.
    Otherwise, we have to two cases to consider.
    \begin{itemize}
        \item[a)] If there exists some minimal $x+1\le z\le d$ such that $\de_x-\de_z<\sum_{x+1}^zh_\rho$, we set $\caly:=\de_z$.
        \item[b)] Otherwise, we find the maximal $1\le y\le x-1$ and the minimal $x+1\le z\le d$ such that $\de_{y}-\de_z< \sum_{y+1}^zh_\rho$ and set $\caly:=\de_z$.
    \end{itemize}
\end{enumerate}
\end{defn}

\begin{rem}
    It is easy to show that if 2a) fails, then the minimal $z$ and maximal $y$ satisfying 2b) are unique.
\end{rem}

\begin{eg}
    For example, if we have the sequence $(13,10^3,8^2,6^3,3^2,2)$, then
    \begin{align*}
        \De(13,10^3,8^2,6^3,3^2,2)&=\prod_{x=1}^6\fkd_x(13,10^3,8^2,6^3,3^2,2)\\[.5em]
        &=\underbrace{\binom{13-6+1}{1}}_{\fkd_1}\underbrace{\binom{10-6+3}{3}}_{\fkd_2}\underbrace{\binom{8-6+2}{2}}_{\fkd_3}\\[.5em]
        &\cdot\underbrace{\binom{6+9}{9}}_{\fkd_4}\underbrace{\binom{3+2}{2}}_{\fkd_6}\underbrace{\binom{2+1}{1}}_{\fkd_6}.
    \end{align*}
\end{eg}

In order to state the next definition, we need some additional notation.
For some $i\ge0$ and $v,s\in\bbz_{>0}$ and $g_v,G_s\in\bbz_{\ge0},h_v,H_s\in\bbz_{>0}$, we set
\begin{align*}
    \iBoxed{s}&=\underbrace{(i+G_s,i+G_s,\dots,i+G_s}_{H_s}),\\[.5em]
    \Boxedi{v}&=\underbrace{(i-g_v,i-g_v,\dots,i-g_v}_{h_v}).
\end{align*}

As mentioned at the beginning of the section, we now introduce a set of partitions which will play a crucial rule when giving an explicit description of $\Hom(S^\mu,S^\la)$.

\begin{defn}\label{def:tau}
    Fix $g_v,G_s\in\bbz{\ge0}$ such that $g_v<g_{v+1}$ and $G_s<G_{s+1}$ for all $1\le v\le \omega$ and $1\le s\le \al$.
    Let
    \begin{align*}
    \tau_1&=\big(\iBoxed{\al},\dots,\iBoxed{2},\iBoxed{1} ,\underbrace{i}_{j\text{th position}}, \\[.5em]
    &\hspace{2cm}\Boxedi{1},\Boxedi{2}, \dots   ,\Boxedi{\omega} \big)
    \end{align*}
    or 
    \begin{align*}
    \tau_2&=\big(\iBoxed{\al},\dots,\iBoxed{2},\iBoxed{1} , i,\underbrace{i}_{j\text{th position}}, \\[.5em]
    &\hspace{2cm}\Boxedi{1},\Boxedi{2}, \dots   ,\Boxedi{\omega} \big)
    \end{align*}
    be a partition with $R=1+\sum_{\rho=1}^\al H_\rho+\sum_{\rho=1}^\omega h_\rho$ or $R=2+\sum_{\rho=1}^\al H_\rho+\sum_{\rho=1}^\omega h_\rho$ parts and fix an integer $1\le j\le R$.
    For $\tj{j}{0}:=\tau_1$ or $\tau_2$, we define the set of partitions $\tij$ as follows.
    If $\tau_j=\tau_{j+1}$, we set $\tij:=\varnothing$.
    Otherwise, if $\tj{j}{0}=\tau_1$, define $\tij':=\{\tj{j}{0},\tj{j}{-1},\tj{j}{1},\tj{j}{2},\dots,\tj{j}{\omega+1}\}$, where
    \begin{align*}
    \tj{j}{-1}&=\big(\iBoxed{\al},\dots,\iBoxed{1}, \underbrace{i-1}_{j\text{th position}},  \Boxedi{1}, \dots   ,\Boxedi{\omega} \big),\\[1em] 
     \tj{j}{1}&=\big(\iBoxed{\al},\dots,\iBoxed{1}, \underbrace{i-2}_{j\text{th position}},  \Boxedi{1}, \dots   ,\Boxedi{\omega} \big),\\[1em] 
     \tj{j}{v+1}&=\big(\iBoxed{\al},\dots,\iBoxed{1},  \Boxedi[+1]{1} ,\Boxedi[+1]{2},\dots,\\
      &\hspace{2cm} \Boxedi[+1]{v-1},\boxed{\quad\cali_{g_{v}+1,h_v+1}\quad},\Boxedi{v+1},\dots,\Boxedi{\omega} \big).
\end{align*}
    (in $\tj{j}{v+1}$ the leftmost $i-(g_1+1)$ is in position $j$) and if $\tj{j}{0}=\tau_1$, define $\tij':=\{\tj{j}{0},\newline \tj{j}{-1},\tj{j}{1},\tj{j}{2}, \dots,\tj{j}{\omega+1}, \tu{j}{1},\tu{j}{2},\dots,\tu{j}{\al+1}\}$, where
    \begin{align*}
    \tj{j}{-1}&=\big(\iBoxed{\al},\dots,\iBoxed{1}, \underbrace{i}_{j-1\text{th position}},i-1,  \Boxedi{1}, \dots   ,\Boxedi{\omega} \big),\\[1em] 
     \tj{j}{1}&=\big(\iBoxed{\al},\dots,\iBoxed{1}, \underbrace{i}_{j-1\text{th position}},i-2,  \Boxedi{1}, \dots   ,\Boxedi{\omega} \big),\\[1em] 
     \tj{j}{v+1}&=\big(\iBoxed{\al},\dots,\iBoxed{1} , \underbrace{i}_{j-1\text{th position}},  \Boxedi[+1]{1} ,\Boxedi[+1]{2},\dots,\\
      &\hspace{2cm} \Boxedi[+1]{v-1},\boxed{\quad\cali_{g_{v}+1,h_v+1}\quad},\Boxedi{v+1},\dots,\Boxedi{\omega} \big),\\[1em]
    \tu{j}{1}&=\big(\iBoxed{\al},\dots,\iBoxed{1} ,\underbrace{i-1}_{j-1\text{th position}},i-1 , \Boxedi{1} , \dots   ,\Boxedi{\omega} \big),\\[1em]
     \tu{j}{s+1}&=\big(\iBoxed{\al},\dots,\iBoxed{s},\boxed{\quad\cali^{G_{s-1},H_{s-1}-1}\quad},\iBoxed[-1]{s-2},\\[1em]
     &\hspace{2cm}\dots,\iBoxed[-1]{1},i-1 ,\underbrace{i-1}_{j-1\text{th position}},  i-1  ,\Boxedi{1} ,\dots   ,\Boxedi{\omega} \big).
\end{align*}
    Then we define $\tij$ to be the set of all $\tau\in\tij'$ satisfying one of the following:
    \begin{itemize}
        \item if $\tau=\tj{j}{v+1}$, then $g_{v}=\sum_{\rho=1}^vh_\rho+1$ and for each $1\le r\le \sum_{\rho=1}^vh_\rho$, we have that $\tau_{j+r}\le \tau_j-1-r$, or
        \item if $\tau=\tu{j}{s+1}$ then $G_{s+1}=\sum_{\rho=1}^s H_\rho+1$ and for each $1\le p\le \sum_{\rho=1}^sH_\rho$, we have that $\tau_{j-1-p}\le \tau_{j-1}+p$.
    \end{itemize}
\end{defn}

\begin{rmk}
    We can visualise the set $\tij$ as follows.
    The Young diagrams of $\tj{j}{1}$ and $\tu{j}{1}$ are obtained from $\tj{j}{0}$ by removing a horizontal (resp.\ vertical) $2$-strip, starting in the last node of row $\tau_j$.
    (This implies that for $1\le k$, $\tj{j}{k}\in\tij$, i.e.\ $\tj{j}{k}$ is a valid partition, if and only if $\tau_j\ge\tau_{j+1}+2$.
    Similarly, for $1\le s$, $\tu{j}{s}\in\tij$ if and only if $\tau_{j-1}=\tau_j$.)
    
    The Young diagram of each $\tj{j}{v+1}$ is obtained from $\tj{j}{1}$ by removing a rim consisting of $2\cdot\sum_{\rho=1}^{v}h_\rho$ nodes, starting in row $j$ and ending in the last node of row $j+\sum_{\rho=1}^vh_\rho$.
    Finally, the Young diagram of each $\tu{j}{s+1}$ is obtained from $\tj{j}{1}$ by removing a rim consisting of $2\cdot\sum_{\rho=1}^{s}h_\rho$ nodes, starting at the end of row $j-2$ and ending in the last node of row $j-1-\sum_{\rho=1}^{s-1}+1$).

    In particular, every $\tj{j}{v}$ and $\tu{j}{s}$ can be obtained from $\tj{j}{0}$ by removing a rim starting in the node $(j,\tau_j)$ and ending in a node which is in the set $\calr:=\calr_1\cup\calr_2$ where
    \begin{align*}
        \calr_1:=&\{(j-m-1,\tau_j+m)\mid 0\le m< m_1\le j-2,(j-m_1-1,\tau_j+m_1+1)\not\in \tj{j}{-1}\},\\[.5em]
        \calr_2:=&\{(j+m,\tau_j-m-1)\mid 0\le m< m_2\le R-j,(j+m_2,\tau_j-m_2)\not\in \tj{j}{-1}\}.
    \end{align*}
\end{rmk}

\begin{eg}\label{eg:taus}
    Let $\tj{4}{0}=(10,8,7^2,5,3^2,2^3)$, so we have that
    \begin{center}
        \begin{tikzpicture}
        \Yboxdim{0.5cm}
            \Yfillcolour{yellow!20}
                \tyng(0cm,0cm,10,8,7,7,5,3,3,2,2,2)
                \node at (-1, 0.15) (a) {$[\tj{4}{0}]=$};
        \end{tikzpicture}.
    \end{center}
    Next, $\tj{4}{-1}=(10,8,7,6,5,3^2,2^3)$, hence
    \begin{center}
        \begin{tikzpicture}
        \Yboxdim{0.5cm}
        \Yfillcolour{yellow!20}
                \tyng(0cm,0cm,10,8,7,7,5,3,3,2,2,2)
                {
                \Ylinethick{1.5pt}
                \Ylinecolour{red}
                \Yfillcolour{red!30}
                \tgyoung(0cm,0cm),:::::::,:::::::,:::::::,::::::;)
                }
                \node at (-1, 0.15) (a) {$[\tj{4}{-1}]=$};
        \end{tikzpicture}
    \end{center}
    where the node highlighted in red is node removed from $[\tj{j}{0}]$.

    We can visualise the set $\calr$ as below.
    In fact, we see that every blue node sits at the bottom of its column in $[\tj{j}{-1}]$ and the set $\tij$ contains seven partitions in total.
    (Observe that if $\tj{j}{0}=\tau_1$, then row $j-1$ already crosses the blue line, which is why we did not define any $\tu{j}{s}$ in this case.)
    \begin{center}
        \begin{tikzpicture}
        \Yboxdim{0.5cm}
        \tyng(0cm,0cm,10,8,7,7,5,3,3,2,2,2)
                {
                \Ylinethick{1.5pt}
                \Ylinecolour{red}
                \Yfillcolour{red!30}
                \tgyoung(0cm,0cm),:::::::,:::::::,::::::;,:::::;;)
                }
                {
                \Ylinethick{1.5pt}
                \Ylinecolour{blue}
                \Yfillcolour{blue!30}
                \tgyoung(0cm,0cm),:::::::,:::::::;,::::::;,:::::;,::::;,:::,::;)
                }
                \draw[blue,line width=1.5pt] (4.5,.5)--(0,-4);
        \end{tikzpicture}
    \end{center}
    In the following, we highlight removed nodes in blue.
    First, we have $\tj{4}{1}=(10,8,7,5,5,3^2,2^3)$ and $\tu{4}{1}=(10,8,6,6,5,3^2,2^3)$.
    \begin{center}
        \begin{tikzpicture}
        \Yfillcolour{yellow!20}
                \Yboxdim{0.5cm}
                \tyng(0cm,0cm,10,8,7,7,5,3,3,2,2,2)
                {
                \Ylinethick{1.5pt}
                \Ylinecolour{blue}
                \Yfillcolour{blue!30}
                \tgyoung(0cm,0cm),:::::::,:::::::,:::::::,:::::;;)
                }
                \tyng(7.5cm,0cm,10,8,7,7,5,3,3,2,2,2)
                {
                \Ylinethick{1.5pt}
                \Ylinecolour{blue}
                \Yfillcolour{blue!30}
                \tgyoung(7.5cm,0cm),:::::::,:::::::,::::::;,::::::;)
                }
                \node at (-1, 0.15) (a) {$[\tj{4}{1}]=$};
                \node at (6.5, 0.3) (a) {$[\tu{4}{1}]=$};
        \end{tikzpicture}.
    \end{center}
    Then we have $\tj{4}{2}=(10,8,7,4^2,3^2,2^3)$ and $\tj{4}{2}=(10,8,7,4,2^6)$.
    \begin{center}
        \begin{tikzpicture}
        \Yfillcolour{yellow!20}
                \Yboxdim{0.5cm}
                \tyng(0cm,0cm,10,8,7,7,5,3,3,2,2,2)
                {
                \Ylinethick{1.5pt}
                \Ylinecolour{blue}
                \Yfillcolour{blue!30}
                \tgyoung(0cm,0cm),:::::::,:::::::,:::::::,::::;;;,::::;)
                }
                \tyng(7.5cm,0cm,10,8,7,7,5,3,3,2,2,2)
                {
                \Ylinethick{1.5pt}
                \Ylinecolour{blue}
                \Yfillcolour{blue!30}
                \tgyoung(7.5cm,0cm),:::::::,:::::::,:,::::;;;,::;;;,::;,::;)
                }
                \node at (-1, 0.15) (a) {$[\tj{4}{2}]=$};
                \node at (6.5, 0.15) (a) {$[\tj{4}{3}]=$};
        \end{tikzpicture}.
    \end{center}
    Finally, $\tu{4}{2}=(10,6^3,5,3^2,2^3)$.
    \begin{center}
        \begin{tikzpicture}
        \Yfillcolour{yellow!20}
                \Yboxdim{0.5cm}
                \tyng(0cm,0cm,10,8,7,7,5,3,3,2,2,2)
                {
                \Ylinethick{1.5pt}
                \Ylinecolour{blue}
                \Yfillcolour{blue!30}
                \tgyoung(0cm,0cm),:::::::,::::::;;,::::::;,::::::;)
                }
                \node at (-1, 0.3) (a) {$[\tu{4}{2}]=$};
        \end{tikzpicture}.
    \end{center}
\end{eg}

\begin{rem}
If $g_{v}=g_{v+1}-h_{v+1}$ for all $1\le v\le z$ and $G_{s+1}=G_{s}+H_{s}$ for all $1\le s\le x$, then $\tij=\{\tj{j}{-1},\tj{j}{0},\tj{j}{1},\tj{j}{2},\dots,\tj{j}{z+1},\tu{j}{1},\tu{j}{2},\dots,\tu{j}{x+1}\}$ and
\[
\vert \tij \vert=2+z+1+x+1=x+z+4.
\]
In particular, if $x=\al$ and $z=\omega$ and $H_s=1=h_v$ for all $1\le v\le z$ and $1\le s\le x$, then $x+z+4$ is maximal.
\end{rem}

The following statement shows that there is a special relation between the $\De$-coefficients of the partitions appearing in $\tij$ where $\tij$ is as in \cref{def:tau}.

\begin{prop}\label{prop:2c}
    For any $\tij$ with $1\le j\le R$ and $i>1$, we have that
    \[
    \sum_{v=0}^{z+1}\De(\tj{j}{v})+\sum_{s=1}^{x+1}\De(\tu{j}{s})=2\De(\tj{j}{-1})
    \]
    where $\tj{j}{v}$ and $\tu{j}{s}$ ($-1\le v\le z+1,1\le s\le x+1$) are as in \cref{def:tau} and $\De(\tau)$ is computed according to \cref{def:coeff}.
\end{prop}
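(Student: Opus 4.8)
The plan is to reduce the stated identity to a single identity among binomial coefficients and then to prove that by induction. First I would exploit the multiplicativity of $\De$: every partition in $\tij$ is obtained from $\tj{j}{0}$ by removing a rim that starts in the node $(j,\tau_j)$ and ends at one of the allowable end-nodes described in the remark following \cref{def:tau}. Hence the rows lying strictly outside the band swept out by the longest such rim contribute exactly the same factor $\binom{\de_x-\caly+\calx}{\calx}$ to $\De(\tau)$ for every $\tau\in\tij$. Dividing the whole equation through by $\De(\tj{j}{-1})$ cancels these common factors and leaves a finite identity of the shape
\[
1+\sum_{v=1}^{z+1}r_v+\sum_{s=1}^{x+1}r_s=2,
\]
where each $r_v$ and $r_s$ is an explicit ratio of the binomial factors attached only to the rows that the corresponding rim actually changes.

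The heart of the argument is then a row-by-row computation of these ratios. For each $\tau\in\tij$ I would determine how the data $\calx$ and $\caly$ of \cref{def:coeff} change when the part in a given row drops by the amount prescribed by the rim. The membership constraints imposed in \cref{def:tau} — namely $g_v=\sum_{\rho=1}^{v}h_\rho+1$ together with $\tau_{j+r}\le\tau_j-1-r$, and dually $G_{s+1}=\sum_{\rho=1}^{s}H_\rho+1$ with $\tau_{j-1-p}\le\tau_{j-1}+p$ — are precisely the gap conditions that keep the ``maximal $y$ / minimal $z$'' searches of \cref{def:coeff} in a predictable regime, so that each $r_v,r_s$ collapses to a single quotient of binomials. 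I would verify the base case directly: for $\tj{j}{0}=\tau_1$ with $\omega=\al=0$ the $\tu$-sum is empty and the claim is just $(i+1)+(i-1)=2i$, while for $\tj{j}{0}=\tau_2$ with $\omega=\al=0$ it reduces to $\binom{i+2}{2}+(i+1)(i-1)+\binom{i+1}{2}=2i(i+1)$. I would then run an induction on the number of blocks $\omega+\al$, peeling off the extremal block on whichever side is nonempty and using Pascal's rule $\binom{n}{k}=\binom{n-1}{k}+\binom{n-1}{k-1}$ to split the longest downward (resp.\ upward) rim contribution so that it merges into the shorter rim terms, thereby reducing $\tij$ to the set attached to a partition with one fewer block, to which the inductive hypothesis applies.

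The main obstacle I anticipate is the bookkeeping in the second step. The quantities $\calx$ and $\caly$ are defined through a two-sided search over gaps between consecutive distinct parts, and when a rim passes through a block those gaps change, which can in principle move the indices $y,z$ that realize $\calx$ and $\caly$. The genuine content is therefore to show that the defining inequalities of $\tij$ force these searches to behave uniformly across $\tj{j}{0},\dots,\tj{j}{z+1}$ and $\tu{j}{1},\dots,\tu{j}{x+1}$, so that the ratios $r_v,r_s$ really do telescope rather than merely forming a finite sum; once this uniformity is pinned down the Pascal induction is routine. I would handle the two-$i$ case $\tj{j}{0}=\tau_2$ in parallel with the one-$i$ case $\tau_1$, noting that the extra repeated $i$ only shifts the position label $j$ and introduces the $\tu$-family, without otherwise disturbing the cancellation.
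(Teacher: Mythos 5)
Your opening reduction---and with it the proposed induction---rests on a premise that is false. You assume that rows lying outside the band swept by the rims contribute the same factor $\binom{\de_x-\caly+\calx}{\calx}$ to $\De(\tau)$ for every $\tau\in\tij$, so that these factors cancel upon dividing by $\De(\tj{j}{-1})$. But $\calx$ and $\caly$ in \cref{def:coeff} are computed by \emph{global} searches over gaps between parts, so removing even the single two-node strip that produces $\tj{j}{-1}$ changes the gaps against which all other rows are tested. Concretely, suppose $\tij=\{\tj{j}{0},\tj{j}{-1},\tj{j}{1}\}$ (so no rim leaves row $j$) and some lower block $v$, touched by no rim, satisfies $g_v=\sum_{\rho=1}^{v}h_\rho$, with the blocks above it widely separated so that no $\tj{j}{w+1}$, $w\ge1$, lies in $\tij$: then in $\tj{j}{0}$ the rule-2 test $\de_j-\de_z<\sum h_\rho$ fails (with equality), while in $\tj{j}{-1}$ and $\tj{j}{1}$ it succeeds, so $\caly$ for row $j$---and with it both $\fkd(i)$ and the factor attached to the untouched block $v$, whose $\calx$ also changes---take different values across the three members. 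Indeed, even the decomposition into blocks of equal parts differs from member to member (in $\tj{j}{-1}$ row $j$ can merge with the block below it), so there is no row-by-row matching of factors available to cancel. Your proposed repair, proving that the $\calx,\caly$ searches behave uniformly across $\tij$, therefore cannot succeed: uniformity genuinely fails, and what is true instead is only that, after tracking exactly how the factors differ, the \emph{ratios} $\De(\tau)/\De(\tj{j}{-1})$ still collapse to clean closed forms. Establishing that is the substance of the proof, and it is the part the proposal treats as routine; the same non-locality also blocks the block-peeling induction, since $\De$ of a truncated partition is not a sub-product of $\De$ of the original one.

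On the other hand, your base cases are correct and the skeleton (a sum of ratios equal to $2$) is the right one; but the engine that closes the general case is partial fractions, not Pascal's rule. Writing $g_0=1$ and $i'=i-\caly$, each intermediate downward ratio is a difference $\frac{1}{g_v}-\frac{1}{g_{v+1}}$ of reciprocals of consecutive participating $g$-values, the longest downward rim contributes $\frac{1}{g_z}-\frac{1}{i'}$, so the whole downward family sums to $1-\frac{1}{i'}$; dually, $\tj{j}{0}$ together with the upward family sums to $1+\frac{1}{i'}$, whence the total is $2$. The identity doing the work is $\frac{m}{r(r+m)}=\sum_{y=r}^{r+m-1}\frac{1}{y(y+1)}$. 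Once the ratio formulas are in hand, the proposition follows from this telescoping with no induction at all, so the induction on $\omega+\al$ does not reduce the difficulty: all of it sits in the unproven ratio computation.
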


\begin{proof}
We will prove the statement by considering the different sizes of $\tij$.
Abusing notation, if $\tau$ is a partition of the form
\[
\big(\iBoxed{\al},\dots,\iBoxed{1} ,\underbrace{i}_{j\text{th position}},\Boxedi{1},\dots,\Boxedi{\omega} \big)
\]
or 
\[
\big(\iBoxed{\al},\dots,\iBoxed{1} ,i,\underbrace{i}_{j\text{th position}},\Boxedi{1},\dots,\Boxedi{\omega} \big)
\]
we write $\De(\tau)=\fkd_{G_\al}\dots\fkd_{G_1}\fkd(i)\fkd_{g_1}\dots\fkd_{g_\omega}$.
We also set
\[
\fkd(G_1^{\al}):=\fkd_{G_\al}\dots\fkd_{G_1}\quad\text{and}\quad \fkd(g_1^{\omega}):=\fkd_{g_1}\dots\fkd_{g_\omega}.
\]
To simplify notation, we also write $i'=i-\caly$.\\

\textbf{The case when }$\vert\tij\vert=2$

In this case $\tij=\{\tau_{(j,0)},\tau_{(j,-1)}\}$.
Recall that if $\tj{j}{0}=\tau_2$, then $\tu{j}{1}\in\tij$, so here we must have $\tj{j}{0}=\tau_1$.
Then
\begin{align*}
    \tj{j}{0}&=\big(\iBoxed{\al},\dots,\iBoxed{1} ,\underbrace{i}_{j\text{th position}},\Boxedi{1},\dots   ,\Boxedi{\omega} \big)\\[1em]
    \tau_{(j,-1)}&=\big(\iBoxed{\al},\dots,\iBoxed{1} , \underbrace{i-1}_{j\text{th position}},\Boxedi{1},\dots   ,\Boxedi{\omega} \big) 
\end{align*}
where $G_1\ge 1$ and $g_1=1$ (since if $g_1\ge 2$, then $\tj{j}{1}\in\tij$).
If $g_1=1=h_1$ we have
\begin{align*}
    \De(\tau_{(j,0)})&=\fkd(G_1^{\al})(i'+1)((i-1)+1-\caly)\fkd(g_2^{\omega})\\[.5em]
    &=(i'+1)\cdot i'\cdot \fkd(G_1^{\al})\fkd(g_2^{\omega})
\end{align*}
and
\begin{align*}
\De(\tau_{(j,-1)})&=\fkd(G_1^{\al})\binom{(i-1)-\caly+2}{2}\fkd(g_2^{\omega})\\[.5em]
&=\binom{i'+1}{2}\cdot \fkd(G_1^{\al})\fkd(g_2^{\omega})
\end{align*}
so $\De(\tau_{(j,0)})=2\De(\tau_{(j,-1)})$ as required.

Otherwise, if $h_1>1$
\begin{align*}
    \De(\tau_{(j,0)})&=\fkd(G_1^{\al})(i-(i-1)+1)\binom{(i-1)-\caly+\calx}{\calx}\fkd(g_2^{\omega})\\[.5em]
    &=2\cdot\binom{i'-1+\calx}{\calx}\cdot\fkd(G_1^{\al})\fkd(g_2^{\omega})\\[1em]
     \De(\tj{j}{-1})&=\fkd(G_1^{\al})\binom{(i-1)-\caly+\calx}{\calx}\fkd(g_2^{\omega})=\binom{i'-1+\calx}{\calx}\cdot\fkd(G_1^{\al})\fkd(g_2^{\omega}).
\end{align*}
\\

\textbf{The case when }$\vert\tij\vert=3$

We have two cases to consider:
\begin{itemize}
    \item[i)] $\tij=\{\tau_{(j,0)},\tau_{(j,-1)},\tau_{(j,1)}\}$, and
    \item[ii)] $\tij=\{\tau_{(j,0)},\tau_{(j,-1)},\tau^{(j,1)}\}$.
\end{itemize}

\textbf{Case i)}

Here $\tij=\{\tau_{(j,0)},\tau_{(j,-1)},\tau_{(j,1)}\}$.
Once again, we must have that $\tj{j}{0}=\tau_1$, so
\begin{align*}
    \tj{j}{0}&=\big(\iBoxed{\al},\dots,\iBoxed{1} ,\underbrace{i}_{j\text{th position}},\Boxedi{1},\dots   ,\Boxedi{\omega} \big)\\[1em]
    \tau_{(j,-1)}&=\big(\iBoxed{\al},\dots,\iBoxed{1} , \underbrace{i-1}_{j\text{th position}},\Boxedi{1},\dots   ,\Boxedi{\omega} \big)\\[1em]
    \tau_{(j,1)}&=\big(\iBoxed{\al},\dots,\iBoxed{1} , \underbrace{i-2}_{j\text{th position}},\Boxedi{1},\dots   ,\Boxedi{\omega} \big) 
\end{align*}
where $G_1\ge 1$ and $g_1\ge 2$.
Since $\tj{j}{v+1}\not\in\tij$ for all $1\le v\le \omega$, there are three possibilities.
\begin{enumerate}
    \item[1)] If $g_v\ge \sum_{\rho=1}^vh_\rho+1$ for all $1\le v\le \omega$, then $\calr_2=\{j,\tau_j-1\}$, that is there are more blue nodes below row $j$ (see \cref{eg:taus}) and $\caly=0$ for $\fkd(i)$ in each $\tau\in\tij$.
    This case also includes $j=R$.
    \item[2)] If $h_1\ge g_1$, then row $j+h_1$ already crosses the blue line and $\fkd(g_2^\omega)$ is the same in all three partitions in $\tij$.
    \item[3)] If for some $1\le v'<v\le \omega$ we have that $g_{v'}>\sum_{\rho=1}^{v'}h_\rho+1$ and $g_v\le \sum_{\rho=1}^vh_\rho$, we see that $\fkd(g_1^{v'})$ is the same in all partitions in $\tij$ and we compute $\fkd(g_v)$ for each.
    Here, if $g_v<\sum_{\rho=1}^vh_\rho$, then $\caly=i-g_v$ in $\fkd(i)$ for all three partitions above, however if $g_v=\sum_{\rho=1}^vh_\rho$, then $\caly$ is different in $\fkd(i)\in\De(\tj{j}{0})$ from $\caly$ in $\fkd(i)$ in the other two.
\end{enumerate}
Thus, we have three computations: case 2) breaks into $g_1=h_1$ and $h_1>g_1$ (if $g_1\ge h_1+1$, then the computation is identical to the $h_1>g_1$ case by setting $\caly=0$ in $\fkd(i)$) and the special case from case 3).

\textbf{Case 2a):} If $h_{1}=g_1$, we have that
\begin{align*}
    \De(\tau_{(j,0)})&=\fkd(G_1^{\al})(i'+1)\binom{(i-g_1)+h_1-\caly}{h_1}\fkd(g_2^{\omega})\\[.5em]
    &=(i'+1)\binom{i'}{h_1}\cdot\fkd(G_1^{\al})\fkd(g_2^{\omega})
\end{align*}
and
\begin{align*}
    \De(\tau_{(j,-1)})&=\fkd(G_1^{\al})((i-1)-(i-g_1)+1)\binom{(i-g_1)+h_1+1-\caly}{h_1+1}\fkd(g_2^{\omega})\\[.5em]
    &=g_1\cdot\binom{i'+1}{h_1+1}\cdot\fkd(G_1^{\al})\fkd(g_2^{\omega})
\end{align*}
and 
\begin{align*}
    \De(\tau_{(j,1)})&=\fkd(G_1^{\al})((i-2)-(i-g_1)+1)\binom{(i-g_1)+h_1+1-\caly}{h_1+1}\fkd(g_2^{\omega})\\[.5em]
    &=(g_1-1)\cdot\binom{i'+1}{h_1+1}\cdot\fkd(G_1^{\al})\fkd(g_2^{\omega})
\end{align*}
Now
\[
    (i'+1)\binom{i'}{h_1}=\frac{(i'+1)!}{h_1!(i'-h_1)!},
\]
so
\begin{align*}
    \De(\tau_{(j,0)})+\De(\tau_{(j,1)})=&\Big(\frac{(i'+1)!}{h_1!(i'-h_1)!}+(h_1-1)\frac{(i'+1)!}{(h_1+1)!(i'-h_1)!}\Big)\cdot\fkd(G_1^{\al})\fkd(g_2^{\omega})\\
    &=2h_1\cdot\frac{(i'+1)!}{(h_1+1)!(i'-h_1)!}\cdot\fkd(G_1^{\al})\fkd(g_2^{\omega})=2\De(\tau_{(j,-1)})
\end{align*}
as required.

\textbf{Case 2b):} Otherwise, if $h_{1}>g_1$,
\begin{align*}
        \De(\tau_{(j,0)})&=\fkd(G_1^{\al})(i-(i-g_1)+1)\fkd(g_1^{\omega})=(g_1+1)\fkd(G_1^{\al})\fkd(g_1^{\omega})\\[.5em]
        \De(\tau_{(j,-1)})&=\fkd(G_1^{\al})(i-1-(i-g_1)+1)\fkd(g_1^{\omega})=g_1\cdot\fkd(G_1^{\al})\fkd(g_1^{\omega})\\[.5em]
        \De(\tau_{(j,1)})&=\fkd(G_1^{\al})(i-2-(i-g_1)+1)\fkd(g_1^{\omega})=(g_1-1)\fkd(G_1^{\al})\fkd(g_1^{\omega}).
\end{align*}
Hence $\De(\tau_{(j,0)})+\De(\tau_{(j,1)})=2\De(\tau_{(j,-1)})$ as required.
We see that the final computation also applies to the case when $j=R$, as then $i-g_1=0$ and $\fkd(g_1^\omega)$ is empty.

\textbf{Case 3):} If $g_{v'}>\sum_{\rho=1}^{v'}h_\rho+1$ for all $1\le v'<v\le \omega$ and $g_v=\sum_{\rho=1}^vh_\rho$.
\begin{align*}
    \De(\tj{j}{0})&=\fkd(G_1^\al)(i-\caly+1)\fkd(g_1^{v-1})\binom{i-g_v-\caly+\sum_{\rho=1}^vh_\rho}{\sum_{\rho=1}^vh_\rho}\fkd(g_{v+1}^\omega)\\[.5em]
    &=(i'+1)\cdot\binom{i'}{g_v}\fkd(G_1^\al)\fkd(g_1^{v-1})\fkd(g_{v+1}^\omega)\\[1em]
    \De(\tau_{(j,-1)})&=\fkd(G_1^{\al})((i-1)-(i-g_v)+1)\fkd(g_1^{v-1})\binom{i-g_v-\caly+\sum_{\rho=1}^vh_\rho+1}{\sum_{\rho=1}^vh_\rho+1}\fkd(g_{v+1}^\omega)\\[.5em]
    &=g_v\cdot\binom{i'+1}{g_v+1}\cdot\fkd(G_1^\al)\fkd(g_1^{v-1})\fkd(g_{v+1}^\omega)\\[1em]
    \De(\tau_{(j,1)})&=\fkd(G_1^{\al})((i-2)-(i-g_v)+1)\fkd(g_1^{v-1})\binom{i-g_v-\caly+\sum_{\rho=1}^vh_\rho+1}{\sum_{\rho=1}^vh_\rho+1}\fkd(g_{v+1}^\omega)\\[.5em]
    &=(g_v-1)\cdot\binom{i'+1}{g_v+1}\cdot\fkd(G_1^\al)\fkd(g_1^{v-1})\fkd(g_{v+1}^\omega)
\end{align*}
Comparing coefficients with case 2a), we see that $\De(\tj{j}{0})+\De(\tj{j}{1})=2\De(\tj{j}{-1})$.
\\

\textbf{Case ii)}

Here $\tij=\{\tau_{(j,0)},\tau_{(j,-1)},\tau^{(j,1)}\}$.
Similar to the previous case, however we have that $g_1=1$ and $G_2\neq 1$.
As $G_{s+1}\neq \sum_{\rho=1}^sH_s+1$ for all $1\le s\le \al-1$, we only have two cases to consider: $h_1=1$ or $h_1>1$.
(We do not have case 3 as $g_1\ge1$.)

We consider the case when $G_1=0$ below.
However it is easy to see that if $G_1\ge 2$, then setting $\calx_i=2$ and replacing $\fkd(G_2^\al)$ with $\fkd(G_1^\al)$ yields the same result.
In general, $\calx_i=\sum_{\rho=1}^{y}H_\rho$ where $y$ is minimal such that $G_{y+1}>\sum_{\rho=1}^{y}H_\rho+1$.

\textbf{Case 1:} If $h_1=1$, then
\begin{align*}
    \tj{j}{0}&=\fkd(G_2^\al)\binom{i'+\calx_i}{\calx_i}\cdot (i-1-\caly+1)\cdot \fkd(g_2^\omega)\\[.5em]
    &=\binom{i'+\calx_i}{\calx_i}\cdot i' \cdot \fkd(G_2^\al)\fkd(g_2^\omega)\\[1em]
    \tj{j}{-1}&=\fkd(G_2^\al)\binom{i-(i-1)+\calx_i-1}{\calx_i-1}\binom{i-1-\caly+\calx_i+1}{\calx_i+1}\fkd(g_2^\omega)\\[.5em]
    &=\calx_i\cdot\binom{i'+\calx_i}{\calx_i+1}\cdot \fkd(G_2^\al)\fkd(g_2^\omega)\\[1em]
    \tu{j}{1}&=\fkd(G_2^\al)\binom{i-(i-1)+\calx_i-2}{\calx_i-2}\binom{i-1-\caly+\calx_i+1}{\calx_i+1}\fkd(g_2^\omega)\\[.5em]
    &=(\calx_i-1)\cdot\binom{i'+\calx_i}{\calx_i+1}\cdot \fkd(G_2^\al)\fkd(g_2^\omega).
\end{align*}
We also see that
\[
\binom{i'+\calx_i}{\calx_i}=\frac{(i'+\calx_i)!}{\calx_i!\cdot i'!}=\frac{\calx_i+1}{i'}\cdot\binom{i'+\calx_i}{\calx_i+1}.
\]
Thus
\[
    \De(\tj{j}{0})+\De(\tu{j}{1})=\frac{1}{\calx_i}\cdot\Big((\calx_i+1)+(\calx_i-1)\Big)\De(\tj{j}{-1})=2\cdot\De(\tj{j}{-1}).
\]

\textbf{Case 2:} If $h_1>1$, then
\begin{align*}
    \tj{j}{0}&=\fkd(G_2^\al)\binom{i-(i-1)+\calx_i}{\calx_i}\binom{i-1-\caly+\calx_{i-1}}{\calx_{i-1}} \fkd(g_2^\omega)\\[.5em]
    &=(\calx_i+1)\cdot\binom{i'-1+\calx_{i-1}}{\calx_{i-1}} \cdot \fkd(G_2^\al)\fkd(g_2^\omega)\\[1em]
    \tj{j}{-1}&=\fkd(G_2^\al)\binom{i-(i-1)+\calx_i-1}{\calx_i-1}\binom{i-1-\caly+\calx_{i-1}}{\calx_{i-1}}\fkd(g_2^\omega)\\[.5em]
    &=\calx_i\cdot\binom{i'-1+\calx_{i-1}}{\calx_{i-1}} \cdot \fkd(G_2^\al)\fkd(g_2^\omega)\\[1em]
    \tu{j}{1}&=\fkd(G_2^\al)\binom{i-(i-1)+\calx_i-2}{\calx_i-2}\binom{i-1-\caly+\calx_{i-1}}{\calx_{i-1}}\fkd(g_2^\omega)\\[.5em]
    &=(\calx_i-1)\cdot\binom{i'-1+\calx_{i-1}}{\calx_{i-1}} \cdot \fkd(G_2^\al)\fkd(g_2^\omega).
\end{align*}
\\

\textbf{The case when }$\vert\tij\vert\ge4$

We have four cases to consider:
\begin{itemize}
    \item[i)] $\tij=\{\tau_{(j,0)},\tau_{(j,-1)},\tau_{(j,1)},\tau^{(j,1)}\}$,
    \item[ii)] $\tij=\{\tau_{(j,0)},\tau_{(j,-1)},\tau_{(j,1)},\dots,\tj{j}{z+1}\}$,
     \item[iii)] $\tij=\{\tau_{(j,0)},\tau_{(j,-1)},\tau^{(j,1)},\dots,\tu{j}{x+1}\}$, and
    \item[iv)] $\tij=\{\tau_{(j,0)},\tau_{(j,-1)},\tau_{(j,1)},\dots,\tj{j}{z+1},\tau^{(j,1)},\dots,\tu{j}{x+1}\}$.
\end{itemize}

This case can be considered similarly.
Details can be included by switching the toggle in the {\tt arXiv} version of this paper.
\begin{answer}

\textbf{Case i)}

We have that $\tij=\{\tau_{(j,0)},\tau_{(j,-1)},\tau_{(j,1)},\tau^{(j,1)}\}$.
As $\tu{j}{1}\in\tij$, we must have that $\tj{j}{0}=\tau_2$.
Then
\begin{align*}
    \tj{j}{0}&=\big(\iBoxed{\al},\dots,\iBoxed{1} ,i,\underbrace{i}_{j\text{th position}},\Boxedi{1},\dots   ,\Boxedi{\omega} \big)\\[1em]
    \tau_{(j,-1)}&=\big(\iBoxed{\al},\dots,\iBoxed{1} , i,\underbrace{i-1}_{j\text{th position}},\Boxedi{1},\dots   ,\Boxedi{\omega} \big)\\[1em]
    \tau_{(j,1)}&=\big(\iBoxed{\al},\dots,\iBoxed{1} , i,\underbrace{i-2}_{j\text{th position}},\Boxedi{1},\dots   ,\Boxedi{\omega} \big)\\[1em]
    \tu{j}{1}&=\big(\iBoxed{\al},\dots,\iBoxed{1} , i-1,\underbrace{i-1}_{j\text{th position}},\Boxedi{1},\dots   ,\Boxedi{\omega} \big) 
\end{align*}
where $G_1\neq 1$ and $g_1\ge 2$ (if $G_1=1$, then $\tu{j}{2}\in\tij$).
We have the same three cases to consider as in the case $\tij=\{\tj{j}{0},\tj{j}{-1},\tj{j}{1}\}$, but now we can also have that $G_1=0$.
Similar to the previous case, we consider $G_1=0$ below.
If $G_1\ge 2$, then setting $\calx_i=2$ and replacing $\fkd(G_2^\al)$ with $\fkd(G_1^\al)$ gives the same result.
In general, $\calx_i=\sum_{\rho=1}^{y}H_\rho$ where $y$ is minimal such that $G_{y+1}>\sum_{\rho=1}^{y}H_\rho+1$.

\textbf{Case 2a):} If $G_1\neq 1$ and $h_1=g_1$, we have that
\begin{align*}
    \De(\tj{j}{0})&=\fkd(G_2^{\al})\binom{i'+\calx_i}{\calx_i}\binom{(i-g_1)+h_1-\caly}{h_1}\fkd(g_2^{\omega})\\[.5em]
    &=\binom{i'+\calx_i}{\calx_i}\binom{i'}{h_1}\cdot\fkd(G_2^{\al})\fkd(g_2^{\omega})\\[1em]
    \De(\tj{j}{-1})&=\fkd(G_2^{\al})\binom{i-(i-g_1)+\calx_i-1}{\calx_i-1}(i-1-(i-g_1)+1)\\[.5em]
    &\cdot \binom{(i-g_1)+h_1+\calx_i-\caly}{h_1+\calx_i}\fkd(g_2^{\omega})\\[.5em]
    &=\binom{g_1+\calx_i-1}{\calx_i-1}\cdot g_1\cdot\binom{i'+\calx_i}{g_1+\calx_i}\cdot\fkd(G_2^{\al})\fkd(g_2^{\omega})\\[1em]
    \De(\tj{j}{1})&=\fkd(G_2^{\al})\binom{i-(i-g_1)+\calx_i-1}{\calx_i-1}(i-2-(i-g_1)+1)\\[.5em]
    &\cdot\binom{(i-g_1)+h_1+\calx_i-\caly}{h_1+\calx_i}\fkd(g_2^{\omega})\\[.5em]
    &=\binom{g_1+\calx_i-1}{\calx_i-1}\cdot (g_1-1)\cdot\binom{i'+\calx_i}{g_1+\calx_i}\cdot\fkd(G_2^{\al})\fkd(g_2^{\omega})\\[1em]
    \De(\tu{j}{1})&=\fkd(G_2^{\al})\binom{i-(i-1)+\calx_i-2}{\calx_i-2}\binom{i-1-(i-g_1)+\calx_i}{\calx_i}\\[.5em]
    &\cdot\binom{(i-g_1)+h_1+\calx_i-\caly}{h_1+\calx_i}\fkd(g_2^{\omega})\\[.5em]
    &=(\calx_i-1)\cdot\binom{g_1+\calx_i-1}{\calx_i}\binom{i'+\calx_i}{g_1+\calx_i}\cdot\fkd(G_2^{\al})\fkd(g_2^{\omega}).
\end{align*}
Expanding the terms in $\De(\tj{j}{0})$, we have
\[
\binom{i'+\calx_i}{\calx_i}\binom{i'}{g_1}=\frac{(i'+\calx_i)!}{i'!\calx_i!}\cdot\frac{i'!}{(i'-g_1)!g_1!}=\binom{\calx_i+g_1}{\calx_i}\binom{i'+\calx_i}{\calx_i+g_1}
\]
and
\[
\binom{\calx_i+g_1}{\calx_i}=\frac{\calx_i+g_1}{\calx_i}\cdot\binom{g_1+\calx_i-1}{\calx_i-1}.
\]
In $\De(\tu{j}{1})$ we have
\[
\binom{g_1+\calx_i-1}{\calx_i}=\frac{g_1}{\calx_i}\cdot\binom{g_1+\calx_i-1}{\calx_i-1}.
\]
Thus
\begin{align*}
    \De(\tj{j}{0})+\De(\tj{j}{1})+\De(\tu{j}{1})&=\frac{1}{g_1}\Bigg(\frac{\calx_i+g_1}{\calx_i}+(g_1-1)+\frac{g_1\cdot(\calx_i-1)}{\calx_i}\Bigg)\De(\tj{j}{-1})\\[.5em]
    &=\frac{1}{g_1}\cdot\frac{\calx_i+g_1+g_1\calx_i-\calx_i+g_1\calx_i-g_1}{\calx_i}\De(\tj{j}{-1})\\[.5em]
    &=\frac{1}{g_1}\cdot2\cdot g_1\cdot\De(\tj{j}{-1})
\end{align*}
as required.

\textbf{Case 2b):} Otherwise, if $h_1>g_1$, we compute the $\De$ function as below.
\begin{align*}
    \De(\tj{j}{0})&=\fkd(G_2^{\al})\binom{i-(i-g_1)+\calx_i}{\calx_i}\fkd(g_1^{\omega})=\binom{g_1+\calx_i}{\calx_i}\cdot\fkd(G_2^{\al})\fkd(g_1^{\omega})\\[1em]
    \De(\tj{j}{-1})&=\fkd(G_2^{\al})\binom{i-(i-g_1)+\calx_i-1}{\calx_i-1}(i-1-(i-g_1)+1)\fkd(g_1^{\omega})\\[.5em]
    &=\binom{g_1+\calx_i-1}{\calx_i-1}\cdot g_1\cdot \fkd(G_2^{\al})\fkd(g_1^{\omega})\\[1em]
    \De(\tj{j}{1})&=\fkd(G_2^{\al})\binom{i-(i-g_1)+\calx_i-1}{\calx_i-1}(i-2-(i-g_1)+1)\fkd(g_1^{\omega})\\[.5em]
    &=\binom{g_1+\calx_i-1}{\calx_i-1}\cdot (g_1-1)\cdot \fkd(G_2^{\al})\fkd(g_1^{\omega})\\[1em]
    \De(\tu{j}{1})&=\fkd(G_2^{\al})\binom{i-(i-1)+\calx_i-2}{\calx_i-2}\binom{i-1-(i-g_1)+\calx_i}{\calx_i}\fkd(g_1^{\omega})\\[.5em]
    &=(\calx_i-1)\cdot\binom{g_1+\calx_i-1}{\calx_i-1}\cdot\fkd(G_2^{\al})\fkd(g_1^{\omega}).
\end{align*}
The computation for summing the coefficients is very similar to the previous, hence we omit it.

\textbf{Case 3):}
If $g_{v'}>\sum_{\rho=1}^{v'}h_\rho+1$ for all $1\le v'<v\le \omega$ and $g_v=\sum_{\rho=1}^vh_\rho$.
\begin{align*}
    \De(\tj{j}{0})&=\fkd(G_2^{\al})\binom{i+\calx_i-\caly}{\calx_i}\fkd(g_1^{v-1})\binom{i-g_v+\sum_{\rho=1}^vh_\rho-\caly}{\sum_{\rho=1}^vh_\rho}\fkd({g_{v+1}^\omega})\\[.5em]
    &=\binom{i'+\calx_i}{\calx_i}\binom{i'}{g_v}\cdot\fkd(G_2^{\al})\fkd(g_1^{v-1})\fkd({g_{v+1}^\omega})\\[1em]
    \De(\tj{j}{-1})&=\fkd(G_2^{\al})\binom{i-(i-g_v)+\calx_i-1}{\calx_i-1}(i-1-(i-g_v)+1)\\[.5em]
    &\cdot\fkd(g_1^{v-1})\binom{i-g_v+\sum_{\rho=1}^vh_\rho+\calx_i-\caly}{\sum_{\rho=1}^vh_\rho+\calx_i}\fkd({g_{v+1}^\omega})\\[.5em]
    &=\binom{g_v+\calx_i-1}{\calx_i-1}\cdot g_v\cdot\binom{i'+\calx_i}{g_v+\calx_i}\cdot\fkd(G_2^{\al})\fkd(g_1^{v-1})\fkd{g_{v+1}^\omega}\\[1em]
    \De(\tj{j}{1})&=\fkd(G_2^{\al})\binom{i-(i-g_v)+\calx_i-1}{\calx_i-1}(i-2-(i-g_v)+1)\\[.5em]
    &\cdot\fkd(g_1^{v-1})\binom{i-g_v+\sum_{\rho=1}^vh_\rho+\calx_i-\caly}{\sum_{\rho=1}^vh_\rho+\calx_i}\fkd({g_{v+1}^\omega})\\[.5em]
    &=\binom{g_v+\calx_i-1}{\calx_i-1}\cdot (g_v-1)\cdot\binom{i'+\calx_i}{g_v+\calx_i}\cdot\fkd(G_2^{\al})\fkd(g_1^{v-1})\fkd{g_{v+1}^\omega}\\[1em]
    \De(\tu{j}{1})&=\fkd(G_2^{\al})\binom{i-(i-g_v)+\calx_i-2}{\calx_i-2}\binom{i-1-(i-g_v)+\calx_i}{\calx_i}\\[.5em]
    &\cdot\fkd(g_1^{v-1})\binom{i-g_v+\sum_{\rho=1}^vh_\rho+\calx_i-\caly}{\sum_{\rho=1}^vh_\rho+\calx_i}\fkd({g_{v+1}^\omega})\\[.5em]
    &=(\calx_i-1)\cdot\binom{g_v+\calx_i-1}{\calx_i-1}\cdot\binom{i'+\calx_i}{g_v+\calx_i}\cdot\fkd(G_2^{\al})\fkd(g_1^{v-1})\fkd{g_{v+1}^\omega}.
\end{align*}
Comparing the coefficients with the previous two cases, we see that $\De(\tj{j}{0})+\De(\tj{j}{1})+\De(\tu{j}{1})=2\De(\tj{j}{-1})$.
\\

\textbf{Cases ii) and iii)} are special cases of case iv) and we deal with them at the end of the proof.\\

\textbf{Case iv)}

Here $\tij=\{\tau_{(j,0)},\tau_{(j,-1)},\tau_{(j,1)},\dots,\tj{j}{z+1},\tau^{(j,1)},\dots,\tu{j}{x+1}\}$ and $\tj{j}{0}=\tau_2$ with $G_1\ge0$ and $g_1\ge 1$.
Recall from the previous cases that we have a special case when $\fkd(i)\in\De(\tj{j}{0})$ has a different $\caly$ than in other $\tau\in\tij$.
The next set of conditions describes this case.
\begin{itemize}
    \item[1)] If
    \begin{itemize}
        \item[$\bullet$] $g_{z+1}=\sum_{\rho=1}^{z+1}h_\rho$, and
        \item[$\bullet$] $g_v\ge \sum_{\rho=1}^vh_\rho$ for $z+2\le v\le \omega$,
    \end{itemize}
    then the $\caly$ term in $\fkd(i)$ is different in $\De(\tj{j}{0})$ from other $\De(\tj{j}{v}),\De(\tu{j}{s})$.
    \item[2)] Otherwise, either $\caly=i-g_{z+1}$ in $\fkd(i)$ for all $\tau\in\tij$ or $0$ in $\fkd(i)$ for all $\tau\in\tij$.
\end{itemize}
In particular, the only difference in the computations for the two cases is when calculating $\De(\tj{j}{0})$, but the rest of the computations agree.

Before starting the calculations, we need to set up some notation. 
We define
\[
j':=\sum_1^xH_\rho+2 \qquad\text{and} \qquad i':=i-\caly=g_{z+1}\qquad\text{and} \qquad d':=\sum_1^xH_\rho+2+\sum_{v=1}^zh_v.
\]

It is easy to see that if $G_1\ge1$, then
\begin{equation*}
    \De(\tj{j}{-1})=\fkd(G_{1}^\al)\cdot(i'+1)\cdot i' \cdot\fkd(g_1^\omega)
\end{equation*}
and if $G_1=0$, then
\begin{equation*}
    \De(\tj{j}{-1})=\fkd(G_2^\al)\cdot\fkd(G_1\cdot i)\cdot i' \cdot\fkd(g_1^\omega).\tag{$\ast$}\label{eq:t-minusone}
\end{equation*}
where the term $\fkd(G_1 \cdot i)$ takes into account the extra $i$ in position $j-1$.
For shorthand notation, we write $\fkd(G_2^\al)\cdot\fkd(G_1\cdot i)=\fkd(G_1^x\cdot i)$ which ends with $\fkd(G_1)$ if $G_1=0$ and $(i'+1)$ if $G_1\ge1$.

Observe that for $1\le s\le x-1$, if $\tu{j}{s+1},\tu{j}{s+2}\in\tij$ we have
\[
    \fkd(G_s)=\binom{i+G_s-\caly+H_s}{H_s}
\]
and if $\tu{j}{s+1}\in\tij,\tu{j}{s+2}\not\in\tij$, then
\[
    \fkd(G_s^y)=\fkd(G_{s+1}^y)\cdot\binom{i+G_s-\caly+\sum_{\rho=s}^y H_\rho}{\sum_{\rho=s}^y H_\rho}
\]
where $s< y\le x-1$ is minimal such that $\tu{j}{y+1}\not\in\tij$ and $\tu{j}{y+2}\in\tij$, except if $G_1=0$, in which case $\tu{j}{2}\not\in\tij$ and we have that
\[
\fkd(G_1)=\fkd(G_{2}^y)\cdot\binom{i-\caly+\sum_{\rho=s}^y H_\rho+1}{\sum_{\rho=s}^y H_\rho+1}
\]
In fact, $\fkd(G_{s+1}^y)$ is the same in most $\tu{j}{q}\in\tij$, except for one special case, which is going to be considered separately later.
In general, we will not write $\fkd(G_{s+1}^y)$ explicitly.
\begin{align*}
    \fkd(G_{1}^x\cdot i)&=\prod_{\substack{s=1\\ \tu{j}{s+1}\in\tij}}^x\binom{i+G_s-\caly+\calx_s}{\calx_s}\prod_{\substack{s=1\\ \tu{j}{s+1}
    \not\in\tij}}^{x-1}\fkd(G_s)\\[.5em]
    &=\prod_{\substack{s=1\\ \tu{j}{s+1}\in\tij}}^{x}\frac{(i'+G_s+\calx_s)!}{(i'+G_s)!\calx_s!}\prod_{\substack{s=1\\ \tu{j}{s+1}
    \not\in\tij}}^{x-1}\fkd(G_s)\\[.5em]
     &=\prod_{\substack{s=1\\ \tu{j}{s+1}\in\tij}}^{x}\frac{(i'+G_s+\calx_s)(i'+G_s+\calx_s-1)\dots(i'+G_s+1)}{\calx_s!}\prod_{\substack{s=1\\ \tu{j}{s+1}
    \not\in\tij}}^{x-1}\fkd(G_s)\\[.5em]
    &=(i'+j'-1)(i'+j'-2)\dots(i'+1)\cdot\prod_{\substack{s=1\\ \tu{j}{s+1}\in\tij}}^x(\calx_s!)^{-1}\prod_{\substack{s=1\\ \tu{j}{s+1}
    \not\in\tij}}^{x-1}\fkd(G_s)\tag{$\ast\ast$}\label{eq:G-upper}
\end{align*}
where $\calx_s=H_s$ or $\sum_s^yH_\rho$ with $y$ as above and for all $1\le s\le x$ such that $\tu{j}{s+1}\in\tij$, we have that $G_{s+1}=\sum_{\rho=1}^sH_\rho+1$.
(Recall that $i'+1$ either comes from $\fkd(G_{1+i})$ or from the single $i$ in the $j-1$th position.)

Similarly, for $1\le v\le z-1$, if $\tj{j}{v+1},\tj{j}{v+2}\in\tij$, we have
\[
    \fkd(g_v)=\binom{i-g_v-\caly+h_v}{h_v}
\]
and if $\tj{j}{v+1}\in\tij,\tj{j}{v+2}\not\in\tij$, then 
\[
    \fkd(g_v^y)=\fkd(g_{v+1}^y)\cdot\binom{i+v_s-\caly+\sum_v^y h_\rho}{\sum_v^y h_\rho}
\]
where $v< y\le z-1$ is minimal such that $\tj{j}{y+1}\not\in\tij$ and $\tj{j}{y+2}\in\tij$.
The term $\fkd(g_{v+1}^y)$ gives the same coefficient in all $\tau\in\tij$, so we can avoid computing it explicitly. 
Then in $\tj{j}{0},\tj{j}{-1}$ and $\tu{j}{s+1}$ for all $0\le s\le x$, we have
\begin{align*}
    \fkd(g_1^z)&=\prod^z_{\substack{v=1\\ \tj{j}{v+1}\in\tij}}\binom{i-g_v-\caly+\calx_v}{\calx_v}\prod_{\substack{v=1\\ \tj{j}{v+1}
    \not\in\tij}}^{z-1}\fkd(g_v)\\[.5em]
    &=\prod^z_{\substack{v=1\\ \tj{j}{v+1}\in\tij}}\frac{(i'-g_v+\calx_v)!}{(i'-g_v)!\calx_v!}\prod_{\substack{v=1\\ \tj{j}{v+1}
    \not\in\tij}}^{z-1}\fkd(g_v)\\[.5em]
    &=\prod^z_{\substack{v=1\\ \tj{j}{v+1}\in\tij}}(i'-g_v+\calx_v)(i'-g_v+\calx_v-1)\dots(i'-g_v+1)\\[.5em]
    &\cdot\prod^z_{\substack{v=1\\ \tj{j}{v+1}\in\tij}}(\calx_v!)^{-1}\prod_{\substack{v=1\\ \tj{j}{v+1}
    \not\in\tij}}^{z-1}\fkd(g_v)\\[.5em]
    &=\frac{(i'-1)!}{(i'-g_{z})!}\prod^z_{\substack{v=1\\ \tj{j}{v+1}\in\tij}}(\calx_v!)^{-1}\prod_{\substack{v=1\\ \tj{j}{v+1}
    \not\in\tij}}^{z-1}\fkd(g_v)\tag{$\ast\ast\ast$}\label{eq:g-lower}
\end{align*}
where $\calx_v=h_v$ or $\sum_{\rho=y}^vh_\rho$.

Hence substituting \cref{eq:G-upper,eq:g-lower} into \cref{eq:t-minusone}, we obtain
\begin{align*}
    \De(\tj{j}{-1})&=\fkd(G_{x+1}^\al)\fkd(G_{1}^x\cdot i)\cdot i'\cdot \fkd(g_1^z)\fkd(g_{z+1}^\omega)\\[.5em]
    &=\frac{i'!}{(i'-g_z)!}\cdot\prod_{\substack{s=1\\ \tu{j}{s+1}\in\tij}}^x(\calx_s!)^{-1}\prod_{\substack{s=1\\ \tu{j}{s+1}
    \not\in\tij}}^{x-1}\fkd(G_s)\\[.5em]
    &\cdot\prod^z_{\substack{v=1\\ \tj{j}{v+1}\in\tij}}(\calx_v!)^{-1}\prod_{\substack{v=1\\ \tj{j}{v+1}
    \not\in\tij}}^{z-1}\fkd(g_v).\tag{$\star$}\label{eq:t-}
\end{align*}

As $g_z=\sum_{v=1}^zh_v+1$, we see that
\begin{align*}
    i'-g_{z}&=i'-\Big(\sum_{v=1}^zh_v+1\Big)\\[.5em]
    &=i'+\Big(\sum_{s=1}^xH_s+2\Big)-\Bigg(\Big(\sum_{s=1}^xH_s+2\Big)+\Big(\sum_{v=1}^zh_v\Big)\Bigg)-1\\[.5em]
    &=i'+j'-d'-1.
\end{align*}

As mentioned above, for $\tj{j}{0}$, we have two cases to consider.
For shorthand notation, we write
\[
\cala:=\prod^{x-1}_{\substack{s=1\\ \tu{j}{s+1}\not\in\tij}}\fkd(G_s)
\]
\begin{itemize}
    \item[a)] If we are in case 1), then $g_{z+1}=\sum_{\rho=1}^{z+1}h_\rho$.
    \begin{align*}
        \De(\tj{j}{0})&=\fkd(G_{x+1}^\al)\prod^x_{\substack{s=1\\ \tu{j}{s+1}\in\tij}}\binom{i+G_s-i+\calx_s}{\calx_s}\cdot\binom{i-\caly+\sum_1^xH_s+2}{\sum^x_1H_s+2 }\\[.5em]
        &\cdot\fkd(g_1^z)\cdot\binom{i-g_{z+1}-\caly+\sum^{z+1}_{\rho=1}h_\rho}{\sum^{z+1}_{\rho=1}h_\rho}\cdot\fkd(g_{z+2}^\omega)\cdot\cala\\[.5em]
        &=\prod^x_{\substack{s=1\\ \tu{j}{s+1}\in\tij}}\binom{G_s+\calx_s}{\calx_s}\cdot\binom{i'+j'}{j'}\binom{i'}{g_{z+1}}\cdot\fkd(G_{x+1}^\al)\fkd(g_1^z)\fkd(g_{z+2}^\omega)\cdot\cala\\[.5em]
        &=\prod^x_{\substack{s=1\\ \tu{j}{s+1}\in\tij}}(G_s+H_s)(G_s+H_s-1)\dots(G_s+1)\cdot\prod^x_{\substack{s=1\\ \tu{j}{s+1}\in\tij}}(\calx_s!)^{-1}\\[.5em]
        &\cdot\frac{(i'+j')!}{i'!j'!}\cdot\frac{i'!}{g_{z+1}!\cdot(i-g_{z+1})!}\cdot\fkd(G_{x+1}^\al)\fkd(g_1^z)\fkd(g_{z+2}^\omega)\cdot\cala\\[.5em]
        &=(\sum_1^xH_s+1)(\sum_1^xH_s)\dots(G_1+1)\cdot\frac{(i'+j')!}{i'!j'!}\\[.5em]
        &\cdot\frac{i'!}{g_{z+1}!\cdot(i-g_{z+1})!}\cdot\prod^x_{\substack{s=1\\ \tu{j}{s+1}\in\tij}}(\calx_s!)^{-1}\cdot\fkd(G_{x+1}^\al)\fkd(g_1^z)\fkd(g_{z+2}^\omega)\cdot\cala\\[.5em]
        &=(j'-1)!\cdot\frac{(i'+j')!}{i'!j'!}\cdot\frac{i'!}{g_{z+1}!\cdot(i-g_{z+1})!}\cdot\prod^x_{\substack{s=1\\ \tu{j}{s+1}\in\tij}}(\calx_s!)^{-1}\\[.5em]
        &\cdot\fkd(G_{x+1}^\al)\fkd(g_1^z)\fkd(g_{z+2}^\omega)\cdot\cala\\[.5em]
        &=\frac{(i'+j')!}{j'\cdot g_{z+1}!(i'-g_{z+1})!}\cdot\fkd(G_{x+1}^\al)\fkd(g_1^z)\fkd(g_{z+2}^\omega)\cdot\cala.
    \end{align*}

    In this case we also see that
    \begin{align*}
        \De(\tj{j}{-1})&=\fkd(G_2^\al)\binom{i-(i-g_{z+1})+j'-1}{j'-1}\cdot(i-1-(i-g_{z+1})+1)\\[.5em]
        &\cdot\fkd(g_1^z)\cdot\binom{i-g_{z+1}-\caly+\sum^{z+1}_{\rho=1}h_\rho+j'}{\sum^{z+1}_{\rho=1}h_\rho+j'}\cdot\fkd(g_{z+2}^\omega)\\[.5em]
        &=\binom{j'-1+g_{z+1}}{j'-1}\cdot g_{z+1}\cdot\binom{i'+j'}{g_{z+1}+j'}\cdot\fkd(G_1^\al)\fkd(g_1^z)\fkd(g_{z+2}^\omega)\\[.5em]
        &=\frac{(j'-1+g_{z+1})!}{(j'-1)!\cdot g_{z+1}!}\cdot g_{z+1}\cdot\frac{(i'+j')!}{(i'-g_{z+1})!(g_{z+1}+j')!}\cdot\fkd(G_1^\al)\fkd(g_1^z)\fkd(g_{z+2}^\omega)\\[.5em]
        &=\frac{g_{z+1}}{(j'-1)!\cdot g_{z+1}!}\cdot\frac{(i'+j')!}{(i'-g_{z+1})!(g_{z+1}+j')}\cdot\fkd(G_1^\al)\fkd(g_1^z)\fkd(g_{z+2}^\omega).
    \end{align*}
    \item[b)] If we are in case 2), we have the following.
    (Note that if $G_1=0$, then $\tu{j}{2}\not\in\tij$.)
    \begin{align*}
        \De(\tj{j}{0})&=\fkd(G_{x+1}^\al)\prod^x_{\substack{s=1\\ \tu{j}{s+1}\in\tij}}\binom{i+G_s-i+\calx_s}{\calx_s}\cdot\binom{i-\caly+\sum_1^xH_s+2}{\sum^x_1H_s+2 }\cdot\fkd(g_1^\omega)\cdot\cala\\[.5em]
        &=\prod^x_{\substack{s=1\\ \tu{j}{s+1}\in\tij}}\binom{G_s+\calx_s}{\calx_s}\cdot\binom{i'+j'}{j'}\cdot\fkd(G_{x+1}^\al)\fkd(g_1^\omega)\cdot\cala\\[.5em]
        &=\prod^x_{\substack{s=1\\ \tu{j}{s+1}\in\tij}}(G_s+H_s)(G_s+H_s-1)\dots(G_s+1)\\[.5em]
        &\cdot\prod^x_{\substack{s=1\\ \tu{j}{s+1}\in\tij}}(\calx_s!)^{-1}\cdot\frac{(i'+j')!}{i'!j'!}\cdot\fkd(G_{x+1}^\al)\fkd(g_1^\omega)\cdot\cala\\[.5em]
        &=(\sum_1^xH_s+1)(\sum_1^xH_s)\dots(G_1+1)\frac{(i'+j')!}{i'!j'!}\\[.5em]
        &\cdot\prod^x_{\substack{s=1\\ \tu{j}{s+1}\in\tij}}(\calx_s!)^{-1}\cdot\fkd(G_{x+1}^\al)\fkd(g_1^\omega)\cdot\cala\\[.5em]
        &=(j'-1)!\cdot\frac{(i'+j')!}{i'!j'!}\cdot\prod^x_{\substack{s=1\\ \tu{j}{s+1}\in\tij}}(\calx_s!)^{-1}\cdot\fkd(G_{x+1}^\al)\fkd(g_1^\omega)\cdot\cala
    \end{align*}
As $i'=g_{z+1}$, we see that 
\begin{equation*}
    \De(\tj{j}{0})=\frac{i'+j'}{i'\cdot j'}\cdot\De(\tj{j}{-1}), \tag{1}\label{eq:t-zero}
\end{equation*}
in both cases.
\end{itemize}

Next, we look at $\De(\tj{j}{v+1})$ with $0\le v\le z$.
For $\tj{j}{1}$, we need to consider four cases.
\begin{itemize}
    \item[a)] If $h_1=1$ and $\tj{j}{2}\in\tij$, then $g_1=2$, so we have that
    \begin{align*}
        \De(\tj{j}{1})&=\fkd(G_{1}^\al\cdot i)\binom{(i-2)-\caly+2}{2}\fkd(g_2^z)\\[.5em]
        &=\binom{i'}{2}\fkd(G_{1}^\al\cdot i)\fkd(g_2^z)\\[.5em]
        &=\frac{1}{2}\cdot\De(\tj{j}{-1})=\frac{h_1}{g_1}\cdot\De(\tj{j}{-1}).
    \end{align*}
    \item[b)] If $h_1>1$ and $\tj{j}{2}\in\tij$, then $g_1=h_1+1$, so we have that
    \begin{align*}
        \De(\tj{j}{1})&=\fkd(G_1^\al\cdot i)(i-2-(i-g_1)+1)\binom{(i-g_1)-\caly+h_1+1}{h_1+1}\fkd(g_2^z)\\[.5em]
        &=(g_1-1)\binom{i'-g_1+h_1+1}{h_1+1}\cdot\fkd(G_1^\al\cdot i)\fkd(g_2^\omega)\\[.5em]
        &=(g_1-1)\cdot\binom{i'}{g_1}\cdot\fkd(G_1^\al\cdot i)\fkd(g_2^\omega)\\[.5em]
        &=(g_1-1)\cdot\frac{i'!}{(g_1)!(i'-g_1)!}\cdot\fkd(G_1^\al\cdot i)\fkd(g_2^\omega)\\[.5em]
        &=(g_1-1)\cdot\frac{i'\cdot(i'-1)!}{(g_1)\cdot(g_1-1)!(i'-g_1)!}\cdot\fkd(G_1^\al\cdot i)\fkd(g_2^\omega)\\[.5em]
        &=\frac{h_1}{g_1}\cdot\De(\tj{j}{-1}),
    \end{align*}
    as $\binom{i'-1}{g_1-1}=\fkd(g_1)$.
    
    \item[c)] If $z=0$ then $\tj{j}{1}$ is the highest index $\tj{j}{v}\in\tij$.
    \begin{align*}
        \De(\tj{j}{p+1})&=\fkd(G_1^\al\cdot i)((i-2)-\caly+1)\fkd(g_{z+1}^\omega)\\[.5em]
        &=(i'-1)\cdot\fkd(G_1^\al\cdot i)\fkd(g_{z+1}^\omega)\\[.5em]
        &=\frac{i'-1}{i'}\cdot\De(\tj{j}{-1}).
    \end{align*}
    
    \item[d)] If $\tj{j}{2}\not\in\tij$, let $2 \le y<z$ be minimal such that $\tj{j}{y}\not\in\tij$ and $\tj{j}{y+1}\in\tij$.
     \begin{align*}
        \De(\tj{j}{1})&=\fkd(G_1^\al\cdot i)(i-2-(i-g_y)+1)\fkd(g_1^{y-1})\binom{(i-g_y)-\caly+\sum_{\rho=1}^yh_\rho+1}{\sum_{\rho=1}^yh_\rho+1}\fkd(g_{y+1}^\omega)\\[.5em]
        &=(g_y-1)\binom{i'-g_y+\sum_{\rho=1}^yh_\rho+1}{\sum_{\rho=1}^yh_\rho+1}\cdot\fkd(G_1^\al\cdot i)\fkd(g_1^{y-1})\fkd(g_{y+1}^\omega)\\[.5em]
        &=(g_y-1)\cdot\binom{i'}{g_y}\cdot\fkd(G_1^\al\cdot i)\fkd(g_1^{y-1})\fkd(g_{y+1}^\omega)\\[.5em]
        &=(g_y-1)\cdot\frac{i'!}{(g_y)!(i'-g_y)!}\cdot\fkd(G_1^\al\cdot i)\fkd(g_1^{y-1})\fkd(g_{y+1}^\omega)\\[.5em]
        &=(g_y-1)\cdot\frac{i'!}{(g_y)\cdot(g_y-1)!(i'-g_y)!}\cdot\fkd(G_1^\al\cdot i)\fkd(g_1^{y-1})\fkd(g_{y+1}^\omega)\\[.5em]
        &=\frac{\sum_{\rho=1}^{y}h_\rho}{g_y}\cdot\De(\tj{j}{-1}).
    \end{align*}
\end{itemize}

Next, we consider $\De(\tj{j}{v+1})$ for $1\le v<z$.
\begin{align*}
    \De(\tj{j}{v+1})&=\fkd(G_1^\al \cdot i)\cdot \prod_{\substack{w=1\\ \tj{j}{w+1}\in\tij}}^{v-1}\binom{(i-(g_w+1)-(i-(g_{v}+1))+\calx_w}{\calx_w}\\[.5em]
    &\cdot\binom{(i-(g_v+1)-(i-g_{v+1})+\sum^{v}_{\rho=1}h_\rho+1}{\sum^{v}_{\rho=1}h_\rho+1}\\[.5em]
    &\cdot\binom{(i-g_{v+1})-\caly+\sum^{v+1}_{\rho=1}h_\rho+1}{\sum^{v+1}_{\rho=1}h_\rho+1}\cdot \fkd(g_{v+2}^\omega)\cdot \prod_{\substack{w=1\\ \tj{j}{w+1}
    \not\in\tij}}^{z-1}\fkd(g_w)\\[.5em]
    &=\bigg(\prod_{\substack{w=1\\ \tj{j}{w+1}\in\tij}}^{v-1}\binom{g_v-g_w+\calx_w}{\calx_w}\bigg)\cdot\frac{(g_{v+1}-1)!}{g_v!(h_{v+1}-1)!}\\[.5em]
    &\cdot\frac{i'!}{(g_{v+1})!(i'-g_{v+1})!}\cdot\fkd(G_1^\al\cdot i)\fkd(g_{v+2}^\omega)\cdot \prod_{\substack{w=1\\ \tj{j}{w+1}
    \not\in\tij}}^{z-1}\fkd(g_w)\\[.5em]
    &=\bigg(\prod_{\substack{w=1\\ \tj{j}{w+1}\in\tij}}^{v-1}\frac{(g_v-g_w+\calx_w)!}{\calx_w!(g_v-g_w)!}\bigg)\cdot\frac{h_{v+1}}{g_v!}\\[.5em]
    &\cdot\frac{i'\cdot(i'-1)\dots(i'-g_{v+1}+1)}{g_{v+1}}\cdot\fkd(G_1^\al\cdot i)\fkd(g_{v+2}^\omega)\cdot \prod_{\substack{w=1\\ \tj{j}{w+1}
    \not\in\tij}}^{z-1}\fkd(g_w)\\[.5em]
    &=\bigg(\prod_1^{v-1}(g_v-g_w+\calx_w)(g_v-g_w-\calx_w-1)\dots(g_v-g_w+1)\bigg)\\[.5em]
    &\cdot\frac{h_{k+1}}{g_k!}\cdot\frac{1}{g_{k+1}}\cdot\De(\tj{j}{-1})\\[.5em]
    &=(g_v-1)!\cdot\frac{h_{v+1}}{g_v!}\cdot\frac{1}{g_{v+1}}\cdot\De(\tj{j}{-1})=\frac{h_{v+1}}{g_v\cdot g_{v+1}}\cdot\De(\tj{j}{-1})
\end{align*}

Note that if $g_v+1=g_{v+1}$, then we must have that $h_{v+1}=1$ and the third term in the first equality equals one.
If $\tj{j}{v+2}\not\in\tij$, we have to replace the third and fourth terms in the first equality with
\[
\binom{(i-(g_v+1)-(i-g_{y})+\sum^{y}_{\rho=1}h_\rho+1}{\sum^{y}_{\rho=1}h_\rho+1}\binom{(i-g_{y})-\caly+\sum^{y}_{\rho=1}h_\rho+1}{\sum^{y}_{\rho=1}h_\rho+1}\cdot\fkd(g_{v+2}^\omega)
\]
where $v< y\le z$ is minimal such that $\tj{j}{y+1}\not\in\tij$ and $\tj{j}{y+2}\in\tij$, which gives 
\[
\De(\tj{j}{v+1})=\frac{\sum_{\rho=v+1}^yh_{\rho}}{g_v\cdot g_{y}}\cdot\De(\tj{j}{-1})=\frac{\sum_{\rho=v+1}^yh_{\rho}}{g_v\cdot (g_v+\sum_{\rho=v+1}^yh_{\rho})}\cdot\De(\tj{j}{-1})
\]
but the computation is essentially the same (see also case d) of $\De(\tj{j}{1})$).

\textbf{\underline{Claim}}

    We will show that
    \[
    \frac{m}{r(r+m)}=\sum_{y=r}^{r+m-1}\frac{1}{y(y+1)}.
    \]

    We will use induction on $m$.
    If $m=1$, then the claim is clearly true.
    So assume that the statement holds for $k<m$.
Then
\begin{align*}
    \sum_{y=r}^{r+m-1}\frac{1}{y(y+1)}&=\sum_{k=r}^{r+m-2}\frac{1}{k(k+1)}+\frac{1}{(r+m-1)(r+m)}\\[.5em]
    &=\underbrace{\frac{m-1}{r(r+m-1)}}_{\text{by induction}}+\frac{1}{(r+m-1)(r+m)}\\[.5em]
    &=\frac{(m-1)(r+m)+r}{r(r+m-1)(r+m)}\\[.5em]
    &=\frac{mr+m^2-m}{r(r+m-1)(r+m)}\\[.5em]
    &=\frac{m(r+m-1)}{r(r+m-1)(r+m)}\\[.5em]
    &=\frac{m}{r(r+m)}\,.
\end{align*}

Setting $g_0=1$ and using the above result, we see that
\begin{align*}
    \sum_{\substack{v=0\\ \tj{j}{v+1}\in\tij}}^{z-1}\De(\tj{j}{v+1})&=\sum_{\substack{v=0\\ \tj{j}{v+1}\in\tij}}^{z-1}\frac{\calx_v}{g_{v}\cdot(g_{v}+\calx_v)}\cdot\De(\tj{j}{-1})\\[.5em]
    &=\sum_{\substack{v=0\\ \tj{j}{v+1}\in\tij}}^{z-1}\sum_{y=g_v}^{g_v+\calx_v-1}\frac{1}{y(y+1)}\cdot\De(\tj{j}{-1})\\[.5em]
    &=\sum_{y=1}^{d'-j'}\frac{1}{y(y+1)}\cdot\De(\tj{j}{-1})\\[.5em]
    &=\bigg(1-\frac{1}{d'-j'+1}\bigg)\cdot\De(\tj{j}{-1}).\tag{2}\label{eq:t-lower}
\end{align*}

Next, we consider $\tj{j}{z+1}$ (if $z=0$, this has already been dealt with above).
Recall that if $\tj{j}{v+1}\in\tij$, then $g_v=\sum_{\rho=1}^vh_\rho+1$.
\begin{align*}
    \De(\tj{j}{z+1})&=\fkd(G_1^\al\cdot i)\cdot\prod_{\substack{{v=1}\\ \tj{j}{v+1}\in\tij}}^{z-1}\binom{(i-(g_v+1)-(i-(g_z+1))+\calx_v}{\calx_v}\\[.5em]
    &\cdot\binom{(i-(g_{z}+1))-\caly+\sum^{z}_{\rho=1}h_\rho+1}{\sum^{z}_{\rho=1}h_\rho+1}\fkd(g_{z+1}^\omega)\cdot\prod_{\substack{v=1\\ \tj{j}{v+1}\not\in\tij}}^{z-1}\fkd(g_v)\\[.5em]
    &=\prod_{\substack{{v=1}\\ \tj{j}{v+1}\in\tij}}^{z-1}\binom{g_{z}-g_v+\calx_v}{\calx_v}\cdot\binom{i'-1}{g_z}\cdot\fkd(G_1^\al\cdot i)\fkd(g_{z+1}^\omega)\cdot\prod_{\substack{v=1\\ \tj{j}{v+1}\not\in\tij}}^{z-1}\fkd(g_v)\\[.5em]
    &=(g_z-g_1+\calx_1)(g_z-g_1+\calx_1-1)\dots(g_z-g_{z-1}+1)\cdot\frac{(i'-1)!}{g_z!(i'-g_z-1)!}\\[.5em]
    &\cdot\prod_{\substack{{v=1}\\ \tj{j}{v+1}\in\tij}}^{z-1}(\calx_v!)^{-1}\cdot\fkd(G_1^\al\cdot i)\fkd(g_{z+1}^\omega)\cdot\prod_{\substack{v=1\\ \tj{j}{v+1}\not\in\tij}}^{z-1}\fkd(g_v)\\[.5em]
    &=(g_z-1)!\cdot\frac{(i'-1)!}{g_z!(i'-g_z-1)!}\cdot\prod_{\substack{{v=1}\\ \tj{j}{v+1}\in\tij}}^{z-1}(\calx_v!)^{-1}\\[.5em]
    &\cdot\fkd(G_1^\al\cdot i)\fkd(g_{z+1}^\omega)\cdot\prod_{\substack{v=1\\ \tj{j}{v+1}\not\in\tij}}^{z-1}\fkd(g_v)\\[.5em]
    &=(g_z-1)!\cdot\frac{(i'-1)(i'-2)\dots(i'-g_z)}{g_z!}\cdot\prod_{\substack{{v=1}\\ \tj{j}{v+1}\in\tij}}^{z-1}(\calx_v!)^{-1}\\[.5em]
    &\cdot\fkd(G_1^\al\cdot i)\fkd(g_{z+1}^\omega)\prod_{\substack{v=1\\ \tj{j}{v+1}\not\in\tij}}^{z-1}\fkd(g_v)\cdot\prod_{\substack{{v=1}\\ \tj{j}{v+1}\in\tij}}^{z-1}(\calx_v!)^{-1}\\[.5em]
    &=\frac{(i'-1)(i'-2)\dots(i'-(d'-j'+1))}{g_z}\cdot\prod_{\substack{{v=1}\\ \tj{j}{v+1}\in\tij}}^{z-1}(\calx_v!)^{-1}\\[.5em]
    &\cdot\fkd(G_1^\al\cdot i)\fkd(g_{z+1}^\omega)\prod_{\substack{v=1\\ \tj{j}{v+1}\not\in\tij}}^{z-1}\fkd(g_v)
\end{align*}
Comparing this with \cref{eq:t-}, we see that
\begin{equation*}
   \De(\tj{j}{z+1})=\frac{i'+j'-d'-1}{i'(d'-j'+1)}\cdot\De(\tj{j}{-1}).\tag{3}\label{eq:t-last} 
\end{equation*}

Finally, we look at $\De(\tu{j}{s+1})$ with $0\le s\le x$.
In the following, we must have that $\tj{j}{0}=\tau_2$.
We first consider $\tu{j}{1}$.
\begin{enumerate}
    \item[a)] If $G_1=1$, then $\tu{j}{2}\in\tij$ and we have that
\begin{align*}
    \De(\tu{j}{1}) &=\fkd(G_1^\al)\cdot\binom{(i-1)-\caly+2}{2}\cdot\fkd(g_1^\omega)\\[.5em]
    &=\frac{(i'+1)\cdot i'}{2}\cdot\fkd(G_1^\al)\fkd(g_1^\omega)=\frac{1}{2}\cdot\De(\tj{j}{-1}).
\end{align*}
    \item[b)] Assume that $x\neq0$.
    If $G_1=0$, then $\tu{j}{2}\not\in\tij$ and let $2 \le y<x$ be minimal such that $\tu{j}{y}\not\in\tij$ and $\tu{j}{y+1}\in\tij$.
     We have three cases to consider:
     \begin{itemize}
         \item[$\bullet$] If $G_2<H_1$, we have that
            \begin{align*}
        \De(\tu{j}{1})&=\fkd(G_2^\al)\cdot\binom{i-(i-1)+\sum_{\rho=1}^{y-1}H_\rho}{\sum_{\rho=1}^{y-1}H_\rho}\binom{i-1-\caly+\sum_{\rho=1}^{y-1}H_\rho+2}{\sum_{\rho=1}^{y-1}H_\rho+2}\cdot\fkd(g_1^\omega)\\[.5em]
        &=(\sum_{\rho=1}^{y-1}H_\rho+1)\cdot\binom{i'+1+\sum_{\rho=1}^{y-1}H_\rho}{\sum_{\rho=1}^{y-1}H_\rho+2}\cdot\fkd(G_2^\al)\fkd(g_1^\omega)\\[.5em]
        &=\frac{\sum_{\rho=1}^{y-1}H_\rho+1}{(\sum_{\rho=1}^{y-1}H_\rho+2)}\cdot\De(\tj{j}{-1})=\frac{G_y}{(G_1+1)\cdot(G_y+1)}\cdot\De(\tj{j}{-1}).
            \end{align*}
        \item[$\bullet$] If $G_2-G_1=H_1$ (i.e.\ $G_2=H_1$).
        In this case, we only need to consider $G_{y'}=G_{y'-1}+H_{y'-1}$ for all $2\le y'\le y-1$ (that is the second term in the first row above is different), otherwise there exists some $2\le y'\le y-1$ such that $G_{y'}-G_1<\sum_{\rho=1}^{y'-1}H_\rho-1$ (i.e.\ $G_{y'}<\sum_{\rho=1}^{y'-1}H_\rho-1$).
        We have that
        \begin{align*}
            \De(\tj{j}{-1})&=\fkd(G_y^\al)\prod_{y'=2}^{y-1}\binom{i+G_{y'}-i+H_{y'}}{H_{y'}}\\[.5em]
            &\cdot\binom{i'+\sum_{\rho=1}^{y-1}H_{\rho}+1}{\sum_{\rho=1}^{y-1}H_{\rho}+1}\cdot ((i-1)-\caly+1)\cdot\fkd(g_1^\omega)\\[.5em]
            &=\prod_{y'=2}^{y-1}\binom{G_{y'}+H_{y'}}{H_{y'}}\binom{i'+G_y}{G_y}\cdot i'\cdot\fkd(G_y^\al)\fkd(g_1^\omega)\\[.5em]
            &=(G_{y-1}+H_{y-1})(G_{y-1}+H_{y-1}-1)\dots(H_1+1)\\[.5em]
            &\cdot\prod_{y'=2}^{y-1}(H_{y'}!)^{-1}\cdot i'\cdot\fkd(G_y^\al)\fkd(g_1^\omega)\\[.5em]
            &=(G_y-1)!\prod_{y'=2}^{y-1}(H_{y'}!)^{-1}\cdot i'\cdot\fkd(G_y^\al)\fkd(g_1^\omega)
        \end{align*}
        and
        \begin{align*}
            \De(\tu{j}{1})&=\fkd(G_y^\al)\prod_{y'=1}^{y-1}\binom{i+G_{y'}-(i-1)+H_{y'}}{H_{y'}}\cdot\binom{i'-1+\sum_{\rho=1}^{y-1}H_{\rho}+2}{\sum_{\rho=1}^{y-1}H_{\rho}+2}\cdot\fkd(g_1^\omega)\\[.5em]
            &=\prod_{y'=1}^{y-1}\binom{G_{y'}+H_{y'}+1}{H_{y'}}\cdot \binom{i'+G_y}{G_y+1}\cdot\fkd(G_y^\al)\fkd(g_1^\omega)\\[.5em]
            &=(G_{y-1}+H_{y-1}+1)(G_{y-1}+H_{y-1})\dots(H_1+2)\\[.5em]
            &\cdot\prod_{y'=1}^{y-1}(H_{y'}!)^{-1}\cdot (H_1+1)\binom{i'+G_y}{G_y+1}\cdot\fkd(G_y^\al)\fkd(g_1^\omega)\\[.5em]
            &=(G_y)!\prod_{y'=1}^{y-1}(H_{y'}!)^{-1}\cdot\frac{i'}{G_y+1}\cdot\fkd(G_y^\al)\fkd(g_1^\omega).
        \end{align*}
     So
     \[
        \De(\tj{j}{-1})=\frac{G_y}{(G_1+1)\cdot (G_y+1)}\cdot\De(\tj{j}{-1}).
     \]
     \item[$\bullet$] If $G_2-G_1-1>H_1-1$, then $x=0$.
     We consider this in case c).
      \end{itemize}
    \item[c)] If $x=0$, then $\tu{j}{1}$ is the highest index $\tu{j}{s}\in\tij$ and there are no more blue nodes above row $j-1$ (see \cref{eg:taus}).
    We have three cases to consider.
    \begin{itemize}
        \item[$\bullet$] If $G_1\ge2$, then row $j-2$ already crosses the blue line.
        It is easy to see that the computation is the same as in case a).
        \item[$\bullet$] If $G_{s+1}\le\sum_{\rho=1}^sH_\rho$ for all $1\le s\le \al-1$, then the rows above row $j-1$ never touch the blue line.
        We have that
        \begin{align*}
            \De(\tu{j}{1})&=\fkd(G_2^\al)\binom{i-(i-1)+\sum_{s=1}^\al H_s}{\sum_{s=1}^\al H_s}\binom{i-1-\caly+\sum_{s=1}^\al H_s+2}{\sum_{s=1}^\al H_s+2}\fkd(g_1^\omega)\\[.5em]
            &=(\sum_{s=1}^\al H_s+1)\cdot\binom{i'+1+\sum_{s=1}^\al H_s}{\sum_{s=1}^\al H_s+2}\cdot\fkd(G_2^\al)\fkd(g_1^\omega)\\[1em]
            \De(\tj{j}{-1})&=\fkd(G_2^\al)\binom{i-(i-1)+\sum_{s=1}^\al H_s+1}{\sum_{s=1}^\al H_s+1}(i-1-\caly+1)\cdot\fkd(g_1^\omega)\\[.5em]
            &=\binom{i'+1+\sum_{s=1}^\al H_s}{\sum_{s=1}^\al H_s+1}\cdot i' \cdot\fkd(G_2^\al)\fkd(g_1^\omega).
        \end{align*}
        In this case we also see that instead of $j'$ we have $\sum_{\rho=1}^\al H_\rho$ appearing in all the computations above, so abusing notation, we will write $j'$ for this sum.
        Hence
        \[
        \De(\tj{j}{0})=\frac{j'-1}{j'}\De(\tj{j}{-1})=\Big(1-\frac{1}{j'}\Big)\De(\tj{j}{-1}).
        \]
         \item[$\bullet$] If $G_{s+1}\le\sum_{\rho=1}^sH_\rho$ for some minimal $1\le s< y$ such that $G_{y+1}>\sum_{\rho=1}^{y}H_\rho$, then replacing $\sum_{\rho=1}^{\al-1}H_\rho$ with $\sum_{\rho=1}^{y}H_\rho$ in the previous case gives the same result.
    \end{itemize}
\end{enumerate}

Finally, we look at $\tu{j}{s+1}$ for $1\le s\le x$.
There are two cases to consider.
\begin{itemize}
    \item[a)] If $H_s\ge1$ and $\tu{j}{s+2}\in\tij$, we have that
\begin{align*}
    \De(\tu{j}{s+1})&=\fkd(G_{s+1}^\al)\cdot\binom{i+G_{s}-(i-1)+H_{s}-1}{H_{s}-1}\\[.5em]
    &\cdot\prod_1^{s-1}\binom{i+G_{q}-1-(i-1)+H_q}{H_q}\cdot\binom{i-1-\caly+\sum_1^sH_q+2}{\sum_1^sH_q+2}\cdot\fkd(g_1^\omega)\\[.5em]
    &=\binom{G_s+H_s}{H_s-1}\cdot\prod_1^{s-1}\binom{G_{q}+H_q}{H_q}\cdot\binom{i'+G_{s+1}}{G_{s+1}+1}\cdot\fkd(G_{s+1}^\al)\fkd(g_1^\omega)\\[.5em]
    &=\frac{G_{s+1}!}{(H_s-1)!(G_s+1)!}\cdot\prod_1^{s-1}\frac{(G_{q}+H_q)!}{H_q!G_q!}\cdot\frac{(i'+G_{s+1})!}{(G_{s+1}+1)!(i'-1)!}\cdot\fkd(G_{s+1}^\al)\fkd(g_1^\omega)\\[.5em]
    &=\frac{G_{s+1}!\cdot H_s}{(H_s)!(G_s+1)!}\cdot\prod_1^{s-1}\frac{(G_{q}+H_q)(G_{q}+H_q-1)\dots(G_q+1)}{H_q!}\\[.5em]
    &\cdot\frac{(i'+G_{s+1})(i'+G_{s+1}-1)\dots i'}{(G_{s+1}+1)!}\cdot\fkd(G_{s+1}^\al)\fkd(g_1^\omega)\\[.5em]
    &=\frac{G_{s+1}!\cdot H_s}{(H_s)!(G_s+1)!}\frac{(G_s)!}{\prod_1^{s-1}H_q!}\cdot\frac{(i'+G_{s+1})(i'+G_{s+1}-1)\dots i'}{(G_{s+1}+1)!}\cdot\fkd(G_{s+1}^\al)\fkd(g_1^\omega)\\[.5em]
    &=\frac{H_s\cdot G_{s+1}!G_s!}{(G_s+1)!(G_{s+1}+1)!}\De(\tj{j}{-1})\\[.5em]
    &=\frac{H_s}{(G_s+1)(G_{s+1}+1)}\De(\tj{j}{-1})=\frac{H_s}{(G_s+1)(G_s+H_s+1)}\De(\tj{j}{-1}).
\end{align*}

    \item[b)] If $H_s>1$ and $\tu{j}{s+2}\not\in\tij$.
     Let $s+2 \le y<x$ be minimal such that $\tj{j}{y}\not\in\tij$ and $\tj{j}{y+1}\in\tij$.
    The proof is analogous to the proof of case b) in $\De(\tu{j}{1})$.
    In this case,
    \[
    \De(\tj{j}{s+1})=\frac{\sum_{\rho=s}^{y-1}H_\rho}{(G_s+1)\cdot (G_y+1)}
    \]
\end{itemize}

Hence
\begin{align*}
    \sum_0^{x}\De(\tu{j}{s})&=\sum_{1}^{j'-1}\frac{1}{r(r+1)}\cdot\De(\tj{j}{-1})\\[.5em]
    &=\big(1-\frac{1}{j'}\big)\cdot\De(\tj{j}{-1}).\tag{4}\label{eq:t-higher}
\end{align*}

Combining our results from \cref{eq:t-zero,eq:t-lower,eq:t-last,eq:t-higher}, we have that
\begin{align*}
    &\De(\tj{j}{0})+\sum_0^{z}\De(\tj{j}{v+1})+\De(\tj{j}{z+1})+\sum_0^{x}\De(\tu{j}{s+1})\\[.5em]
    &=\frac{i'+j'}{i'j'}\cdot\De(\tj{j}{-1})+\big(1-\frac{1}{d'-j'+1}\big)\cdot\De(\tj{j}{-1})\\[.5em]
    &+\frac{i'+j'-d'-1}{i'(d'-j'+1)}\cdot\De(\tj{j}{-1})+\big(1-\frac{1}{j'}\big)\cdot\De(\tj{j}{-1})\\[.5em]
    &=2\De(\tj{j}{-1}).
\end{align*}
    
\textbf{Case ii)}

Here $\tij=\{\tau_{(j,0)},\tau_{(j,-1)},\tau_{(j,1)},\dots,\tj{j}{z+1}\}$.
We have that $\tj{j}{0}=\tau_1,G_1=1$ and $g_1\ge2$.
We only need to consider $\De(\tj{j}{0})$, all other $\De(\tj{j}{v+1})$ for $0\le v\le z$ can be calculated as before.
\begin{itemize}
    \item[a)] If $g_{z+1}=\sum_{\rho=1}^{z+1}h_\rho$ and $g_v\ge \sum_{\rho=1}^{v}h_\rho$ for $z+2\le v\le \omega$ (case a) above):
\begin{align*}
    \De(\tj{j}{0})&=\fkd(G_1^\al)\cdot(i'+1)\fkd(g_1^z)\binom{i'-g_{z+1}+\sum_{\rho=1}^{z+1}h_\rho}{\sum_{\rho=1}^{z+1}h_\rho}\fkd(g_{z+2}^\omega)\\[.5em]
    &=(i'+1)\binom{i'}{g_{z+1}}\cdot\fkd(G_1^\al)\fkd(g_1^z)\fkd(g_{z+2}^\omega).
\end{align*}
Then
\begin{align*}
    \De(\tj{j}{-1})&=\fkd(G_1^\al)\cdot(i-1-(i-g_{z+1}+1)\fkd(g_1^z)\binom{i'-g_{z+1}+\sum_{\rho=1}^{z+1}h_\rho+1}{\sum_{\rho=1}^{z+1}h_\rho+1}\fkd(g_{z+2}^\omega)\\[.5em]
    &=g_{z+1}\cdot\binom{i'+1}{g_{z+1}+1}\cdot\fkd(G_1^\al)\fkd(g_1^z)\fkd(g_{z+2}^\omega).
\end{align*}

\item[b)] If $g_{z+1}<\sum_{\rho=1}^{z+1}h_\rho$, then
\begin{align*}
    \De(\tj{j}{0})&=\fkd(G_1^\al)\cdot(i-(i-g_{z+1})+1)\cdot\fkd(g_{1}^\omega)=(i'+1)\cdot\fkd(G_1^\al)\fkd(g_1^\omega)\\[1em]
    \De(\tj{j}{-1})&=\fkd(G_1^\al)\cdot(i-1-(i-g_{z+1})+1)\cdot\fkd(g_{1}^\omega)=i' \cdot\fkd(G_1^\al)\fkd(g_1^\omega).
\end{align*}
\end{itemize}

In both cases
\[
\De(\tj{j}{0})=\frac{i'+1}{i'}\cdot\De(\tj{j}{-1}).
\]
As $j'=1$, we have that
\begin{align*}
    &\De(\tj{j}{0})+\sum_0^{z-1}\De(\tj{j}{v+1})+\De(\tj{j}{z+1})\\[.5em]
    &=\frac{i'+1}{i'}\cdot\De(\tj{j}{-1})+\big(1-\frac{1}{d'}\big)\cdot\De(\tj{j}{-1})+\frac{i'-d'}{i'\cdot d'}\cdot\De(\tj{j}{-1})\\[.5em]
    &=2\De(\tj{j}{-1}).
\end{align*}

\textbf{Case iii)}

Here $\tij=\{\tau_{(j,0)},\tau_{(j,-1)},\tau^{(j,1)},\dots,\tu{j}{x+1}\}, G_1\neq 1$ and $g_1=1$.
Once again, we only need to consider $\De(\tj{j}{0})$.
\begin{itemize}
    \item[a)] If $h_1=1$, we have that
\begin{align*}
    \De(\tj{j}{0})&=\fkd(G_2^\al)\binom{i-\caly+\sum_{\rho=1}^xH_\rho+2}{\sum_{\rho=1}^xH_\rho+2}\cdot(i-1-\caly+1)\cdot\fkd(g_2^\omega)\\[.5em]
    &=\binom{i'+j'}{j'}\cdot i'\cdot\fkd(G_2^\al)\fkd(g_2^\omega)\\[1em]
    \De(\tj{j}{-1})&=\fkd(G_2^\al)\binom{i-(i-1)+\sum_{\rho=1}^xH_\rho+1}{\sum_{\rho=1}^xH_\rho+1}\binom{i-1-\caly+\sum_{\rho=1}^xH_\rho+3}{\sum_{\rho=1}^xH_\rho+3}\cdot\fkd(g_2^\omega)\\[.5em]
    &=j'\cdot\binom{i'+j'}{j'+1}\cdot \fkd(G_2^\al)\fkd(g_2^\omega).
\end{align*}
    \item[b)] If $h_1>1$, we have that
\begin{align*}
    \De(\tj{j}{0})&=\fkd(G_2^\al)\binom{i-(i-1)+\sum_{\rho=1}^xH_\rho+2}{\sum_{\rho=1}^xH_\rho+2}\cdot\fkd(g_1^\omega)=(j'+1)\cdot\fkd(G_2^\al)\fkd(g_1^\omega)\\[1em]
    \De(\tj{j}{-1})&=\fkd(G_2^\al)\binom{i-(i-1)+\sum_{\rho=1}^xH_\rho+1}{\sum_{\rho=1}^xH_\rho+1}\cdot\fkd(g_1^\omega)=j'\cdot \fkd(G_2^\al)\fkd(g_1^\omega).
\end{align*}
\end{itemize}
In both cases
\[
\De(\tj{j}{0})=\frac{j'+1}{j'}\De(\tj{j}{-1})
\]
and so
\[
    \De(\tj{j}{0})+\sum_{s=0}^{x}\De(\tu{j}{s+1})=\Bigg(\frac{j'+1}{j'}+\Big(1-\frac{1}{j'}\Big)\Bigg)\cdot\De(\tj{j}{-1})=2\De(\tj{j}{-1}).\qedhere
\]
\end{answer}
\end{proof}

\begin{eg}
    Continuing with \cref{eg:taus}, we have that
    \[
        \De(\tj{4}{-1})=\De(10,8,7,6,5,3^2,2^3)=\frac{12\cdot11\cdot9\cdot7\cdot6\cdot5\cdot4\cdot3}{2}=\De(\tj{j}{-1}).
    \]
    Then
    \begin{align*}
        \De(\tj{4}{0})&=\De(10,8,7^2,5,3^2,2^3)=\frac{8}{3\cdot5}\cdot\De(\tj{j}{-1}),\\[.5em]
        \De(\tj{4}{1})&=\De(10,8,7,5^2,3^2,2^3)=\frac{1}{2}\cdot\De(\tj{j}{-1}),\\[.5em]
        \De(\tj{4}{2})&=\De(10,8,7,4^2,3^2,2^3)=\frac{2}{2\cdot4}\cdot\De(\tj{j}{-1}),\\[.5em]
        \De(\tj{4}{3})&=\De(10,8,7,4,2^6)=\frac{1}{4\cdot5}\cdot\De(\tj{j}{-1}),\\[.5em]
        \De(\tu{4}{1})&=\De(10,8,6^2,5,3^2,2^3)=\frac{1}{2}\cdot\De(\tj{j}{-1}),\\[.5em]
        \De(\tu{4}{2})&=\De(10,6^3,5,3^2,2^3)=\frac{1}{2\cdot3}\cdot\De(\tj{j}{-1}).
    \end{align*}
    So we have
    \[
    \sum_{\rho=0}^{3}\De(\tj{4}{\rho})+\sum_{\rho=1}^{2}\De(\tu{4}{\rho})=2\De(\tj{j}{-1})
    \]
    as required.
\end{eg}

At this point, we turn our attention back to tableaux of hook shapes and give a very useful way to label (some of) them.

\begin{defn}\label{def:tableau}
    Let $\la=(a,1^b)$ and $\ttt\in\P(\la)$.
    Fix a set of positive even integers $\cale=\{e_1,\dots,e_\rho\}$ such that $e_i+2=e_{i+1}$ for all $1\le i\le \rho-1$ and $1\le\rho\le a-1$.
    We define $\sfz(\la)_\cale\subset\P(\la)$ to be the set of all $\ttt\in\P(\la)$, such that for some $v_i<v_{i+1}$, $\Arm(\ttt)$ contains entries $\tp_{r_1},\tp_{r_2},\dots,\tp_{r_s},\cale,\tp_{r_{s+1}}\tp_{r_{s+2}},\dots,\tp_{r_d},(n)$ in this order where $2d+\rho+1(+1)=a$ (we see that $n\in\Arm(\ttt)$ can only happen if $n$ is even).
    If $\ttt_{\lzero}$ is the \emph{`least dominant'} pair-tableau in $\sfz(\la)_\cale$, that is $\Arm(\ttt_{\lzero})$ contains entries $\tp_{e_1-2x-1},\tp_{e_1-2x+1},\dots,\tp_{e_1-3},\tp_{e_1-1}, \cale,\newline\tp_{e_\rho+3},\tp_{e_\rho+5},\dots,\tp_{n-2},\tp_n$ (or shift each entry by one if $n$ is even) for some $1<x$ such that $e_1-2x-1\ge3$, we define $\ttt_{(1)}:=\ttt_{\lzero}$ with corresponding $Z_{1}=(0,0,\dots,0)$ of length $d$.
    
    Otherwise, we label each $\ttt_{(i)}\in\sfz(\la)_\cale$ by $Z_{i}=(\de_1,\de_2,\dots,\de_d)$ where
    \[
    \de_j=\begin{cases}
    \frac{1(j)-i(j)}{2}-\rho\quad\qquad\text{if }1(j)>e_\rho\text{ and } i(j)<e_1\\[1em]
    \frac{1(j)-i(j)}{2}\quad\qquad\qquad\text{otherwise}
    \end{cases}
    \]
    and $\tp_{1(j)}$ (resp.\ $\tp_{i(j)}$) is the $j$-th pair in $\Arm(\ttt_{(1)})$ (resp.\ $\Arm(\ttt_{(i)})$) and $1\le j\le d$.
    We see that $0\le\de_j\le (b-\rho)/2$.

    In particular, for $\ttt_{(i)},\ttt_{(j)}\in\sfz(\la)_\cale$, we have that $i<j$ if $Z_i<Z_j$ in the lexicographic order on partitions.
\end{defn}

\begin{eg}
    Let $\la=(5,1^4)$ and $\cale=\varnothing$.
    Then there are six tableaux in $\sfz(\la)_\cale$.
    \begin{center}
\begin{tikzpicture}
\tgyoung(0cm,3.5cm,16789,2,3,4,5)
\tyoung(4cm,3.5cm,14589,2,3,6,7)
\tyoung(8cm,3.5cm,14567,2,3,8,9)
\tyoung(0cm,0cm,12389,4,5,6,7)
\tyoung(4cm,0cm,12367,4,5,8,9)
\tyoung(8cm,0cm,12345,6,7,8,9)
\node at (-.75, 3)   (f) {$\ttt_{(1)}=$};
\node at (3.25, 3)   (a) {$\ttt_{(2)}=$};
\node at (7.25, 3)   (b) {$\ttt_{(3)}=$};
\node at (-.75, -.5)   (c) {$\ttt_{(4)}=$};
\node at (3.25, -.5)   (d) {$\ttt_{(5)}=$};
\node at (7.25, -.5)   (e) {$\ttt_{(6)}=$};
\end{tikzpicture}
\end{center}
Indeed, we have $Z_1=(0,0)<Z_2=(1,0)<Z_3=(1,1)<Z_4=(2,0)<Z_5=(2,1)<Z_6=(2,2)$.
\end{eg}

\begin{rem}
    Hence, now we have a way to label certain hook tableaux by partitions.
    In fact, we will see later that $\mu=(\mu_1,\dots,\mu_Z)$ uniquely determines the set
\[
\cale=\{\ttt^\mu(m,\mu_m) \text{ is even} \mid 1\le m\le Z\},
\]
and by \cref{action,y-action} if $\varphi(z^\mu)=v=\sum\ct\vtt \in S^\la$, then $\ct\neq0$ only if $\ttt\in\sfz(\la)_\cale$. 
\end{rem}

If $Z_p=\tj{j}{y}$ or $\tu{j}{y}$ labels $\ttt_{(p)}$ for some fixed $i$, we write $Z_p\in\tij$ and label the corresponding basis vector by $\vt{j,y}$.
We will always make sure that it is clear from the context whether we are referring to $\tj{j}{y}$ or $\tu{j}{y}$.
To simplify notation, we also write $v_{s_k\tau(j,y)}$ for the basis vector corresponding to the tableau $s_k\ttt_{(p)}$.

For $k$ odd such that $\psi_k\vtt\neq0$, we will rename the different actions appearing in \cref{action} depending on the number of pairs involved in the resulting rotation (see \cref{cor:psi-leg,cor:psi-arm}).
\begin{itemize}
    \item $0-0$ move: $\tp_k\in\Arm(\ttt)$ and $\tp_{k+2}\in\Leg(\ttt)$.
    \item $1-1$ move: $\tp_{k+2}\in\Arm(\ttt)$ and $\tp_{k}\in\Leg(\ttt)$.
    \item $(L)-(L{-}1)$ move: $\tp_{k},\tp_{k+2}\in\Leg(\ttt)$.
    Here $\ttt$ is as in \cref{pair-leg} and $L=m+1$.
    In general, we will refer to these as \emph{leg moves}.
    \item $(A{-}1)-(A)$ move: $\tp_{k},\tp_{k+2}\in\Arm(\ttt)$.
    Here $\ttt$ is as in \cref{pair-arm} and $A=m+1$.
    In general, we will refer to these as \emph{arm moves}.
\end{itemize}

Abusing notation, if $\tj{j}{y}$ or $\tu{j}{y}$ labels $\ttt_{(p)}\in\sfz(\la)_\cale$, we write $\Arm(\tau(j,y))$ and $\Leg(\tau(j,y))$ for $\Arm(\ttt_{(p)})$ and $\Leg(\ttt_{(p)})$, respectively.
For the rest of this section, we also assume that $\tp_k$ is the $j$th pair in $\Arm(\tau(j,0))$ and simply write $\tau(y)$ instead of $\tj{j}{y}$ and $\tu{j}{y}$.

By \cref{action}, we see that if $\psi_k\neq0$, then the pair $\tp_k$ is rotated into $\Arm(\ttt)$.
Assume that $\varphi\in\hom(S^\mu,S^\la)\neq0$ and $\varphi(z^\mu)=v\sum\ct\vtt$.
When considering the action of $\psi_k$ on $v$, we would like to break $v$ into smaller sums such that
\begin{equation}\label{eq:double-sum}
    \psi_kv=\psi_k\sum_{x}c_{\ttt_{(x)}}v_{\ttt_{(x)}}=\sum_x\sum_yc_{\ttt_{(y)}}\underbrace{\psi_kv_{\ttt_{(y)}}}_{=a_y\cdot v_{s_k\ttt_{(x)}}}.
\end{equation}
(By \cref{action}, we already know that $a=1$ or $-2$.)
The rest of this section is dedicated to finding the right set of tableaux and coefficients and proving that
\[
\psi_k\sum_yc_{\ttt_{(y)}}v_{\ttt_{(y)}}=0.
\]

\begin{lem}\label{lem:delta-tableaux}
    If $\ttt_{(p)}\in\sfz(\la)_\cale$ with $Z_p\in\tij$ and $\tp_k$ is the $j$th pair in $\Arm(\tau(0))$ and $e_1\neq k+1$, then
    \begin{itemize}
        \item[i)] $\psi_kv_{\ttt_{(p)}}=a_\tau\cdot v_{s_k \tau_{(0)}}$ for all odd $3\le k\le n-2$ where $a_\tau=-2$ if $Z_p=\tau_{(-1)}$ and $a_\tau=1$ otherwise,
        \item[ii)] if $k=n-1$ is odd, then
        \[
        \psi_k\vtd{p}=
        \begin{cases}
            v_{s_k\ttt_{(p)}}&\text{if }n\in\Leg(\tau(0))\\
            0&\text{if }n\in\Arm(\tau(0)),
        \end{cases}
        \]
        \item[iii)] $\psi_r\vtd{p}=v_{s_r\ttt_{(p)}}$ if $r\in\cale$.
    \end{itemize}
\end{lem}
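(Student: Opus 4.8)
The plan is to classify the label $Z_p\in\tij$ according to its type in \cref{def:tau} --- namely $\tj{j}{0}$, $\tj{j}{-1}$, the $\tj{j}{y}$ with $y\ge1$, or the $\tu{j}{s}$ with $s\ge1$ --- and, using the rim-removal description in the remark following \cref{def:tau}, to read off in each case where the $j$th pair $\tp_k$ and the following pair $\tp_{k+2}$ sit (in the arm or the leg) in the corresponding tableau $\ttt_{(p)}$; the value $\psi_k\vtd{p}$ is then forced by the relevant clause of \cref{action}. Throughout, the hypothesis $e_1\ne k+1$ guarantees that the entry of $\Arm(\tau(0))$ immediately after $\tp_k$ is the pair $\tp_{k+2}$ and not an element of $\cale$, while the strict descent of $\tau_1,\tau_2$ at row $j$ forces $\tp_k\in\Arm(\tau(0))$ and $\tp_{k+2}\in\Leg(\tau(0))$.

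Part (iii) I would settle first, as it is immediate: for $r\in\cale$ the entry $r$ is an unpaired even entry of $\Arm(\ttt_{(p)})$, and being unpaired forces $r+1\in\Leg(\ttt_{(p)})$; since $\cale$ is fixed across $\sfz(\la)_\cale$ this holds for every $\ttt_{(p)}$, so \cref{action}(3) gives $\psi_r\vtd{p}=v_{s_r\ttt_{(p)}}$.

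For part (i) I would run through the four types. For $\tj{j}{0}=\tau(0)$ we have $k\in\Arm$ and $k+1\in\Leg$, so the first clause of \cref{action}(4) (the $0$--$0$ move) gives $\psi_kv_{\tau(0)}=v_{s_k\tau(0)}$, i.e.\ $a_\tau=1$. The label $\tj{j}{-1}$ drops the $j$th pair by a single node, shifting the residue of $\tp_k$ so that $\tp_k\in\Leg(\ttt_{(p)})$, $\tp_{k+2}\in\Arm(\ttt_{(p)})$ and $\bi_{\tp_k}=\bi_{\tp_{k+2}}$; the second clause of \cref{action}(4) then produces the factor $-2$, and it remains to check that $s_{k-1}s_{k+1}s_k\ttt_{(p)}=s_k\tau(0)$ as standard tableaux, giving $a_\tau=-2$. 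For the $\tj{j}{y}$ with $y\ge1$ the pairs $\tp_k,\tp_{k+2}$ both lie in $\Leg(\ttt_{(p)})$ and $\ttt_{(p)}$ satisfies the hypotheses of \cref{pair-leg}, so \cref{cor:psi-leg} gives $\psi_k\vtd{p}=\psi_kv_{(\ttt_{(p)})^{2r+k,k}}$ in reduced form; dually, for the $\tu{j}{s}$ with $s\ge1$ we have $\tp_k,\tp_{k+2}\in\Arm(\ttt_{(p)})$ and \cref{cor:psi-arm} applies. In both families the crux is that, since every label of $\tij$ is a rim-removal of $\tj{j}{0}$ and the rotations of \cref{pair-leg},\cref{pair-arm} are exactly the inverse operations, one has $(\ttt_{(p)})^{2r+k,k}=\tau(0)$ (resp.\ $(\ttt_{(p)})_{k+2-2r,k+2}=\tau(0)$); then $\psi_k\vtd{p}=\psi_kv_{\tau(0)}=v_{s_k\tau(0)}$ and $a_\tau=1$.

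Part (ii) is the boundary version of the $0$--$0$ move at $k=n-1$, where $\tp_{k+2}$ no longer exists. As $k$ is odd, $n$ is even and by \cref{def:tableau} may lie in the arm or the leg of $\tau(0)$. If $n\in\Leg(\tau(0))$ then $n-1\in\Arm$, $n\in\Leg$ and \cref{action}(4) gives $\psi_{n-1}\vtd{p}=v_{s_{n-1}\ttt_{(p)}}$; if $n\in\Arm(\tau(0))$ then $n-1,n\in\Arm$, no clause of \cref{action}(4) applies and $\psi_{n-1}\vtd{p}=0$. For the non-dominant labels the same rotations as in part (i) first return the configuration to $\tau(0)$, so both alternatives propagate to every $\ttt_{(p)}$ with $Z_p\in\tij$. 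The main obstacle is the bookkeeping in part (i): matching each rim-removal defining $\tj{j}{y}$ and $\tu{j}{s}$ to the positions of $\tp_k,\tp_{k+2}$ and to the parameter $r$ of \cref{pair-leg},\cref{pair-arm}, and confirming that all the rotations land on $\tau(0)$ so that the single target $v_{s_k\tau(0)}$ is reached; the $\tj{j}{-1}$ case is the most delicate, being the sole source of the coefficient $-2$ and requiring the tableau identity $s_{k-1}s_{k+1}s_k\ttt_{(p)}=s_k\tau(0)$.
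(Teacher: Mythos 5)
Your proposal is correct and follows essentially the same route as the paper's proof: part (iii) via the unpaired-even-entry observation (\cref{y-pair}), part (ii) read off directly from \cref{action}, and part (i) by the case split into $\tau(0)$, $\tau(-1)$, leg moves and arm moves, with \cref{cor:psi-leg} and \cref{cor:psi-arm} supplying the rotations that land back on $s_k\tau(0)$. The only difference is that the paper executes the ``bookkeeping'' you defer — an explicit pair count showing that the inequality conditions in \cref{def:tau} force $g_v=\sum_{\rho=1}^{v}h_\rho+1$ to be the minimal number of arm pairs in the sense of \cref{pair-leg}, so the minimal rotation of \cref{cor:psi-leg} returns every label in $\tij$ to $\tau(0)$ — and then treats the arm moves ``in a similar way.''
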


\begin{proof}
    Case iii) follows from \cref{y-pair}.
    If $k=n-1$ and $n\in\Arm(\tau(0))$, then $\psi_{n-1}\vtd{p}=0$ by \cref{action} as $n\in\Arm(\tau(v))$ for all $\ttt_{(p)}\in\sfz(\la)_\cale$.
    If $k=n-1$ and $n\in\Leg(\tau(0))$, the statement holds by \cref{action}.
    
    For case i), we first consider leg moves, that is for $\vt{y}$ the corresponding $\ttt_{(p)}$ is labelled by $Z_p=\tau_{(y)}$.
    Since $e_1\neq k+1$, it is easy to see that for all $3\le k\le n-2$,
    \begin{align*}    
        \psi_k\vt{0}&=v_{s_k\tau(0)}\,\qquad\qquad\text{as }\tp_k\in\Arm(\tau(0)),\tp_{k+2}\in\Leg(\tau(0))\\[.5em]
        \psi_k\vt{-1}&=-2v_{s_k\tau(0)}\qquad\;\;\;\text{as }\tp_k\in\Leg(\tau(-1)),\tp_{k+2}\in\Arm(\tau(-1))
    \end{align*}

    Also, by \cref{action}
    \[
        \psi_k\vt{1}=v_{s_k\tau(0)}
    \]
    as $\tp_k,\tp_{k+2}\in\Leg(\tau(1))$ and $\tp_{x}\in\Arm(\tau(1))$ where $x\ge k+4$ ($\tp_x$ must exist by definition).
    The condition $e_1\ge k+3$ or $e_1\le k-3$ ensures that for all $1\le v\le z$ we have that $\tp_k,\tp_{k+2},\tp_x\in\Leg(\tau(v+1))$ (as $g_1\ge2$).
    Further, the position of $\cale$ does not actually matter, since by \cref{action} if $\tp_k,\tp_{k+2}\in\Leg(\tau(v+1))$, to compute $\psi_k\vt{v+1}$ we only need to count the pairs in $\Leg(\tau(v+1))$ and $\Arm(\tau(v+1))$, starting with $\tp_k$ and even entries are ignored.
    After $\tp_k$, we see that there are $g_1$ pairs in $\Leg(\tau(v+1))$ which are smaller than the one in position $j$ in $\Arm(\tau(v+1))$.
    The next $h_1$ pairs are in $\Arm(\tau(v+1))$, followed by $g_2-g_1$ pairs in the leg.
    In general, each $g_m$ ($1\le m\le v$) many pairs in $\Leg(\tau(v+1))$ are followed by $g_{m+1}-g_m$ pairs in $\Arm(\tau(v+1))$.
    In total there are $\sum_1^vh_m+1=g_v$ pairs in $\Arm(\tau(v+1))$ between positions $j$ and $j+g_{v+1}-1$.
    Then, we count the number of pairs in $\Leg(\tau(v+1))$ as 
    \[
    g_1+g_2-g_1+g_3-g_2+\dots+g_{v}-g_{v-1}+1=g_{v}+1
    \]
    where the plus one accounts for the pair $\tp_k$.

    Assume that $\tau\in\tij$.
    Using the facts that for all $1\le r\le \sum_{\rho=1}^vh_\rho$, $\tau_{j+r}\le \tau_j-1-r$, and that for all $\sum_{\rho=1}^{w-1}h_\rho< r\le \sum_{\rho=1}^{w}h_\rho$ with $1\le w\le v$, we have that $\tau_{j+r}=i-g_w$, we see that
    \begin{align*}
        i-g_w=\tau_{j+r}&\le i-1-\sum^w_{\rho=1}h_\rho\\
        &\Rightarrow g_w\ge\sum_{\rho=1}^{w}h_\rho+1
    \end{align*}
    In particular, if $g_w= \sum_{\rho=1}^{w}h_\rho+1$, then $\tj{j}{w+1}\in\tij$.
    Hence $g_v$ is the minimal number of pairs in $\Arm(\tau(v+1))$ in the sense of \cref{pair-leg}.
    Thus all $\ttt_{(p)}$ labelled by $Z_p\in\tij$ satisfy the conditions of case 3 in \cref{action}, so $\psi_kv_{\ttt_{(p)}}=v_{s_k\tau(0)}$ as claimed.

    Arm moves can be proved in a similar way.
\end{proof}

Using the previous lemma, we will show that setting $\ct=\De(\ttt)$ and $\ttt_{(y)}\in\tij$ in \cref{eq:double-sum}, we obtain a homomorphism given by
\[
\psi_kv=\psi_k\sum_{\ttt_{(x)}\in\sfz(\la)_\cale}c_{\ttt_{(x)}}v_{\ttt_{(x)}}=\psi_k\sum_{\substack{1\le j\le R\\ 0\le i\le \frac{b-\rho}{2}}}\underbrace{\sum_{\tau\in\tij} \De(\tau)v_{\tau}}_{=:\cals_{i,j}}=\sum_{\substack{1\le j\le R\\ 0\le i\le \frac{b-\rho}{2}}}\sum_{\tau\in\tij} \De(\tau)\underbrace{\psi_kv_{\tau}}_{=a_\tau v_{s_k\tau(0)}}
\]
where $\De(\tau)$ is computed according to \cref{def:coeff}.

\begin{rem}
    If $\cals_{i,j}$ is as above and $\vert \tij\vert=m$, then it is easy to see that $2\le m \le R+2$.
    The lower bound is clear, as by definition we have that $\{\tj{j}{0},\tj{j}{-1}\}\subseteq \tij$.
    For the upper bound, we just need to count the number of possible leg and arm moves, which gives us $R-j+1+j-1+2=R+2$ where the plus two accounts for the moves $0-0$ and $1-1$.
\end{rem}

\begin{prop}\label{prop:main}
    Suppose that $\ttt_{(p)}\in\sfz(\la)_\cale$ with corresponding $Z_p$ and that $3\le k\le n-2$ is odd such that $\tp_k$ is the $j$th (where $1\le j\le R$) pair in $\Arm(\ttt_{(p)})$ and $\de_j=i$ (where $0\le i\le \frac{b-\rho}{2}$) in $Z_p$ .
    Then for each pair of integers $(i,j)$, we have that $\psi_kv_{\ttt_{(p)}}=v_{s_k\tau(0)}$ if and only if $e_1\neq k+1$ and $Z_p\in\tij$.
\end{prop}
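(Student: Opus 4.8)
The plan is to feed the hypothesis ``$\tp_k\in\Arm(\ttt_{(p)})$ is the $j$th arm pair'' into \cref{action} and read off the complete list of possible values of $\psi_k\vtd{p}$, then match these against the combinatorics of \cref{def:tau}. First I would record the resulting trichotomy. If the entry of $\Arm(\ttt_{(p)})$ immediately following $\tp_k$ is the unpaired even number $e_1=k+1$, then $k,k+1\in\Arm(\ttt_{(p)})$ with $k+1$ unpaired, $\tp_{k+2}$ is not a pair, and $k-1\in\Arm(\ttt_{(p)})$; hence no case of \cref{action} for odd $k$ applies and $\psi_k\vtd{p}=0$. This already shows that $e_1\neq k+1$ is necessary. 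Otherwise $\tp_{k+2}$ is a genuine pair, and since $\tp_k\in\Arm(\ttt_{(p)})$ exactly two options remain: either $\tp_{k+2}\in\Leg(\ttt_{(p)})$, a $0$–$0$ move, giving $\psi_k\vtd{p}=v_{s_k\ttt_{(p)}}$; or $\tp_{k+2}\in\Arm(\ttt_{(p)})$, an arm move, giving $\psi_k\vtd{p}=\psi_kv_{(\ttt_{(p)})_{k+2-2r,k+2}}$ by \cref{cor:psi-arm} (this vanishes when no admissible $r$ exists, by \cref{4killing-2}).

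For the ``if'' direction I would first pin down $Z_p$. Scanning the partitions listed in \cref{def:tau}, the pair $\tp_k$ lies in $\Arm(\ttt_{(p)})$ only for the base $\tj{j}{0}$ (a $0$–$0$ move) and for the arm-move partitions $\tu{j}{s+1}$, whereas $\tj{j}{-1}$ and the leg-move partitions $\tj{j}{v+1}$ place $\tp_k$ in $\Leg(\ttt_{(p)})$. Since every $\tu{j}{s+1}$ carries value $i-1$ in its $j$th position, the extra hypothesis $\de_j=i$ rules these out, so $Z_p\in\tij$ together with $\tp_k\in\Arm(\ttt_{(p)})$ and $\de_j=i$ forces $Z_p=\tj{j}{0}=\tau(0)$. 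In $\tau(0)$ one has $\tp_k\in\Arm$ and $\tp_{k+2}\in\Leg$, the descent at position $j$ built into the shape $\tau_1$ (resp.\ $\tau_2$), so we are in the $0$–$0$ case with $\ttt_{(p)}=\tau(0)$ and $\psi_k\vtd{p}=v_{s_k\tau(0)}$, as claimed. Equivalently, this is the $a_\tau=1$ instance of \cref{lem:delta-tableaux}.

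For the converse I would run the trichotomy backwards, assuming $\psi_k\vtd{p}=v_{s_k\tau(0)}$. The vanishing case is excluded, giving $e_1\neq k+1$. In the $0$–$0$ case $\psi_k\vtd{p}=v_{s_k\ttt_{(p)}}$, and as $s_k$ is injective on standard tableaux this equals $v_{s_k\tau(0)}$ if and only if $\ttt_{(p)}=\tau(0)$, i.e.\ $Z_p=\tj{j}{0}\in\tij$. The remaining task, and the main obstacle, is to rule out the arm-move case: there $\psi_k\vtd{p}=v_{s_k(\ttt_{(p)})_{k+2-2r,k+2}}$, and I would show by tracking the clockwise rotation of \cref{pair-arm} against the labelling of \cref{def:tableau} that the rotated tableau again has its $j$th arm pair at position $j$ but now with displacement $i+1$ (one more than in $\ttt_{(p)}$), exactly as the $\tu{}{}$-computation behind \cref{lem:delta-tableaux} predicts. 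Consequently its image is $v_{s_k\tau'(0)}$ for the base $\tau'(0)$ of $\mathcal{T}_{i+1,j}$, which differs from $\tau(0)$, so $\psi_k\vtd{p}\neq v_{s_k\tau(0)}$ in this case. Carefully matching this displacement count to the $\tu{}{}$-labels of \cref{def:tau} is the delicate step; the rest is routine bookkeeping.
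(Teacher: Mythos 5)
You have misread the hypothesis, and the misreading strips the proposition of its actual content. The convention fixed immediately before the statement (``we also assume that $\tp_k$ is the $j$th pair in $\Arm(\tau(j,0))$''), and the hypothesis of \cref{lem:delta-tableaux}, make clear that $\tp_k$ occupies the $j$th arm position of the \emph{base} tableau $\tau(0)$ of the family $\tij$ — not of $\ttt_{(p)}$ itself — and that $\ttt_{(p)}$ ranges over all of $\sfz(\la)_\cale$, including tableaux in which $\tp_k$ lies in $\Leg(\ttt_{(p)})$. The paper's own proof confirms this: it explicitly identifies $Z_p=\tj{j}{-1}$ as the unique label with $\tp_k\in\Leg(\ttt_{(p)})$ and $\tp_{k+2}\in\Arm(\ttt_{(p)})$ (the $1$--$1$ move, with image $-2v_{s_k\tau(0)}$), and then characterises the leg-move labels $\tj{j}{v+1}$ and arm-move labels $\tu{j}{s+1}$. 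Under your literal reading, as you observe yourself, $Z_p\in\tij$ together with $\de_j=i$ forces $Z_p=\tj{j}{0}$, so the ``iff'' degenerates into ``$\psi_k\vtd{p}=v_{s_k\tau(0)}$ iff $\ttt_{(p)}=\tau(0)$ and $e_1\neq k+1$''. That statement cannot do the job the proposition has in the paper: \cref{cor:coeff} needs that the tableaux whose $\psi_k$-image is a nonzero multiple of $v_{s_k\tau(0)}$ are \emph{precisely} those labelled by $\tij$, so that the identity of \cref{prop:2c} makes the coefficient-$1$ contributions of $\tj{j}{0},\tj{j}{v+1},\tu{j}{s+1}$ cancel the $-2$ contribution of $\tj{j}{-1}$ inside $\psi_k\cals_{i,j}$; your version leaves every term of $\cals_{i,j}$ other than $\tau(0)$ unaccounted for. (The statement is admittedly loosely worded — ``$\Arm(\ttt_{(p)})$'' should be ``$\Arm(\tau(0))$'', and the coefficient is $-2$ rather than $1$ when $Z_p=\tj{j}{-1}$ — but the intended meaning is pinned down by the proof and by the applications.)

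Consequently the substantive work is missing from your proposal. The heart of the converse is to show that \emph{any} tableau whose image under $\psi_k$ is a nonzero multiple of $v_{s_k\tau(0)}$ carries a label in $\tij$, and that labels violating the conditions of \cref{def:tau} are either annihilated by $\psi_k$ or sent to a different basis vector. In the paper this is a case analysis on how many arm pairs the rotation alters (one, two, at least three), using \cref{action}, \cref{3pairs}, \cref{cor:psi-leg,cor:psi-arm} and \cref{4killing-1,4killing-2}, and matching the rotation conditions of \cref{pair-leg,pair-arm} against the rim conditions of \cref{def:tau} (namely $g_{v}=\sum_{\rho=1}^{v}h_\rho+1$ together with $\tau_{j+r}\le\tau_j-1-r$, and their arm analogues); none of this appears in your write-up. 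Finally, even within your own reading, the arm-move step you flag as delicate is carried out incorrectly: a tableau with $\tp_k,\tp_{k+2}$ in arm positions $j$ and $j+1$ (displacements $i,i$) is the label $\tu{j+1}{1}$ of the family $\mathcal{T}_{i+1,j+1}$ — the family whose base again has associated entry $k$ — and $\psi_k$ sends it to the base of \emph{that} family, not of $\mathcal{T}_{i+1,j}$; only the weaker conclusion that the image differs from $v_{s_k\tau(0)}$ survives.
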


\begin{proof}
    Fix $3\le k\le n-2$.
    We have already shown in \cref{lem:delta-tableaux} that if $Z_p\in\tij$ and $e_1\neq k+1$, then $\psi_kv_{\ttt_{(p)}}=c\cdot\vt{0}$ where $c=-2$  if $Z_p=\tau_{(-1)}$ and $c=1$ otherwise.
    
    To prove the converse, suppose $e_1=k+1$.
    In this case,
    \[
    \psi_k\vtt=0
    \]
    for all $\ttt\in\sfz(\la)_\cale$ by \cref{action} since we either have $k-1,k,k+1\in\Arm(\ttt),k+2\in\Leg(\ttt)$ or $k+1\in\Arm(\ttt),k-1,k,k+2\in\Leg(\ttt)$.
    For the rest of the proof, we assume that $e_1\neq k+1$.
    
    By \cref{action}, we see that
    \[
    \psi_k v_{\ttt_{(p)}}=-2v_{s_{k-1}s_{k+1}s_k\ttt_{(p)}}
    \]
    if and only if $\tp_{k+2}\in\Arm(\ttt)$ and $\tp_{k}\in\Leg(\ttt)$ which is precisely the case when $Z_p=\tau(-1)$ (then we also see that $s_{k-1}s_{k+1}s_k\ttt_{(p)}=s_k\tau(0)$).
    Since $e_1\neq k+1$, $\tp_{k+2}$ exists for all $3\le k\le n-2$.
    We also see that if $k=n-1$, then $j=R$ and $i=0$, so $\tau(-1)$ is not a valid partition and $\tij=\varnothing$.
    
    Assume that $\psi_kv_{\ttt_{(p)}}\neq0$ such that the resulting rotation changes at most one pair in $\Arm(\ttt_{(p)})$.
    It is clear that there are at most three possible moves ($0-0,1-1,2-1$) and we must have that $\de_{j-1}\ge i+1$ (as otherwise the $1-2$ move can occur) and $\de_{j+1}=i-1$ or $\de_{j+1}\le i-2$ such that if $\de_{j+1}=i-x$, then it is repeated either less than or more than $x$ times.
    We see that any such $\ttt_{(p)}$ is labelled by some $Z_p\in\tij$.
 
    Next, assume that $\psi_kv_{\ttt_{(p)}}\neq0$ such that the resulting rotation changes at most two pairs in $\Arm(\ttt_{(p)})$ (these moves are three listed in the previous paragraph plus $3-2$ and $1-2$).
    \begin{itemize}
    \item If $\de_{j-1}=\de_j=\de_{j+1}= i$, then $\psi_kv_{\ttt_{(p)}}$ is either zero or the resulting rotation moves $\tp_k$ from the $j$th position.
    \item If $\de_{j+1}=i-1$, then the $0-0$ move is allowed by.
    But by \cref{action}, we see that leg moves are only possible if $\tp_k$ and $\tp _{k+2}$ are both in the leg and pair in position $j$ is larger than $k+2$, which requires $\de_{j+1}\le i-2$.
    If $\de_{j-1}\ge i+1$, then we already saw that the rotation can involve at most one pair in the arm.
    If $\de_{j-1}= i$, we must have that $\de_{j-2}\ge i+1$, otherwise the $2-3$ move would be allowed.
    \item If $\de_{j-1}=i=\de_j$, and $\de_{j+1}
    \le i-x$ ($x\ge2$), then $x$ must be as above and $\de_{j-2}\ge i+2$.
    If $\de_{j-1}=i+1$, we already know that the rotation can change at most one pair in the arm.
    \end{itemize}
    We see that each $\ttt_{(p)}$ described above is labelled by $Z_p\in\tij$.
    
    Next, assume that $\psi_kv_{\ttt_{(p)}}\neq0$ such that the resulting rotation changes at least three pairs in $\Arm(\ttt_{(p)})$ and that $\tj{j}{0}$ is of the form as in \cref{def:tau}.
    
    Suppose that for some $1\le v'\le \omega$ such that $\tj{j}{v'}\not\in\tij$.
    It is clear that if $g_{v'}\neq \sum_{\rho=1}^{v'}h_\rho+1$, then by \cref{action} and a similar argument as in the proof of \cref{lem:delta-tableaux}, we have that $\psi_k\vt{v'}=0$.

    Finally, suppose that $g_{v'}= \sum_{\rho=1}^{v'}h_\rho+1$ and for some $1\le r\le \sum^{v'}_{\rho=1}h_\rho$, we have that $\tau_{j+r}>\tau_j-2-r$.
    This implies that $g_{w}< \sum_{\rho=1}^{w}h_\rho+1$ for some $w\le v'$.
    In particular, looking at the proof of \cref{lem:delta-tableaux}, the minimal number of pairs is not $g_{v'}$ in this case, hence $\psi_k\vt{v'}\neq v_{s_k\tau(0)}$.

    Arm moves can be considered using a similar reasoning.
\end{proof}

\begin{cor}\label{cor:coeff}
    We have that $\psi_k\cals_{i,j}=0$ for all odd $3\le k\le n-2,1\le j\le R$ and $0\le i\le \frac{b-\rho}{2}$.
\end{cor}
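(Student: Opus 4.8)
The plan is to read off the corollary as a direct consequence of the two results just proved: \cref{prop:main}, which pins down exactly which basis vectors $v_\tau$ survive under $\psi_k$ and with what scalar, and \cref{prop:2c}, the combinatorial identity among the $\De$-coefficients. I would begin by expanding
\[
\psi_k\cals_{i,j}=\sum_{\tau\in\tij}\De(\tau)\,\psi_kv_\tau .
\]
If $e_1=k+1$, then by \cref{prop:main} (through \cref{action}) every $\psi_kv_\tau$ vanishes, so the sum is zero termwise and there is nothing to do; hence I assume $e_1\neq k+1$ for the remainder.

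Under this assumption, \cref{lem:delta-tableaux}(i) gives, for every $\tau\in\tij$, the identity $\psi_kv_\tau=a_\tau\,v_{s_k\tau(0)}$, where $a_\tau=-2$ when $\tau=\tj{j}{-1}$ and $a_\tau=1$ otherwise. The key structural point is that all these images are scalar multiples of the \emph{same} basis vector $v_{s_k\tau(0)}$, so I can factor it out:
\[
\psi_k\cals_{i,j}=\Bigl(\sum_{\tau\in\tij}a_\tau\De(\tau)\Bigr)\,v_{s_k\tau(0)} .
\]
It then suffices to show the scalar vanishes. Recalling from \cref{def:tau} that the members of $\tij$ are precisely $\tj{j}{-1}$ together with $\tj{j}{v}$ for $0\le v\le z+1$ and $\tu{j}{s}$ for $1\le s\le x+1$, I separate off the single term $\tj{j}{-1}$ to obtain
\[
\sum_{\tau\in\tij}a_\tau\De(\tau)=\Bigl(\sum_{v=0}^{z+1}\De(\tj{j}{v})+\sum_{s=1}^{x+1}\De(\tu{j}{s})\Bigr)-2\De(\tj{j}{-1}).
\]
By \cref{prop:2c} the bracketed sum equals $2\De(\tj{j}{-1})$, so the scalar is $2\De(\tj{j}{-1})-2\De(\tj{j}{-1})=0$, and therefore $\psi_k\cals_{i,j}=0$.

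The one place that needs care — and which I expect to be the main obstacle — is that \cref{prop:2c} is stated only for $i>1$, whereas the corollary ranges over $0\le i\le\frac{b-\rho}{2}$, so the boundary values $i\in\{0,1\}$ must be handled directly. When $i=0$ the partitions $\tj{j}{-1}$ and $\tj{j}{1}$ fail to be partitions (they would require a negative part in position $j$), forcing either $\tij=\varnothing$ (so $\cals_{i,j}=0$ trivially) or a single degenerate configuration on which $\psi_k$ already acts as zero by \cref{action}. When $i=1$ the set $\tij$ shrinks to a short list not containing $\tj{j}{1}$, and the required coefficient identity reduces to the small cases $\vert\tij\vert\le 3$ of \cref{prop:2c}, which I would re-verify by the same binomial manipulations used there. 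Once these degeneracies of \cref{def:tau} are correctly bookkept, the coefficient still closes up to zero, and the main argument remains the one-line combination of \cref{prop:main} and \cref{prop:2c} displayed above.
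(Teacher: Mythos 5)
Your proof is correct and is essentially the paper's own: the paper likewise expands $\psi_k\cals_{i,j}$, applies \cref{lem:delta-tableaux} to write each $\psi_kv_\tau$ as $a_\tau v_{s_k\tau(0)}$ with $a_\tau=-2$ exactly for $\tau=\tj{j}{-1}$, and cancels the resulting scalar via the identity of \cref{prop:2c}. Your extra bookkeeping at the boundary --- termwise vanishing when $e_1=k+1$ (via \cref{prop:main}) and the values $i\in\{0,1\}$ omitted from the hypothesis of \cref{prop:2c} --- is care that the paper's two-line proof skips over, and it is sound apart from one imprecision: for $i=1$ the set $\tij$ need not satisfy $|\tij|\le 3$, since it can still contain arbitrarily many $\tu{j}{s}$'s, so the identity to re-verify is the upward-moves-only case of the proof of \cref{prop:2c} (whose binomial computation does go through when $i=1$), not just the small cases.
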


\begin{proof}
    By \cref{prop:2c,lem:delta-tableaux}, we have that
    \begin{align*}
    \psi_k\cals_{i,j}&=\psi_k\sum_{\tau\in\tij}\De(\tau)v_\tau\\[.5em]
    &=\psi_k\big(\sum_{v=0}^{z+1}\De(\tj{j}{v})\vt{v}+\sum_{s=1}^{x+1}\De(\tu{j}{s})\vt{s}\big)+\psi_k\De(\tj{j}{-1})\vt{-1}\\[.5em]
    &=2\De(\tj{j}{-1}) v_{s_k\tau(0)}+(-2)\De(\tj{j}{-1}) v_{s_k\tau(0)}=0.
    \qedhere
    \end{align*}
\end{proof}

\begin{prop}\label{prop:number}
   If $\la=(a,1^b)$ with $a\ge3$ and $b\ge2$, then every $\ttt\in\sfz(\la)_\cale$ appears in at least one $\cals_{i,j}$ for some odd $3\le k\le n-2$.
\end{prop}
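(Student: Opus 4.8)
The plan is to reduce the statement to a purely local non-vanishing assertion and then read off membership from the results already established. By \cref{lem:delta-tableaux} every non-zero action $\psi_k\vtt$ with $k$ odd and $3\le k\le n-2$ is a scalar multiple of a single vector $v_{s_k\tau(0)}$, and \cref{prop:main} identifies the tableaux whose images under $\psi_k$ are proportional to this common vector as exactly those labelled by the partitions in the associated set $\tij$, with $(i,j)$ determined by the position and displacement of the moved pair. Consequently, as soon as I exhibit one odd $k\in[3,n-2]$ with $\psi_k\vtt\ne0$, the label $Z$ of $\ttt$ lies in the corresponding $\tij$, and hence $\De(Z)\vtt$ is one of the summands of $\cals_{i,j}=\sum_{\tau\in\tij}\De(\tau)v_\tau$. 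So it suffices to prove that no $\ttt\in\sfz(\la)_\cale$ is annihilated by all the odd generators $\psi_k$ with $3\le k\le n-2$.

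To produce such a $k$ I would use that $a\ge3$ forces at least one pair in $\Arm(\ttt)$ and $b\ge2$ forces at least one pair in $\Leg(\ttt)$, so there is always an arm--leg boundary among the pairs. I organise the search through the displacement label $Z=(\de_1,\dots,\de_d)$ of \cref{def:tableau}. If $Z$ has an interior descent, that is $\de_j>\de_{j+1}$ for some $1\le j\le d-1$, then the $j$th and $(j+1)$th arm pairs are spread apart, a leg pair sits directly above the $j$th arm pair $\tp_k$ in value, and hence $\tp_k\in\Arm(\ttt)$ with $\tp_{k+2}\in\Leg(\ttt)$; this is a $0-0$ configuration, so $\psi_k\vtt=v_{s_k\ttt}\ne0$ by \cref{action}. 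Here $k+1$ belongs to the paired entry $\tp_{k+2}$, so $e_1\ne k+1$ automatically and the excluded case of \cref{prop:main} does not arise; thus $Z=\tj{j}{0}\in\tij$ with $i=\de_j$, and $\ttt$ appears in $\cals_{i,j}$.

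If instead $Z$ has no interior descent, then $Z$ is constant and $\ttt$ is a translate of the least dominant tableau $\ttt_{\lzero}$, whose arm pairs form contiguous blocks and whose relevant boundary is the bottom one. Using the explicit description in \cref{def:tableau}, the leg pairs of $\ttt_{\lzero}$ are the smallest pairs $\tp_3,\tp_5,\dots$, and since $b\ge2$ this block is non-empty; the smallest arm pair $\tp_{k+2}$ therefore has a leg pair $\tp_k$ directly below it in value with $k\ge3$. This is a $1-1$ configuration, so $\psi_k\vtt=-2\,v_{s_{k-1}s_{k+1}s_k\ttt}\ne0$ by \cref{lem:delta-tableaux}, and $Z=\tj{j}{-1}\in\tij$ for the appropriate $(i,j)$. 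The subcase $\cale=\varnothing$ is handled identically, with the contiguous arm block sitting above the non-empty block of smallest leg pairs.

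The hardest part will be the faithful translation between the three descriptions in play: the arm/leg placement of the pairs of $\ttt$, the label $Z$ together with its descents, and the abstract partitions $\tj{j}{v}$ and $\tu{j}{s}$ of \cref{def:tau}. I must verify that an interior descent really does guarantee a leg pair immediately above the corresponding arm pair even when the unpaired evens of $\cale$ intervene --- this is exactly what the $-\rho$ correction in the displacement formula of \cref{def:tableau} is designed to absorb --- and that the residue hypotheses packaged into \cref{lem:delta-tableaux} hold, so that the $1-1$ move does not vanish. When a boundary is separated by $\cale$ so that no $0-0$ or $1-1$ move is available at consecutive values, I would instead invoke an arm or leg move across $\cale$ via \cref{cor:psi-arm} and \cref{cor:psi-leg}, whose non-vanishing is governed by the counting criteria of \cref{4killing-1} and \cref{4killing-2}; confirming that those criteria are met --- so that $Z$ is realised as one of the $\tj{j}{v}$ or $\tu{j}{s}$ --- and that the witnessing index $k$ always lands in $[3,n-2]$, is the delicate point that remains.
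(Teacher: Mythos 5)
Your opening reduction is exactly the paper's proof: $\ttt$ appears in some $\cals_{i,j}$ if and only if some odd $\psi_k$ with $3\le k\le n-2$ does not annihilate $\vtt$ (this is what \cref{prop:main} packages), and the paper then simply asserts that simultaneous vanishing of all these $\psi_k\vtt$ "is impossible", with no further argument. So the substance of your proposal is your attempted justification of that impossibility, and that is where there is a genuine gap --- indeed the part you defer at the end is not a routine verification but the whole content of the claim.

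Concretely, your Case 2 fails as stated. The number of leg pairs of a tableau in $\sfz(\la)_\cale$ is $(b-\rho)/2$, not something guaranteed by $b\ge 2$: the leg always contains the $\rho$ unpaired odd entries $e_1+1,e_1+3,\dots,e_\rho+1$ forced by $\cale$, so when $b=\rho$ (not excluded by $a\ge3$, $b\ge2$) there are no leg pairs at all. Then $\sfz(\la)_\cale$ consists of a single tableau in which all consecutive arm pairs are adjacent, the only arm/leg interfaces involve $\cale$ or unpaired odd entries, and one checks from \cref{action} (and the $e_1=k+1$ exclusion in the proof of \cref{prop:main}) that \emph{every} odd $\psi_k$ with $3\le k\le n-2$ kills $\vtt$; only the even generators $\psi_{e_i}$ act non-trivially, and those are irrelevant to membership in the $\cals_{i,j}$. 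So your witness $k$ need not exist, and no fallback via \cref{cor:psi-leg} or \cref{cor:psi-arm} can save it, since those moves require leg pairs to balance against arm pairs. Even when leg pairs do exist, your specific claim in Case 2 is wrong for the maximally displaced constant label: there all leg pairs sit \emph{above} the arm block, the smallest arm pair is $\tp_3$ with nothing below it, and the non-vanishing move is the $0$--$0$ move at the top of the block, not the $1$--$1$ move you invoke at the bottom. Finally, whenever the relevant arm pair and the adjacent leg pair are separated by $\cale$, both the $0$--$0$ and $1$--$1$ moves vanish (this is precisely the excluded case $e_1=k+1$), which is the situation you explicitly leave open. As the $b=\rho$ configuration shows, this cannot be repaired by a more careful application of \cref{4killing-1} and \cref{4killing-2} alone; the impossibility assertion itself requires restricting the admissible configurations (for instance $b>\rho$, which does hold in the paper's applications), and without such a restriction your argument --- and for that matter the paper's one-line assertion --- does not go through.
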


\begin{proof}
    We see that $\ttt\in\sfz(\la)_\cale$ does not appear in $\cals_{i,j}$ if and only if $\psi_k\vtt=0$ for all odd $3\le k\le n-2$, but this is impossible.
\end{proof}

\subsection{Homomorphisms from hook partitions}\label{subs:hooks}

In this section, we study homomorphisms from $S^\mu$ to $S^\la$ where $\mu$ is also a hook shape.
First, we look at $\mu=(1^n)$ or $(n)$ and completely determine $\hom(S^\mu,S^\la)$ in these cases.
Next, we generalise these results for all $\mu=(c,1^d)$ by finding at least one nonzero homomorphism if it exists.
The results are summarised in \cref{hook-result}.

\begin{rem}
   As we will see, all the homomorphisms constructed in \cref{subs:hooks,subs:nonhooks} have integer coefficients and leading coefficient one, so our results hold for any field $\bbf$ with $\Ch(\bbf)=p$, we only need to reduce the coefficients modulo $p$. 
\end{rem}

\subsubsection{\texorpdfstring{$\mu=(1^n)$ or $\mu=(n)$}{mu row or column}} \label{column}

It is clear that if $\varphi\in\hom(S^\mu,S^\la)$ then $\varphi(z^\mu)=v=\sum\ct\vtt$ such that $\bi^\mu=\bi^\ttt$.
This together with \cref{arm-odd,leg-odd} imply that 
\[
\varphi(z^\mu)=v=\sum_{\substack{\ttt\in\P(\la)\\\bi^\mu=\bi^\ttt}}\ct\vtt.
\]
By \cref{y-pair}, we see that $(J^\mu_1+J^\mu_2)v=0$ for all such $v$.
If $\mu=(n)$, there are no Garnir nodes and if $\mu=(1^n)$, the Garnir relations are given by $\psi_kz^\mu=0$ for all $1\le k\le n-1$.
Hence in order to prove the existence of a homomorphism, we only need to show that $\psi_kv=0$ for all $2\le k \le n-1$ and this will ensure that $(J^\mu_3+J^\mu_4)v=0$.
The residue conditions also imply that all entries in $\ttt$ must appear in pairs (except if $n$ is even, in which case $n$ is not in a pair) and so $\psi_r\vtt=0$ for all even $2\le r\le n-2$.

\begin{rem}
    Considering \cref{arm-odd,leg-odd}, it might seem that we could still have $r-1,r+2\in\Arm(\ttt)$ and $r,r+1\in\Leg(\ttt)$ for some odd $r$ such that $\bi_{r-1}=\bi_{r+1}$, however by a similar argument as in the remark after \cref{standard-y}, we see that this is impossible.
\end{rem}

First, we consider $n$ odd.

\begin{prop}\label{oddcolumn}
Let $n$ be odd, $\mu=(1^n)$ or $(n)$ and $\la=(a,1^b)$.
We have that $\hom(S^\mu,S^\la)$ is one dimensional, spanned by
\[
\varphi(z^\mu)= v=\sum_{\substack{\ttt\in\P(\la)\\ \bi^\mu=\bi^\ttt}} \De(\ttt)\vtt,
\]
if $a$ is odd and zero otherwise.
\end{prop}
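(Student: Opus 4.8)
The plan is to reduce the statement, exactly as in the paragraph preceding it, to a question about when a vector $v=\sum c_\ttt\vtt$ is annihilated by the relevant generators, and then to split into the cases $a$ even and $a$ odd. First I would record the reduction. By \cref{cor:pair} together with the residue constraint $\bi^\ttt=\bi^\mu$, any $\varphi\in\hom(S^\mu,S^\la)$ sends $z^\mu$ to $v=\sum c_\ttt\vtt$ with the sum over $\ttt\in\P(\la)$ satisfying $\bi^\ttt=\bi^\mu$; by \cref{y-pair} every such $v$ already satisfies the $e$- and $y$-relations, so $\varphi$ is a homomorphism if and only if $\psi_k v=0$ for all $1\le k\le n-1$ (relation (iii) for $\mu=(n)$ and the Garnir relations (iv) for $\mu=(1^n)$ both amount to this). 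Since $n$ is odd we have $\cale=\varnothing$ and, $1$ occupying the corner, the residue sequence $\bi^\mu$ forces every entry $2,\dots,n$ into a pair; thus the admissible tableaux are precisely those of $\sfz(\la)_\varnothing$. The arm of such a tableau has $a-1$ cells beyond the corner, which split into pairs only if $a$ is odd. Hence for $a$ even no admissible $\ttt$ exists, so $v=0$ and $\hom(S^\mu,S^\la)=0$; this disposes of the ``zero otherwise'' case.

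For $a$ odd I would first establish existence. By \cref{psi-1} we have $\psi_1\vtt=0$, and by \cref{y-pair} $\psi_k\vtt=0$ for every even $k$ (each even $k\le n-1$ is the smaller entry of a pair, hence adjacent to $k+1$ in $\ttt$), so only the conditions $\psi_kv=0$ for odd $3\le k\le n-2$ survive. Setting $v_0:=\sum_{\ttt\in\sfz(\la)_\varnothing}\De(\ttt)\vtt$ and fixing such a $k$, \cref{prop:main} identifies the tableaux whose image under $\psi_k$ is a fixed nonzero multiple of some $v_{s_k\tau(0)}$ as exactly those labelled by a set $\tij$, the remaining summands being killed by $\psi_k$; therefore $\psi_kv_0=\sum_{i,j}\psi_k\cals_{i,j}$, and each term vanishes by \cref{cor:coeff}. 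Since $\ttt^\la$ is the most dominant admissible tableau and occurs with coefficient $\De(\ttt^\la)=1$, the vector $v_0$ is nonzero, so it spans a nonzero homomorphism.

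The hard part is the upper bound $\dim\hom(S^\mu,S^\la)\le 1$. For any solution $v=\sum c_\ttt\vtt$ and any $0-0$ move $\psi_k$ with source $\tau(0)$, comparing coefficients of $v_{s_k\tau(0)}$ and invoking \cref{prop:main} yields
\[
c_{\tau(0)}=2c_{\tau(-1)}-\sum_{y\ge1}c_{\tau(y)}-\sum_{s}c_{\tau^{(s)}},
\]
where every tableau on the right is strictly less dominant than $\tau(0)$ (being obtained from it by moving nodes into lower rows). I would then order $\sfz(\la)_\varnothing$ by a linear refinement of the dominance order and observe that every pair-tableau other than the least dominant one $\ttt_{(1)}$ admits a $0-0$ move: a tableau has no such move exactly when, listing its pairs in increasing index order, all leg-pairs precede all arm-pairs, and $\ttt_{(1)}$ is the unique tableau of this form. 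The displayed relations then form a triangular system expressing each $c_\ttt$ with $\ttt\neq\ttt_{(1)}$ through strictly less dominant coefficients, so the whole solution is determined by the single parameter $c_{\ttt_{(1)}}$ and $\dim\le1$.

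Combining the last two paragraphs gives that $\hom(S^\mu,S^\la)$ is exactly one-dimensional, spanned by $v_0$, when $a$ is odd. I expect the real care to be needed in the two combinatorial inputs behind the uniqueness step, namely that $\tau(0)$ is the unique maximal element of each $\tij$ and that $\ttt_{(1)}$ is the only pair-tableau without a $0-0$ move; both should follow from the explicit descriptions in \cref{def:tau} and \cref{def:tableau}, but they are what makes the triangularity, and hence the whole dimension count, go through.
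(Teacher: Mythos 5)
Your proposal is correct in substance, and its existence half is essentially the paper's own argument: reduce via \cref{cor:pair} and \cref{y-pair} to the conditions $\psi_k v=0$ for odd $k$, then kill $\psi_k v_0$ by grouping the admissible tableaux into the blocks $\cals_{i,j}$ and invoking \cref{prop:main} and \cref{cor:coeff}. Where you genuinely diverge is the vanishing case and, more importantly, the upper bound $\dim\hom(S^\mu,S^\la)\le 1$. The paper handles the vanishing case by noting that for $n$ odd and $a$ even the two Specht modules lie in different blocks (your ``no admissible pair-tableau exists'' argument is an equivalent, more hands-on version), and it gets the dimension bound in one line from the conjugation isomorphism $\hom(S^\mu,S^\la)\cong\hom(S^{\la'},S^{\mu'})$ quoted in the introduction, since $S^{\mu'}$ is one-dimensional. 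Your replacement — extracting, for every tableau admitting a $0$-$0$ move, the relation $c_{\tau(0)}=2c_{\tau(-1)}-\sum_{y\ge1}c_{\tau(y)}-\sum_s c_{\tau^{(s)}}$, observing that all terms on the right carry strictly smaller labels, and that $\ttt_{(1)}$ is the unique pair-tableau with no $0$-$0$ move — is a valid triangularity argument: \cref{prop:main} is stated as an ``if and only if'', which is exactly what guarantees that these relations exhaust the constraints imposed by $\psi_k v=0$, and the two combinatorial inputs you flag do follow from \cref{def:tau} and \cref{def:tableau}. What your route buys is independence from the conjugation isomorphism (an external input from \cite{loub17}) and a direct proof that the solution space of the defining linear system is spanned by $v_0$; what it costs is length and a dependence on the fine structure of \cref{def:tau}, all of which the paper's duality argument sidesteps.

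One factual slip to fix: it is not $\ttt^\la$, the most dominant admissible tableau, that carries coefficient $\De(\ttt^\la)=1$. The coefficient $1$ is attached to the \emph{least} dominant tableau $\ttt_{(1)}$, labelled by $Z_1=(0,\dots,0)$, while $\ttt^\la$ receives the largest coefficient (it is $6$ in the paper's example with $\la=(5,1^4)$). This does not damage your argument — nonvanishing of $v_0$ over any field still follows, from the coefficient $1$ of $v_{\ttt_{(1)}}$ — but the tableau you point to is the wrong one.
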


\begin{proof}
If $n$ is odd and $a$ is even, then the modules $S^\mu$ and $S^\la$ lie in different blocks and so $\dim\hom(S^\mu,S^\la)=0$.
So we can assume that $a$ is odd.
As $\hom(S^\nu,S^{(1^n)})$ is at most one dimensional, with the only possible homomorphism given by $z^\nu\mapsto v_{\ttt^{(1^n)}}$, we can use the isomorphism given in the introduction to show that $\dim\hom(S^\mu,S^\la)$ is either zero or one.
Thus, once we find a nonzero $v$, then $v$ is unique up to scalar multiplication.
By \cref{prop:number}, we see that all elements of the set $\{\ttt\in\sfz(\la)_\varnothing\mid \bi^\mu=\bi^\ttt\}$ appear in $v$ and by \cref{cor:coeff}, $\psi_kv=0$ for odd $2\le k\le n-1$, hence $(J^\mu_3+J^\mu_4)v=0$ and we proved the statement.
\end{proof}

\begin{eg}
Let $\mu=(1^9)$ and $\la=(5,1^4)$. Then $\dim\hom(S^{(1^9)},S^{(5,1^4)})=1$ and
\begin{center}
\begin{tikzpicture}
\tgyoung(0cm,3.5cm,16789,2,3,4,5)
\tyoung(3.5cm,3.5cm,14589,2,3,6,7)
\tyoung(7cm,3.5cm,14567,2,3,8,9)
\tyoung(0cm,0cm,12389,4,5,6,7)
\tyoung(3.5cm,0cm,12367,4,5,8,9)
\tyoung(7cm,0cm,12345,6,7,8,9)
\node at (-.75, 3)   (f) {$v=$};
\node at (2.75, 3)   (a) {$+2$};
\node at (6.25, 3)   (b) {$+3$};
\node at (-.75, -.5)   (c) {$+3$};
\node at (2.75, -.5)   (d) {$+6$};
\node at (6.25, -.5)   (e) {$+6$};
\node at (9.5, -.5)   (g) {$.$};
\end{tikzpicture}
\end{center}
\end{eg}

Next we consider $n$ even. It is easy to see that by the same reasoning as for $n$ odd, we only have to check $(J^\mu_3+J^\mu_4)v=0$.

\begin{prop}\label{evencolumn}
Let $n$ be even, $\mu=(1^n)$  or $(n)$ and $\la=(a,1^b)$.
We have that $\hom(S^\mu,S^\la)=1$ is one dimensional, spanned by
\[
\varphi(z^\mu) =v=\sum_{\substack{\ttt\in\P(\la)\\ \bi^\mu=\bi^\ttt}} \De(\ttt)\vtt,
\]
if $a$ is even and zero otherwise.
\end{prop}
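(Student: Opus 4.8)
The plan is to mirror the proof of \cref{oddcolumn}, with the roles of the two parities of $a$ interchanged. As in the odd case, the conjugation isomorphism $\hom(S^\mu,S^\la)\cong\hom(S^{\la'},S^{\mu'})$ sends the problem into a homomorphism space whose target ($S^{(1^n)}$ or $S^{(n)}$) is one dimensional, so $\dim\hom(S^\mu,S^\la)\le 1$. Since $\bi^\mu=(0,1,0,1,\dots)$ is strictly alternating, \cref{standard-y} gives $y_r\vtt=0$ for every $r$ and every $\ttt$ with $\bi^\ttt=\bi^\mu$, so $(J_1^\mu+J_2^\mu)v=0$ holds automatically; as in the discussion preceding \cref{oddcolumn} (via \cref{cor:pair,arm-odd,leg-odd}), every $\ttt$ occurring in $v$ lies in $\sfz(\la)_\varnothing$. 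Thus it remains only to check $(J_3^\mu+J_4^\mu)v=0$, that is, to decide when the candidate $v=\sum_{\ttt}\De(\ttt)\vtt$ (summed over $\ttt\in\sfz(\la)_\varnothing$ with $\bi^\ttt=\bi^\mu$) satisfies $\psi_kv=0$ for all $1\le k\le n-1$.

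First I would dispose of the even indices and the interior odd indices uniformly for both parities of $a$. For $k=1$ we have $\psi_1v=0$ by \cref{psi-1}; for even $2\le k\le n-2$ we have $\psi_k\vtt=0$ by \cref{y-pair}(ii), since all entries of $\ttt$ are paired; and for odd $3\le k\le n-3$ we write $v=\sum_{i,j}\cals_{i,j}$ and invoke \cref{cor:coeff} to get $\psi_kv=0$. The whole question therefore reduces to the top index $k=n-1$, which is odd because $n$ is even and which is \emph{not} covered by \cref{cor:coeff}.

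The decisive point is the position of the entry $n$. Since $i_n=1$, the node holding $n$ must have residue $1$; the two removable nodes of $\la$ are the arm end $(1,a)$, of residue $(a-1)\bmod 2$, and the foot of the leg $(b+1,1)$, of residue $b\bmod 2$. When $a$ is even, $n$ is forced to the arm end, so $n\in\Arm(\ttt)$ for every $\ttt$ in the support, and \cref{lem:delta-tableaux}(ii) gives $\psi_{n-1}\vtt=0$; hence $v$ satisfies every relation and, being nonzero by \cref{prop:number}, spans a one dimensional $\hom$. When $a$ is odd, $n$ is instead forced to the foot of the leg, so $n\in\Leg(\ttt)$; now \cref{lem:delta-tableaux}(ii) gives $\psi_{n-1}\vtt=v_{s_{n-1}\ttt}$ whenever $n-1$ sits at the arm end (and $\psi_{n-1}\vtt=0$ otherwise, directly from \cref{action}), and these images are pairwise distinct standard basis vectors. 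As each $\De(\ttt)>0$, we obtain $\psi_{n-1}v=\sum_{n-1\in\Arm(\ttt)}\De(\ttt)\,v_{s_{n-1}\ttt}\ne 0$, so this candidate is not a homomorphism.

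The main obstacle is to upgrade the last computation into a genuine vanishing statement: I must argue that the candidate $v$ spans the whole solution space of the partial system $\{\psi_kv=0 : 1\le k\le n-2\}$, so that the failure of the final relation $\psi_{n-1}v=0$ really forces $\hom(S^\mu,S^\la)=0$ rather than merely ruling out one vector. I expect this to follow from a dominance-triangularity argument: reading off the moves in \cref{action} shows that each relation $\psi_kv=0$ expresses the coefficient of a tableau in terms of strictly more dominant ones, so the partial solution space is at most one dimensional, with the coefficient of the top tableau $\ttt^\la$ as its only free parameter; combined with $\dim\hom\le 1$ this yields dimension zero. Alternatively one may route through the isomorphism to $\hom(S^{\la'},S^{(n)})$ and quote the trivial-module computation, but the direct argument above keeps the proof parallel to \cref{oddcolumn}.
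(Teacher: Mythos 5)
Your handling of the $a$ even half is correct and coincides with the paper's (the paper simply says this case is the same as \cref{oddcolumn}); the problem is the $a$ odd half, where your route genuinely diverges from the paper's and the divergence opens a gap. You only verify that the particular candidate $\sum\De(\ttt)\vtt$ violates $\psi_{n-1}v=0$, and you correctly observe that this proves nothing until one knows that the solution space of the partial system $\{y_rv=0,\ \psi_kv=0:\ k\le n-2\}$ is at most one dimensional. But the mechanism you sketch for that is backwards. In each group $\tij$ the relation coming from \cref{prop:main} and \cref{lem:delta-tableaux} reads
\[
c_{\tau_{(0)}}-2c_{\tau_{(-1)}}+\sum_{\tau\in\tij\setminus\{\tau_{(0)},\tau_{(-1)}\}}c_\tau=0,
\]
where $\tau_{(0)}$ is the unique member with the lex-largest label (the most dominant one) and every other member lies strictly below it. So a single relation lets you solve for the coefficient of its \emph{most} dominant tableau in terms of strictly \emph{less} dominant ones; the free parameter is therefore the coefficient of the \emph{least} dominant tableau, not of $\ttt^\la$. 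As stated, your claim that ``each relation expresses the coefficient of a tableau in terms of strictly more dominant ones'' fails: for $n=10$, $a=5$, the tableau $\tts$ with arm pairs $\{\tp_5,\tp_9\}$ lies in three groups (for $k=3,5,7$), and each of them also contains the bottom tableau with arm pairs $\{\tp_7,\tp_9\}$, so $c_{\tts}$ is never determined by strictly more dominant coefficients and your top-down induction never gets started.

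The gap is fixable by reversing the induction: any tableau in the support other than the least dominant one admits an odd $k\le n-3$ with $\tp_k\in\Arm(\tts)$ and $\tp_{k+2}\in\Leg(\tts)$ (otherwise its arm pairs would form the final segment of pairs ending in $\tp_{n-1}$, i.e.\ it would be the bottom tableau), hence it occurs as the $\tau_{(0)}$ of some group and its coefficient is determined by strictly less dominant ones; induction up the label order then gives the one-dimensionality you need, and your argument closes. You should also note that the paper's own proof of the $a$ odd case is entirely different and avoids this machinery: it takes an \emph{arbitrary} putative homomorphism $v'=\sum c_\ttt\vtt$, observes that no term with $\tp_{n-1}\in\Arm(\ttt)$ can occur (nothing else can produce, let alone cancel, $v_{s_{n-1}\ttt}$), and then by descending induction on odd $k$ forces every pair into $\Leg(\ttt)$, so that $\la=(1^n)$ or $(n)$ and only the trivial map survives; no candidate vector or uniqueness of partial solutions is needed. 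Your fallback of conjugating and quoting the trivial-module theorem would also work, but it invokes \cref{trivial}, i.e.\ Lyle's external result, which the paper only brings in later and which is a much heavier tool than this proposition requires.
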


\begin{proof}
If $a$ is even, the proof is essentially the same as in \cref{oddcolumn}.
Assume that there exists some $\varphi'\in\hom(S^{\mu},S^\la)$ with $n$ even and $a$ odd such that $\varphi'(z^\mu)=v'=\sum_{\ttt\in\P(\la)} \ct\vtt$ and $\tp_{n-1}\in\Arm(\ttt)$.
By \cref{action}, $\psi_{n-1}\vtt= v_{s_{n-1}\ttt}$.
But it is easy to see that there exists no other $\tts\in\P(\la)$ such that $\psi_{n-1}\vs=v_{s_{n-1}\ttt}$, hence we must have that $\tp_{n-1}\in\Leg(\ttt)$.

Next, we look at the possible position of $\tp_{n-3}$ in $\ttt$:
    \begin{enumerate}
        \item[1)] $\tp_{n-3}\in\Arm(\ttt),\tp_{n-1}\in\Leg(\ttt)$: in this case $\psi_{n-3}\vtt=v_{s_{n-3}\ttt}$;
        \item[2)] $\tp_{n-3},\tp_{n-1}\in\Leg(\ttt)$: in this case $\psi_{n-3}\vtt=0$ by \cref{action}.
    \end{enumerate}
We see that if $n-3\in\Arm(\ttt)$, then $\psi_{n-3}\vtt\neq0$ by case 1) above.
As $\tp_{n-1}\in\Leg(\ttt)$ for all $\ttt$ involved in $v$, it is clear that $\psi_{n-3}v\neq0$.
Thus only case 2) is possible.
We can use the same reasoning for all $3\le k<n-3$ and conclude that if $n$ is even and $a$ is odd then $\la=(1^n)$ or $(n)$, that is $S^\mu\cong S^\la$ and we only have the trivial homomorphism.
\end{proof}

\begin{eg}
Let $\mu=(1^8)$ and $\la=(4,1^4)$. Then $\hom(S^{(1^8)},S^{(4,1^4)})=1$ where $\varphi(z^\mu)= v$ and
\begin{center}
\begin{tikzpicture}
\tgyoung(0cm,3.5cm,1678,2,3,4,5)
\tyoung(3.5cm,3.5cm,1458,2,3,6,7)
\tyoung(7cm,3.5cm,1238,4,5,6,7)
\node at (-.75, 3)   (f) {$v=$};
\node at (2.75, 3)   (a) {$+2$};
\node at (6.25, 3)   (b) {$+3$};
\node at (9.5, 3)   (g) {$.$};
\end{tikzpicture}
\end{center}
\end{eg}

\subsubsection{\texorpdfstring{$\mu=(c,1^d)$ and $c>a$}{mu hook and c>a}} \label{sec:large-c}

The strategy is very similar to that of \cref{column}.
Using \cref{arm-odd,leg-odd} and the residue conditions, we see that if $(J^\mu_1+J^\mu_2)v=0$ then
\[
v=\sum_{\substack{\ttt\in\P(\la)\\\bi^\mu=\bi^\ttt}}\ct\vtt.
\]
Observe that in this case there is one extra Garnir relation $\psi_Gz^\mu=\psi_1\psi_2\dots\psi_az^\mu=0$ coming from the Garnir node $\ttt(1,1)$, but since $\psi_1$ appears on the left, $\psi_G\vtt=0$ for all $\ttt\in\sfz(\la)_\cale$ by \cref{action}.
Hence to prove that $\varphi(z^\mu)=v$ for $\varphi\in\hom(S^\mu,S^\la)\neq0$, we only need to show that $\psi_kv=0$ for all $2\le k \le n-1$ as this will ensure that $(J^\mu_3+J^\mu_4)v=0$.
Using \cref{action}, we see that the only possible way that some even $r$ is unpaired in $\Arm(\ttt)$ is if $\psi_rz^\mu\neq0$.

We need to consider the parities of $n,c$ and $a$ separately.

\begin{prop}\label{prop:large-c}
Let $\mu=(c,1^d)$ and $\la=(a,1^b)$ be two partitions of $n$ with $c>a$.
Then $\hom(S^\mu,S^\la)$ is at least one dimensional if
\begin{itemize}
    \item[i)] $n$ is odd and $c\equiv_2 a$, or
    \item[ii)]  $n$ is even,
\end{itemize}
and zero otherwise.
\end{prop}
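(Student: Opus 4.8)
The plan is to split the statement into its necessity (``zero otherwise'') and sufficiency (``at least one dimensional'') halves, handling the former by the block decomposition and the latter by the explicit delta-weighted construction already used in \cref{oddcolumn,evencolumn}. For necessity I would first note that a nonzero homomorphism forces $\cont(\mu)=\cont(\la)$, since $S^\mu$ and $S^\la$ must lie in the same block. A direct count of the $0$- and $1$-nodes of the two hooks shows that, when $n$ is odd, the number of $0$-residue nodes of $(c,1^{d})$ is $(n-1)/2$ if $c$ is even and $(n+1)/2$ if $c$ is odd, and the identical statement holds for $a$; hence the contents agree precisely when $c\equiv_2 a$. When $n$ is even both hooks always have exactly $n/2$ nodes of each residue, so no block obstruction arises. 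This settles the ``zero otherwise'' clause: for $n$ odd with $c\not\equiv_2 a$ the two modules lie in different blocks and $\hom(S^\mu,S^\la)=0$.

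For sufficiency I would propose the candidate
\[
\varphi(z^\mu)=v=\sum_{\substack{\ttt\in\P(\la)\\ \bi^\mu=\bi^\ttt}}\De(\ttt)\,\vtt
\]
and verify the four families of defining relations. By \cref{arm-odd,leg-odd} and the residue conditions any $v$ killed by $J_1^\mu+J_2^\mu$ is supported on pair-tableaux, and \cref{y-pair} shows this $v$ satisfies $(J_1^\mu+J_2^\mu)v=0$. Since $\psi_1$ sits on the left of the Garnir element at the node $(1,1)$, that relation holds by \cref{action}; the remaining Garnir nodes $(i,1)$ with $2\le i\le d$ contribute exactly the relations $\psi_{c+i-1}v=0$, while $J_3^\mu$ contributes $\psi_rv=0$ for $1\le r\le c-1$. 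Thus it suffices to prove $\psi_kv=0$ for every $1\le k\le n-1$ with $k\ne c$. For $k$ odd with $e_1\ne k+1$ this is immediate from \cref{cor:coeff} after summing $\cals_{i,j}$ over the relevant range; at the boundary value $k=c-1$ (when $c$ is even, so $e_1=c=k+1$) \cref{prop:main} gives $\psi_{c-1}\vtt=0$ termwise; for even $k\ne c$ the entries $k,k+1$ are adjacent in every supporting pair-tableau, so \cref{y-pair} applies; and for $k=n-1$ with $n$ even, $n\in\Arm(\ttt)$ for every supporting tableau and \cref{lem:delta-tableaux} gives $\psi_{n-1}\vtt=0$. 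The single even value $k=c$ is not demanded by any relation, which is exactly why the even entry $c$ is permitted to remain unpaired in the arm.

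The remaining, and I expect hardest, step is to show $v\ne0$, equivalently that $\{\ttt\in\P(\la):\bi^\mu=\bi^\ttt\}$ is nonempty under the stated parity hypotheses. Here I would read $\bi^\mu$ as the sequence $(0,1,0,1,\dots)$ of length $c$ followed by $(1,0,1,0,\dots)$ of length $d$, and reformulate realizability by $\la$ as a $2$-colouring of $\{1,\dots,n\}$ into an arm of size $a$ with increasing residues $0,1,0,\dots$ and a leg of size $b$ with increasing residues $1,0,1,\dots$, compatible with $\bi^\mu$. Since $c>a$, the arm of $\la$ is strictly shorter than that of $\mu$, leaving room to spill the surplus arm-entries of $\mu$ into the leg of $\la$; the parity bookkeeping then produces an explicit least dominant pair-tableau $\ttt_{\lzero}\in\sfz(\la)_\cale$ with $\bi^{\ttt_{\lzero}}=\bi^\mu$ whose coefficient $\De(\ttt_{\lzero})$ is a nonzero integer, so $v\ne0$ by linear independence of the $\vtt$, and by \cref{prop:number} every element of $\sfz(\la)_\cale$ occurs in some $\cals_{i,j}$. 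The crux is that this colouring exists exactly when $n$ is even, or $n$ is odd with $c\equiv_2 a$, matching the condition forced by the block argument, so necessity and sufficiency meet. I would organise the final write-up by the parities of $n$ and $c$, exhibiting $\ttt_{\lzero}$ in each, since the shape of $\cale$ and the position of $n$ (in the arm when $n$ is even, in the leg when $n$ is odd) differ between the cases.
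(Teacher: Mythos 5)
Your construction breaks down in part of case (ii). You take $v=\sum_{\ttt\in\P(\la),\,\bi^\mu=\bi^\ttt}\De(\ttt)\vtt$ and, at the step $k=n-1$ with $n$ even, justify $\psi_{n-1}v=0$ by asserting that $n\in\Arm(\ttt)$ for every supporting tableau. That assertion is false exactly when $n$ is even and $c\equiv_2 a$: if $c$ and $a$ are both odd, the arm of a supporting pair-tableau consists of $1$ together with $(a-1)/2$ pairs, and if both are even it consists of $1$, the unpaired entry $c$ (forced into the arm by \cref{y-action}) and $(a-2)/2$ pairs; in either case parity forces the unpaired entry $n$ into the leg. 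The support then contains tableaux $\ttt$ whose arm ends with the pair $\tp_{n-1}$ (possible since $a\ge 3$, resp.\ $a\ge 4$), and for such $\ttt$ \cref{action} gives the $0-0$ move $\psi_{n-1}\vtt=v_{s_{n-1}\ttt}$, which nothing else in your sum can cancel, since a $1-1$, leg or arm move at $k=n-1$ would require the nonexistent pair $\tp_{n+1}$. In this situation $c\le n-2$, so the degenerate Garnir relation $\psi_{n-1}z^\mu=0$ does hold in $S^\mu$, and your $v$ is therefore not a homomorphism. Concretely, for $\mu=(5,1^3)$ and $\la=(3,1^5)$ your sum runs over the three tableaux whose arm pair is $\tp_3$, $\tp_5$ or $\tp_7$, and $\psi_7v=\De(\ttt)\,v_{s_7\ttt}\neq 0$ for the tableau $\ttt$ with arm $\{1,6,7\}$, even though $\psi_7z^\mu=0$. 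This is not a repairable choice of coefficients either: the remark following \cref{prop:large-c} shows that for $n$ even with $c\equiv_2 a$ every homomorphism is supported on tableaux in which $c+1,\dots,n$ all lie in the leg, so no vector with your larger support can satisfy the relations.

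Where your argument is sound, it is sound for a reason worth recording: for $n$ odd it reproduces the proof of \cref{prop:n-less} (whose argument never actually uses $c\le a$), and for $n$ even with $c\not\equiv_2 a$ parity really does force $n\in\Arm(\ttt)$, so your verification goes through there. The paper avoids the problematic case entirely by a different construction: it first builds $\bar\varphi\in\hom(S^{(c)},S^{\bar\la})$ for the sub-hook $\bar\la=(a,1^{c-a})$ using \cref{oddcolumn,evencolumn} (with $\cale=\varnothing$ or $\{c\}$ according to the parity of $c$), and then extends each supporting tableau of $\bar\la$ to one of $\la$ by adjoining $c+1,\dots,n$ to the leg; the relations indexed by $k>c$ then hold termwise by \cref{action}, and those indexed by $k<c$ hold because $\bar v$ was already a homomorphism. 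To repair your write-up you should either adopt this reduce-and-extend construction, or equivalently prune your sum in the even-$n$, $c\equiv_2 a$ case to those tableaux in which all of $c+1,\dots,n$ lie in the leg, which amounts to the same thing.
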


\begin{proof}
Define $\bar\la:=(a,1^{c-a})$.
First, we are going to construct a homomorphism $\bar\varphi(z^{(c)})=\bar v$ with $\bar v$ summing over elements of the set $\{\bar\ttt\in\sfz(\bar\la)_\cale\mid \bi^{(c)}=\bi^{\bar\ttt}\}$ and then extend this to a homomorphism from $S^\mu$ to $S^\la$ by placing the entries $c+1,c+2,\dots,n$ in $\Leg(\bar\ttt)$.

 \begin{enumerate}
    \item Odd $n$
    
    Just as in \cref{oddcolumn}, we see that if $n$ is odd and $c\not\equiv_2 a$, then $S^\mu$ and $S^\la$ are in different blocks.
    Thus we assume that $c\equiv_2 a$.
    \begin{itemize}
        \item[a)] $a$ odd
        
    In this case $c$ is also odd and we take
    \[
        \bar v=\sum_{\substack{\bar\ttt\in\sfz(\bar\la)_\varnothing\\ \bi^\mu=\bi^{\bar\ttt}}}\De(\bar\ttt)\vtb.
    \]
    \item[b)] $a$ even
    
    Here $c$ is even and we take
    \[
        \bar v=\sum_{\substack{\bar\ttt\in\sfz(\bar\la)_{\{c\}}\\ \bi^\mu=\bi^{\bar\ttt}}}\De(\bar\ttt)\vtb.
    \]
    \end{itemize}
    \item Even $n$
    
      In this case, the residue conditions allow $c$ to be either odd or even, depending on where $n$ is placed. 
    \begin{itemize}
        \item[a)] $a$ odd
        
        If $c$ is odd, we construct $\bar v$ as described in case 1.
        If $c$ is even, $\bar v$ is the same as in 1b) (note that $n\in\Arm(\bar\ttt)$ for all $\bar \ttt$ appearing in the sum).
        \item[b)] $a$ even
        
        If $c$ is odd, then $\bar v$ agrees with the one in 1a) (observe that $n\in\Arm(\bar\ttt)$ for all $\bar \ttt$ appearing in the sum).
        Finally, if $c$ is even, $\bar v$ is the same as in 1b).
        \end{itemize}
\end{enumerate}

We extend each $\bar\ttt$ above to $\ttt\in\std(\la)$ by adjoining $c+1,c+2,\dots,n$ to $\Leg(\bar\ttt)$, giving terms $v$.
By \cref{action}, it is clear then that $\psi_r v=0=y_s v$ for all $c<r< n$ and $c<s \le n$ and by \cref{oddcolumn}, we have that $J^\mu v=0$.
\end{proof}

\begin{rem}
The homomorphisms $\varphi$ described in \cref{prop:large-c} might not be unique, indeed if $n$ is odd and $c\equiv_2 a$ or $n$ is even and $c\not\equiv_2 a$, then by \cref{oddcolumn,evencolumn} we see that if $\varphi'\in\hom(S^{(n)},S^\la)$ and $\varphi'(z^{(n)})=v'$, then there exists some $\varphi\in\hom(S^\mu,S^\la)$ such that $\varphi(z^\mu)=v'$.
As $c>a$, it is clear that $v\neq v'$ and we have that $\dim\hom(S^\mu,S^\la)\ge2$.

However, if $n$ is even and $c\equiv_2 a$, then $n\not\in\Arm(\ttt)$ and by a similar reasoning as in \cref{evencolumn}, we see that the entries $c+1,\dots,n$ must all be placed in $\Leg(\ttt)$.
As $\dim\hom(S^{(c)},S^\la)=1$, this implies that $\hom(S^{\mu},S^\la)$ is also one dimensional in this case.
\end{rem}

\begin{eg}
    Let $n>7$.
    If $\mu=(7,1^{n-7})$ and $\la=(3,1^{n-3})$, then $\dim\hom(S^\mu,S^\la)\ge2$ and the homomorphism described in \cref{prop:large-c} case 1a) is drawn below where we omitted all entries greater than $7$.
    \begin{center}
\begin{tikzpicture}
\tgyoung(0cm,3.5cm,167,2,3,4,5,'12\vdts)
\tgyoung(3.5cm,3.5cm,145,2,3,6,7,'12\vdts)
\tgyoung(7cm,3.5cm,123,4,5,6,7,'12\vdts)
\node at (-.75, 3)   (f) {$v=$};
\node at (2.75, 3)   (a) {$+2$};
\node at (6.25, 3)   (b) {$+3$};
\end{tikzpicture}
\end{center}
\end{eg}

\subsubsection{\texorpdfstring{$\mu=(c,1^d)$ and $c=a$ or $a-1$}{mu=la}}

In \cite{ls14}, the author obtained a generalised eigenspace decomposition for $S^\la$ for hook partitions of shape $\la$ in the case when $n$ is odd.
In doing so, he obtained an upper bound on $\dim\End_{R^\La_n}(S^\la)$ and constructed an endomorphism for odd $n$.
Here we summarise these results and extend for the case when $n$ is even and $\mu=(a-1,1^{b+1})$.

\begin{prop}[\cite{thesis} Lemma~3.28 and \cite{ls14}, Thm.~4.3]
\begin{itemize}
    \item[i)] Let $n$ and $a$ both be odd and take $\la=(a,1^b)$.
    Then
    \begin{itemize}
        \item[$\bullet$] $\dim\End_{R^\La_n}(S^\la)\le 1+b/2$ if $b<n/2$, and
        \item[$\bullet$] $\dim\End_{R^\La_n}(S^\la)\le 1+(a-1)/2$ if $b\ge n/2$.
    \end{itemize}
    \item[ii)] If $n$ is even, then $\dim\End_{R^\La_n}(S^\la)=1$.
\end{itemize}
\end{prop}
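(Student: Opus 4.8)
The plan is to use the universal property of the Specht module to identify $\End_{R^\La_n}(S^\la)$ with the solution space
\[
V:=\{v\in S^\la : e(\bj)v=\delta_{\bj,\bi^\la}v,\ y_rv=0,\ \psi_rv=0\ (1\le r<a),\ g^Av=0\},
\]
since an endomorphism $\varphi$ is determined by $v=\varphi(z^\la)$, and any $v$ satisfying the defining relations (i)--(iv) of $z^\la$ extends to an endomorphism. First I would cut down the support of $v=\sum_\ttt\ct\vtt$: the condition $e(\bi^\la)v=v$ forces $\bi^\ttt=\bi^\la$, and then \cref{y-action,arm-odd,leg-odd,cor:pair} force $\ct\neq0\Rightarrow\ttt\in\sfz(\la)_\cale$ for the set $\cale$ of consecutive even arm-entries determined by $\la$. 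On this support $y_rv=0$ is automatic by \cref{y-pair}, and \cref{action} shows the even-indexed $\psi$-relations merely pin down $\cale$. As in the column computations of \cref{oddcolumn,evencolumn}, the first-row relations $\psi_rz^\la=0$ and the column Garnir relations reduce the whole system to the single family $\psi_kv=0$ for all odd $k$: for a Garnir node $(r,1)$ with $r\ge2$ the belt has no brick, so $g^A$ collapses to a single $\psi_{a+r-1}$; the boundary case $k=a$ comes from the node $(1,1)$; and $\psi_1$ annihilates every $\vtt$ by \cref{psi-1}. Thus $V$ is exactly the space of $v$ supported on $\sfz(\la)_\cale$ killed by every odd $\psi_k$.

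For the construction witnessing tightness of part (i), I would invoke \cref{cor:coeff}: each block vector $\cals_{i,j}=\sum_{\tau\in\tij}\De(\tau)v_\tau$ satisfies $\psi_k\cals_{i,j}=0$, so every $\bbf$-linear combination of the $\cals_{i,j}$ lies in $V$. In particular the total delta-vector $\sum_{\ttt\in\sfz(\la)_\cale}\De(\ttt)\vtt=\sum_{i,j}\cals_{i,j}$ is a genuine endomorphism, distinct in general from the identity $z^\la\mapsto v_{\ttt^\la}$, and by \cref{prop:number} every tableau of $\sfz(\la)_\cale$ contributes.

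The upper bound is where the real work lies, and I would organise it through the $\bbz$-grading. Since $\End_{R^\La_n}(S^\la)=\bigoplus_dV_d$, with $V_d$ spanned by homogeneous $v$ whose supporting tableaux share the degree $\deg(\ttt^\la)+d$, it suffices to bound the dimension of each graded piece and the number of degrees $d$ with $V_d\neq0$. For odd $n$ and $a$ the crucial claim is that each $V_d$ is at most one-dimensional: a homogeneous solution of $\psi_kv=0$ has all of its coefficients determined, up to a single global scalar, by the relations among the $\psi_k$-images encoded in \cref{prop:2c}. Granting this, the bound reduces to counting admissible degrees, which correspond to the successive rotations of pairs between arm and leg allowed by \cref{cor:psi-leg,cor:psi-arm,pair-leg,pair-arm}; the number of such rotations is capped by the shorter of the arm and the leg, giving at most $1+b/2$ nonzero graded pieces when $b<n/2$ (the leg binds) and $1+(a-1)/2$ when $b\ge n/2$ (the arm binds). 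The main obstacle is precisely the at-most-one-dimensionality of each $V_d$: I expect to prove it by showing that the $\cals_{i,j}$ span $V$ and that, within a fixed degree, the combinatorics of \cref{def:tau} force any two of them to be proportional, so that no exotic solutions of the $\psi_k$-system survive. This is the content of the generalised eigenspace analysis of \cite{ls14}, with the degree decomposition separating the $\cals_{i,j}$ into distinct graded components and turning the count into the enumeration above.

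For part (ii) with $n$ even, I would instead run the cascading-forcing argument of \cref{evencolumn}. Here $\psi_{n-1}v=0$ together with \cref{action} forces the top pair $\tp_{n-1}$ into $\Leg(\ttt)$ for every $\ttt$ in the support; then $\psi_{n-3}v=0$ forces $\tp_{n-3}\in\Leg(\ttt)$, and descending through $\psi_{n-5},\psi_{n-7},\dots$ pins down the position of every pair, so that the support collapses to a single tableau and no freedom survives beyond an overall scalar. Hence $V_d=0$ for $d\neq0$ and $\dim V_0=1$, giving $\dim\End_{R^\La_n}(S^\la)=1$. The only delicate point is verifying that the parity of $n$ genuinely obstructs every rotation at each stage of the cascade, which is exactly the parity bookkeeping already carried out in the remark following \cref{standard-y} and in \cref{evencolumn}.
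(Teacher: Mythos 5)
The first thing to note is that the paper contains no proof of this proposition at all: it is quoted verbatim from \cite{thesis} and \cite[Thm.~4.3]{ls14} (the surrounding text says ``Here we summarise these results\dots''), so your argument has to stand on its own. It does not, and the first failure is concrete: your reduction of $\End_{R^\La_n}(S^\la)$ to the solution space of ``$\psi_k v=0$ for all odd $k$'' is wrong at the corner. For $\la=(a,1^b)$ the Garnir node $(1,1)$ does \emph{not} produce the relation $\psi_a z^\la=0$: its belt is the whole first row together with $(2,1)$, every brick lies in row one, so $\scrd^A$ is trivial and the Garnir element is the long word $\psi_1\psi_2\cdots\psi_a$, which is automatically zero on every $\vtt$ because $\psi_1$ kills everything (\cref{psi-1}); the paper says exactly this in the section on $\mu=(c,1^d)$ with $c>a$. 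Hence there is no relation at $k=a$, and this is not a technicality: under your (stronger) system the identity endomorphism $z^\la\mapsto v_{\ttt^\la}$ would not lie in $V$, since $\psi_a v_{\ttt^\la}=v_{s_a\ttt^\la}\neq0$ by \cref{action}, and $V$ would collapse to the one-dimensional simultaneous-solution space of \cref{oddcolumn} --- contradicting both the bound $1+b/2>1$ you are trying to prove and the second endomorphism exhibited in \cref{prop:endo}. Two further slips in the same part of the setup: a solution of the full column system has residue sequence $\bi^{(1^n)}$, not $\bi^\la$, so your ``total delta-vector'' is not an endomorphism of $S^\la$ at all; and a single block $\cals_{i,j}$ is annihilated only by the one $\psi_k$ attached to position $j$ (\cref{prop:main}, \cref{cor:coeff}), not by all odd $\psi_k$, so arbitrary linear combinations of the $\cals_{i,j}$ do not lie in $V$.

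The second, and decisive, gap is that the heart of part (i) --- that each graded piece $V_d$ is at most one-dimensional, together with the count of admissible degrees producing $1+b/2$, resp.\ $1+(a-1)/2$ --- is precisely what you defer (``I expect to prove it\dots'') and then attribute to ``the generalised eigenspace analysis of \cite{ls14}''. Since \cite[Thm.~4.3]{ls14} \emph{is} the statement being proved, this is a circular citation, not a proof; nothing in your text actually establishes either the one-dimensionality of the graded pieces or the enumeration of degrees. Part (ii) has an analogous defect: the cascade of \cref{evencolumn} is powered by having $\psi_k v=0$ for \emph{every} $k$, and it forces every pair into the leg; for an endomorphism the descent cannot cross the corner (again, there is no relation at $k=a$), and it must not, because the image of the identity has pairs in the arm whenever $a\ge3$. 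So ``descending through $\psi_{n-5},\psi_{n-7},\dots$ pins down the position of every pair'' is contradicted by the identity map itself; the correct argument has to combine the column relations from above, the row relations from below, and the $y$-relations of \cref{y-action} to pin the support to $\ttt^\la$, which your sketch does not do.
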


\begin{defn}[cf.~Def.~6.1,~\cite{ls14}]
For $n$ odd, let $k$ and $r$ be odd integers  with $3\le k\le a<r\le n$.
We will denote by $\ttt_{k,r}$ the tableau with  $\{\tp_3,\tp_5,\dots,\tp_a,\tp_r\}\setminus\{\tp_k\}$ in the arm.
If $n$ is even, we choose odd integers $k$ and $r$ such that $3\le k\le a<r\le n-1$ and $\ttt_{k,r}$ denotes the tableau with even arm length where the pairs $\{\tp_3,\tp_5,\dots,\tp_a,\tp_r\}\setminus\{\tp_k\}$ and $n$ are in the arm.
\end{defn}

\begin{eg}
If $\lambda=(7,1^4)$ then
\[\ttt_{5,9}=\young(1236789,4,5,<10>,<11>)
\qquad
\text{and}
\qquad
\ttt_{7,11}=\young(12345<10><11>,6,7,8,9).
\]
\end{eg}

\begin{propc}{ls14}{cf.~Prop.~6.2}\label{prop:endo}
If $n$ and $a$ are both odd and $\mu=\la$ or if $n$ is even and $c=a-1$ is odd, then there exists a homomorphism $\varphi\in\hom(S^\mu,S^\la)$ given by $\varphi(z^\mu)=v$ where
\[
v=\sum_{\substack{3\le k\le a\\ a+2\le r\le n\\k,r \text{ odd}}}\mfrac{k-1}{2}\cdot\mfrac{n+2-r}{2}v_{\ttt_{k,r}}.
\]
\end{propc}

\begin{proof}
If $n$ is odd, the result follows by \cite[Prop.~6.2]{ls14}.
If $n$ is even, by the same reasoning as in the proof of \cref{evencolumn}, we must have that $n\in\Arm(\ttt_{k,r})$.
\end{proof}

\begin{eg}
If $\mu=(7,1^5)$ and $\la=(8,1^4)$, then we have a homomorphism from $S^\mu$ to $S^\la$ given by
\[
v=v_{\ttt_{5,9}}+2\;v_{\ttt_{3,9}}+2\;v_{\ttt_{5,11}}+4\;v_{\ttt_{5,9}}+3\;v_{\ttt_{7,11}}+6\;v_{\ttt_{7,9}}.
\]
\end{eg}

\subsubsection{\texorpdfstring{$\mu=(c,1^d)$ and $c\le a$}{mu hook and c<a}}

The next statement follows from \cref{oddcolumn}.

\begin{prop}\label{prop:n-less}
If $n$ is odd and $c\le a\le n$, then $\hom(S^\mu,S^\la)$ is at least one dimensional if $c\equiv_2 a$ and zero otherwise.
\end{prop}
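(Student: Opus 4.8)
The plan is to separate the two directions of the statement and then split the existence part according to the parity of $c$ (equivalently of $a$, since $c\equiv_2 a$).

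First I would dispose of the ``zero otherwise'' direction by a block argument. Counting residues in a hook, $\la=(a,1^b)$ has exactly $\lfloor a/2\rfloor+\lfloor(b+1)/2\rfloor$ nodes of residue $1$, which (using that $n$ is odd) equals $(n+1)/2$ when $a$ is even and $(n-1)/2$ when $a$ is odd. Running the same count for $\mu=(c,1^d)$ shows that $\cont(\mu)=\cont(\la)$ if and only if $a\equiv_2 c$. Consequently, if $c\not\equiv_2 a$ then $S^\mu$ and $S^\la$ lie in different blocks of $R_n^\La$ and $\hom(S^\mu,S^\la)=0$.

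Now suppose $c\equiv_2 a$ and first take $c$ (hence $a$) odd. The key point is that reading $\ttt^\mu$ along its row and then down its leg gives the strictly alternating sequence $\bi^\mu=(0,1,0,1,\dots)=\bi^{(n)}$, precisely because the unique multi-node row ends at the odd entry $c$. I would therefore let $v$ be a nonzero element spanning $\hom(S^{(n)},S^\la)$ as furnished by \cref{oddcolumn}, and verify that $v$ also satisfies the defining relations of $S^\mu$. Every relation of $S^\mu$ other than the first-column Garnir relation is of the form $y_rv=0$ or $\psi_rv=0$ with $1\le r\le n-1$ (the column Garnir nodes $(i,1)$, $2\le i\le d$, only require $\psi_{c+i-1}v=0$), and all of these already hold for $v$ by \cref{oddcolumn}. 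For the Garnir node $(1,1)$ the associated element involves $\psi_1$, which annihilates every basis vector of $S^\la$ by \cref{psi-1}; hence $g^{(1,1)}v=0$ exactly as in the proof of \cref{prop:large-c}. Thus $z^\mu\mapsto v$ is a nonzero homomorphism. For $c$ (hence $a$) even the residue sequence is no longer alternating: the even-length row forces $i_c=i_{c+1}=1$, so by \cref{cor:pair,y-action} any candidate $v=\sum\ct\vtt$ with $\ct\ne0$ must have $c\in\Arm(\ttt)$ and $c+1\in\Leg(\ttt)$, i.e.\ $\ttt\in\sfz(\la)_{\{c\}}$. I would then set $v=\sum_{\ttt\in\sfz(\la)_{\{c\}},\,\bi^\ttt=\bi^\mu}\De(\ttt)\vtt$, in direct analogy with \cref{oddcolumn,prop:large-c}, and check the relations of $S^\mu$ one by one: $e(\bi^\mu)v=v$ holds by construction; $y_rv=0$ by \cref{y-pair}(i); for even $r\ne c$ each $\psi_r\vtt=0$ by \cref{y-pair}(ii), since $r,r+1$ form an unbroken pair in $\ttt$; for odd $3\le r\le n-2$ one has $\psi_rv=0$ by \cref{cor:coeff} after writing $v=\sum_{i,j}\cals_{i,j}$; and $\psi_1v=0=g^{(1,1)}v$ again by \cref{psi-1}. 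Nonvanishing of $v$ follows from \cref{prop:number} together with the fact that the $\De$-coefficients have leading value one, and the degenerate possibilities ($d=0$, or $\la$ too small for \cref{prop:number}) reduce to $S^\mu\cong S^\la$ or to \cref{oddcolumn} directly.

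The part I expect to be most delicate is the even-$c$ construction: one must be sure that $\sfz(\la)_{\{c\}}$ with residue sequence $\bi^\mu$ is nonempty (this is where the hypothesis $c\le a$ is used, leaving room in the arm) and that the resulting $v$ is genuinely nonzero, and one must confirm that the first-column Garnir element really does reduce to something killed by $\psi_1$. The latter is not entirely formal because of the reduced-expression conventions in the Garnir element, but it is already carried out in the proof of \cref{prop:large-c}, whose node $(1,1)$ has the same shape, so I would simply invoke that argument rather than reprove it.
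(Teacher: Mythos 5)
Your proposal is correct and takes essentially the same route as the paper: the block argument for the vanishing direction, reuse of the \cref{oddcolumn} homomorphism when $c$ is odd, and the $\De$-weighted sum over $\sfz(\la)_{\{c\}}$ (checked via \cref{y-pair}, \cref{prop:main}, \cref{cor:coeff}, and the fact that $\psi_1$ kills the first-column Garnir element) when $c$ is even. You are in fact somewhat more explicit than the paper, which relegates the Garnir node $(1,1)$ and the residue/parity restriction to the setup surrounding \cref{prop:large-c} and does not comment on nonvanishing.
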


\begin{proof}
    The parity restrictions follow by the same reasoning as in \cref{prop:large-c}.
    
    Assume that $c$ and $a$ are both odd.
    In \cref{oddcolumn} we have already constructed $\varphi(z^{(1^n)})=v$.
    Taking the same $v$ here, we already know that $\psi_kv=0$ for all $2\le k \le n-1$, so the condition $\psi_rv=0$ for $2\le r\neq c \le n$ is immediately satisfied.
    
    If $c$ and $a$ are both even, by \cref{y-action} we have that
    \[
    \varphi(z^\mu)=v=\sum_{\substack{\ttt\in\sfz(\la)_{\{c\}}\\ \bi^\mu=\bi^\ttt}}\ct\vtt.
    \]
    Using \cref{oddcolumn} again, we can construct $v$ in the same way by setting $\ct=\De(\ttt)$.
    By \cref{prop:main}, $\psi_{c-1}\vtt=0$ for all $\ttt\in\sfz(\la)_{\{c\}}$ and so by \cref{action,cor:coeff}, $\psi_kv=0$ for all $2\le k\neq c \le n-1$ and we proved the statement.
\end{proof}

\begin{prop}\label{even-parity}
If $n$ is even and $c<a$, then $\hom(S^\mu,S^\la)$ is at least one dimensional if $c\not\equiv_2 a$ and zero otherwise.
\end{prop}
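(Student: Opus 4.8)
The plan is to study a potential homomorphism $\varphi\colon S^\mu\to S^\la$ through $v=\varphi(z^\mu)$, which must satisfy precisely the defining relations of $z^\mu$. By \cref{cor:pair} the support of any such $v$ lies in $\P(\la)$, so \cref{y-pair} makes the $e$- and $y$-relations $(J_1^\mu+J_2^\mu)v=0$ automatic, and by the remark after \cref{def:tableau} the support in fact lies in $\sfz(\la)_\cale$ for the set $\cale$ attached to $\mu$, which for $\mu=(c,1^d)$ is $\{c\}$ if $c$ is even and $\varnothing$ if $c$ is odd. For $\mu=(c,1^d)$ the row relations give $\psi_rz^\mu=0$ for $1\le r\le c-1$; exactly as in \cref{sec:large-c}, the Garnir element at $(1,1)$ begins with $\psi_1$ and so annihilates every $\vtt$ by \cref{psi-1}, while each leg Garnir node yields a single relation $\psi_mz^\mu=0$ with $c+1\le m\le n-1$ (there are no bricks, since vertically adjacent nodes have distinct residues when $e=2$). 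Since $\psi_cz^\mu\ne0$, the upshot is that $v$ defines a homomorphism precisely when $v\ne0$, its support lies in $\sfz(\la)_\cale$, and $\psi_rv=0$ for every $r\ne c$.

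Because $n$ is even, a residue count shows $\cont(\mu)=\cont(\la)$, so $\mu$ and $\la$ always lie in the same block and the parity dichotomy is not a block phenomenon; instead it is dictated by the forced position of the even entry $n$. Comparing the residue of $n$ in $\bi^\mu$ with the residues of the two extremal nodes of $\la$, one finds that $n$ must occupy the end of the arm when $c\not\equiv_2 a$ and the foot of the leg when $c\equiv_2 a$. For the existence half ($c\not\equiv_2 a$) I would take
\[
v=\sum_{\substack{\ttt\in\sfz(\la)_\cale\\ \bi^\ttt=\bi^\mu}}\De(\ttt)\,\vtt,
\]
checking first that this index set is nonempty, which reduces to the arm-length parity in \cref{def:tableau} being compatible with $c<a$ and with $n$ in the arm. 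That $\psi_rv=0$ for all $r\ne c$ then splits into three cases: for odd $r$ with $3\le r\le n-2$ it follows from \cref{cor:coeff} after decomposing $v$ into the sums appearing there; for even $r\ne c$ the entry $r$ is paired in every tableau of $\sfz(\la)_\cale$, so $\psi_r\vtt=0$ by \cref{y-pair}; and $r=1$ is immediate. The exempted index $r=c$ is exactly the unpaired even entry of $\cale$ when $c$ is even, where $\psi_c$ need not vanish. Nonvanishing of $v$ follows from \cref{prop:number}, giving $\hom(S^\mu,S^\la)\ne0$.

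For the vanishing half ($c\equiv_2 a$) the residue analysis forces $n\in\Leg(\ttt)$ for every $\ttt$ in the support, and I would then run the descent of \cref{evencolumn}. The relation $\psi_{n-1}v=0$ forces $\tp_{n-1}\in\Leg(\ttt)$, since no other tableau in $\sfz(\la)_\cale$ produces $v_{s_{n-1}\ttt}$, and applying $\psi_{n-3},\psi_{n-5},\dots$ in turn pushes every odd-indexed pair into the leg. When $c$ is even the exempted index $c$ is even, so the full odd descent survives and drives all pairs into the leg, leaving an arm of at most two nodes; when $c$ is odd the descent skips only $\tp_c$ and sends every other pair down, leaving an arm of at most three nodes. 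In either case the arm is too short to have length $a>c$, forcing $v=0$ and hence $\hom(S^\mu,S^\la)=0$.

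The hardest part will be the vanishing argument, specifically justifying the ``no cancellation'' step of the descent once the exempted index $r=c$ sits in the interior of the chain (the case $c$ odd): one has to rule out a new family of supported tableaux whose images under the surviving $\psi_r$ combine to cancel, which I expect to settle by the same residue and domino-counting bookkeeping used in \cref{4killing-1,4killing-2,cor:psi-leg,cor:psi-arm}. A secondary nuisance is confirming nonemptiness of the index set in the construction and the compatibility of the $\De$-coefficients with \cref{cor:coeff} at the indices adjacent to $n$, both handled by the parity case analysis on $c$ and $a$ running through \cref{subs:hooks}.
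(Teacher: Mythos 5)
Your overall route coincides with the paper's: reduce the homomorphism conditions to ``support contained in $\sfz(\la)_\cale$ and $\psi_rv=0$ for all $r\neq c$'', construct $v=\sum\De(\ttt)\vtt$ when $c\not\equiv_2 a$ exactly as in \cref{prop:n-less}, and run the \cref{evencolumn} descent when $c\equiv_2 a$. The existence half is essentially correct; the one slip is that \cref{cor:coeff} only covers odd $r$ with $3\le r\le n-2$, so the odd index $r=n-1$ (which exists precisely because $n$ is even) is not in your three-case split and needs the separate remark that $n\in\Arm(\ttt)$ for every supported $\ttt$ in this parity case, whence $\psi_{n-1}\vtt=0$ by \cref{lem:delta-tableaux}(ii) or \cref{action}.

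The genuine gap is in the vanishing half, exactly where you flag it, and the justification you hope for cannot exist in the form you state it. Once the exempted index allows $\tp_c$ to sit in the arm, $\psi_{c-2}v$ simultaneously receives $0-0$ contributions, $(-2)$-weighted $1-1$ contributions, and arm/leg-rotation contributions, and such mixtures genuinely do cancel --- this is precisely the mechanism that the sums $\cals_{i,j}$ of \cref{cor:coeff} are built to exploit in the existence construction --- so no bookkeeping in the style of \cref{4killing-1,4killing-2} can show that each surviving $\psi_r$ forces $\tp_r$ into the leg below $c$. (Your final count is also inconclusive at the edge case $c=1$, $a=3$, where ``arm of at most three nodes'' is no contradiction.) Fortunately the descent below $c$ is unnecessary: stop it at $c+2$. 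The relations $\psi_{n-1},\psi_{n-3},\dots,\psi_{c+2}$ are all available, and at each such step no cancellation can occur, because all higher pairs already lie in the leg: leg moves and $1-1$ moves vanish, so $\psi_kv$ is a sum of distinct basis vectors $v_{s_k\ttt}$ over the tableaux with $\tp_k\in\Arm(\ttt)$, forcing those coefficients to be zero. Hence every pair $\tp_{c+2},\dots,\tp_{n-1}$, together with $n$, lies in the leg of every supported tableau, so the arm is contained in $\{1,2,\dots,c\}$ and has at most $c<a$ entries. Since every supported tableau must have arm length exactly $a$, the support is empty and $v=0$ outright; no relation of index below $c$ is ever needed. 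This emptiness count is what the paper's terse appeal to \cref{evencolumn} amounts to, and with it your argument closes.
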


\begin{proof}
    If $n$ is even and $c\equiv_2 a$, then we would have
    \[
    \varphi(z^\mu)=v=\sum_{\substack{\ttt\in\sfz(\la)_\cale\\ \bi^\mu=\bi^\ttt}}\ct\vtt.
    \]
    By a similar argument as in \cref{evencolumn}, this is impossible and we must have that $n\in\Arm(\ttt)$, but then $c\not\equiv_2 a$, a contradiction.
    
    If $c$ is even and $a$ is odd, we see that $\cale=\{c\}$ and if $c$ is odd and $a$ is even, then $\cale=\varnothing$.
    Using a similar reasoning as in the proof of \cref{prop:n-less}, we see that $J^\mu v=0$.
\end{proof}

We summarise the results from this section in the following statement.

\begin{thm}\label{hook-result}
Let $\mu=(c,1^d)$ and $\la=(a,1^b)$. Then $\hom(S^\mu,S^\la)$ is at least one dimensional if
\begin{itemize}
    \item[i)] $n$ is odd and $c\equiv_2 a$, or
    \item[ii)] $n$ is even, unless $c\equiv_2 a$ and $c< a$.
\end{itemize}
Moreover, if $c>a$ and $c+n\not\equiv_2 a$, then $\dim\hom(S^\mu,S^\la)\ge2$.
\end{thm}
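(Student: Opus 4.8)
The plan is to read the first assertion off the propositions already proved in this subsection, sorting the cases by the parity of $n$ and the comparison of $c$ with $a$, and to obtain the final inequality from the Remark following \cref{prop:large-c}. I only need the ``at least one dimensional'' direction; the matching vanishing statements are part of those same propositions and may be cited if one wants the sharper dichotomy.

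For the first assertion I would split on the size of $c$. If $c>a$, \cref{prop:large-c} is already decisive: $\hom(S^\mu,S^\la)\neq0$ exactly when $n$ is odd with $c\equiv_2 a$, or $n$ is even. If $c=a$, then $n=c+d=a+b$ forces $d=b$, hence $\mu=\la$ and the identity endomorphism shows $\hom(S^\mu,S^\la)\neq0$; here $c\not<a$, so (ii) never excludes this case and (i) holds whenever $n$ is odd. If $c<a$, I would quote \cref{prop:n-less} for $n$ odd (nonzero iff $c\equiv_2 a$) and \cref{even-parity} for $n$ even (nonzero iff $c\not\equiv_2 a$). Collecting: for $n$ odd the space is nonzero precisely when $c\equiv_2 a$, which is (i); for $n$ even it is nonzero in every configuration except $c<a$ together with $c\equiv_2 a$, which is exactly the exclusion in (ii). This step is pure bookkeeping once the constituent propositions are invoked.

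For the final inequality, I would first rewrite the hypothesis: expressing $c+n\not\equiv_2 a$ as ``$c-a+n$ odd'' and splitting on the parity of $n$ shows it is equivalent to ``$n$ odd and $c\equiv_2 a$'' or ``$n$ even and $c\not\equiv_2 a$'', which (together with $c>a$) are the hypotheses of the Remark following \cref{prop:large-c}. That remark produces a second homomorphism: \cref{oddcolumn,evencolumn} supply a nonzero $\varphi'\in\hom(S^{(n)},S^\la)$ with $\varphi'(z^{(n)})=v'$, and $z^\mu\mapsto v'$ extends to some $\varphi\in\hom(S^\mu,S^\la)$. The homomorphism $v$ built in \cref{prop:large-c} confines all of $c+1,\dots,n$ to the leg, whereas $v'$ comes from the row module and carries larger entries into the arm as well; since $c>a$ these supports differ and $v,v'$ are linearly independent, so $\dim\hom(S^\mu,S^\la)\ge2$. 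I expect the main obstacle to be exactly this last step, and I would verify two things in particular: that $v'$ has residue sequence $\bi^\mu$ so that it is a legitimate image of $z^\mu$ (this is where the parity hypothesis enters, through the column residues of $\mu$ matching the row residues of $(n)$), and the linear independence of $v$ and $v'$, for which I would isolate a single pair-tableau occurring in $v'$ but not in $v$. The verification that $v'$ respects the defining relations of $S^\mu$ is routine: $v'$ is annihilated by every $y_r$ and every $\psi_k$, which is precisely the condition used in the proof of \cref{oddcolumn} to recognise a valid image of the cyclic generator.
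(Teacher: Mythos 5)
Your proposal is correct and takes essentially the same approach as the paper: \cref{hook-result} is presented there purely as a summary assembled from \cref{prop:large-c}, \cref{prop:n-less}, \cref{even-parity} and the remark following \cref{prop:large-c}, which is exactly the bookkeeping you perform (using the identity endomorphism for $c=a$ is a harmless simplification of citing \cref{prop:endo}). Your additional verifications for the ``moreover'' part --- that the parity hypothesis forces $c$ odd so that $\bi^\mu=\bi^{(n)}$ and $z^\mu\mapsto v'$ is legitimate, and that exhibiting a tableau in the support of $v'$ but not of $v$ gives linear independence rather than mere inequality --- are details the paper's remark leaves implicit, and they are sound.
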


\begin{rem}
    In particular, if $c=a$ is odd and $a\ge5,b\ge4$, then $\dim\hom(S^\mu,S^\la)\ge 3$, as the non-trivial homomorphisms in \cref{prop:endo,prop:n-less} are clearly distinct.
\end{rem}

\subsection{Homomorphisms from non-hooks}\label{subs:nonhooks}

In this section, we first use \cref{y-action} to describe homomorphisms from $S^\mu$ to $S^\la$ where $\mu$ is a two-column partition.
Combining this with our findings from the previous sections, we obtain a generalised version of \cref{hook-result} which describes $\hom(S^\mu,S^\la)$ in the case when $\mu$ is a \emph{two-column hook partition}, a partition of the form $\mu=(c,2^s,1^d)$ (see \cref{thm:main}).

\subsubsection{Homomorphisms from two-column partitions}

\begin{thm}\label{thm:two-column}
Let $\mu=(2^r,1^s)$ with $r\ge 1$ and $s\ge 0$ be a partition of $n$.
Then $\hom(S^\mu,S^\la)$ is one dimensional, spanned by
\[
    \varphi(z^\mu)=v=\sum_{\substack{\ttt\in\sfz(\la)_{\cale}\\ \bi^\mu=\bi^\ttt\\ \cale=\{2,4,\dots,2r\}}}\De(\ttt)\vtt,
\]
if $r+1\le a\le n-r$, and either
\begin{itemize}
    \item $n+a\equiv_2 r$, or
    \item $n$ is even and $a=r+1$,
\end{itemize}
and zero otherwise.
\end{thm}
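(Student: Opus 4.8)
The plan is to mimic the single-column arguments of \cref{oddcolumn,evencolumn}, now with the $\Delta$-machinery of \cref{def:tableau,prop:2c,cor:coeff} carrying the combinatorial weight. Writing $\mu=(2^r,1^s)$ with $n=2r+s$, the even-length rows of $\ttt^\mu$ are precisely the first $r$ rows, so by \cref{y-action} the forced unpaired arm-evens are exactly $\cale=\{2,4,\dots,2r\}$. Hence for any $\varphi\in\hom(S^\mu,S^\la)$ with $\varphi(z^\mu)=v=\sum\ct\vtt$, the relation $(J_1^\mu+J_2^\mu)v=0$ together with \cref{cor:pair,arm-odd,leg-odd} confines the support to $\{\ttt\in\sfz(\la)_\cale\mid\bi^\ttt=\bi^\mu\}$. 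A key structural fact I would record at the outset is that, with this $\cale$, each odd entry $2i-1$ (for $2\le i\le r$) is sandwiched between the consecutive arm-evens $2i-2,2i$ and therefore lies in $\Leg(\ttt)$, so the would-be pair $\tp_{2i-1}$ straddles arm and leg and is \emph{incoherent}; the same holds for $\tp_{2r+1}$.

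Next I would extract the numerical constraints. The inequality $r+1\le a\le n-r$ is the non-emptiness of $\sfz(\la)_\cale$: the arm of $\la$ must house the corner together with the $r$ entries of $\cale$, giving $a\ge r+1$, and the dual bound follows from the conjugation isomorphism $\hom(S^\mu,S^\la)\cong\hom(S^{\la'},S^{\mu'})$ applied to the arm of $\la'$. The parity clause — $n+a\equiv_2 r$, or the boundary exception $n$ even with $a=r+1$ — is the block condition $\cont(\mu)=\cont(\la)$, obtained by comparing residue contents exactly as the parity restriction was derived in \cref{oddcolumn,evencolumn}; when it fails the two modules lie in different blocks and the Hom-space vanishes. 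The degenerate cases where $\la$ is a single row or column are subsumed by \cref{oddcolumn,evencolumn}.

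For existence I would set $\ct=\De(\ttt)$ and check the relations (i)--(iv) on $v$. Relations (i) and the $y$-relations (ii) are immediate from \cref{y-pair}. For (iii) and (iv) the decisive point is that, since $e=2$, every Garnir belt lying inside the two-column region of $\mu$ contains \emph{no} bricks: in each three-node belt the two nodes sharing a row carry opposite residues, so $k=0$ and the Garnir element collapses to a single $\psi_w$ with $\ell(w)\le 2$, built from a consecutive pair $\psi_{2p-1},\psi_{2p}$ (column $1$) or $\psi_{2p},\psi_{2p+1}$ (column $2$); the tail belts ($p\ge r+1$) are two-node and give $\psi_kz^\mu=0$ for $2r+1\le k\le n-1$, as in the column case. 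I claim every one of these, along with relation (iii) $\psi_{2i-1}z^\mu=0$, holds \emph{term-by-term}: whenever the odd generator in $\psi_w$ has index $k$ with $k-1\in\cale$, the pair $\tp_k$ is incoherent and \cref{action} gives $\psi_k\vtt=0$, while $\psi_1\vtt=0$ by \cref{psi-1} dispatches the node $(1,1)$; the even tail generators vanish likewise because an even $k>2r$ in $\Arm(\ttt)$ is paired, so $k+1\in\Arm(\ttt)$ and \cref{action} returns $0$. The genuinely interior relations $\psi_kv=0$ at the odd indices $k$ with $k-1\notin\cale$ (so $\tp_k$ is a genuine pair) are exactly those governed by \cref{cor:coeff}, and since \cref{prop:number} gives $v=\sum_{i,j}\cals_{i,j}$ this yields $\psi_kv=0$; the top case $k=n-1$ is then handled according to the position of $n$ as in \cref{evencolumn} and \cref{lem:delta-tableaux}.

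For one-dimensionality, I would argue that the relations $\psi_kv=0$ at the interior odd indices chain together the coefficients of all tableaux in $\sfz(\la)_\cale$ — every tableau is reached by the leg- and arm-moves of \cref{def:tableau}, by \cref{prop:number} — so a solution is pinned down by the coefficient of a single (say most dominant) tableau; \cref{prop:2c} exhibits the $\De$-vector as such a solution, whence $\dim\hom(S^\mu,S^\la)=1$ exactly when the support is non-empty. The hardest part will be the Garnir verification: establishing the no-bricks claim uniformly, fixing the precise reduced word of each $w$ so as to know which generator acts first, and making the term-by-term vanishing watertight in the boundary situations where a tail entry or $n$ itself sits at the top of the arm — precisely where \cref{lem:delta-tableaux}, the parity of $n$, and the $a=r+1$ exception interact. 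Reconciling the exact inequalities and parity conditions with these boundary analyses, rather than with the generic count alone, is the delicate bookkeeping the proof must resolve.
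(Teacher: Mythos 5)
Your existence argument is workable and rests on the same machinery as the paper's (the paper organises it differently: it strips off the $r\times 2$ rectangle, takes the known one-dimensional homomorphism $S^{(s)}\to S^{(a-r,1^{b-r})}$ from \cref{oddcolumn,evencolumn}, and extends it by freezing $\cale$ at the front of the arm and $\{3,5,\dots,2r+1\}$ at the top of the leg; your direct verification via \cref{y-pair}, \cref{action}, \cref{cor:coeff} and \cref{prop:number} amounts to the same computation). The genuine gap is in the ``zero otherwise'' direction. You assert that the clause ``$n+a\equiv_2 r$, or $n$ even and $a=r+1$'' is the block condition $\cont(\mu)=\cont(\la)$. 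That is false when $n$ is even: with $e=2$ and $n$ even, the residue content of a hook $(a,1^b)$ of $n$ does not depend on the parity of $a$. Concretely, take $\mu=(2,1^4)$ and $\la=(4,1^2)$, so $n=6$, $r=1$; both partitions have three $0$-nodes and three $1$-nodes, hence lie in the same block, yet the theorem asserts $\hom(S^\mu,S^\la)=0$ since $n+a=10\not\equiv_2 1=r$ and $a\neq r+1$. Moreover, an exceptional case $a=r+1$ in which the Hom-space is \emph{nonzero} cannot be an instance of a block condition at all. The vanishing for $n$ even has to be proved structurally, as in \cref{evencolumn}: $\psi_{n-1}v=0$ forces $\tp_{n-1}\in\Leg(\ttt)$ for every $\ttt$ in the support (no other basis vector can produce $v_{s_{n-1}\ttt}$ to cancel it), and cascading down through $\tp_{n-3},\tp_{n-5},\dots$ shows that either $n\in\Arm(\ttt)$, which forces $a\equiv_2 r$, or every pair lies in the leg, which forces $a=r+1$. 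Your proposal never runs this argument, so necessity of the stated conditions is unproven.

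Two smaller points. First, your ``no bricks'' claim about the Garnir belts is incorrect: a brick only requires its \emph{leftmost} node to have residue $\res(A)$, not equal residues within the brick, so each three-node belt in the two-column region contains exactly one $A$-brick ($k=1$, not $k=0$). This happens to be harmless, since for $k\le 1$ the group $\fkS^A$ and the set $\scrd^A$ are trivial and $g^A=\psi^{\ttt^A}$ is the length-two element you wrote down, but the justification should be corrected. Second, your one-dimensionality argument (``chaining'' coefficients through the leg- and arm-moves) is only a sketch: it requires showing the linear system $\psi_kv=0$ has corank one on the span of $\{\vtt\mid\ttt\in\sfz(\la)_\cale\}$, which you do not do. The paper gets uniqueness for free by identifying any solution with an element of $\hom(S^{(s)},S^{(a-r,1^{b-r})})$, which is one dimensional by \cref{oddcolumn,evencolumn} (and by handling $s\le 1$ separately, where the support is a single tableau).
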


\begin{proof}
    By \cref{y-action}, if $\vtt$ is involved in $v$ and $x=\ttt^\mu(y,2)$, then $x\in\Arm(\ttt)$ for all $2\le y\le r$.
    Thus $(J^\mu_1+J^\mu_2)v=0$ if and only if
    \[
    v=\sum_{\substack{\ttt\in\sfz(\la)_{\cale}\\ \bi^\mu=\bi^\ttt}}\ct\vtt
    \]
    where $\cale=\{2,4,\dots,2r\}$ and so $r+1\le a\le n-r$.
    
    Similar to the proof of \cref{prop:large-c}, setting $\bar\la=(a-r,1^{b-r})$, we can construct
    \[
    \bar\varphi(z^{(s)})=\bar v=\sum_{\substack{\bar\ttt\in\sfz(\bar\la)_{\varnothing}\\ \bi^\mu=\bi^{\bar \ttt}}}\De(\bar\ttt)\vtb.
    \]
    For each $\bar\ttt$ above, we define $\ttt\in\std(\la)$, by increasing every entry in $\bar \ttt$ by $2r$ and placing the entries in $\cale$ at the beginning of $\Arm(\ttt)$ and $\{3,5,\dots,2r+1\}$ at beginning of $\Leg(\ttt)$.
    Then we obtain $v$ such that $y_sv=0$ for all $1\le s\le n$ and $\psi_k v=0$ for all $k\in\{3,5,\dots,2r-1\}$ and $2r+1\le k \le n-1$, hence $(J_1^\mu+J_2^\mu+J_3^\mu)v=0$.
    Next, we look at the Garnir relations.
    There are three different non-trivial relations:
    \begin{itemize}
        \item[1)] $G_1z^\mu=\psi_{k}\psi_{k+1}z^\mu=0$ if $k=\ttt^\mu(y,1)$ for $k$ odd and $1\le y \le r$,
        \item[2)] $G_2z^\mu=\psi_{k+1}\psi_{k}z^\mu=0$ if $k=\ttt^\mu(y,2)$ for $k$ even and $1\le y \le r$, and
        \item[3)] $\psi_kz^\mu=0$ for all $2r+1\le k\le n-1$.
    \end{itemize}
    By \cref{action}, we see that $G_1\vtt=0=G_2\vtt$ and we already proved case 3) above, so $J^\mu v=0$.
    It is clear that if $n$ is odd, then so is $s$.
    If $n\equiv_2 s\not\equiv_2 (a-r)$, then $S^{(s)}$ and $S^{\bar\la}$ lie in different blocks and $\dim\hom(S^{(s)},S^{\bar\la})=0$.
    Hence if $\dim\hom(S^\mu,S^\la)\neq0$, then $n+(a-r)+r=n+a\equiv_2 r$.
    If $n$ is even, using a similar argument as in \cref{evencolumn}, we must have that $n\in\Arm(\ttt)$, which implies $\dim\hom(S^\mu,S^\la)\neq0$ then $n+a\equiv_2r$.
    Moreover, if $n$ is even, then it is easy to see that there exists $v=\vtt$ where $\Arm(\ttt)$ only contains $\cale$, that is $a=r+1$.
    
    Finally, we prove that $\dim\hom(S^\mu,S^\la)=1$.
    Indeed, if $s\le1$, then $\mid\ttt\in\sfz(\la)_\cale\mid=1$ and $v=\vtt$ where $\Arm(\ttt)$ contains only entries of $\cale$.
    If $s>1$, observe that $\bar\varphi\in\hom(S^{(s)},S^{(a-r,b-r)})$, so $\bar v$ is unique up to scalar multiplication.
    Using a similar argument as in the remark after \cref{prop:large-c}, we see that $\dim\hom(S^\mu,S^\la)=1$.
\end{proof}

\subsubsection{Homomorphisms from two-column hook partitions}

\begin{thm}\label{thm:2chook}
Let $\mu=(c,2^s,1^d)$ with $c$ even.
Then $\hom(S^\mu,S^\la)$ is at least one dimensional if $a\ge s+2$; and either
\begin{itemize}
    \item $n$ is odd, $a\equiv_2 s$ and $s+2\le a\le n-(s+1)$, or
    \item $n$ is even and $s+2\le a\le c+s$ or $a=c+s+1+2y$ for $0\le y\le d/2-2$,
\end{itemize}
and zero otherwise.
\end{thm}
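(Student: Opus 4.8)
The plan is to run the same three-step argument used for hooks and two-column shapes in \cref{subs:hooks} and the proof of \cref{thm:two-column}: first cut down the admissible $v$ using the $y$-relations, then build $v$ by reducing to a genuinely smaller hook, and finally check every defining relation of $S^\mu$ term by term. So first I would apply \cref{y-action}. Since $c$ is even and rows $2,\dots,s+1$ of $\mu$ have even length $2$, the relation $(J_1^\mu+J_2^\mu)v=0$ forces, for every $\ttt$ occurring in $v=\sum\ct\vtt$, the even-length-row ends $c,c+2,\dots,c+2s$ into $\Arm(\ttt)$ and the subsequent row-starts $c+1,c+3,\dots,c+2s+1$ into $\Leg(\ttt)$. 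By the remark after \cref{def:tableau} this is exactly the statement that $\ttt\in\sfz(\la)_\cale$ for
\[
\cale=\{c,c+2,\dots,c+2s\},
\]
a block of $s+1$ consecutive even integers. Because the arm has only $a-1$ nodes and must accommodate all of $\cale$, this already forces $a\ge s+2$.

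Second, I would construct $\varphi$ by reducing to the hook-to-hook case. Deleting the forced block removes $s+1$ entries from the arm and $s+1$ from the leg (for $d\ge1$; the degenerate cases $d=0$ and $s=0$ reduce directly to \cref{thm:two-column} and \cref{hook-result}), so I would set
\[
\bar\mu:=(c-1,1^{d-1}),\qquad \bar\la:=(a-s-1,1^{b-s-1}),
\]
two partitions of $\bar n:=n-2(s+1)=c+d-2$, and take the hook homomorphism $\bar\varphi\in\hom(S^{\bar\mu},S^{\bar\la})$ provided by \cref{prop:large-c}, \cref{prop:n-less} or \cref{even-parity} according to whether $c-1$ exceeds, equals, or is smaller than $a-s-1$. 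I would then extend $\bar\varphi$ exactly as in the proof of \cref{thm:two-column}: shift the entries of each reduced tableau, insert $\cale$ at the front of the arm and the forced odd entries at the front of the leg, and keep $\ct=\De(\ttt)$. The inserted even arm entries and odd leg pairs are inert by \cref{action} and \cref{y-pair}, so the coefficients transport without change.

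Third, I would verify $J^\mu v=0$ relation by relation. The $y$-relations and all even-$\psi$ relations hold by \cref{y-pair}; the odd relations $\psi_kv=0$ for $3\le k\le n-2$ follow from \cref{cor:coeff}, once I check that the fixed block $\cale$ never forces the excluded configuration $e_1=k+1$ of \cref{prop:main}. The Garnir relations fall into three families: the corner relation at $(1,1)$, which dies because $\psi_1$ sits on the left (\cref{action}, cf.\ \cref{psi-1}); the two families $G_1,G_2$ from the length-two rows, which vanish term by term by \cref{action} precisely as in \cref{thm:two-column}; and the column relations $\psi_kz^\mu=0$ on the length-one tail, handled as in \cref{oddcolumn}. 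For the ``zero otherwise'' direction I would push the block/parity obstruction and the valid-partition constraints through the reduction, using $\bar n\equiv_2 n$ and the fact that (as $c$ is even) $c-1\equiv_2 a-s-1$ is equivalent to $a\equiv_2 s$, to turn \cref{hook-result} for $(\bar\mu,\bar\la)$ into the stated parity condition and the two ranges $s+2\le a\le c+s$ (where $a-s-1\le c-1$) and $a=c+s+1+2y$ (where $a-s-1>c-1$ and the parity is right).

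The delicate point will be the exact upper endpoints of these ranges when $n$ is even. Translating ``$\bar\la$ is a partition'' gives only $\bar b=b-s-1\ge0$, whereas the theorem demands the strictly sharper $y\le d/2-2$, i.e.\ a reduced leg of length $\bar b\ge2$. This sharpening encodes the parity phenomenon of \cref{evencolumn}: once $n$ is even the largest entry is forced into $\Arm(\ttt)$, and when the reduced leg is too short there is no standard filling compatible with this together with the inserted odd leg-pairs, so the putative map degenerates to zero. Establishing that the construction genuinely fails at the borderline values (rather than merely that the reduction stops applying) is the real work, and I expect it to need a direct non-existence argument modelled on \cref{evencolumn} rather than a black-box appeal to \cref{hook-result}.
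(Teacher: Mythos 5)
Your construction of the homomorphism is essentially the paper's: the paper also cuts out the block forced by \cref{y-action} and extends a hook-to-hook homomorphism, except that it keeps the pair $(c,c+1)$ and reduces to $\bar\mu=(c,1^d)$, $\bar\la=(a-s,1^{b-s})$ (invoking \cref{prop:n-less,even-parity}), whereas you cut one pair deeper, to $(c-1,1^{d-1})$ and $(a-s-1,1^{b-s-1})$. Three small points on your version. The deeper cut fails at $d=0$: $\mu=(c,2^s)$ is not a two-column partition unless $c=2$, so your fallback to \cref{thm:two-column} is wrong there, while the paper's shallower cut covers $d=0$ uniformly. The forced block does not go ``at the front of the arm'': the arm entries smaller than $c$ coming from the reduced tableaux must precede it, so the placement is dictated by standardness, not by position one. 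And your list of Garnir relations omits the node $(1,2)$, whose belt runs along the remainder of the first row when $c\ge 4$ and $s\ge 1$, so its Garnir element is a sum over brick permutations rather than a short braid word; it does vanish on your $v$ (every summand of $g^{(1,2)}$ has $\psi^{\ttt^{(1,2)}}=\psi_{c+1}\psi_c$ as its right-hand factor, and $\psi_{c+1}\psi_c\vtt=0$ for every $\ttt$ occurring in $v$ by \cref{action}), but neither you nor the paper says this.

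The genuine gap is the one you flag yourself, and it cannot be closed the way you propose. Your reduction proves existence whenever the reduced leg is nonnegative, i.e.\ for all $y\le d/2-1$, and you plan to reach the stated bound $y\le d/2-2$ by a non-existence argument at the borderline $y=d/2-1$ (equivalently $b=s+1$). No such argument exists, because the construction produces a genuine nonzero homomorphism there. Concretely, take $\mu=(2,2,1,1)$ and $\la=(4,1,1)$, so $c=2$, $s=1$, $d=2$ and $a=4=c+s+1$ forces $y=0>d/2-2$: \cref{thm:2chook} asserts $\hom(S^\mu,S^\la)=0$, yet \cref{thm:two-column} applied to the same partition $\mu=(2^2,1^2)$ gives a one-dimensional space, spanned by the single tableau with arm $\{2,4,6\}$, and all relations can be checked directly. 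The same happens for $(4,2,1,1)\to(6,1,1)$, so this is not an artefact of the two-column overlap. In other words, the discrepancy you noticed is an off-by-one error in the statement's upper bound (it should read $y\le d/2-1$), not a degeneration of your construction at the boundary; note also that the paper's own proof establishes only the existence direction and never addresses ``zero otherwise'' at all, so on that half of the theorem your attempt and the paper are equally incomplete, but your proposed repair would be chasing a false claim.
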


\begin{proof}
    Using the same method as in \cref{thm:two-column}, it is easy to see that we extend $v$ from \cref{prop:n-less,even-parity}, which is a homomorphism from $S^{(c,1^d)}$ to $S^{(a-s,1^{b-s})}$, to a homomorphism $\varphi(z^\mu)=v'$, where $\varphi\in\hom(S^\mu,S^\la)$, by increasing every entry greater than $c+1$ by $2s$ and placing the entries $\cale=\{c+2,c+4,\dots,c+2s\}$ in the arm and the entries $\{c+3,c+5,\dots,c+2s+1\}$ in the leg.
\end{proof}

\begin{eg}
    Let $\mu=(4,2^2,1^5)$ and $\la=(8,1^5)$.
    By \cref{prop:n-less}, there exists $\varphi(z^{(4,1^5)})=v$ as below where $\varphi\in\hom(S^{(4,1^5)},S^{(6,1^3)})$.
    \begin{center}
\begin{tikzpicture}
\tgyoung(0cm,0cm,145689,2,3,7)
\tgyoung(5cm,0cm,123689,4,5,7)
\tgyoung(10cm,0cm,123456,7,8,9)
\node at (-.75, -.5)   (f) {$v=$};
\node at (4.5, -.5)   (a) {$+2$};
\node at (9.5, -.5)   (b) {$+3$};
\end{tikzpicture}
\end{center}
    Then we can extend this a homomorphism $v'$ from $S^\mu$ to $S^\la$.
        \begin{center}
\begin{tikzpicture}
\tgyoung(0cm,0cm,1468\ten\eleven<12><13>,2,3,5,7,9)
\tgyoung(5cm,0cm,123468<12><13>,5,7,9,\ten,\eleven)
\tgyoung(10cm,0cm,123468\ten\eleven,5,7,9,<12>,<13>)
\node at (-.75, -.5)   (f) {$v'=$};
\node at (4.5, -.5)   (a) {$+2$};
\node at (9.5, -.5)   (b) {$+3$};
\end{tikzpicture}
\end{center}
\end{eg}

Combining our results from \cref{hook-result,thm:2chook}, we have just proved our main theorem:

\begin{thm}[Main Theorem]\label{thm:main}
  Let $\la=(a,1^b)$ and $\mu=(c,2^s,1^d)$ where if $s>0$ then $c$ is even.
  Then $\hom(S^\mu,S^\la)$ is at least one dimensional if for $c$ even, we have that $a\ge s+2$ and either
    \begin{itemize}
        \item $n$ is odd, $a\equiv_2 s$ and $s+1\le a\le n-(s+1)$, or
        \item $n$ is even and $s+2\le a\le c+s$ or $a=c+s+1+2y$ for $0\le y\le d/2-2$,
\end{itemize}
and zero otherwise.
\end{thm}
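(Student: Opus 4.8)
The plan is to read off \cref{thm:main} from the two classifications already obtained, \cref{hook-result} and \cref{thm:2chook}, treating the hook stratum $s=0$ and the genuinely two-column stratum $s>0$ separately. No new homomorphism needs to be built: in each regime the relevant map, together with the proof that the homomorphism space vanishes outside the stated region, has already been constructed, so the only task is to check that the conditions recorded in \cref{thm:main} are the faithful amalgamation of the two source statements.

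First I would handle $s>0$. Here the hypothesis forces $c$ to be even, and \cref{thm:2chook} applies on the nose. Its even-$n$ alternatives — the interval $s+2\le a\le c+s$ together with the progression $a=c+s+1+2y$ for $0\le y\le d/2-2$ — are copied verbatim into \cref{thm:main}, so nothing is needed there. The only point requiring a remark is the odd-$n$ lower bound: \cref{thm:2chook} writes $s+2\le a$ whereas \cref{thm:main} writes $s+1\le a$. Because both statements also impose the global hypothesis $a\ge s+2$ and the parity constraint $a\equiv_2 s$, the two inequalities cut out the identical set of admissible $a$, namely those $a\equiv_2 s$ with $s+2\le a\le n-(s+1)$; I would simply note this so that the weaker-looking bound in \cref{thm:main} is seen to be harmless.

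Next I would dispose of $s=0$, where $\mu=(c,1^d)$ is itself a hook and \cref{hook-result} gives the complete answer. Specialising the conditions of \cref{thm:2chook} to $s=0$ and using that $c$ is even, the requirement $a\equiv_2 s$ becomes $a\equiv_2 c$, which is precisely condition (i) of \cref{hook-result} in the odd-$n$ case; the even-$n$ clauses are matched the same way, after translating the interval and the arithmetic progression through the relation $n=c+d$. I would then verify explicitly how the degenerate shapes are absorbed — the column and row $\la=(1^n),(n)$ handled in \cref{oddcolumn,evencolumn}, and the endpoints $c=a$ and $c=a-1$ of \cref{prop:endo} — so that the two descriptions of the $s=0$ stratum are seen to coincide.

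The argument carries no genuine analytic obstacle: the mathematical substance lives in \cref{thm:two-column}, \cref{prop:main} and \cref{prop:2c}, all already established, together with the block (residue-content) constraints that produce the parity conditions. The one thing that must be done with care is the finite bookkeeping of the boundary cases of the index ranges — matching the endpoints $a=s+2$, $a=c+s$ and the top value $y=d/2-2$ of the progression against the parity and dominance restrictions, and confirming that the regions declared empty by \cref{thm:main} are exactly those declared empty by the two source theorems. This is a case check rather than an estimate, and it is where I would expect to spend most of the verification.
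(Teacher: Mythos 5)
Your proposal coincides with the paper's own proof: the paper establishes \cref{thm:main} in a single sentence, by combining \cref{hook-result} (the $s=0$ hook stratum) with \cref{thm:2chook} (the $s>0$, $c$ even stratum), which is exactly the decomposition you describe, with no new homomorphism constructed. Your extra observation that the odd-$n$ bound $s+1\le a$ in \cref{thm:main} cuts out the same set as the bound $s+2\le a$ in \cref{thm:2chook}, because the parity condition $a\equiv_2 s$ excludes $a=s+1$, is correct and supplies the boundary bookkeeping that the paper leaves implicit.
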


\begin{rmk}
We have a more precise description of the dimensions of $\hom(S^\mu,S^\la)$ in the following cases:
\begin{itemize}
    \item[1)] $\hom(S^\mu,S^\la)$ is at least three dimensional if $c=a$ is odd and $a\ge5,b\ge4$,
    \item[2)] $\hom(S^\mu,S^\la)$ is at least two dimensional if $c>a$ (unless we are in case 3b) below),
    \item[3)] $\hom(S^\mu,S^\la)$ is one dimensional if
    \begin{itemize}
        \item[a)] $n$ is even, $c\equiv_2 a$ and $c\ge a$, or
        \item[b)] if $c=2$ (i.e.\ $\mu=(2^{s+1},1^d)$).
    \end{itemize}
\end{itemize}

   If $s=1$ and $a=c+1$, then we recover the known Carter-Payne homomorphism of degree $1$ and can explicitly describe it by $z^\mu\mapsto \vtt$, where $\Arm(\ttt)$ consists of $2,3,\dots,c,c+2$.
   For more details, we refer the reader to \cite[Thm.~3.12]{lm14}.
\end{rmk}

\begin{rem}
    In this final remark, we give the graded the dimension of the homomorphism from \cref{thm:main}.
    The following is easy to verify:
    \begin{itemize} 
        \item[1)] If $\mu=(c,1^d)$, then
        \begin{itemize}
            \item[$\bullet$] $\deg(\ttt^\mu)=(c-1)/2$ if $c$ is odd, and 
            \item[$\bullet$] $\deg(\ttt^\mu)=c/2$ if $c$ is even.
        \end{itemize}
        \item[2)] If $\mu=(c,2^s,1^d)$ with $c$ even, then $\deg(\ttt^\mu)=c/2+s$.
    \end{itemize}
    Now we only need to compute $\deg(\ttt)-\deg(\ttt^\mu)=:\cald$ for all cases  in \cref{thm:main} where $\ttt$ has shape $\la=(a,1^b)$.
    Assume that $\mu=(c,1^d)$.
    \begin{enumerate}
        \item[1)] If $n$ is odd, then $\cald=(a-c)/2$.
        \item[2)] If $n$ is even, then
        \begin{itemize}
            \item[$\bullet$] $\cald=(a-c)/2$ if $c\equiv_2 a$, 
            \item[$\bullet$] $\cald=(a-c+1)/2$ if $c$ is odd and $a$ is even, and
            \item[$\bullet$] $\cald=(a-c+3)/2$ if $c$ is even and $a$ is odd.
        \end{itemize}
    \end{enumerate}
    Finally, let $\mu=(c,2^s,1^d)$ with $c$ even and $s\ge 1$.
    \begin{enumerate}
        \item[1)] If $n$ is odd, then
        \begin{itemize}
            \item[$\bullet$] $\cald=(a-c)/2$ if $s$ is even, and
            \item[$\bullet$] $\cald=(a-c+1)/2$ if $s$ is odd.
        \end{itemize}
        \item[2)] If $n$ is even, then
        \begin{itemize}
            \item[$\bullet$] $\cald=(a-c+2)/2$ if $s$ is odd and $a$ is even, 
            \item[$\bullet$] $\cald=(a-c+1)/2$ if both $s$ and $a$ are odd,
            \item[$\bullet$] $\cald=(a-c)/2$ if $s$ and $a$ are both even, and
            \item[$\bullet$] $\cald=(a-c+3)/2$ if $s$ is even and $a$ is odd. 
        \end{itemize}
    \end{enumerate}
    Then we have that 
    \[
    \dim_q\hom(S^\mu,S^{\la})=q^\cald,
    \]
    where $\hom(S^\mu,S^{\la})$ is as in \cref{thm:main}.
\end{rem}

\subsection{The trivial module}

In the following section, we obtain a graded version of James's well-known result for the trivial module $S^{(n)}$ (see \cite[Thm.~24.4]{James}).
First, we need to recall some elementary results about binomial coefficients which will be used in the statement of \cref{trivial}.
Let $p$ be a fixed prime.

\begin{defn}
  For $n \in \bbz_{>0}$, we define $\ell_p(n) = \min \{ i \mid p^i > n \}$.
  We also set $\ell_p(0) = -\infty$.
\end{defn}

There are two one dimensional $S^\la$ modules, indexed by the partitions $\la=(1^n)$ and $(n)$.
The module $S^{(n)}$ is called the \emph{trivial module} and $S^{(1^n)}$ is called the \emph{sign module}.
As $e=2$, we have that $S^{(n)}\cong S^{(1^n)}$ up to grading shift.

\begin{thm}\label{trivial}
Let $\mu=(\mu_1,\mu_2,\dots,\mu_Z)$ be a partition of $n$.
Then $\hom(S^\mu,S^{(1^n)})$ is one dimensional if
\begin{itemize}
    \item[i)] $\Ch\bbf=0,\mu_1$ is odd and $\mu=(\mu_1,1^{n-\mu_1})$ or
    \item[ii)] $\Ch\bbf=p>0$ and
    \begin{itemize}
    \item[$\bullet$] for all $1\le i\le Z-1$, $\mu_i$ is odd, and
    \item[$\bullet$]  if $n$ is odd, then $\mu_Z$ is odd, and
    \item[$\bullet$]  $\mu_i\equiv -1 \;\md\;2p^{\ell_p(a{i+1})}$ for all $1\le i< Z$ where $a_i=\lfloor \mu_i/2\rfloor$,
\end{itemize}
\end{itemize}
and zero otherwise.
In this case
\[
\dim_q\hom(S^\mu,S^{\la})=q^{-(\sum_1^Z a_i)}.
\]
\end{thm}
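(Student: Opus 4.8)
The plan is to exploit that $S^{(1^n)}$ is one dimensional, spanned by $z^{(1^n)}$, so any $\varphi\in\hom(S^\mu,S^{(1^n)})$ is determined by $\varphi(z^\mu)=c\,z^{(1^n)}$ for a scalar $c$, and $\varphi$ is a well-defined homomorphism exactly when $z^{(1^n)}$ satisfies all four defining relations of $z^\mu$. Since $\la=(1^n)$ has empty arm, \cref{action} (or a direct degree count, as $y_r$ raises degree and $\psi_r$ changes it) gives $y_r z^{(1^n)}=0$ and $\psi_r z^{(1^n)}=0$ for all admissible $r$. Hence relations (ii) and (iii) hold automatically, and the whole problem collapses to relations (i) and (iv).

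First I would treat (i). The requirement $e(\bi^\mu)z^{(1^n)}=z^{(1^n)}$ forces $\bi^\mu=\bi^{(1^n)}=(0,1,0,1,\dots)$. Reading $\ttt^\mu$ row by row, the residue at the start of row $r$ is $\sum_{j<r}\mu_j\bmod 2$; matching against the alternating sequence gives $\sum_{j<r}\mu_j\equiv 1-r\pmod 2$ for all $r$, which inductively is equivalent to $\mu_1,\dots,\mu_{Z-1}$ being odd. This in turn makes $\cont(\mu)=\cont((1^n))$, so $S^\mu$ and $S^{(1^n)}$ share a block. The extra parity statement (if $n$ is odd then $\mu_Z$ is odd) does \emph{not} follow from this residue matching alone, and extracting it is one of the delicate points; I would obtain it from a finer block/parity comparison combined with the bottom-row Garnir analysis of the next step.

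The heart of the argument is relation (iv). For a Garnir node $A=(r,c)$ each $\tau^A_u$ fixes $z^{(1^n)}$ (its $\sigma$-parts are products of $\psi$'s and die, its idempotent parts act as $e(\bi^A)=e(\bi^\mu)$), so $g^A z^{(1^n)}=|\scrd^A|\,\psi^{\ttt^A}z^{(1^n)}$. Now $\psi^{\ttt^A}z^{(1^n)}=z^{(1^n)}$ precisely when $\ttt^A=\ttt^\mu$, i.e.\ when the belt splits evenly into bricks ($C^A=D^A=\varnothing$); this happens exactly at the \emph{inner corners}, where $c$ is even, $\mu_r$ is odd and $c\le\mu_{r+1}$, while $g^A z^{(1^n)}=0$ at every other Garnir node. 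At such an $A$ one computes $k=(\mu_r+1)/2$ and $f=(\mu_r-c+1)/2$, whence $|\scrd^A|=\binom{k}{f}=\binom{a_r+1}{c/2}$. As $c$ ranges over the even values up to $\mu_{r+1}$, the exponent $c/2$ runs through $1,\dots,a_{r+1}$. Thus $z^\mu\mapsto z^{(1^n)}$ is a homomorphism iff $\binom{a_r+1}{t}$ vanishes in $\bbf$ for all $1\le t\le a_{r+1}$ and all such $r$. Over characteristic $0$ these binomials never vanish, so a nonzero map forces the absence of inner corners, i.e.\ $\mu$ is a hook $(\mu_1,1^{n-\mu_1})$ with $\mu_1$ odd, giving case (i). In characteristic $p$, Kummer's theorem on the $p$-adic valuation of binomial coefficients turns ``$\binom{a_r+1}{t}\equiv 0$ for all $1\le t\le a_{r+1}$'' into $p^{\ell_p(a_{r+1})}\mid(a_r+1)$, which, since $\mu_r=2a_r+1$, is exactly $\mu_r\equiv -1\pmod{2p^{\ell_p(a_{r+1})}}$ — the congruences of case (ii).

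Finally, for the graded dimension I would compute $\deg(\ttt^{(1^n)})-\deg(\ttt^\mu)$ using the inductive degree formula; since the space is at most one dimensional and its generator maps in a single fixed degree, this difference is the exponent, and a routine count should produce $-\sum_{i=1}^Z a_i$. The main obstacle is the Garnir computation above — correctly isolating the inner-corner nodes, confirming the coefficient is the stated binomial, and checking that no other Garnir node contributes — together with establishing the additional parity restriction on $\mu_Z$ when $n$ is odd, which escapes the residue condition and the binomial obstructions and appears to require an independent block-theoretic input.
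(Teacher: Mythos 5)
Your proposal takes a genuinely different route from the paper. The paper's proof of the classification in \cref{trivial} is a single citation, \cite[Theorem~3.3]{Lyle07}, followed by the two\nobreakdash-line degree count $\deg(\ttt^\mu)=\sum_i a_i$, $\deg(\ttt^{(1^n)})=0$; you instead verify the defining relations of $S^\mu$ directly on $z^{(1^n)}$. Your core computation is correct: $S^{(1^n)}$ is spanned by $z^{(1^n)}$ with every $y_r$ and $\psi_r$ acting as zero, so relations (ii) and (iii) are automatic; relation (i) holds iff $\bi^\mu=\bi^{(1^n)}$, i.e.\ $\mu_1,\dots,\mu_{Z-1}$ odd; and for relation (iv), $g^Az^{(1^n)}=\sum_{u\in\scrd^A}\tau^A_u\psi^{\ttt^A}z^{(1^n)}$ vanishes automatically unless $\ttt^A=\ttt^\mu$, which happens exactly when $C^A=D^A=\varnothing$, i.e.\ $c$ even and $\mu_r$ odd, and in that case each $\tau^A_u$ fixes $z^{(1^n)}$, giving $g^Az^{(1^n)}=|\scrd^A|\,z^{(1^n)}=\binom{a_r+1}{c/2}\,z^{(1^n)}$. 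Your Lucas/Kummer translation of ``$\binom{a_r+1}{t}=0$ in $\bbf$ for $1\le t\le a_{r+1}$'' into $\mu_r\equiv-1\pmod{2p^{\ell_p(a_{r+1})}}$ is also right. What this buys is a self-contained argument that explains where the moduli $2p^{\ell_p(a_{i+1})}$ come from, where the paper outsources everything to the literature.

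Your diagnosis of the clause ``if $n$ is odd then $\mu_Z$ is odd'' should, however, be sharper. You cannot derive it because in your (correct) framework it is not an obstruction at all, and no ``block-theoretic input'' will produce it. Take $\mu=(3,2)$, $n=5$, $\Ch(\bbf)=2$: then $\bi^\mu=(0,1,0,1,0)=\bi^{(1^5)}$; the Garnir node $(1,1)$ has $\ttt^A\neq\ttt^\mu$, hence imposes nothing; the node $(1,2)$ gives $g^Az^{(1^5)}=\binom{2}{1}z^{(1^5)}=2z^{(1^5)}=0$. So $z^\mu\mapsto z^{(1^5)}$ is a nonzero homomorphism even though $n$ is odd and $\mu_Z$ is even. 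For $p=2$ the algebra is $\bbf\fkS_n$ in characteristic $2$, and James's result \cite[Theorem~24.4]{James} confirms the same verdict (here $2^{\ell_2(\mu_2)}=4$ and $3\equiv-1\pmod{4}$), with no parity condition on the last part. So the tension is between your method and the theorem as stated, not a gap inside your method: the second bullet of case (ii) — and likewise the exclusion of $\mu=(n)$ with $n$ even from case (i), which your argument also correctly allows since there are then no Garnir nodes — cannot be established, because the map you construct exists in precisely those excluded cases. If your write-up is meant to prove \cref{trivial} verbatim, you must flag this discrepancy rather than search for a missing argument; otherwise what you have proves the (corrected) classification. The degree statement you defer is exactly the paper's count, so nothing is missing there.
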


\begin{proof}
The first part of the statement follows by \cite[Theorem~3.3]{Lyle07}.
To calculate the degree, notice that $\deg(\ttt^\mu)=\sum_1^Z a_i$.
Hence
\[
\deg(\ttt^{(1^n)})-\deg(\ttt^\mu)=-\sum_1^Z a_i.\qedhere
\]
\end{proof}

\begin{eg}
Let $\mu=(7,3,2)$. If $p=2$, then $\dim\hom(S^\mu,S^{(12)})=1$.
\begin{center}
\begin{tikzpicture}
\tyoung(0cm,1.5cm,1234567,89<10>,\eleven<12>)
\tyoung(6cm,3cm,1,2,3,4,5,6,7,8,9,<10>,\eleven,<12>)
\draw[->] (4,1) -- (5,1);
\end{tikzpicture}
\end{center}
We can easily compute the graded dimension by calculating the degrees of $\ttt^\mu$ and $\ttt^\la$:
\[
\deg\ttt^\la-\deg\ttt^{\mu}=0-5=-5.
\]
Hence $\dim_q\hom(S^{\mu},S^{(1^{12})})=q^{-5}$.
\end{eg}

\begin{rem}
    If $\la=(n)$ where depending on the parity $n=2d$ or $2d+1$, then $\deg(\ttt^\la)=d$.
    Thus
    \[
    \deg(\ttt^{(n)})-\deg(\ttt^\mu)=d-\sum_1^Z a_i.
    \]
\end{rem}

\subsection{Further questions remaining}

Our strategy relied upon reducing more complex shapes (without complicated Garnir relations) into single rows or columns and then applying \cref{oddcolumn}.
In this paper, we gave a complete description of $\hom(S^\mu,S^\la)$ where $\mu$ can be visualised as a hook partition with a rectangle of width two in the middle.
The first row having even parity and no other rows having length more than two eliminated complicated Garnir relations.
We would like to emphasise that shapes of $\mu$ with non-trivial Garnir relations seem to require an entirely different treatment as there are more relations to consider and the combinatorics governing the coefficients of the tableaux in the homomorphism space is different.
As we see in \cref{def:coeff}, the $\De$ coefficients are all products of positive integers.
One of the main reasons that prevented us to further continue our research into other shapes of $\mu$ was the fact that even traditionally ``well-behaving" shapes, such as two-part partitions, give rise to homomorphisms which have large prime numbers as coefficients.

Nevertheless, using results from \cite{dg15}, we can present a simple proof for the case when $n$ is odd and $\mu$ is a two-row partition, however we note that this method does not yield an explicit description of the homomorphism or the degree.

\begin{lem}\label{lem:tworows}
Assume that $n$ is odd and let $\mu=(\mu_1,\mu_2)$ and $\la=(a,1^b)$ be two partitions of $n$ with $\mu_1\ge\mu_2\ge2$.
If $\Ch(\bbf)=2$, then $\hom(S^\mu,S^\la)$ is one dimensional if and only if $\mu_2\le a-1\le \mu_1$ if and only if $\mu_2\le b+1\le \mu_1$.
\end{lem}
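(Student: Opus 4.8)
The plan is to translate the homomorphism-space dimension into a known combinatorial quantity via the work of De Boeck--Geranios \cite{dg15}, rather than by the explicit constructive method used in the rest of the paper. Since $n$ is odd and $\mu=(\mu_1,\mu_2)$ is a two-row partition, the Specht module $S^\mu$ has a particularly simple structure, and $S^\la$ is labelled by a hook. I would first record the block-theoretic constraint: $S^\mu$ and $S^\la$ lie in the same block precisely when they have equal residue content, which for $n$ odd forces a parity relation tying $a$ to $\mu_1,\mu_2$. This immediately rules out most $(a,b)$ and reduces the problem to verifying that $\hom(S^\mu,S^\la)$ is exactly one-dimensional on the remaining range $\mu_2\le a-1\le\mu_1$.

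The main engine will be a result of \cite{dg15} computing $\dim\hom(S^\mu,S^\la)$ (or an associated decomposition number / Hom-space) for two-row $\mu$ in quantum characteristic two. The key step is to specialise their formula to $\mu=(\mu_1,\mu_2)$ and $\la=(a,1^b)$ and check that it evaluates to $1$ exactly on the stated interval and to $0$ outside it. Here I expect the condition $\mu_2\le a-1\le\mu_1$ to emerge as the requirement that the hook $\la$ be ``compatible'' with the two-row shape in the sense of their combinatorial criterion — intuitively, the arm length $a-1$ of $\la$ must sit between the two row lengths of $\mu$. The equivalence with $\mu_2\le b+1\le\mu_1$ is then purely formal: since $a+b=n=\mu_1+\mu_2$, we have $a-1=\mu_1+\mu_2-b-1$, so $\mu_2\le a-1\le\mu_1$ holds if and only if $\mu_2\le b+1\le\mu_1$, and I would simply verify this substitution.

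The hardest part will be extracting precisely the right statement from \cite{dg15} and matching their conventions (residue conventions, row- versus column-Specht modules, grading shifts) to those fixed in \cref{SecSpecht} of this paper, since a sign or conjugation mismatch would flip the inequalities. I would address this by using the conjugation isomorphism $\hom(S^\mu,S^\la)\cong\hom(S^{\la'},S^{\mu'})$ recorded in the introduction to move to whichever orientation makes \cite{dg15} directly applicable, and by cross-checking the one-dimensionality against the general upper bound for Hom-spaces from a hook into a hook implicit in \cite{ls14}. Once the dimension is pinned to $1$ on the interval, the ``only if'' direction follows from the block condition together with the vanishing outside the interval, completing the equivalence.
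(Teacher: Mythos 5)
There is a genuine gap: your ``main engine'' does not exist. You propose to quote a result of \cite{dg15} that computes $\dim\hom(S^\mu,S^\la)$ for two-row $\mu$ in quantum characteristic two and then ``specialise their formula'', but \cite{dg15} contains no such formula; what it provides (and what the paper actually uses) is much weaker, namely the filtration statement \cite[Lemma~1.3.9]{dg15}: the restriction of a (dual) Specht module to a Young subgroup $\fkS_a\times\fkS_b$ has a filtration by modules $(S^\nu)^\circledast\otimes(S^{\tau/\nu})^\circledast$, with the skew factors refined by Littlewood--Richardson. All of the real work in the paper's proof is the chain that turns this into a dimension count, and that chain is absent from your proposal: (i) dualise/conjugate so the problem becomes $\Hom((S^{(b+1,1^{a-1})})^\circledast,(S^{(2^c,1^d)})^\circledast)$, i.e.\ hook into two-column; (ii) use that for $n$ odd the Young permutation module satisfies $M(a,b)\cong (S^{(a,1^b)})^\circledast\oplus(S^{(a+1,1^{b-1})})^\circledast$ with the two summands in different blocks (this, via \cite[Prop.~7.1.1]{dg20}, is precisely where the hypothesis that $n$ is odd enters, and it lets one replace the dual hook Specht module by $M(a,b)$); (iii) apply Frobenius reciprocity to reduce to $\Hom_{\fkS_a\times\fkS_b}\bigl(1,(S^\tau)^\circledast{\downarrow}\bigr)$; (iv) combine the filtration with the K\"unneth formula and James's theorem \cite[Thm.~24.4]{James} on homomorphisms from the trivial module to see that exactly one Littlewood--Richardson factor, namely $(S^{(1^a)})^\circledast\otimes(S^{(1^b)})^\circledast$, contributes when $c\le a\le c+d$ and none contributes otherwise (plus the separate degenerate case $\tau=(2,1^b)$, handled by $2$-restrictedness). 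Without steps (ii)--(iv) you have proved neither the existence of a nonzero homomorphism on the interval nor the vanishing off it.

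A secondary point: your claim that a block-theoretic parity constraint ``rules out most $(a,b)$'' and that the ``only if'' direction then ``follows from the block condition'' misidentifies where the vanishing comes from. In the correct argument block theory is used only to split $M(a,b)$ and discard the complementary summand; the vanishing of $\hom(S^\mu,S^\la)$ outside $\mu_2\le a-1\le\mu_1$ comes from James's theorem applied to the Littlewood--Richardson factors, not from the modules lying in different blocks (indeed the lemma's final condition is an interval condition with no visible parity restriction). Your purely arithmetic verification that $\mu_2\le a-1\le\mu_1$ is equivalent to $\mu_2\le b+1\le\mu_1$ (using $a+b=n=\mu_1+\mu_2$) is correct, but it is the only step of the proposal that is actually carried out.
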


\begin{proof}
    We note that our Specht modules are dual to those defined by James, so that $(S^\la)^\circledast$ denotes the Specht modules indexed by $\la$ as in \cite{James}.

    Let $e=2$ and $\Ch\bbf=p=2$, so $R^\La_n\cong\bbf\fkS_n$.
    We write $\bbf\fkS_n$ throughout the proof, but all necessary Hecke algebra analogues can be found in \cite{dg20}, except for the Littlewood--Richardson rule.
    Although it holds in the Grothendieck group, we cannot find a Hecke analogue of \cite[Thm.~5.3]{jp79} in the literature, however it is almost certainly true. 

    We want to understand
    \[
        \Hom_{\bbf\fkS_n}(S^{(c+d,c)},S^{(a,1^b)})\cong\Hom_{\bbf\fkS_n}((S^{(b+1,1^{a-1})^\circledast},(S^{(2^c,1^d)})^\circledast).
    \]
    We may assume that $(S^{(a,1^b)})^\circledast$ and $(S^{(2^c,1^d)})^\circledast$ lie in the same block, as otherwise there exists no homomorphisms between them.
    As $n$ is odd, we have that
    \[
    M(a,b)=(S^{(a,1^b)})^\circledast\oplus (S^{(a+1,1^{b-1})})^\circledast
    \]
    since $(S^{(a,1^b)})^\circledast$ and $(S^{(a+1,1^{b-1})})^\circledast$ are in different blocks (see \cite[Prop.~7.1.1]{dg20}).
    
    Set $\tau=(2^c,1^d)$.
    It follows that $(S^{(a+1,1^{b-1})})^\circledast$ and $(S^{\tau})^\circledast$ also lie in different blocks and so
    \begin{align*}
        \Hom_{\bbf\fkS_n}(M(a,b),(S^\tau)^\circledast)&\cong \Hom_{\bbf\fkS_n}((S^{(a,1^b)})^\circledast,(S^\tau)^\circledast)\oplus\underbrace{\Hom_{\bbf\fkS_n} (S^{(a+1,1^{b-1})})^\circledast,(S^\tau)^\circledast)}_{=0}\\[.5em]
        &\cong\Hom_{\bbf\fkS_n}((S^{(a,1^b)})^\circledast,(S^\tau)^\circledast)
    \end{align*}
    We also have that
    \[
     \Hom_{\bbf\fkS_n}(M(a,b),(S^\tau)^\circledast)\cong\Hom_{\bbf(\fkS_a\times\fkS_b)}(1_{\fkS_a\times\fkS_b},(S^\tau)^\circledast\downarrow_{\fkS_a\times\fkS_b}).
    \]

    By \cite[Lemma~1.3.9.(i)]{dg15}, $(S^\tau)^\circledast\downarrow_{\fkS_a\times\fkS_b}$ has a filtration with factors isomorphic to $(S^\nu)^\circledast\otimes (S^{(\tau/\nu)})^\circledast$ such that $\nu\subseteq\tau$.
    Furthermore, each submodule $(S^{(\tau/\nu)})^\circledast$ has a Specht filtration and these factors $(S^{\xi})^\circledast$ are given by the Littlewood--Richardson rule \cite[Lemma~1.3.9.(ii)]{dg15}.
    It follows that
    \[
    \dim\Hom_{\bbf\fkS_n}((S^{(a,1^b)})^\circledast,(S^\tau)^\circledast)\le \sum_{\substack{\nu\vdash a\\\mu\vdash b}}\dim\Hom_{\bbf\fkS_a\times\bbf\fkS_b}(1_{\fkS_a\times\fkS_b},(S^\nu)^\circledast\otimes(S^\mu)^\circledast).
    \]

    By Kunneth's formula, we also have that
    \[
    \Hom_{\bbf\fkS_a\times\bbf\fkS_b}(1_{\fkS_a\times\fkS_b},(S^\nu)^\circledast\otimes(S^\mu)^\circledast)\cong \Hom_{\bbf\fkS_a}(1_{\fkS_a},(S^\nu)^\circledast)\otimes\Hom_{\bbf\fkS_b}(1_{\fkS_b},(S^\mu)^\circledast)
    \]

    Recall that $\nu_1,\mu_1\le2$ and so by James's Theorem (\cite[Theorem~24.4]{James}), we have that
    \[
    \Hom_{\bbf\fkS_a}(1_{\fkS_a},(S^\nu)^\circledast)\cong
    \begin{cases}
        \bbf\qquad\qquad\text{if }\nu=(2)\text{ or } \nu=(1^a),\\
        0\qquad\qquad\text{otherwise,}
        \end{cases}
    \]
    and similarly
    \[
    \Hom_{\bbf\fkS_b}(1_{\fkS_b},(S^\mu)^\circledast)\cong
    \begin{cases}
        \bbf\qquad\qquad\text{if }\mu=(2)\text{ or } \mu=(1^b),\\
        0\qquad\qquad\text{otherwise.}
        \end{cases}
    \]

    First, assume that $\tau=(2^c,1^d)\neq(2,1^b)$.
    It is easy to see that if $a< c$ or $a> c+d$, then either $\mu$ or $\nu$ cannot be as required by James's Theorem and so
    \[
    \Hom_{\bbf\fkS_a}(1_{\fkS_a},(S^\nu)^\circledast)\otimes\Hom_{\bbf\fkS_b}(1_{\fkS_b},(S^\mu)^\circledast)=0
    \]
    for all $\mu,\nu$, which gives
    \[
    \Hom_{\bbf\fkS_n}((S^{(a,1^b)})^\circledast,(S^\tau)^\circledast)=0.
    \]
    
    On the other hand, if $c\le a\le c+d$, then we see that $(S^{(2^c,1^d)})^\circledast\downarrow_{\fkS_a\times\fkS_b}$ has $(S^{(1^a)})^\circledast\otimes(S^{(1^b)})^\circledast$ as a submodule while the other factors are of the form $(S^{\nu})^\circledast\otimes(S^{\mu})^\circledast$ and they do not satisfy the conditions in James's Theorem and thus
    \[
    \Hom_{\bbf\fkS_n}((S^{(a,1^b)})^\circledast,(S^\tau)^\circledast)\cong\bbf.
    \]

    Finally, if $\tau=(2,1^b)$, then
    \[
    \End_{\bbf\fkS_n}((S^\tau)^\circledast)\cong\bbf,
    \]
    since it is a $2$-restricted partition.

    Thus 
    \[
    \Hom_{\bbf\fkS_n}((S^{(a,1^b)})^\circledast,(S^{(2^c,1^d)})^\circledast)
    \]
    is one dimensional if $c\le a\le c+d$ and zero otherwise.
    Hence by the first isomorphism in our proof, for $\la=(a,1^b)$ and $\mu=(c+d,c)$ such that $a+b=n$ is odd, we have that $\Hom(S^\la,S^\mu)$ is one dimensional if $c\le a-1\le c+d$ and $c\le b+1\le c+d$.
\end{proof}

\begin{rem}
    We note that \cref{lem:tworows} considers ungraded homomorphisms.
    The above method can also be applied to two-column partitions, however unlike our result (\cref{thm:two-column}), it does not give an explicit description of the homomorphism.
\end{rem}

\bibliographystyle{amsalpha}
\bibliography{master}

\providecommand{\bysame}{\leavevmode\hbox to3em{\hrulefill}\thinspace}
\providecommand{\MR}{\relax\ifhmode\unskip\space\fi MR }
\providecommand{\MRhref}[2]{%
  \href{http://www.ams.org/mathscinet-getitem?mr=#1}{#2}
}
\providecommand{\href}[2]{#2}
\begin{thebibliography}{BKW11}

\bibitem[BK09]{bkisom}
J.~Brundan and A.~Kleshchev,
  \emph{\href{http://dx.doi.org/10.1007/s00222-009-0204-8}{Blocks of cyclotomic
  {Hecke} algebras and {Khovanov}--{Lauda} algebras}}, Invent.\ Math.
  \textbf{178} (2009), no.~3, 451--484.

\bibitem[BKW11]{bkw11}
J.~Brundan, A.~Kleshchev, and W.~Wang,
  \emph{\href{http://dx.doi.org/10.1515/CRELLE.2011.033}{Graded {Specht}
  modules}}, J.\ Reine Angew.\ Math. \textbf{655} (2011), 61--87.

\bibitem[DG15]{dg15}
S.~Donkin and H.~Geranios,
  \emph{\href{https://doi.org/10.1016/j.jalgebra.2015.05.005}{Invariants of
  {Specht} modules}}, J.\ Algebra \textbf{439} (2015), 188--224.

\bibitem[DG20]{dg20}
\bysame,
  \emph{\href{https://doi.org/10.1016/j.jalgebra.2019.12.017}{Decompositions of
  some {Specht} modules {I}}}, J.\ Algebra \textbf{550} (2020), 1--22.

\bibitem[Jam78]{James}
G.~D. James, \emph{\href{http://dx.doi.org/10.1007/BFb0067708}{The
  {Representation} {Theory} of the {Symmetric} {Groups}}}, Lecture Notes in
  Mathematics, vol. 682, Springer, Berlin, 1978.

\bibitem[JP79]{jp79}
G.~D. James and M.~H. Peel,
  \emph{\href{http://dx.doi.org/10.1016/0021-8693(79)90342-9}{Specht series for
  skew representations of symmetric groups}}, J.\ Algebra \textbf{56} (1979),
  no.~2, 343--364.

\bibitem[KMR12]{kmr}
A.~Kleshchev, A.~Mathas, and A.~Ram,
  \emph{\href{http://dx.doi.org/10.1112/plms/pds019}{Universal graded {S}pecht
  modules for cyclotomic {H}ecke algebras}}, Proc.\ London Math.\ Soc.
  \textbf{105} (2012), 1245--1289.

\bibitem[LM14]{lm14}
S.~Lyle and A.~Mathas,
  \emph{\href{http://dx.doi.org/10.1090/S1088-4165-2014-00450-3}{Cyclotomic
  {Carter}--{Payne} homomorphisms}}, Represent.\ Theory \textbf{18} (2014),
  117--154.

\bibitem[Lou17]{loub17}
J.~Loubert,
  \emph{\href{https://doi.org/10.1016/j.jalgebra.2016.12.022}{Homomorphisms
  from an arbitrary {Specht} module to one corresponding to a hook}}, J.\
  Algebra \textbf{485} (2017), 97--117.

\bibitem[Lyl07]{Lyle07}
S.~Lyle, \emph{\href{http://dx.doi.org/10.1515/CRELLE.2007.054}{Some
  {$q$}-analogues of the {Carter--Payne} theorem}}, J.\ Reine Angew.\ Math.
  \textbf{608} (2007), 93--121.

\bibitem[Spe14]{ls14}
L.~Speyer,
  \emph{\href{http://dx.doi.org/10.1016/j.jalgebra.2014.07.011}{Decomposable
  {Specht} modules for the {Iwahori--Hecke} algebra
  $\mathscr{H}_{{\mathbb{F}},-1}(\mathfrak{S}_n)$}}, J.\ Algebra \textbf{418}
  (2014), 227--264.

\bibitem[Spe15]{thesis}
\bysame,
  \emph{\href{https://qmro.qmul.ac.uk/xmlui/handle/123456789/9114}{Representation
  theory of {Khovanov}--{Lauda}--{Rouquier} algebras}}, Ph.D. thesis, Queen
  Mary University of London, 2015.

\end{thebibliography}

\end{document}